\def\l@section{\@tocline{1}{0pt}{1pc}{}{}}
\def\l@subsection{\@tocline{2}{0pt}{1pc}{4.6em}{}}
\def\l@subsubsection{\@tocline{3}{0pt}{1pc}{7.6em}{}}
\renewcommand{\tocsection}[3]{%
  \indentlabel{\@ifnotempty{#2}{\makebox[2.3em][l]{%
    \ignorespaces#1 #2.\hfill}}}#3}
\renewcommand{\tocsubsection}[3]{%
  \indentlabel{\@ifnotempty{#2}{\hspace*{2.3em}\makebox[2.3em][l]{%
    \ignorespaces#1 #2.\hfill}}}#3}
\renewcommand{\tocsubsubsection}[3]{%
  \indentlabel{\@ifnotempty{#2}{\hspace*{4.6em}\makebox[3em][l]{%
    \ignorespaces#1 #2.\hfill}}}#3}
\theoremstyle{plain}
\numberwithin{equation}{section}
\newtheorem{theorem}{Theorem}[section]
\newtheorem{corollary}[theorem]{Corollary}
\newtheorem{lemma}[theorem]{Lemma}
\newtheorem{proposition}[theorem]{Proposition}
\newtheorem{question}[theorem]{Question}
\newtheorem{conjecture}[theorem]{Conjecture}
\newtheorem{main theorem}{Main Theorem}
\theoremstyle{definition}
\newtheorem{definition}[theorem]{Definition}
\newtheorem{remark}[theorem]{Remark}
\newtheorem{example}[theorem]{Example}
\newcommand{\be}{\begin{equation}}
\newcommand{\ee}{\end{equation}}
\renewcommand{\leq}{\leqslant}
\renewcommand{\geq}{\geqslant}
\begin{document}

\title{Free semigroupoid algebras and the first cohomology groups}
\author{Linzhe Huang}
\address{Linzhe Huang, Beijing Institute of Mathematical Sciences and Applications, Beijing, 101408, China}
\email{huanglinzhe@bimsa.cn}
\author{Minghui Ma}
\address{Minghui Ma, School of Mathematical Sciences, Dalian University of Technology, Dalian, 116024, China}
\email{minghuima@dlut.edu.cn}
\date{}

\maketitle

\begin{abstract}
This paper investigates derivations of the free semigroupoid algebra $\mathfrak{L}_G$ of a countable or uncountable directed graph $G$ and its norm-closed version, the tensor algebra $\mathcal{A}_G$.
We first prove a weak Dixmier approximation theorem for $\mathfrak{L}_G$ when $G$ is strongly connected.
Using the theorem, we show that if every connected component of $G$ is strongly connected, then every bounded derivation $\delta$ from $\mathcal{A}_G$ into $\mathfrak{L}_G$ is of the form $\delta=\delta_T$ for some $T\in\mathfrak{L}_G$ with $\|T\|\leqslant\|\delta\|$.
For any finite directed graph $G$, we also show that the first cohomology group $H^1(\mathcal{A}_G,\mathfrak{L}_G)$ vanishes if and only if every connected component of $G$ is either strongly connected or a fruit tree.

To handle infinite directed graphs, we introduce the alternating number and propose \Cref{conj intro-in-tree}.
Suppose every connected component of $G$ is not strongly connected.
We show that if every bounded derivation from $\mathcal{A}_G$ into $\mathfrak{L}_G$ is inner, then every connected component of $G$ is a generalized fruit tree and the alternating number $A(G)$ of $G$ is finite.
The converse is also true if the conjecture holds.

Finally, we provide some examples of free semigroupoid algebras together with their nontrivial first cohomology groups.
\end{abstract}

{\bf Keywords.}  Free semigoupoid algebra, tensor algebra, derivation, first cohomology group, weak Dixmier approximation theorem

{\bf MSC.}  46L10, 46K50, 47B47


\section{Introduction}\label{sec introduction}
A {\sl free semigroupoid algebra} is a weak-operator closed algebra generated by partial isometries and projections, associated with a directed graph.
It was introduced by Kribs and Power \cite{KP04} as a generalization of free semigroup algebras.
The theory of free semigroup algebras originates from the research of Popescu \cite{Pop89,Pop89b,Pop91,Pop95} and Arias and Popescu \cite{AP95} where they introduced the non-commutative analytic Toeplitz algebra and proved its reflexivity.
In \cite{DP99}, Davidson and Pitts studied many properties of free semigroup algebras as the axiomatization of the non-commutative analytic Toeplitz algebra, and examined their invariant subspaces.
In a series of papers, free semigroup algebras have been systematically investigated:
the hyperreflexivity, the classification of atomic free semigroup algebras, the structure theorem, the Kaplansky density theorem, and related topics are established, see \cite{Dav06,DKP01,DLP05,DP99,Ken11} and references therein.
In recent years, many of these results were extended to free semigroupoid algebras \cite{DDL19,JP06,JK04,JK05,KK04,KP04}.

The first cohomology groups of operator algebras characterize the structure of their derivations.
The study of derivations on operator algebras dates back to a 1953 conference where Kaplansky asked Singer about derivations on $C(X)$, the algebra of continuous functions on a compact Hausdorff space $X$.
A day later, Singer provided a concise and elegant proof that the only such derivation is the zero mapping.
After that Kaplansky went on to wrote his paper \cite{Kap53} and showed that every derivation on a type $\mathrm{I}$ von Neumann algebra is inner.
In \cite{Kad66,Sak66}, it was proved by Kadison and Sakai that every derivation on a von Neumann algebra is inner, i.e., the first cohomology group of a von Neumann algebra is trivial.
Later in \cite{Chr77}, Christensen proved that every derivation on a nest algebra is inner, which is the case of non-self-adjoint algebras.

Free semigroupoid algebras constitute an important class of non-self-adjoint algebras.
This paper aims to provide a complete characterization of derivations on these algebras for arbitrary directed graphs.
In the special case where the directed graph consists of a single vertex with $n$ distinct loops, the free semigroupoid algebras reduce to the non-commutative analytic Toeplitz algebras.
Previous work by the authors and Wei in \cite{HMW24} established that all bounded derivations on such algebras are inner.
We also refer the reader to \cite{Huang25} for further related results, which proved the innerness of derivations on Thompson's semigroup algebra.

For a directed graph $G$ with vertices $\mathcal{V}=\mathcal{V}(G)$ and edges $\mathcal{E}=\mathcal{E}(G)$, each edge $e$ in $\mathcal{E}$ has a source vertex $s(e)$ and a range vertex $r(e)$ in $\mathcal{V}$.
Let $\mathbb{F}_{G}^+$ be the set of all directed paths of finite length in $G$, which is called the {\sl free semigroupoid} of $G$.
Let $\mathcal{H}_G=\ell^2(\mathbb{F}_{G}^+)$ be the Hilbert space with an orthonormal basis $\{\xi_w\colon w\in\mathbb{F}_{G}^+\}$.
For each edge $e\in\mathcal{E}$ and each vertex $x\in\mathcal{V}$, the partial isometry $L_e$ and the projection $L_x$ on $\mathcal{H}_G$ are defined in \eqref{equ left}.
The {\sl left free semigroupoid algebra} $\mathfrak{L}_G$ determined by $G$ is the weak-operator closed algebra generated by $\{L_e,L_x\colon e\in\mathcal{E},x\in\mathcal{V}\}$.
The {\sl tensor algebra} $\mathcal{A}_G$ is its norm-closed version.

A directed graph $G$ is called {\sl strongly connected} if there is a directed path from $x$ to $y$ for any $x,y\in\mathcal{V}$.
Suppose $G$ is a finite strongly connected directed graph.
We first show that every bounded derivation $\delta$ from $\mathcal{A}_G$ into $\mathfrak{L}_G$ is inner, i.e., there exists $T\in\mathfrak{L}_G$ such that $\delta=\delta_T$ (see \Cref{prop S-C-finite}).
In order to generalize this result to infinite directed graphs, it is crucial to estimate the norm of $T$, which should be independent of the number of vertices.
For the purpose, we prove a weak Dixmier approximation theorem for $\mathfrak{L}_G$ when $G$ is strongly connected (see \Cref{thm weak-Dixmier}), which states that for any operator $A\in\mathfrak{L}_G$ the intersection of the weak-operator closure of $\{\frac{1}{n}\sum_{j=1}^nU_j^*AU_j\colon U_j~\text{is an isometry in}~\mathfrak{L}_G\}$ and the center $Z(\mathfrak{L}_G)$ of $\mathfrak{L}_G$ is nonempty.
In particular, we are able to prove the following result.

\begin{theorem}[\Cref{thm S-C} and \Cref{cor S-C}]\label{thm intro-S-C}
Suppose $G$ is a directed graph such that every connected component of $G$ is strongly connected.
Then for every bounded derivation $\delta$ from $\mathcal{A}_G$ into $\mathfrak{L}_G$, there exists $T\in\mathfrak{L}_G$ such that $\delta=\delta_T$ and $\|T\|\leqslant\|\delta\|$.
\end{theorem}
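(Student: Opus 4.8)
The plan is to reduce the general statement to the case of a single strongly connected graph, to settle that case by combining \Cref{prop S-C-finite} with a Dixmier-type averaging that yields the norm bound, and finally to pass to infinite graphs by exhaustion. I would begin with the reduction to one connected component. Writing $G=\bigsqcup_i G_i$ for the decomposition into connected components and $P_i$ for the orthogonal projection of $\mathcal H_G$ onto $\mathcal H_{G_i}=\ell^2(\mathbb F_{G_i}^+)$, every generator $L_e,L_x$ lives in a single component, so the generators—hence all of $\mathfrak L_G$—are block diagonal with respect to $\{P_i\}$; in particular each $P_i$ is central. For a bounded derivation $\delta$ I would first show $\delta(P_i)=0$: expanding $P_i=P_i^2$ forces $P_i\delta(P_i)P_i=0$ and confines the possibly nonzero blocks of $\delta(P_i)$ to the off-diagonal positions, while $\delta(P_i)\in\mathfrak L_G$ is itself block diagonal, and the two facts together give $\delta(P_i)=0$. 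Consequently, for $a\in\mathcal A_{G_i}=P_i\mathcal A_GP_i$ one gets $\delta(a)=\delta(P_iaP_i)=P_i\delta(a)P_i\in\mathfrak L_{G_i}$, so $\delta$ restricts to a bounded derivation $\delta_i\colon\mathcal A_{G_i}\to\mathfrak L_{G_i}$ with $\|\delta_i\|\le\|\delta\|$, and likewise $\delta(L_e)=P_i\delta(L_e)P_i$ for every edge $e$ of $G_i$. If each $\delta_i$ is implemented by some $T_i\in\mathfrak L_{G_i}$ with $\|T_i\|\le\|\delta_i\|$, then $T:=\bigoplus_i T_i$ is block diagonal with $\|T\|=\sup_i\|T_i\|\le\|\delta\|$, hence $T\in\mathfrak L_G$, and $\delta,\delta_T$ agree on every generator; as both are bounded derivations, $\delta=\delta_T$. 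This isolates the single strongly connected case.

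For a \emph{finite} strongly connected $G$, \Cref{prop S-C-finite} gives $\delta=\delta_{T_0}$ for some $T_0\in\mathfrak L_G$, but with no control on $\|T_0\|$; since $T_0$ is determined only modulo the center $Z(\mathfrak L_G)$, I would use averaging to choose the optimal representative. For an isometry $U\in\mathcal A_G$ one has $U^*\delta(U)=U^*(T_0U-UT_0)=U^*T_0U-T_0$, so for isometries $U_1,\dots,U_n\in\mathcal A_G$,
\[
\frac1n\sum_{j=1}^n U_j^*T_0U_j=T_0+\frac1n\sum_{j=1}^n U_j^*\delta(U_j),
\]
and the correction term has norm at most $\|\delta\|$. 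By \Cref{thm weak-Dixmier} the averages on the left have a WOT-cluster point $Z\in Z(\mathfrak L_G)$; passing to a common subnet, the correction terms cluster to some $R$ with $\|R\|\le\|\delta\|$ (the norm is WOT-lower semicontinuous), and taking limits yields $Z=T_0+R$. Since $Z$ is central, $T:=-R=T_0-Z$ implements the same derivation, $\delta=\delta_T$, and $\|T\|=\|R\|\le\|\delta\|$. One must run the averaging over isometries in $\mathcal A_G$, on which $\delta$ is defined; for strongly connected $G$ there is an ample supply of such isometries built from the $L_e$, and this is the form of \Cref{thm weak-Dixmier} I would invoke.

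To remove finiteness I would exhaust an infinite strongly connected $G$ by the net of its finite strongly connected subgraphs $G_n$, directed by inclusion, with $\mathcal V(G_n)\uparrow\mathcal V(G)$ and $\mathcal E(G_n)\uparrow\mathcal E(G)$; this is possible because any two vertices are joined by a finite directed path, so any finite vertex set enlarges to a finite strongly connected one. Each $\mathcal H_{G_n}\subseteq\mathcal H_G$ is reducing for the generators of $\mathcal A_{G_n}$, so the projection $Q_n$ onto $\mathcal H_{G_n}$ commutes with $\mathcal A_{G_n}$, and the compression $\Phi_n(X)=Q_nXQ_n$ is a contractive $\mathcal A_{G_n}$-bimodule map carrying $\mathfrak L_G$ into $\mathfrak L_{G_n}$ (on Fourier expansions it truncates to paths inside $G_n$). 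Hence $\delta_n:=\Phi_n\circ\delta$ is a bounded derivation $\mathcal A_{G_n}\to\mathfrak L_{G_n}$ with $\|\delta_n\|\le\|\delta\|$, and by the finite case there is $T_n\in\mathfrak L_{G_n}$ with $\delta_n=\delta_{T_n}$ and $\|T_n\|\le\|\delta\|$. Regarding each $T_n$ inside $\mathfrak L_G$ through the reducing subspace $\mathcal H_{G_n}$, the net $(T_n)$ lies in the $\|\delta\|$-ball, which is WOT-compact, so it has a cluster point $T$ with $\|T\|\le\|\delta\|$. Finally, for a fixed generator $L_e$ and all $n$ beyond the stage where $e\in G_n$, one has $Q_n\delta(L_e)Q_n=\delta_n(L_e)=T_nL_e-L_eT_n$; letting $Q_n\to I$ strongly (since $\bigcup_n\mathcal H_{G_n}$ is dense) and $T_n\to T$ along the cluster subnet gives $\delta(L_e)=TL_e-L_eT$, and likewise on the $L_x$, so $\delta=\delta_T$.

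The hard part will be the infinite step, and within it the bookkeeping of the embedding $\mathfrak L_{G_n}\hookrightarrow\mathfrak L_G$: I must verify that $\Phi_n$ genuinely lands in $\mathfrak L_{G_n}$ and that the reducing subspace $\mathcal H_{G_n}$ is norm-determining for the lifted operators, so that the bound $\|T_n\|\le\|\delta\|$ survives the passage into $\mathfrak L_G$ and legitimizes the WOT-compactness argument. It is exactly here that \Cref{thm weak-Dixmier} is decisive: without the a priori estimate $\|T_n\|\le\|\delta\|$, uniform in $n$ (equivalently, independent of the number of vertices), the implementers furnished by \Cref{prop S-C-finite} could not be confined to a single WOT-compact ball and the limit $T$ would fail to exist.
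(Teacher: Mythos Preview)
Your argument is correct in outline and essentially matches the paper for the component decomposition (the paper packages this as \Cref{prop derivation-decomposition} and \Cref{rem derivation-decomposition}) and for the finite strongly connected case (this is exactly \Cref{cor S-C-finite}: use \Cref{prop S-C-finite} to get some implementer, then subtract a central element produced by \Cref{thm weak-Dixmier}; the isometries in the proof of \Cref{thm weak-Dixmier} are finite linear combinations of $L_p$'s, hence lie in $\mathcal A_G$, so your concern about the domain of $\delta$ is answered).

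The infinite step is where you diverge from the paper. The paper does \emph{not} exhaust by finite strongly connected subgraphs $G_n\subseteq G$; instead it takes finite vertex sets $F\subseteq\mathcal V$, forms the \emph{reduced} subgraph $G_F$ (edges are minimal $F$-to-$F$ paths), and invokes \Cref{prop reduced} to identify $L_F\mathfrak L_GL_F\cong\mathfrak L_{G_F}$. The corner $L_F\mathfrak L_GL_F$ is visibly inside $\mathfrak L_G$, so the implementer $T_F$ already lives in $\mathfrak L_G$ with $\|T_F\|\le\|\delta\|$, and one takes a WOT cluster point inside the $\|\delta\|$-ball of $\mathfrak L_G$ (\Cref{prop compact}); this is \Cref{lem reduced-derivation}. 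Your route requires lifting $T_n\in\mathfrak L_{G_n}$ back into $\mathfrak L_G$, and here your phrase ``through the reducing subspace $\mathcal H_{G_n}$'' is dangerous: the naive extension $T_nQ_n$ is \emph{not} in $\mathfrak L_G$ (it fails to commute with $R_e$ for edges $e\notin G_n$). What does work is either (a) lifting by Fourier expansion, which one checks is isometric by decomposing $\{\xi_w:r(w)\in\mathcal V(G_n)\}$ into $\mathcal B_n$-invariant pieces each unitarily equivalent to some $R_v\mathcal H_{G_n}$, or (b) taking the WOT limit of $T_nQ_n$ in $B(\mathcal H_G)$ and then observing that for each fixed $e\in\mathcal E(G)$ eventually $e\in G_n$, whence $[T_nQ_n,R_e]=0$ and the limit lies in $\mathfrak R_G'=\mathfrak L_G$. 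Either fix is routine; the paper's corner approach simply sidesteps the issue.
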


We can immediately obtain from \Cref{thm intro-S-C} that every derivation $\delta$ on $\mathcal{A}_G$ can be implemented by an operator $T$ in $\mathfrak{L}_G$ (see \Cref{cor S-C-2}).
This can be regarded as an analogue of Kadison's result \cite{Kad66} that every derivation on a $C^*$-algebra is implemented by an operator in its weak-operator closure.
In \cite{Sak68}, Sakai showed that every derivation on a simple $C^*$-algebra is inner.
Thus, it is natural to propose the following question in the case of tensor algebras.

\begin{question}\label{ques inner}
Let $G$ be a directed graph.
When is each derivation on $\mathcal{A}_G$ inner?
\end{question}

It is related to a paper by Katsoulis \cite{Kat16} where he pointed out that if every derivation on $\mathcal{A}_G$ is inner then local derivations are derivations.
For connected directed graphs which are not strongly connected, we obtain a complete answer to this question in \Cref{cor converse}.
For strongly connected directed graphs, we get a partial answer to \Cref{ques inner} in \Cref{lem infinite-not-inner}, \Cref{prop S-C-uncountable}, and \Cref{prop circle-inner}.
We provide a sufficient condition for this question when $\mathcal{A}_G$ is the non-commutative disc algebra in \Cref{lemma condition-inner}.
In \Cref{prop circle-inner-2}, we also prove that every derivation on $\mathfrak{L}_G$ is inner if every connected component of $G$ is an $n$-circle graph, which provides an affirmative answer to the question proposed at the end of \cite{Dun07}.

One of the main purpose of this paper is to characterize all the directed graphs $G$ such that all bounded derivations from $\mathcal{A}_G$ into $\mathfrak{L}_G$ are inner, or equivalently, $H^1(\mathcal{A}_G,\mathfrak{L}_G)=0$.
By \Cref{thm intro-S-C}, it remains to investigate the situation when every connected component of $G$ is not strongly connected.
The following is the fundamental case.
A tree $G$ is called a {\sl rooted tree} if there exists a vertex $v_0$ satisfying one of the following conditions:
(i) there is a path from $v_0$ to every vertex $v$;
(ii) there is a path from every vertex $v$ to $v_0$.
In case (i) $G$ is called an {\sl out-tree} and in case (ii) $G$ is called an {\sl in-tree}.

We prove that for every derivation $\delta$ on $\mathcal{A}_G$, there exists $T\in\mathcal{A}_G$ such that $\delta=\delta_T$ and $\|T\|\leqslant\|\delta\|$ if $G$ is an out-tree (see \Cref{prop out-tree}).
For in-trees, it is not difficult to show that $T$ can be chosen such that $\|T\|\leqslant a_n\|\delta\|$, where $a_n$ is a universal constant that is dependent on the number of the vertices $n$.
The essential problem is whether the constant can be chosen to be independent of the number of the vertices.
The difference between out-trees and in-trees is that every out-tree can be isometrically embedded into the upper triangle algebra (see \Cref{lem out-tree}) while in-trees may not be.
We compute an example in \Cref{prop in-tree-eg} and propose the following conjecture.

\begin{conjecture}[\Cref{conj in-tree}]\label{conj intro-in-tree}
Suppose $G$ is an in-tree.
Then for every derivation $\delta$ on $\mathcal{A}_G$, there exists $T\in\mathcal{A}_G$ such that $\delta=\delta_T$ and $\|T\|\leqslant C\|\delta\|$, where $C\geqslant 1$ is a universal constant.
\end{conjecture}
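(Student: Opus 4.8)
The plan is to split $\delta$ by the gauge action, solve each homogeneous piece exactly (where the tree structure gives a clean uniform bound), and then confront the genuine difficulty, which is the reassembly of the pieces. Let $\gamma_\theta$ be the gauge automorphisms of $\mathcal{A}_G$ with $\gamma_\theta(L_e)=e^{i\theta}L_e$ and $\gamma_\theta(L_x)=L_x$, and for $k\geqslant 0$ let
\[
\delta_k(A)=\frac{1}{2\pi}\int_0^{2\pi}e^{-ik\theta}\,\gamma_{-\theta}\bigl(\delta(\gamma_\theta A)\bigr)\,d\theta,
\]
so that $\delta_k$ is a derivation raising the path-length grading by $k$ and $\|\delta_k\|\leqslant\|\delta\|$. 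For $k\geqslant 1$, since $G$ is a tree the only directed path from $s(e)$ to $r(e)$ is $e$ itself, so the middle Leibniz term vanishes and $\delta_k(L_e)=\delta_k(L_{r(e)})L_e+L_e\delta_k(L_{s(e)})$; thus $\delta_k$ is determined by its values on the vertex projections. Writing $\delta_k(L_x)=\sum_{|w|=k}g^x_wL_w$, the identities $\delta_k(L_x)=\delta_k(L_x)L_x+L_x\delta_k(L_x)$ and $\delta_k(L_x)L_y+L_x\delta_k(L_y)=0$ force $g^{s(w)}_w=-g^{r(w)}_w$ and kill every other coefficient, and a direct check shows $\delta_k=\delta_{T_k}$ for $T_k:=\sum_{|w|=k}t_wL_w$ with $t_w:=g^{s(w)}_w$.

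The decisive feature of an in-tree is that a directed path is determined by its source vertex and its length, so the partial isometries $\{L_w:|w|=k\}$ have pairwise orthogonal initial projections and pairwise orthogonal final projections. Hence $T_k$ is a sum of partial isometries with orthogonal columns and orthogonal rows, giving $\|T_k\|=\sup_{|w|=k}|t_w|$, and since $t_w=\langle\xi_w,\delta_k(L_{s(w)})\xi_{s(w)}\rangle$ is a single Fourier coefficient we get $\|T_k\|\leqslant\|\delta_k\|\leqslant\|\delta\|$, uniformly in the number of vertices. This is the step that fails for a general graph and is made possible by the tree structure, exactly as in the out-tree case of \Cref{prop out-tree}.

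Summing the pieces, the candidate implementer is $T=\sum_{k\geqslant 0}T_k=\sum_{x}\delta(L_x)L_x$, and the whole conjecture reduces to the single uniform estimate $\|T\|\leqslant C\|\delta\|$. Two computations show that boundedness of $\delta$ already defeats the naive linear-in-depth growth. First, the degree-zero part satisfies $\delta_0(L_w)=(s_{r(w)}-s_{s(w)})L_w$ for a function $x\mapsto s_x$ on the vertices; boundedness forces $|s_{r(w)}-s_{s(w)}|\leqslant\|\delta\|$ along every path, so after centering $(s_x)$ we obtain $\|T_0\|=\sup_x|s_x|\leqslant\|\delta\|$, even though the unconstrained potential problem on a deep tree has solutions whose norm grows with the depth. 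Second, for each basis vector one computes $T\xi_w=\sum_{s(v)=r(w)}t_v\,\xi_{vw}$, whose nonzero coefficients are exactly some of the Fourier coefficients of $\delta(L_{r(w)})\xi_{r(w)}$, whence $\|T\xi_w\|\leqslant\|\delta(L_{r(w)})\|\leqslant\|\delta\|$. Thus $T$ is bounded column-by-column, and along each maximal upward ray $T$ acts as a lower-triangular Toeplitz operator with symbol $f_y(z)=\sum_{\ell\geqslant 1}t_{v_\ell}z^\ell$ lying in $H^2$ with $H^2$-norm at most $\|\delta\|$.

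The hard part will be to upgrade these one-sided $H^2$ bounds to a genuine operator-norm bound. The column estimate alone is insufficient because the rows of $T$ collect coefficients from different projections $\delta(L_x)$, and a crude Schur test only returns $\|T\|\leqslant(\mathrm{depth})\|\delta\|$; equivalently, an $H^2$ bound on each ray-symbol does not give the required $H^\infty$ bound. The mechanism that should supply the missing control is the boundedness of $\delta$ on all of $\mathcal{A}_G$, not merely on the projections: testing $\delta$ against analytic polynomials in the shift along a ray should bound $\|f_y\|_{H^\infty}$ by a universal multiple of $\|\delta\|$, and the remaining obstacle is to organize the interference between the infinitely many rays, which share initial segments near the root, into a single bound independent of depth. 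The finite-norm computation of \Cref{prop in-tree-eg} is the evidence that this interference is controllable and that the universal constant $C$ exists. A complementary route would be to embed $\mathcal{A}_G$ into $\mathcal{A}_{G'}$ for a strongly connected $G'$ obtained by adjoining return edges, extend $\delta$ to a bounded derivation $\widetilde\delta$ on $\mathcal{A}_{G'}$ with $\|\widetilde\delta\|\leqslant C\|\delta\|$, apply \Cref{thm intro-S-C} to implement it by some $T'\in\mathfrak{L}_{G'}$ with $\|T'\|\leqslant\|\widetilde\delta\|$, and compress back to $\mathcal{A}_G$; there the obstruction migrates to producing the extension with controlled norm while keeping the compressed operator inside $\mathcal{A}_G$ and still implementing $\delta$.
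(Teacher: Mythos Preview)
The statement you are attempting is \Cref{conj in-tree}, which the paper explicitly leaves as an open \emph{conjecture}; there is no proof in the paper to compare against. Your proposal is therefore not competing with a known argument but attempting to resolve an open problem, and you yourself flag that it does not do so.

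Your homogeneous analysis is correct and even sharper than anything in the paper: the in-tree property does force $\|T_k\|=\sup_{|w|=k}|t_w|\leqslant\|\delta\|$, and your degree-zero computation is right because in an in-tree there is a directed path from every vertex to the root, so the telescoping constraint $|s_x-s_{v_r}|\leqslant\|\delta\|$ holds for \emph{all} $x$, not merely for neighbours. But the reassembly step is a genuine gap, and you say so. The bound $\|T_k\|\leqslant\|\delta\|$ together with $T=\sum_{k=0}^{\mathrm{depth}}T_k$ yields only $\|T\|\leqslant(\mathrm{depth}+1)\|\delta\|$, which is exactly the trivial bound the conjecture asks you to beat. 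Your column estimate $\|T\xi_w\|\leqslant\|\delta\|$ is also correct but insufficient, since a bounded-column operator can have arbitrarily large norm.

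The two escape routes you sketch do not close the gap. The ``$H^\infty$ along a ray'' idea has no shift to exploit: an in-tree has no circles, so there is no operator $L_c$ with $L_c^n$ available to test against, and the ray symbol $f_y$ is a polynomial of degree at most the depth, for which $\|f_y\|_{H^2}\leqslant\|\delta\|$ gives no uniform $H^\infty$ control. The embedding route into a strongly connected $G'$ faces the problem that a bounded extension $\widetilde\delta$ with $\|\widetilde\delta\|\leqslant C\|\delta\|$ is not known to exist with $C$ independent of the graph; constructing it is essentially equivalent to the conjecture. The paper's only positive evidence beyond out-trees is the single depth-one family of \Cref{prop in-tree-eg}, where the norm of $T$ is computed explicitly; your proposal does not add a new case to that evidence.
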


A generalized tree is a connected directed graph with at least two vertices and no polygon.
A tree is a generalized tree with finitely many vertices.
For convenience, we assume that every tree contains at least two vertices (see \Cref{def tree}).

Suppose $G_t$ is a generalized tree, and $V:=\{v_\lambda\}_{\lambda\in\Lambda}$ is a subset of leaves of $G_t$.
Suppose $G_f$ is a directed graph with connected components $\{\mathscr{C}_{n_\lambda}\}_{\lambda\in\Lambda}$, and each $v_\lambda$ is identified with a vertex of the $n_\lambda$-circle graph $\mathscr{C}_{n_\lambda}$.
The amalgamated graph $G=G_t\sqcup_VG_f$ is called a {\sl generalized fruit tree} provided $V\ne\mathcal{V}_t$ (see \Cref{def fruit-tree}).
The alternating number of a directed graph is given in \Cref{def alternating}.

\begin{theorem}[\Cref{cor inner-imply} and \Cref{cor fruit-tree-bound}]
\label{thm intro-fruit-tree}
Suppose $G$ is a directed graph with connected components $\{G_\lambda\}_{\lambda\in\Lambda}$ such that each $G_\lambda$ is not strongly connected.
Let $A(G_\lambda)$ be the alternating number of $G_\lambda$.
Then we have the following two statements:
\begin{enumerate}[(i)]
\item If every bounded derivation from $\mathcal{A}_G$ into $\mathfrak{L}_G$ is inner, then each $G_\lambda$ is a generalized fruit tree and $A(G)=\sup_\lambda A(G_\lambda)<\infty$.

\item Assume that \Cref{conj intro-in-tree} holds. If each $G_\lambda$ is a generalized fruit tree and $A(G)<\infty$, then every bounded derivation from $\mathcal{A}_G$ into $\mathfrak{L}_G$ is inner.
\end{enumerate}
\end{theorem}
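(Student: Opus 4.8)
I would prove the two statements separately, but both rest on the same structural reduction. Because no directed path joins distinct connected components, $\mathfrak{L}_G$ decomposes as the weak-operator direct sum of the $\mathfrak{L}_{G_\lambda}$ and $\mathcal{A}_G$ as the corresponding norm-closed sum; hence a bounded derivation $\delta\colon\mathcal{A}_G\to\mathfrak{L}_G$ restricts to bounded derivations $\delta_\lambda\colon\mathcal{A}_{G_\lambda}\to\mathfrak{L}_{G_\lambda}$ with $\|\delta_\lambda\|\le\|\delta\|$, and $\delta$ is inner precisely when each $\delta_\lambda=\delta_{T_\lambda}$ with $\sup_\lambda\|T_\lambda\|<\infty$, so that $T=\bigoplus_\lambda T_\lambda\in\mathfrak{L}_G$. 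The theorem thus splits into a statement about a single non-strongly-connected component together with a uniform-norm bookkeeping across $\lambda$.

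For statement (i) I would argue by contraposition on a single component. First, if some $G_\lambda$ is not a generalized fruit tree, then---being connected but not strongly connected---it must contain a configuration incompatible with the tree-plus-fruit decomposition: either a polygon sitting in the interior rather than as a circle attached at a leaf, or a branching pattern that cannot be organized into a generalized tree. For each such configuration I would exhibit an explicit bounded derivation, specified by its values on the generators $L_e$, and show that any operator implementing it would be forced to have an unbounded or non-analytic Fourier expansion, so that no implementer lies in $\mathfrak{L}_{G_\lambda}$; the derivation is then non-inner. Second, assuming every $G_\lambda$ is a generalized fruit tree but $A(G)=\sup_\lambda A(G_\lambda)=\infty$, I would construct for each $\lambda$ a normalized derivation concentrated on the in-directed part of $G_\lambda$ whose minimal implementer has norm comparable to $A(G_\lambda)$; since these norms are unbounded over $\lambda$, the direct-sum derivation admits no bounded implementer and is not inner.

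For statement (ii), assuming \Cref{conj intro-in-tree}, I would run the decomposition in the reverse direction on each component. Writing $G_\lambda=G_{t,\lambda}\sqcup_{V_\lambda}G_{f,\lambda}$, I would split $\delta_\lambda$ into the part carried by the circle fruits $G_{f,\lambda}$ and the part carried by the generalized tree $G_{t,\lambda}$. The fruits are strongly connected, so \Cref{thm intro-S-C} implements their part with the sharp bound $\|T\|\le\|\delta\|$. On the tree part I would separate the out-directed and in-directed segments: out-trees are handled by \Cref{prop out-tree}, again with $\|T\|\le\|\delta\|$, while the in-trees are exactly governed by \Cref{conj intro-in-tree}, giving $\|T\|\le C\|\delta\|$ for a universal $C$. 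Reassembling the local implementers along $G_\lambda$ forces one to traverse the alternations between in- and out-directed segments, and I would aim for a bound of the form $\|T_\lambda\|\le C\,A(G_\lambda)\,\|\delta\|$. Since $A(G)<\infty$, these are uniformly bounded, and the $T_\lambda$ glue to a single $T\in\mathfrak{L}_G$ with $\delta=\delta_T$.

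The main obstacle is concentrated in the in-tree direction. First, \Cref{conj intro-in-tree} is genuinely required, which is why (ii) is only conditional. Second---and this is the technical heart underlying both directions---I must show that the alternating number is the exact invariant controlling the norm of the minimal implementer on a generalized fruit tree: the upper estimate $\|T_\lambda\|\le C\,A(G_\lambda)\,\|\delta\|$ needed for (ii), together with a matching lower estimate $\|T_\lambda\|\gtrsim A(G_\lambda)$ realized by a worst-case derivation for (i). Proving this sharp two-sided dependence on $A(G_\lambda)$, and then checking that the supremum of the resulting bounds is finite exactly when $A(G)<\infty$, is the step I expect to demand the most care.
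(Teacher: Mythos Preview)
Your outline matches the paper's architecture: reduce to components, then for (i) exhibit obstructing derivations and for (ii) decompose each generalized fruit tree into fruits plus tree and control implementer norms through the alternating number. Two technical differences deserve mention.

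For the conclusion $A(G)<\infty$ in (i), your direct-sum construction handles the case where the supremum is attained across infinitely many components, but not the case where a \emph{single} $G_\lambda$ already has $A(G_\lambda)=\infty$: there you need one non-inner derivation on one component, whereas \Cref{lem alternating} only yields a sequence $\delta_m$ with $\|\delta_m\|\le 1$ and minimal implementer norm $\ge m/2$. The paper closes this via the open mapping theorem (\Cref{prop alternating}): if every derivation were inner, the map $T+Z(\mathfrak{L}_{G_\lambda})\mapsto\delta_T$ would be a bounded bijection of Banach spaces, hence have bounded inverse, contradicting the sequence $\delta_m$. Also, for the ``not a generalized fruit tree'' half, the paper isolates four concrete obstructions (\Cref{prop abstract-fruit-tree}) and disposes of each with a derivation supported on a single acyclic edge (\Cref{lem derivation-extension}); the contradiction is algebraic---the Fourier coefficients of a putative implementer satisfy incompatible linear equations---rather than a matter of unboundedness or non-analyticity as you suggest.

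For (ii), the paper does not split the tree directly into in- and out-directed segments. It instead inducts on $A(G)$ (\Cref{prop tree-bound}): pick a maximal subtree $G_0$ with $A(G_0)=n$; by maximality the connected pieces of the complement are rooted trees attached along a single edge, so one handles $G_0$ by the inductive hypothesis and the pieces by \Cref{prop out-tree} or the conjecture, then reassembles. The resulting constant $C_n$ depends only on $n$, but the recursion is multiplicative and gives no linear bound in $A(G_\lambda)$; your hoped-for estimate $\|T_\lambda\|\le C\,A(G_\lambda)\,\|\delta\|$ would need a sharper reassembly than the paper supplies, though for the theorem as stated any $C_n<\infty$ suffices.
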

As a consequence of \Cref{thm intro-S-C} and \Cref{thm intro-fruit-tree}, we can obtain that if $G$ is a finite directed graph, then $H^1(\mathcal{A}_G,\mathfrak{L}_G)=0$ if and only if every connected component of $G$ is either strongly connected or a fruit tree.
Moreover, if $G$ is connected but not strongly connected, then $H^1(\mathcal{A}_G,\mathcal{A}_G)=0$ if and only if $G$ is a fruit tree that contains no in-fruit (see \Cref{cor converse}).

This paper is structured as follows.
In \Cref{sec pre}, we review and prove some basic properties of free semigroupoid algebras that will be used throughout the paper.
In \Cref{sec derivation}, we present several auxiliary results on derivations that will simplify the arguments in later sections.
We establish a weak Dixmier approximation theorem for free semigroupoid algebras in \Cref{sec weak-Dixmier}, which plays a key role in the characterization of derivations in \Cref{sec S-C}.
\Cref{sec S-C} is devoted to the study of derivations in the case where every connected component of the directed graph $G$ is strongly connected and provides the proof of \Cref{thm intro-S-C}.
In \Cref{sec fruit-tree}, we address the case where every connected component of $G$ is not strongly connected and prove \Cref{thm intro-fruit-tree}.
Finally, in \Cref{sec cohomology}, we compute the first cohomology groups of some examples.

\section{Preliminaries}\label{sec pre}
\subsection{Free semigroupoids}
Let $G=(\mathcal{V},\mathcal{E},s,r)$ be a {\sl directed graph} with countably or uncountably many vertices $\mathcal{V}=\mathcal{V}(G)$ and edges $\mathcal{E}=\mathcal{E}(G)$.
Each edge $e\in\mathcal{E}$ has an initial vertex $x$ and a final vertex $y$, and we view $e$ as an arrow from $x$ to $y$.
We shall use the {\sl source map} $s$ and the {\sl range map} $r$ to denote the vertices, i.e., $s(e)=x$ and $r(e)=y$.
See the following graph
\begin{equation*}
\xymatrix{
r(e)=y&s(e)=x.
\ar_{e}"1,2";"1,1"
}
\end{equation*}
We denote by $\mathbb{F}_{G}^+$ the set of all directed paths of finite length in $G$, which is called the {\sl free semigroupoid} or the {\sl path space} of $G$.
Vertices are considered as paths of length zero in $\mathbb{F}_{G}^+$.
The source and range maps can be naturally extended to $\mathbb{F}_{G}^+$, which are still denoted by $s$ and $r$.
More precisely, for a path of length zero, i.e., a vertex $v\in\mathcal{V}$, we set $r(v)=s(v)=v$.
For a path $p=e_1e_2\cdots e_n\in\mathbb{F}_{G}^+$ of length $\ell(p)=n\geqslant 1$, where $e_1,e_2,\ldots,e_n\in\mathcal{E}$ and $s(e_j)=r(e_{j+1})$ for each $1\leqslant j\leqslant n-1$, we define $r(p)=r(e_1)$ and $s(p)=s(e_n)$.
See the following graph
\begin{equation*}
\xymatrix{
r(p)&v_2&v_3&\cdots&s(p).
\ar_{e_1}"1,2";"1,1"
\ar_{e_2}"1,3";"1,2"
\ar_{e_3}"1,4";"1,3"
\ar_{e_n}"1,5";"1,4"
}
\end{equation*}
If $p_1,p_2\in\mathbb{F}_{G}^+$ with $s(p_1)=r(p_2)$, then their product $p_1p_2\in\mathbb{F}_{G}^+$ is well-defined.
This operation makes $\mathbb{F}_{G}^+$ into a semigroupoid.
In particular, if $\mathcal{V}$ is a singleton, then $\mathbb{F}_{G}^+$ is the free semigroup generated by $\mathcal{E}$.

A path $p\in\mathbb{F}_{G}^+$ of length $\ell(p)\geqslant 1$ is called a {\sl circle} at a vertex $v\in\mathcal{V}$ if $r(p)=s(p)=v$.
If $p$ is a circle at $v$, then $p^j$ is meaningful for every $j\geqslant 1$.
For convenience, we set $p^0:=v$.
A length-one circle is said to be a {\sl loop}.
A circle is called {\sl minimal} if it is not the products of any two circles.
For every vertex $v\in\mathcal{V}$, let $\mathcal{E}_v$ denote the set of all minimal circles at $v$, and $G_v$ the directed graph with $\mathcal{V}(G_v)=\{v\}$ and $\mathcal{E}(G_v)=\mathcal{E}_v$.
Then $\mathbb{F}_{G_v}^+$ is the free semigroup generated by $\mathcal{E}_v$.
It follows from \cite[Proposition 5.1]{DDL19} that
\begin{equation}\label{equ reduced-by-v}
  \{p\in\mathbb{F}_{G}^+\colon r(p)=s(p)=v\}=\mathbb{F}_{G_v}^+.
\end{equation}
The following lemma is a direct application of \cite[Lemma 2.1]{HMW24}.

\begin{lemma}\label{lem 2.1}
Let $w,p_1,p_2\in\mathbb{F}_{G}^+$ be circles at a vertex $v\in\mathcal{V}$.
If there exists an integer $k\geqslant\ell(p_2)+1$ such that $p_1w^k=w^kp_2$, then $p_1w=wp_1$ and $p_1=p_2$.
\end{lemma}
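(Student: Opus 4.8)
The plan is to reduce the statement to a purely combinatorial identity in a free semigroup, where the cited result \cite[Lemma 2.1]{HMW24} applies directly. Since $w$, $p_1$, and $p_2$ are circles at the single vertex $v$, each of them lies in the set $\{p\in\mathbb{F}_G^+\colon r(p)=s(p)=v\}$, which by \eqref{equ reduced-by-v} equals $\mathbb{F}_{G_v}^+$, the free semigroup generated by the minimal circles $\mathcal{E}_v$ at $v$. Thus the hypothesis $p_1w^k=w^kp_2$ is an identity among words in a genuine free semigroup, and the two conclusions $p_1w=wp_1$ and $p_1=p_2$ are exactly statements about these words.

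First I would record the bookkeeping between the two notions of length. Write $\ell_v(p)$ for the length of $p$ as a word in $\mathbb{F}_{G_v}^+$, i.e. its number of minimal-circle letters. Since every minimal circle has $G$-length at least one, we have $\ell_v(p)\leq\ell(p)$ for every circle $p$ at $v$. In particular, the hypothesis $k\geq\ell(p_2)+1$ implies $k\geq\ell_v(p_2)+1$, so the largeness assumption needed by \cite[Lemma 2.1]{HMW24} --- phrased there in terms of word length in a free semigroup --- is already guaranteed by the weaker-looking assumption stated in terms of the path length $\ell$ in $G$.

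With the reduction in place, the core is the word combinatorics, which I would organize as follows. Comparing lengths in $p_1w^k=w^kp_2$ gives $\ell_v(p_1)=\ell_v(p_2)$; reading prefixes and suffixes of the common word $U:=p_1w^k=w^kp_2$ shows that $p_1$ is a prefix of $w^k$ and $p_2$ is a suffix of $w^k$. The equation is a conjugacy relation $p_1z=zp_2$ with $z=w^k$, so $p_1$ and $p_2$ are conjugate and $w^k$ carries both the period $\ell_v(w)$ coming from $w$ and the period $\ell_v(p_1)$ coming from the prefix $p_1$. Because $k\geq\ell_v(p_2)+1=\ell_v(p_1)+1$, the overlap $w^k$ is long enough for a Fine--Wilf type periodicity argument to force $w$ and $p_1$ to be powers of a common word; this is precisely the content encapsulated by \cite[Lemma 2.1]{HMW24}, and it yields $p_1w=wp_1$. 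Finally, commuting $p_1$ past $w^k$ turns $p_1w^k=w^kp_2$ into $w^kp_1=w^kp_2$, and left cancellation in the free semigroup gives $p_1=p_2$.

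The step I expect to be the main obstacle is the periodicity argument: one must be sure that the bound $k\geq\ell(p_2)+1$ provides a sufficiently long common factor for the Fine--Wilf mechanism to apply, and the subtlety lies entirely in the length bookkeeping, since the hypothesis is stated using the $G$-path length $\ell$ while the free-semigroup lemma counts minimal circles. Once the inequality $\ell_v\leq\ell$ is invoked this obstacle dissolves, and the remainder is the routine verification that \cite[Lemma 2.1]{HMW24} applies verbatim to the free semigroup $\mathbb{F}_{G_v}^+$.
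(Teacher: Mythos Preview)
Your proposal is correct and matches the paper, which simply records the lemma as a direct application of \cite[Lemma 2.1]{HMW24}. The length bookkeeping you flag as the main obstacle can in fact be bypassed entirely: since equality of paths in $\mathbb{F}_G^+$ is just equality of their edge sequences, one may regard $w,p_1,p_2$ as words in the free monoid on $\mathcal{E}$ and invoke the cited lemma there with $\ell$ itself as the word length, so the detour through $\mathbb{F}_{G_v}^+$ and the inequality $\ell_v\leqslant\ell$ is unnecessary.
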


A directed graph $G$ is called {\sl strongly connected} or {\sl transitive} if for every pair of vertices $(x,y)$, there exist paths from $x$ to $y$ and from $y$ to $x$.
For every $n\geqslant 1$, the {\sl $n$-circle graph} $\mathscr{C}_n$ is a directed graph with $n$ distinct vertices $\{v_1,v_2,\ldots,v_n\}$ and edges $\{e_1,e_2,\ldots,e_n\}$ satisfying that $r(e_j)=v_j$ and $s(e_j)=v_{j+1\,(\mathrm{mod}\,n)}$ for all $1\leqslant j\leqslant n$.
The following is the $3$-circle graph
\begin{equation*}
\xymatrix{
&v_3&\\
v_1&&v_2.
\ar^{e_3}"2,1";"1,2"
\ar_{e_1}"2,3";"2,1"
\ar^{e_2}"1,2";"2,3"
}
\end{equation*}
We say that $G$ is a {\sl vertex graph} or {\sl trivial graph} if it consists of one vertex and no edge.
Note that a strongly connected directed graph $G$ is not an $n$-circle graph if and only if one of the following holds:
\begin{enumerate}[(i)]
\item $G$ is a vertex graph;
\item there exist at least two distinct minimal circles at each vertex.
\end{enumerate}

\subsection{Free semigroupoid algebras}
Let $\mathcal{H}_G$ be the Hilbert space $\ell^2(\mathbb{F}_{G}^+)$ with an orthonormal basis $\{\xi_w\colon w\in\mathbb{F}_{G}^+\}$.
For each edge $e\in\mathcal{E}$ and vertex $x\in\mathcal{V}$, we can define partial isometries and projections on $\mathcal{H}_G$ by
\begin{align}\label{equ left}
    L_e\xi_w=\begin{cases}
        \xi_{ew}& r(w)=s(e)\\
        0&\text{otherwise}
    \end{cases}\quad\text{and}\quad
    L_x\xi_w=\begin{cases}
        \xi_{w}& r(w)=x\\
        0&\text{otherwise}
    \end{cases}
\end{align}
and
\begin{align}\label{equ right}
    R_e\xi_w=\begin{cases}
        \xi_{we}& s(w)=r(e)\\
        0&\text{otherwise}
    \end{cases}\quad\text{and}\quad
    R_x\xi_w=\begin{cases}
        \xi_{w}& s(w)=x\\
        0&\text{otherwise}
    \end{cases}.
\end{align}
The {\sl left} and {\sl right free semigroupoid algebras} determined by $G$ are the weak-operator closed algebras given by
\begin{align*}
    \mathfrak{L}_G:=\overline{\mathrm{Alg}}^{\mathrm{WOT}}
    \{L_e,L_x\colon e\in\mathcal{E},x\in\mathcal{V}\},
\end{align*}
and
\begin{align*}
    \mathfrak{R}_G:=\overline{\mathrm{Alg}}^{\mathrm{WOT}}
    \{R_e,R_x\colon e\in\mathcal{E},x\in\mathcal{V}\}.
\end{align*}
Clearly, for any $x,y\in\mathcal{V}$, $L_y\mathfrak{L}_GL_x\ne\{0\}$ if and only if there exists a path from $x$ to $y$.
It follows from \cite[Theorem 4.2]{KP04} that
\begin{equation*}
  \mathfrak{L}_G'=\mathfrak{R}_G,\quad \mathfrak{R}_G'=\mathfrak{L}_G.
\end{equation*}
In particular, both $\mathfrak{L}_G$ and $\mathfrak{R}_G$ are unital.
The {\sl tensor algebra} $\mathcal{A}_G$ is the norm-closed algebra given by
\begin{align*}
    \mathcal{A}_G:=\overline{\mathrm{Alg}}^{\|\cdot\|}
    \{L_e,L_x\colon e\in\mathcal{E},x\in\mathcal{V}\},
\end{align*}
which is a Banach subalgebra of $\mathfrak{L}_G$.
A directed graph with finitely many vertices is called a {\sl finite directed graph} (may have infinitely many edges).
In the case of finite directed graphs, $\mathcal{A}_G$ is called the {\sl quiver algebra} by Muhly and Solel \cite{MS99}.
Note that $\mathcal{A}_G$ is unital when $G$ is a finite directed graph, however, $\mathcal{A}_G$ is nonunital when $G$ is an infinite directed graph.
For both finite and infinite directed graphs, we investigate derivations of $\mathcal{A}_G$ in this paper.
For each $A\in\mathfrak{L}_G$, it has a {\sl Fourier expansion} of the form $\sum_{w\in\mathbb{F}_{G}^+}a_wL_w$, which means that
\begin{equation*}
  A\xi_e=\sum_{s(w)=r(e)} a_w\xi_{we},\quad A\xi_{x}=\sum_{s(w)=x}a_w\xi_{w}
\end{equation*}
for all $e\in\mathcal{E}$ and $x\in\mathcal{V}$ (see \cite[Remark 4.3]{KP04}).
It is clear that every operator in $\mathfrak{L}_G$ is completely determined by its Fourier expansion.
The following result will be employed to handle directed graphs with infinitely many vertices.

\begin{proposition}\label{prop compact}
Any norm-closed ball of $\mathfrak{L}_G$ is weak-operator compact.
\end{proposition}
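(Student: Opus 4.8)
The plan is to realize the norm-closed ball of $\mathfrak{L}_G$ as a weak-operator closed subset of the corresponding ball in $\mathcal{B}(\mathcal{H}_G)$ and then to establish compactness of the latter. First I would fix $r>0$ and set $B_r=\{A\in\mathcal{B}(\mathcal{H}_G)\colon\|A\|\leqslant r\}$. Since the operator norm is lower semicontinuous in the weak-operator topology (WOT) --- it is a supremum of the WOT-continuous functions $A\mapsto|\langle A\xi,\eta\rangle|$ over unit vectors $\xi,\eta$ --- the ball $B_r$ is WOT-closed. As $\mathfrak{L}_G$ is WOT-closed by definition, the intersection $\{A\in\mathfrak{L}_G\colon\|A\|\leqslant r\}=\mathfrak{L}_G\cap B_r$ is WOT-closed. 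It therefore suffices to prove that $B_r$ itself is WOT-compact, for then the set in question is a WOT-closed subset of a WOT-compact set.

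For the compactness of $B_r$ I would use a Tychonoff argument that works regardless of whether $\mathcal{H}_G$ is separable (the crucial point, since $G$ may have uncountably many vertices). The assignment $A\mapsto(\langle A\xi,\eta\rangle)_{(\xi,\eta)}$ embeds $B_r$, equipped with WOT, into the product $\prod_{(\xi,\eta)}\overline{D}(0,r\|\xi\|\,\|\eta\|)$ of closed discs, which is compact by Tychonoff's theorem. A net $(A_i)$ in $B_r$ then admits a subnet whose matrix coefficients converge pointwise to a function $\phi(\xi,\eta)$; this $\phi$ is sesquilinear and bounded by $r$, so by the Riesz representation theorem $\phi(\xi,\eta)=\langle A\xi,\eta\rangle$ for a unique $A$ with $\|A\|\leqslant r$, and $A_i\to A$ in WOT. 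Hence $B_r$ is WOT-compact. Equivalently, one may invoke Banach--Alaoglu: $B_r$ is the unit ball (up to scaling) of the dual $\mathcal{B}(\mathcal{H}_G)=\mathcal{T}(\mathcal{H}_G)^*$, hence ultraweakly compact, and the ultraweak and weak-operator topologies coincide on norm-bounded sets.

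Combining the two steps, $\mathfrak{L}_G\cap B_r$ is a WOT-closed subset of the WOT-compact ball $B_r$, and is therefore WOT-compact, which is precisely the assertion.

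The main point to watch is not any single delicate estimate but the potential non-separability of $\mathcal{H}_G$: sequences do not detect WOT-compactness in this generality, so the argument must be phrased with nets (or routed through Banach--Alaoglu), and one must check that the pointwise limit of the matrix coefficients genuinely comes from a bounded operator via Riesz representation. Everything else is a routine repackaging of standard facts about the weak-operator topology.
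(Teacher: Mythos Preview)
Your proof is correct and follows essentially the same approach as the paper: realize the ball of $\mathfrak{L}_G$ as the intersection of the WOT-closed algebra $\mathfrak{L}_G$ with the corresponding ball of $\mathcal{B}(\mathcal{H}_G)$, and invoke WOT-compactness of the latter. The paper simply quotes the WOT-compactness of $(\mathcal{B}(\mathcal{H}_G))_1$ as a standard fact, whereas you supply the Tychonoff/Banach--Alaoglu justification (and the remark about non-separability), but the structure is identical.
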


\begin{proof}
We only need to prove that the unit ball $(\mathfrak{L}_G)_1$ of $\mathfrak{L}_G$ is weak-operator compact.
Since the unit ball $(B(\mathcal{H}))_1$ of $B(\mathcal{H})$ is weak-operator compact and
\begin{equation*}
  (\mathfrak{L}_G)_1=\mathfrak{L}_G\cap(B(\mathcal{H}))_1
\end{equation*}
is weak-operator closed, $(\mathfrak{L}_G)_1$ is weak-operator compact.
\end{proof}

\subsection{Reduced subgraphs and subalgebras}
Let $G=(\mathcal{V},\mathcal{E},r,s)$ be a directed graph and let $F$ be a subset of $\mathcal{V}$.
A path $p=e_1e_2\cdots e_n\in\mathbb{F}_{G}^+$ of length $\ell(p)=n\geqslant 1$ is called a {\sl minimal path} at $F$ if
\begin{equation*}
  r(e_1),s(e_n)\in F~\text{and}~
  s(e_j)=r(e_{j+1})\in\mathcal{V}\backslash F~\text{for each}~1\leq j\leq n-1.
\end{equation*}
In particular, for every vertex $v\in\mathcal{V}$, a minimal path at $\{v\}$ is exactly a minimal circle at $v$.
Let $\mathcal{E}_F$ be the set of all minimal paths at $F$.

\begin{definition}
The {\sl reduced subgraph} $G_F$ of $G$ by $F$ is defined to be the directed graph with vertices $\mathcal{V}(G_F)=F$ and edges $\mathcal{E}(G_F)=\mathcal{E}_F$.
\end{definition}

Note that the reduced subgraph of $G$ is different from the {\sl induced subgraph} of $G$ by $F$, whose edge set is $\{e\in\mathcal{E}\colon r(e),s(e)\in F\}$.
Similar to \eqref{equ reduced-by-v}, we have
\begin{equation}\label{equ reduced-by-F}
  \{p\in\mathbb{F}_{G}^+\colon r(p),s(p)\in F\}=\mathbb{F}_{G_F}^+.
\end{equation}
For simplicity, we denote by $L_F$ the projection $\sum_{v\in F}L_v$.
The following proposition establishes the connection between the reduced subgraphs and reduced subalgebras (see also the proof of \cite[Lemma 2.6]{HMW24}).

\begin{proposition}\label{prop reduced}
Suppose $G$ is directed graph.
Let $F$ be a subset of $\mathcal{V}$ and $G_F$ the reduced graph of $G$ by $F$.
Then the operator algebras $L_F\mathfrak{L}_GL_F$ and $L_F\mathcal{A}_GL_F$ acting on $L_F\mathcal{H}_G$ are completely isometrically isomorphic to $\mathfrak{L}_{G_F}$ and $\mathcal{A}_{G_F}$ acting on $\mathcal{H}_{G_F}$, respectively.
\end{proposition}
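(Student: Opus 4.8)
The plan is to identify the compression by $L_F$ with the restriction to an explicit invariant subspace that is canonically unitarily equivalent to $\mathcal{H}_{G_F}$, and then to prove this restriction is a completely isometric isomorphism. The crux is the norm equality, which I establish by decomposing the ambient space according to source vertices.

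First I would fix the combinatorial picture. By \eqref{equ reduced-by-F} the path space $\mathbb{F}_{G_F}^+$ is identified with $\{p\in\mathbb{F}_{G}^+\colon r(p),s(p)\in F\}$ in a way that respects products and the maps $s,r$. Set $\mathcal{K}:=\overline{\operatorname{span}}\{\xi_p\colon p\in\mathbb{F}_{G}^+,\ r(p),s(p)\in F\}\subseteq L_F\mathcal{H}_G$ and let $U\colon\mathcal{H}_{G_F}\to\mathcal{K}$ be the canonical unitary $\xi_p\mapsto\xi_p$. A direct computation with \eqref{equ left} shows that for $A=\sum_w a_wL_w\in\mathfrak{L}_G$ one has $L_FAL_F=\sum_{w\in\mathbb{F}_{G_F}^+}a_w\,L_FL_wL_F$, that $\mathcal{K}$ is invariant under every such $L_FAL_F$, and that $U^*(L_FL_eL_F)|_{\mathcal{K}}U$ and $U^*L_vU$ are precisely the generators $L_e^{G_F}$, $L_v^{G_F}$ of $\mathfrak{L}_{G_F}$ for $e\in\mathcal{E}_F$ and $v\in F$. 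Hence $\Phi(X):=U^*X|_{\mathcal{K}}U$ defines an algebra homomorphism on $L_F\mathfrak{L}_GL_F$ (restriction to a common invariant subspace is multiplicative) that carries generators to generators, and likewise on $L_F\mathcal{A}_GL_F$.

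The main obstacle is to show $\Phi$ is completely isometric, i.e.\ that the norm of $L_FAL_F$ is already attained on $\mathcal{K}$, even though $\mathcal{K}$ is properly contained in $L_F\mathcal{H}_G$ and need not be reducing. I would decompose $L_F\mathcal{H}_G=\bigoplus_{y\in\mathcal{V}}\mathcal{H}_{F,y}$ with $\mathcal{H}_{F,y}:=\overline{\operatorname{span}}\{\xi_w\colon r(w)\in F,\ s(w)=y\}$; each summand is mapped into itself by $L_FAL_F$ (prepending a path of $\mathbb{F}_{G_F}^+$ does not change the source), so $L_FAL_F$ is block diagonal for this decomposition and $\|L_FAL_F\|=\sup_y\|(L_FAL_F)|_{\mathcal{H}_{F,y}}\|$. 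Every $w$ with $s(w)=y$, $r(w)\in F$ factors uniquely as $w=p\,u_0$, where $u_0$ runs from $y$ up to its first visit to $F$ and $p\in\mathbb{F}_{G_F}^+$ with $s(p)=r(u_0)\in F$. Since $L_FAL_F$ acts only on the $p$-part, this exhibits $(L_FAL_F)|_{\mathcal{H}_{F,y}}$ as a direct sum of copies of the restrictions $(L_FAL_F)|_{\mathcal{K}_z}$, $z\in F$, where $\mathcal{K}_z:=\overline{\operatorname{span}}\{\xi_p\colon p\in\mathbb{F}_{G_F}^+,\ s(p)=z\}$. As $\mathcal{K}=\bigoplus_{z\in F}\mathcal{K}_z$ (the case $y=z\in F$ giving $u_0$ of length zero), I get $\|L_FAL_F\|=\sup_{z\in F}\|(L_FAL_F)|_{\mathcal{K}_z}\|=\|\Phi(L_FAL_F)\|$. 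Repeating the source decomposition with $A$ replaced by a matrix $[A_{ij}]\in M_n(\mathfrak{L}_G)$ upgrades this to complete isometry; in particular $\Phi$ is injective.

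It remains to identify the ranges. For the tensor algebras this is clean: $\Phi$ is norm continuous and isometric, $\mathcal{A}_{G_F}$ is the norm-closure of the polynomials in its generators, and each such polynomial is $\Phi$ of the corresponding compressed polynomial from $\mathcal{A}_G$, so $\Phi(L_F\mathcal{A}_GL_F)=\mathcal{A}_{G_F}$. For the free semigroupoid algebras I would use that $\Phi$ is weak-operator continuous on bounded sets, via $\langle\Phi(L_FAL_F)\xi,\eta\rangle=\langle AU\xi,U\eta\rangle$; since $\mathfrak{L}_{G_F}$ is the weak-operator closure of the polynomials in its generators, this gives $\Phi(L_F\mathfrak{L}_GL_F)\subseteq\mathfrak{L}_{G_F}$ with weak-operator dense range. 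Because $L_F=\sum_{v\in F}L_v\in\mathfrak{L}_G$ is a projection, the corner $L_F\mathfrak{L}_GL_F$ is weak-operator closed, and $\Phi$ is a bounded-weak-operator homeomorphism onto its range (both $\Phi$ and $\Phi^{-1}$ being continuous for the matrix coefficients by the formula above and the block structure found in the previous paragraph); together with \Cref{prop compact}, a Krein--\v Smulian argument shows the range is weak-operator closed, hence equal to $\mathfrak{L}_{G_F}$, consistently with the commutation theorem $\mathfrak{L}_{G_F}=\mathfrak{R}_{G_F}'$ of \cite[Theorem 4.2]{KP04}. Thus $\Phi$ is the desired completely isometric isomorphism in both cases.
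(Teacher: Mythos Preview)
Your proof is correct and follows essentially the same approach as the paper: both decompose $L_F\mathcal{H}_G$ by factoring every path $w$ with $r(w)\in F$ uniquely as $w=p\,u_0$ with $p\in\mathbb{F}_{G_F}^+$ and $u_0$ the tail from $s(w)$ to its first visit to $F$, and read off the block-diagonal structure (the paper packages this via $L_F\mathcal{H}_G=\bigoplus_{u_0\in\Lambda_F}R_{u_0}\mathcal{H}_{G_F}$ and the formula $A=\bigoplus R_{u_0}(A|_{\mathcal{H}_{G_F}})R_{u_0}^*$, while you organize the same decomposition source-vertex-first). Your treatment of surjectivity via Krein--\v Smulian is more explicit than the paper's, which leaves it implicit in the block formula.
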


\begin{proof}
For any $v\in\mathcal{V}$, let $\Lambda_{F,v}$ be the set of all paths $p\in\mathbb{F}_G^+$ such that $r(p)\in F$, $s(p)=v$, and $p$ does not contain any minimal path at $F$.
More precisely, $\Lambda_{F,v}=\{v\}$ for $v\in F$ and
\begin{equation*}
  \Lambda_{F,v}=\{p=e_1e_2\cdots e_n\in\mathbb{F}_{G}^+\colon r(e_1)\in F,s(e_n)=v,r(e_j)\notin F, 2\leqslant j\leqslant n\}
\end{equation*}
for $v\in\mathcal{V}\backslash F$.
Let $\Lambda_F=\bigcup_{v\in\mathcal{V}}\Lambda_{F,v}$ and $\Omega_F=\{w\in\mathbb{F}_{G}^+\colon r(w)\in F\}$.
Combining with \eqref{equ reduced-by-F}, we have
\begin{align*}
    \Omega_F=\bigsqcup_{p\in\Lambda_F}\mathbb{F}_{G_F}^+p,\quad
    L_F\mathcal{H}_G=\bigoplus_{p\in\Lambda_F}R_p\mathcal{H}_{G_F}.
\end{align*}
Thus, every vector $x\in L_F\mathcal{H}_G$ is of the form $\sum_{p\in\Lambda_F}R_px_p$, where $x_p\in\mathcal{H}_{G_F}$.
Note that $x_p=R_p^*x$.
Therefore, for any $A\in L_F\mathfrak{L}_GL_F$, we have
\begin{equation*}
  A=\bigoplus_{p\in\Lambda_F}R_p(A|_{\mathcal{H}_{G_F}})R_p^*.
\end{equation*}
It follows that the map $A\mapsto A|_{\mathcal{H}_{G_F}}$ from $L_F\mathfrak{L}_GL_F$ onto $\mathfrak{L}_{G_F}$ is a complete isometric isomorphism.
\end{proof}
We refer to e.g. \cite[Chapter 6]{Dou98} for the definitions and properties of the Hardy spaces.
\begin{corollary}\label{cor relative-commutant}
Suppose $G$ is a directed graph.
Let $w$ be a circle at $v\in\mathcal{V}$.
Then there exists a largest integer $m\geqslant 1$ and a circle $w_1$ at $v$ such that $w=w_1^m$.
Moreover, the relative commutant $\{L_w\}'\cap L_v\mathfrak{L}_GL_v$ is the weak-operator closed algebra generated by $\{L_v,L_{w_1}\}$, which is completely isometrically isomorphic to $H^\infty(\mathbb{D})$ acting on $H^2(\mathbb{D})$.
\end{corollary}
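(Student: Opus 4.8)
The plan is to reduce everything to the single-vertex case and then run a Fourier-coefficient argument. By \Cref{prop reduced} applied to $F=\{v\}$, the corner $L_v\mathfrak{L}_GL_v$ is completely isometrically isomorphic to $\mathfrak{L}_{G_v}$ acting on $\mathcal{H}_{G_v}$, where $G_v$ is the single-vertex graph whose edges are the minimal circles $\mathcal{E}_v$ at $v$; by \eqref{equ reduced-by-v} the circle $w$ corresponds to a word in the free monoid $\mathbb{F}_{G_v}^+$. Hence it suffices to prove the statement inside the free semigroup algebra $\mathfrak{L}_{G_v}$, with $w$ a word and $L_v$ the identity. For the first assertion, if $w=u^k$ for a circle $u$ then $\ell(w)=k\,\ell(u)\geqslant k$, so the set of such $k$ is bounded by $\ell(w)$ and contains $1$; letting $m$ be its largest element and $w_1$ a corresponding circle, maximality of $m$ forces $w_1$ to be primitive, and by the classical uniqueness of the primitive root in a free monoid $w_1$ is the unique primitive word with $w=w_1^m$.

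For the easy inclusion, $L_{w_1}$ commutes with $L_w=L_{w_1}^m$, so the WOT-closed algebra $\mathfrak{A}$ generated by $\{L_v,L_{w_1}\}$ lies inside $\{L_w\}'\cap L_v\mathfrak{L}_GL_v$. Since $L_{w_1}=L_u$ for a nonempty word $u$, it is an isometry on $\mathcal{H}_{G_v}$ with $\bigcap_{k\geqslant 1}\operatorname{ran}L_{w_1}^k=\{0\}$, hence a pure isometry; by the Wold decomposition $L_{w_1}\cong S\otimes I_\mu$ for the unilateral shift $S$ on $H^2(\mathbb{D})$ and some multiplicity $\mu$. Consequently $\phi\mapsto\phi(L_{w_1})$ identifies $\mathfrak{A}$ with $H^\infty(\mathbb{D})\otimes I_\mu$, which is completely isometrically isomorphic to $H^\infty(\mathbb{D})$ acting on $H^2(\mathbb{D})$; this yields the claimed identification of $\mathfrak{A}$.

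The main work is the reverse inclusion. Let $A\in\{L_w\}'\cap\mathfrak{L}_{G_v}$ have Fourier expansion $A=\sum_{u}a_uL_u$ over $u\in\mathbb{F}_{G_v}^+$. Since $A$ commutes with every $L_{w^k}=L_w^k$, comparing the coefficient of $L_z$ in $AL_{w^k}$ and in $L_{w^k}A$, that is, computing $\langle A\xi_{w^k},\xi_z\rangle$ against $\langle L_{w^k}A\xi_v,\xi_z\rangle$, gives for each word $z$ the identity $\sum_{u:\,uw^k=z}a_u=\sum_{u:\,w^ku=z}a_u$. Fix $p_1$ with $a_{p_1}\neq 0$ and take $z=p_1w^k$: the left side is exactly $a_{p_1}$ (by right cancellation), so the right side is nonzero and $p_1w^k$ must start with $w^k$, say $p_1w^k=w^kp_2$ with $\ell(p_2)=\ell(p_1)$. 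Choosing $k\geqslant\ell(p_1)+1\geqslant\ell(p_2)+1$ and invoking \Cref{lem 2.1}, we obtain $p_1w=wp_1$ and $p_1=p_2$. As commuting words in a free monoid are powers of a common primitive word, and $w_1$ is the primitive root of $w$, this forces $p_1=w_1^j$ for some $j\geqslant 0$. Thus the Fourier support of $A$ lies in $\{w_1^j:j\geqslant 0\}$, so $A=\sum_j a_{w_1^j}L_{w_1}^j$ belongs to $\mathfrak{A}$ (its symbol lies in $H^\infty(\mathbb{D})$ and $A$ is the WOT-limit of the Cesàro means of its polynomials in $L_{w_1}$). Combining the two inclusions completes the proof.

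I expect the principal obstacle to be this reverse inclusion: one must set up the coefficient bookkeeping so that commutation with the high powers $L_{w^k}$ feeds exactly the hypothesis $k\geqslant\ell(p_2)+1$ of \Cref{lem 2.1}, and then pass from the word-level conclusion $p_1w=wp_1$ to membership in the analytic algebra $\mathfrak{A}$, which requires the free-monoid commutation theorem together with a WOT-approximation argument to see that an operator with Fourier support on $\{w_1^j\}$ is genuinely generated by $L_{w_1}$.
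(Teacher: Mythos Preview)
Your proposal is correct and follows essentially the same route as the paper: reduce to the single-vertex free semigroup algebra via \Cref{prop reduced}, show that any $A$ commuting with $L_w$ has Fourier support in $\{w_1^j\}$, and identify the resulting algebra with $H^\infty(\mathbb{D})$. The paper's proof is terser---it simply asserts the Fourier-support claim without spelling out the use of \Cref{lem 2.1} and the free-monoid commutation theorem that you make explicit---and for the $H^\infty$ identification the paper restricts to the cyclic subspace $\overline{\mathrm{span}}\{\xi_{w_1^j}\}$ in the style of \Cref{prop reduced}, whereas you invoke the Wold decomposition of the pure isometry $L_{w_1}$; these are equivalent ways to see the same thing.
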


\begin{proof}
By \eqref{equ reduced-by-v} and \Cref{prop reduced}, $L_{v_1}\mathfrak{L}_GL_{v_1}$ is completely isometrically isomorphic to the free semigroup algebra $\mathfrak{L}_{\mathbb{F}_{G_v}^+}$.
Therefore, we only need to focus on free semigroup algebras.

Without loss of generality, we assume that $\mathcal{V}=\{v\}$ and $G=G_v$.
In this case, $\mathbb{F}_G^+$ is a free semigroup.
Clearly, there exists a largest integer $m\geqslant 1$ and a circle $w_1\in\mathbb{F}_G^+$ at $v$ such that $w=w_1^m$.
Moreover, every operator $A\in\{L_w\}'\cap\mathfrak{L}_{\mathbb{F}_G^+}$ has a Fourier expansion of the form $\sum_{j=0}^{\infty}a_jL_{w_1^j}$.
Similar to the proof of \Cref{prop reduced}, $\{L_w\}'\cap\mathfrak{L}_{\mathbb{F}_G^+}$ on $\ell^2(\mathbb{F}_G^+)$ is completely isometrically isomorphic to the weak-operator closed algebra generated by $\{L_v,L_{w_1}\}$ on the Hilbert subspace with an orthonormal basis $\{\xi_{w_1^j}\colon j\geqslant 0\}$.
This completes the proof.
\end{proof}

\begin{remark}
For each $k\geqslant 1$, we have
\begin{equation*}
  \{L_{w^k}\}'\cap L_v\mathfrak{L}_GL_v=\{L_w\}'\cap L_v\mathfrak{L}_GL_v
  \cong H^\infty(\mathbb{D}).
\end{equation*}
\end{remark}

\subsection{Conditional expectation}
Let $\mathcal{B}$ be a Banach algebra and $\mathcal{A}$ a Banach subalgebra of $\mathcal{B}$.
A linear map $E\colon\mathcal{B}\to\mathcal{A}$ is called a {\sl conditional expectation} from $\mathcal{B}$ onto $\mathcal{A}$ if the following conditions hold:
\begin{enumerate}[(i)]
\item $E$ is an idempotent, i.e., $E^2=E$;
\item $E$ is a contraction, i.e., $\|E\|\leqslant 1$;
\item $E$ is an $\mathcal{A}$-bimodule map, i.e., $E(A_1BA_2)=A_1E(B)A_2$ for all $A_1,A_2\in\mathcal{A}$ and $B\in\mathcal{B}$.
\end{enumerate}

\begin{proposition}\label{prop E}
Suppose $G$ is a directed graph.
Let $w$ be a circle at $v\in\mathcal{V}$.
Define a map from $\mathfrak{L}_G$ into $\mathfrak{L}_G$ by
\begin{equation*}
  E_w(A)=\text{\scriptsize {\rm WOT-}}\lim_{k\to \infty}L_w^{*k}AL_w^k.
\end{equation*}
Then $E_w$ is a conditional expectation from $\mathfrak{L}_G$ onto $\{L_w\}'\cap L_v\mathfrak{L}_GL_v$.
\end{proposition}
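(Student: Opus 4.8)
The plan is to identify $E_w$ explicitly on Fourier expansions and then read off the three defining properties of a conditional expectation. Write $w=w_1^m$ as in \Cref{cor relative-commutant}. Since $w$ is a circle at $v$, the operator $L_w$ is a partial isometry with $L_w^*L_w=L_v$, so each $L_w^{*k}AL_w^k$ is supported and ranged in $L_v\mathcal{H}_G$ and satisfies $\|L_w^{*k}AL_w^k\|\leqslant\|A\|$. By uniform boundedness, the weak-operator limit exists as soon as every matrix coefficient $\langle L_w^{*k}AL_w^k\xi_p,\xi_q\rangle$ stabilizes, and since the norm is weak-operator lower semicontinuous the resulting limit automatically satisfies $\|E_w(A)\|\leqslant\|A\|$, which gives contractivity.

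Writing the Fourier expansion $A=\sum_{u}a_uL_u$ and using $L_w^k\xi_p=\xi_{w^kp}$ for $r(p)=v$, one computes $\langle L_w^{*k}AL_w^k\xi_p,\xi_q\rangle=\sum_{u:\,uw^kp=w^kq}a_u$ when $r(p)=r(q)=v$ (and $0$ otherwise). The core step is to analyze the path equation $uw^kp=w^kq$. Comparing ranges and sources forces $u$ to be a circle at $v$ with $\ell(u)=\ell(q)-\ell(p)$, and unique factorization in $\mathbb{F}_G^+$ forces $u$ to be the length-$\ell(u)$ prefix of the periodic word $w^\infty$; matching the remaining edges past position $\ell(u)$ then forces this prefix to be period-aligned with $w_1$, i.e. $u=w_1^j$ and $q=w_1^jp$ for a unique $j\geqslant 0$. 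This is precisely the kind of cancellation packaged by \Cref{lem 2.1}. Consequently the coefficient is eventually constant, equal to $a_{w_1^j}$ if $q=w_1^jp$ for some $j\geqslant 0$ and to $0$ otherwise, so that $E_w(A)=\sum_{j\geqslant 0}a_{w_1^j}L_{w_1^j}$; by \Cref{cor relative-commutant} this bounded operator, whose Fourier expansion is supported on powers of $w_1$, lies in $\{L_w\}'\cap L_v\mathfrak{L}_GL_v$, which proves the range condition.

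It remains to check idempotence together with surjectivity, and the bimodule property. If $B\in\{L_w\}'\cap L_v\mathfrak{L}_GL_v$, then $L_vB=B$ and $BL_w=L_wB$, whence $L_w^*BL_w=L_w^*L_wB=L_vB=B$ and therefore $L_w^{*k}BL_w^k=B$ for every $k$, so $E_w(B)=B$. Combined with the range computation this yields $E_w^2=E_w$ and surjectivity onto $\{L_w\}'\cap L_v\mathfrak{L}_GL_v$. For the bimodule property with $A_1,A_2\in\{L_w\}'\cap L_v\mathfrak{L}_GL_v$, the identity $E_w(BA_2)=E_w(B)A_2$ is immediate: $A_2$ commutes with $L_w^k$, so $L_w^{*k}BA_2L_w^k=(L_w^{*k}BL_w^k)A_2$, and right multiplication by the fixed operator $A_2$ is weak-operator continuous. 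The identity $E_w(A_1B)=A_1E_w(B)$ I verify directly from the Fourier formula: writing $A_1=\sum_i c_iL_{w_1^i}$ and $B=\sum_u b_uL_u$, both sides have $L_{w_1^j}$-coefficient $\sum_{i=0}^j c_i\,b_{w_1^{j-i}}$.

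The main obstacle is the combinatorial heart of the second paragraph: proving that the coefficient sum $\sum_{u:\,uw^kp=w^kq}a_u$ stabilizes and identifying the surviving index with a single power $w_1^j$. This rests on the choice of $w_1$ as the primitive root of $w$ together with unique factorization of paths, and is where \Cref{lem 2.1} does the real work. A naive prefix argument is insufficient, since $w^\infty$ may have circle-prefixes that are not powers of $w_1$; it is only the full equation, matching the edges past position $\ell(u)$, that forces period-alignment and hence $u=w_1^j$. The left bimodule identity is the secondary subtlety, since elements of $\{L_w\}'$ need not commute with $L_w^*$, and this is exactly why I argue it from the explicit Fourier formula rather than by manipulating the defining limit.
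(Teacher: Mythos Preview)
Your proof is correct and follows essentially the same route as the paper: compute the matrix coefficients $\langle L_w^{*k}AL_w^k\xi_p,\xi_q\rangle$, use the combinatorial cancellation of \Cref{lem 2.1} to show they stabilize, and read off the Fourier expansion of $E_w(A)$. The only difference is cosmetic---the paper records the surviving indices as $\{p:pw=wp\}$ while you further identify these with $\{w_1^j:j\geqslant 0\}$ via \Cref{cor relative-commutant}---and you explicitly verify idempotence, contractivity, and the bimodule identities, which the paper leaves implicit.
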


\begin{proof}
Assume that $A\in\mathfrak{L}_G$ has a Fourier expansion $\sum_{p\in\mathbb{F}_G^+}a_pL_p$.
For any elements $p_1,p_2\in\mathbb{F}_G^+$, we have
\begin{equation*}
  \langle L_w^{*k}AL_w^k\xi_{p_1},\xi_{p_2}\rangle
  =\sum_{p\in\mathbb{F}_G^+}a_p\langle\xi_{pw^kp_1},\xi_{w^kp_2}\rangle.
\end{equation*}
By \Cref{lem 2.1}, if the equation $pw^kp_1=w^kp_2$ has a solution $p$ for some large integer $k$, then we must have $pw=wp$ and $pp_1=p_2$.
It follows that
\begin{equation*}
  \lim_{k\to\infty}\langle L_w^{*k}AL_w^k\xi_{p_1},\xi_{p_2}\rangle
  =\sum_{pw=wp}a_p\langle L_p\xi_{p_1},\xi_{p_2}\rangle.
\end{equation*}
Since $\{L_w^{*k}AL_w^k\}_{k=1}^{\infty}$ is a bounded sequence, $E_w(A)$ is well-defined and has a Fourier expansion $\sum_{pw=wp}a_pL_p$.
This completes the proof.
\end{proof}

\begin{remark}
It is clear that $E_{w^k}=E_w$ for all integer $k\geqslant 1$.
\end{remark}

The following corollary will be used in the proof of \Cref{thm weak-Dixmier}.

\begin{corollary}\label{cor sum-U}
Suppose $G$ is a strongly connected directed graph that is not an $n$-circle graph.
Let $\{v_j\}_{j=1}^n$ be $n$ distinct vertices of $G$.
Then there are partial isometries $\{U_{i,k}\colon 1\leqslant i\leqslant n,k\geqslant 1\}$ in $\mathfrak{L}_G$ such that $U_{i,k}^*U_{i,k}=\sum_{j=1}^nL_{v_j}$ and
\begin{align*}
   \text{\scriptsize {\rm WOT-}}\lim_{k\to\infty}\sum_{i=1}^{n} U_{i,k}^*AU_{i,k}
   =\left(\sum_{j=1}^n\lambda_{j}(A)\right)\left(\sum_{j=1}^nL_{v_j}\right)
\end{align*}
for all $A\in\mathfrak{L}_G$, where $\lambda_j(A)$ is the Fourier coefficient of $A$ at $L_{v_j}$.
\end{corollary}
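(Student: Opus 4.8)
Let me understand what Corollary~\ref{cor sum-U} is asking. We have a strongly connected directed graph $G$ that is not an $n$-circle graph, and $n$ distinct vertices $v_1, \ldots, v_n$. We want partial isometries $U_{i,k}$ with $U_{i,k}^* U_{i,k} = \sum_j L_{v_j}$ (so each $U_{i,k}$ is an isometry on the subspace $L_F \mathcal{H}_G$ where $F = \{v_1, \ldots, v_n\}$), and we want

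$$\text{WOT-}\lim_{k\to\infty} \sum_{i=1}^n U_{i,k}^* A U_{i,k} = \Big(\sum_j \lambda_j(A)\Big)\Big(\sum_j L_{v_j}\Big).$$

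The right side is a scalar (the sum of the Fourier coefficients of $A$ at the vertex projections $L_{v_j}$) times the projection $L_F$.

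**Key structural facts to use.**

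The condition "$G$ strongly connected and not an $n$-circle graph" is exactly characterized in the excerpt: either $G$ is a vertex graph, or there exist at least two distinct minimal circles at each vertex. The vertex graph case is trivial (one vertex, no edges, $\mathfrak{L}_G = \mathbb{C}$), so the real content is when each vertex supports at least two distinct minimal circles. This is crucial because it gives me room to construct isometries.

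The conditional expectation $E_w$ from Proposition~\ref{prop E} is the main averaging tool. For a circle $w$ at a vertex $v$, $E_w(A) = \text{WOT-}\lim_k L_w^{*k} A L_w^k$ has Fourier expansion $\sum_{pw=wp} a_p L_p$, collapsing $A$ onto the relative commutant $\{L_w\}' \cap L_v \mathfrak{L}_G L_v \cong H^\infty(\mathbb{D})$.

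**The plan.**

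The idea is to chain two averaging operations. The left-multiplier $L_w^{*k} A L_w^k$ is not quite what we want because it is multiplied on both sides; but note that on the subspace $L_v \mathcal{H}_G$, the operator $L_w^k$ acts as an isometry (since $L_w^* L_w = L_v$ when $w$ is a circle at $v$), and iterating $E_w$ pins $A$ down to $H^\infty(\mathbb{D})$ in the variable $L_{w_1}$.

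The reason we need two distinct minimal circles at each vertex is the following. A single $E_w$ collapses $A$ to the commutant of $L_w$, which is a copy of $H^\infty(\mathbb{D})$ — still infinite-dimensional. To kill everything except the constant (degree-zero) term, i.e. to extract just $\lambda_v(A) L_v$, I need to further average against a \emph{second} circle $w'$ at $v$ that does not commute with $w$. Concretely, if $w, w'$ are two distinct minimal circles at $v$, then applying $E_{w'}$ after $E_w$ should collapse $H^\infty(\mathbb{D})$ in $L_{w_1}$ down to the scalars, because a nonconstant word in $w_1$ cannot simultaneously commute with two independent generators. So the single-vertex, single-$U$ version of the statement is: for each $v$, there exist isometries $V_{i,k}$ (in $\mathfrak{L}_G$, restricted to $L_v\mathcal{H}_G$) with $\text{WOT-}\lim_k \sum_i V_{i,k}^* A V_{i,k} = \lambda_v(A) L_v$.

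**Assembling the multi-vertex version.**

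Given the single-vertex statement, I would handle $n$ vertices as follows. Using strong connectivity, pick paths realizing partial isometries that shuffle the $n$ vertices, combined with the circle-based isometries at each $v_j$, to build $U_{i,k}$ satisfying $U_{i,k}^* U_{i,k} = L_F = \sum_j L_{v_j}$. The sum over $i = 1, \ldots, n$ and the limit in $k$ then averages $A$ so that cross terms (off-diagonal Fourier coefficients) vanish in WOT, leaving $\sum_j \lambda_j(A)$ on the diagonal $L_F$. The precise combinatorial design of the $U_{i,k}$ — there are $n$ of them, matching the $n$ vertices — is where strong connectivity enters: it guarantees paths between any $v_i$ and $v_j$, which I use to "rotate" the vertices so that each of the $n$ isometries $U_{i,k}$ picks out a different diagonal block while averaging that block to its vertex-coefficient.

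**Main obstacle.**

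The hard part will be constructing the $U_{i,k}$ so that simultaneously (a) $U_{i,k}^* U_{i,k} = L_F$ exactly (the isometry condition on the full $n$-dimensional vertex block), and (b) the double limit — sum over $i$ then WOT-limit over $k$ — annihilates every nonconstant Fourier term while correctly accumulating the $n$ vertex coefficients into the single scalar $\sum_j \lambda_j(A)$. I expect the construction to use two minimal circles $w_j, w_j'$ at each $v_j$ plus connecting paths, so that $U_{i,k}$ is something like a sum/product of $L_{w_j}^k$ and path operators $L_{p_{ij}}$ normalized to be a partial isometry with initial space $L_F \mathcal{H}_G$. Verifying condition (a) — that the pieces have orthogonal ranges so the partial isometry relation holds on the nose — and then computing the WOT-limit via Lemma~\ref{lem 2.1} and Proposition~\ref{prop E} to confirm all non-constant and all off-diagonal terms cancel, is the technical crux. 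The vertex-graph degenerate case must be excluded or checked separately, but there $\mathfrak{L}_G = \mathbb{C} L_v$ and the statement is immediate.
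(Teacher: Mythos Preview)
Your plan captures the right ingredients and matches the paper's overall strategy, but two points of execution differ from (and are less sharp than) the paper's proof.

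First, the paper does not iterate two conditional expectations $E_{w'}\circ E_w$ to reach the scalar $\lambda_{v_j}(A)$. Instead it applies a single $E_{c_j}$, which by \Cref{cor relative-commutant} leaves an operator with Fourier expansion $\sum_{m\ge 0}a_m L_{c_j^m}$, and then conjugates \emph{once} by $L_{d_j}$, where $d_j$ is the second minimal circle at $v_j$. Since distinct minimal circles at the same vertex cannot be prefixes of one another, $L_{d_j}^*L_{c_j}=0$, and hence $L_{d_j}^*E_{c_j}(A)L_{d_j}=\lambda_j(A)L_{v_j}$ in one step. This sidesteps the issue, implicit in your sketch, of merging two separate WOT limits into a single sequence $\{U_{i,k}\}_k$.

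Second, you do not address how the cross terms between different vertices are killed. The paper sets $w_j=c_j^{m_j}$ with the lengths $m_1\ell(c_1)<m_2\ell(c_2)<\cdots<m_n\ell(c_n)$ strictly increasing, and defines
\[
U_{i,k}=\sum_{j=1}^n L_{w_j^k\,d_j\,p_{j,\,j+i\,(\mathrm{mod}\,n)}}.
\]
The length disparity forces $L_{w_a^k}^*AL_{w_b^k}\to 0$ in WOT for $a\ne b$, so only the diagonal terms survive; this is an ingredient your outline is missing. With these two refinements in place, the rest of your plan --- using the paths $p_{j,j+i}$ to cyclically rotate the vertex labels so that summing over $i$ distributes each $\lambda_j(A)$ across all of $L_F$ --- is exactly what the paper does.
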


\begin{proof}
If $G$ is a vertex graph, then $\mathfrak{L}_G\cong\mathbb{C}$ and the conclusion is clear.
We assume that $G$ is not a vertex graph.
Let $c_j$ and $d_j$ be two distinct minimal circles at $v_j$ for every $1\leqslant j\leqslant n$, $p_{ij}$ a path from $v_j$ to $v_i$ for every $1\leqslant i,j\leqslant n$, $\{m_j\}_{j=1}^{n}$ a set of positive integers such that
\begin{equation*}
  m_1\ell(c_1)<m_2\ell(c_2)<\cdots<m_n\ell(c_n),
\end{equation*}
and $w_j=c_j^{m_j}$ for every $1\leqslant j\leqslant n$.
For every $1\leqslant i\leqslant n$ and $k\geqslant 1$, we define
\begin{align*}
   U_{i,k}=\sum_{j=1}^nL_{w_j^kd_jp_{j,j+i\,(\mathrm{mod}\,n)}}.
\end{align*}
Let $A\in\mathfrak{L}_G$.
Then for any $1\leqslant i\ne j\leqslant n$, we have
\begin{equation*}
  \text{\scriptsize {\rm WOT-}}\lim_{k\to\infty}
  L_{w_i^k}^*AL_{w_j^k}=0.
\end{equation*}
Note that $L_{d_j}^*L_{c_j}=0$ for each $1\leqslant j\leqslant n$.
Combining with \Cref{prop E}, we have
\begin{align*}
   \text{\scriptsize {\rm WOT-}}\lim_{k\to\infty}U_{i,k}^*AU_{i,k}
   &=\sum_{j=1}^{n}L_{d_jp_{j,j+i\,(\mathrm{mod}\,n)}}^*E_{c_j}(A)
   L_{d_jp_{j,j+i\,(\mathrm{mod}\,n)}}\\
   &=\sum_{j=1}^n\lambda_j(A)L_{v_{j+i\,(\mathrm{mod}\,n)}}.
\end{align*}
It is clear that
\begin{align*}
   \sum_{i=1}^n\sum_{j=1}^n\lambda_j(A)L_{v_{j+i\,(\mathrm{mod}\,n)}}
   =\sum_{j=1}^n\lambda_j(A)\sum_{i=1}^nL_{v_{j+i\,(\mathrm{mod}\,n)}}
   =\sum_{j=1}^n\lambda_j(A)\sum_{i=1}^nL_{v_i}.
\end{align*}
This completes the proof.
\end{proof}

\section{Auxiliary results on derivations}\label{sec derivation}
In this section, we prove some preliminary results on derivations.
Let $\mathcal{A}$ be a Banach algebra and $\mathcal{X}$ a Banach $\mathcal{A}$-bimodule.
A linear map $\delta\colon\mathcal{A}\to\mathcal{X}$ is called a {\sl derivation} if $\delta(AB)=\delta(A)B+A\delta(B)$ for all $A,B\in\mathcal{A}$.
Every element $T\in\mathcal{X}$ induces a derivation $\delta_T$, which is defined by $\delta_T(A)=AT-TA$.
It is clear that $\delta_T$ is bounded with $\|\delta_T\|\leqslant 2\|T\|$.
Such derivations are called {\sl inner derivations}.
Let $\mathrm{Der}(\mathcal{A},\mathcal{X})$ and $\mathrm{Inn}(\mathcal{A},\mathcal{X})$ be the linear spaces of all bounded derivations and inner derivations from $\mathcal{A}$ into $\mathcal{X}$, respectively.
The {\sl first cohomology group} $H^1(\mathcal{A},\mathcal{X})$ is defined to be the quotient space $\mathrm{Der}(\mathcal{A},\mathcal{X})/\mathrm{Inn}(\mathcal{A},\mathcal{X})$.

\subsection{Decomposition of derivations}
Let $\{\mathcal{A}_\lambda\}_{\lambda\in\Lambda}$ be a family of Banach algebras and $\{\mathcal{X}_\lambda\}_{\lambda\in\Lambda}$ a family of Banach spaces such that $\mathcal{X}_\lambda$ is a Banach $\mathcal{A}_\lambda$-bimodule for each $\lambda\in\Lambda$.
We define a Banach algebra by
\begin{equation*}
  \ell^\infty(\{\mathcal{A}_\lambda\}_{\lambda\in\Lambda})
  =\left\{(A_\lambda)_{\lambda\in\Lambda}\colon A_\lambda\in\mathcal{A}_\lambda, \sup_{\lambda\in\Lambda}\|A_\lambda\|<\infty\right\}.
\end{equation*}
Let $c_0(\{\mathcal{A}_\lambda\}_{\lambda\in\Lambda})$ be the norm-closure of the set of all elements with finitely many nonzero terms in $\ell^\infty(\{\mathcal{A}_\lambda\}_{\lambda\in\Lambda})$.
We can similarly define the Banach space $\ell^\infty(\{\mathcal{X}_\lambda\}_{\lambda\in\Lambda})$.
Then $\ell^\infty(\{\mathcal{X}_\lambda\}_{\lambda\in\Lambda})$ is a Banach bimodule over $\ell^\infty(\{\mathcal{A}_\lambda\}_{\lambda\in\Lambda})$ in a natural way, i.e.,
\begin{equation*}
  (A_\lambda)_{\lambda\in\Lambda}(X_\lambda)_{\lambda\in\Lambda}
  (B_\lambda)_{\lambda\in\Lambda}
  =(A_\lambda X_\lambda B_\lambda)_{\lambda\in\Lambda}
\end{equation*}
for all $(A_\lambda)_{\lambda\in\Lambda},(B_\lambda)_{\lambda\in\Lambda}
\in\ell^\infty(\{\mathcal{A}_\lambda\}_{\lambda\in\Lambda})$, and $(X_\lambda)_{\lambda\in\Lambda}
\in\ell^\infty(\{\mathcal{X}_\lambda\}_{\lambda\in\Lambda})$.
Clearly, $\ell^\infty(\{\mathcal{X}_\lambda\}_{\lambda\in\Lambda})$ is also a Banach bimodule over $c_0(\{\mathcal{A}_\lambda\}_{\lambda\in\Lambda})$.

Let $\mathcal{A}=c_0(\{A_\lambda\}_{\lambda\in\Lambda})$ or $\mathcal{A}=\ell^\infty(\{\mathcal{A}_\lambda\}_{\lambda\in\Lambda})$.
Suppose that $\{\delta_\lambda\}_{\lambda\in\Lambda}$ is a family of bounded linear maps $\delta_\lambda\colon\mathcal{A}_\lambda\to\mathcal{X}_\lambda$ such that $\sup_{\lambda\in\Lambda}\|\delta_\lambda\|<\infty$.
Then their product is defined by
\begin{equation*}
  \prod\delta_\lambda\colon\mathcal{A}
  \to\ell^\infty(\{X_\lambda\}_{\lambda\in\Lambda}),\quad
  (A_\lambda)_{\lambda\in\Lambda}
  \mapsto(\delta_\lambda(A_\lambda))_{\lambda\in\Lambda}.
\end{equation*}
The following proposition shows that every derivation arises in the above way.

\begin{proposition}\label{prop derivation-decomposition}
Let $\{\mathcal{A}_\lambda\}_{\lambda\in\Lambda}$ be a family of Banach algebras and $\{\mathcal{X}_\lambda\}_{\lambda\in\Lambda}$ a family of Banach spaces such that $\mathcal{X}_\lambda$ is a Banach $\mathcal{A}_\lambda$-bimodule for each $\lambda\in\Lambda$.
Suppose that for any $X_\lambda\in\mathcal{X}_\lambda$, $\mathcal{A}_\lambda X_\lambda=\{0\}$ implies that $X_\lambda=0$.
Let $\mathcal{A}=c_0(\{A_\lambda\}_{\lambda\in\Lambda})$ or $\mathcal{A}=\ell^\infty(\{\mathcal{A}_\lambda\}_{\lambda\in\Lambda})$.

Then every derivation $\delta$ from $\mathcal{A}$ into $\ell^\infty(\{\mathcal{X}_\lambda\}_{\lambda\in\Lambda})$ is of the form $\prod\delta_\lambda$, where each $\delta_\lambda$ is a derivation from $\mathcal{A}_\lambda$ into $\mathcal{X}_\lambda$.
In particular, if $\delta$ is bounded then we have $\|\delta\|=\sup_{\lambda\in\Lambda}\|\delta_\lambda\|<\infty$.
\end{proposition}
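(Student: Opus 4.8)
The plan is to exploit the coordinate structure of $\mathcal{A}$ and $\ell^\infty(\{\mathcal{X}_\lambda\}_{\lambda\in\Lambda})$ and reduce everything to a purely algebraic computation, so that no continuity of $\delta$ is needed until the final norm identity. For each $\lambda$ let $\iota_\lambda\colon\mathcal{A}_\lambda\to\mathcal{A}$ be the isometric embedding that places an element in the $\lambda$-th coordinate and $0$ elsewhere (its image lies in $\mathcal{A}$ in both cases $\mathcal{A}=c_0(\{\mathcal{A}_\lambda\})$ and $\mathcal{A}=\ell^\infty(\{\mathcal{A}_\lambda\})$), and let $\pi_\mu\colon\ell^\infty(\{\mathcal{X}_\lambda\})\to\mathcal{X}_\mu$ be the contractive coordinate projection. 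Define $\delta_\lambda:=\pi_\lambda\circ\delta\circ\iota_\lambda\colon\mathcal{A}_\lambda\to\mathcal{X}_\lambda$. The goal is to prove that each $\delta_\lambda$ is a derivation and that $\pi_\mu(\delta(A))=\delta_\mu(A_\mu)$ for every $A=(A_\lambda)_\lambda\in\mathcal{A}$ and every $\mu$; the identity $\delta=\prod\delta_\lambda$ then follows at once. Throughout I will use that both the multiplication on $\mathcal{A}$ and the bimodule action on $\ell^\infty(\{\mathcal{X}_\lambda\})$ are computed coordinatewise, so that $\pi_\mu$ is multiplicative on the relevant products.

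The key step, and the only place where the standing hypothesis is used, is the vanishing of the off-diagonal components: for $\lambda\ne\mu$ and any $A_\lambda\in\mathcal{A}_\lambda$ I claim $\pi_\mu(\delta(\iota_\lambda(A_\lambda)))=0$. Here the correct order of multiplication is essential. Since $\iota_\mu(B_\mu)\iota_\lambda(A_\lambda)=0$ for every $B_\mu\in\mathcal{A}_\mu$, applying the derivation identity and projecting to the $\mu$-th coordinate yields $0=B_\mu\,\pi_\mu(\delta(\iota_\lambda(A_\lambda)))$, because the right factor $\iota_\lambda(A_\lambda)$ has zero $\mu$-th coordinate and so kills the first summand. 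As $B_\mu$ ranges over $\mathcal{A}_\mu$, this says $\mathcal{A}_\mu\,\pi_\mu(\delta(\iota_\lambda(A_\lambda)))=\{0\}$, whence the hypothesis (``$\mathcal{A}_\mu X_\mu=\{0\}$ forces $X_\mu=0$'') gives the claim. I expect this to be the main obstacle precisely because one must multiply by $\iota_\mu(B_\mu)$ on the \emph{left}: the faithfulness assumption concerns the left action, and multiplying on the wrong side would require a nondegeneracy of the right action that is not available. Consequently $\delta(\iota_\lambda(A_\lambda))=\iota_\lambda(\delta_\lambda(A_\lambda))$ is supported in the single coordinate $\lambda$.

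With the off-diagonal vanishing in hand, the remaining steps are routine. Applying $\delta$ to $\iota_\lambda(A_\lambda B_\lambda)=\iota_\lambda(A_\lambda)\iota_\lambda(B_\lambda)$ and projecting to coordinate $\lambda$ shows that each $\delta_\lambda$ obeys the Leibniz rule, hence is a derivation. For general $A=(A_\lambda)_\lambda$ and fixed $\mu$, I then use $\iota_\mu(B_\mu)A=\iota_\mu(B_\mu A_\mu)$: applying $\delta$, projecting to coordinate $\mu$, and subtracting the Leibniz expansion of $\delta_\mu(B_\mu A_\mu)$ leaves $B_\mu\big(\pi_\mu(\delta(A))-\delta_\mu(A_\mu)\big)=0$ for all $B_\mu\in\mathcal{A}_\mu$; the hypothesis once more forces $\pi_\mu(\delta(A))=\delta_\mu(A_\mu)$, so $\delta=\prod\delta_\lambda$. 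Finally, when $\delta$ is bounded, $\|\delta_\lambda\|\le\|\pi_\lambda\|\,\|\delta\|\,\|\iota_\lambda\|\le\|\delta\|$ gives $\sup_\lambda\|\delta_\lambda\|\le\|\delta\|$, while the coordinatewise supremum norm on $\ell^\infty(\{\mathcal{X}_\lambda\})$ yields $\|\delta(A)\|=\sup_\lambda\|\delta_\lambda(A_\lambda)\|\le(\sup_\lambda\|\delta_\lambda\|)\|A\|$, hence the reverse inequality and the asserted equality $\|\delta\|=\sup_\lambda\|\delta_\lambda\|<\infty$.
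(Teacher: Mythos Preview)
Your proof is correct and follows essentially the same approach as the paper's: both establish the off-diagonal vanishing by multiplying on the left by an element supported in a single coordinate and invoking the left-nondegeneracy hypothesis, then repeat the trick with $\iota_\mu(B_\mu)A=\iota_\mu(B_\mu A_\mu)$ (the paper writes this as $\delta(B(A-A_\mu))=0$) to identify the $\mu$-th coordinate of $\delta(A)$. The only difference is cosmetic---you introduce explicit maps $\iota_\lambda,\pi_\mu$ where the paper simply identifies $\mathcal{A}_\lambda$ with its image in $\mathcal{A}$.
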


\begin{proof}
For simplicity, each $\mathcal{A}_\lambda$ will be identified with a Banach subalgebra of $\mathcal{A}$.
For any $A\in\mathcal{A}_\lambda$ and $B\in\mathcal{A}_\mu$ with $\lambda\ne\mu$, we have
\begin{equation*}
  0=\delta(AB)=\delta(A)B+A\delta(B).
\end{equation*}
Note that $A\delta(B)\in\mathcal{X}_\lambda$ and $\delta(A)B\in\mathcal{X}_\mu$.
Hence $A\delta(B)=0$.
By assumption, the $\lambda$-coordinate of $\delta(B)$ is zero for each $\lambda\ne\mu$, i.e., $\delta(B)\in\mathcal{X}_\mu$.
Therefore, we can define a derivation by
\begin{equation*}
  \delta_\mu\colon\mathcal{A}_\mu\to\mathcal{X}_\mu,\quad
  \delta_\mu(B)=\delta(B).
\end{equation*}
Let $A=(A_\lambda)_{\lambda\in\Lambda}\in\mathcal{A}$ and $B\in\mathcal{A}_\mu$.
Since the $\mu$-coordinate of $A-A_\mu$ is zero, we have
\begin{equation*}
  0=\delta(B(A-A_\mu))=\delta_\mu(B)(A-A_\mu)+B\delta(A-A_\mu)=B\delta(A-A_\mu).
\end{equation*}
By assumption, the $\mu$-coordinate of $\delta(A-A_\mu)$ is zero, i.e., the $\mu$-coordinate of $\delta(A)$ is $\delta_\mu(A_\mu)$ for each $\mu\in\Lambda$.
Therefore, we have
\begin{equation*}
  \delta((A_\lambda)_{\lambda\in\Lambda})
  =(\delta_\lambda(A_\lambda))_{\lambda\in\Lambda}.
\end{equation*}
If $\delta$ is bounded, then one can obtain that $\|\delta\|=\sup_{\lambda\in\Lambda}\|\delta_\lambda\|$.
This completes the proof.
\end{proof}

\begin{remark}\label{rem derivation-decomposition}
Let $G$ be a directed graph with connected components $\{G_\lambda\}_{\lambda\in\Lambda}$.
Then
\begin{equation*}
  \mathcal{A}_G\cong c_0(\{\mathcal{A}_{G_\lambda}\}_{\lambda\in\Lambda}),\quad \mathfrak{L}_G\cong\ell^\infty(\{\mathfrak{L}_{G_\lambda}\}_{\lambda\in\Lambda}).
\end{equation*}
By \Cref{prop derivation-decomposition}, each bounded derivation $\delta$ from $\mathcal{A}_G$ into $\mathfrak{L}_G$ is of the form $\prod\delta_\lambda$ with $\|\delta\|=\sup_{\lambda\in\Lambda}\|\delta_\lambda\|$.
\end{remark}

\subsection{Locally inner derivations}
Let $\mathcal{A}$ be a Banach algebra, $\mathcal{X}$ a Banach $\mathcal{A}$-bimodule, and $\mathscr{A}$ a subset of $\mathcal{A}$.
A derivation $\delta\colon\mathcal{A}\to\mathcal{X}$ is called {\sl locally inner} with respect to $\mathscr{A}$ if there exists $T\in\mathcal{X}$ such that $\delta(A)=\delta_T(A)$ for all $A\in\mathscr{A}$.
It is clear that $\delta$ is inner if and only if $\delta$ is locally inner with respect to $\mathcal{A}$.
The following lemma will be used to study derivations from $\mathcal{A}_G$ into $\mathfrak{L}_G$ when $G$ is a finite directed graph.

\begin{lemma}\label{lem idempotent}
Let $\mathscr{A}$ be a finite set of idempotents $\{P_1,P_2,\ldots,P_n\}$ in $\mathcal{A}$ such that $\sum_{j=1}^nP_j=I$ and $P_iP_j=0$ for all $i\ne j$.
Then every derivation $\delta\colon\mathcal{A}\to\mathcal{X}$ is locally inner with respect to $\mathscr{A}$.
\end{lemma}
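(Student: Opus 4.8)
The plan is to construct the implementing operator $T$ explicitly from the derivation's values on the idempotents, using the orthogonality relations $\sum_j P_j = I$ and $P_iP_j = 0$ ($i\ne j$) to force the cross-terms to cancel. The natural candidate is
\[
  T = \sum_{i=1}^n \delta(P_i)\,P_i,
\]
and the goal is to verify that $\delta_T(P_j) = P_jT - TP_j = \delta(P_j)$ for every $j$, which is exactly the statement that $\delta$ is locally inner with respect to $\mathscr{A}$.

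\emph{The key steps, in order.} First I would record the two basic consequences of $P_j$ being an idempotent: differentiating $P_j^2 = P_j$ gives $\delta(P_j) = \delta(P_j)P_j + P_j\delta(P_j)$, and differentiating $P_iP_j = 0$ for $i\ne j$ gives $\delta(P_i)P_j = -P_i\delta(P_j)$. Second, using $\sum_j P_j = I$ one checks that $\delta(I) = 0$ (from $\delta(I) = \delta(I\cdot I) = \delta(I)I + I\delta(I) = 2\delta(I)$), which together with linearity yields $\sum_i \delta(P_i) = 0$. Third, I would compute $P_jT$ and $TP_j$ term by term. For $P_jT = \sum_i P_j\delta(P_i)P_i$, I split off the $i=j$ term and rewrite the $i\ne j$ terms via $P_j\delta(P_i) = -\delta(P_j)P_i$ (the $i\ne j$ relation with roles swapped); summing the resulting pieces and feeding in $\sum_i P_i = I$ should collapse $P_jT$ to a clean expression. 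A parallel computation handles $TP_j = \sum_i \delta(P_i)P_iP_j = \delta(P_j)P_j$, since $P_iP_j = 0$ kills all but $i=j$. Finally I would subtract the two and invoke the idempotent relation $\delta(P_j) = \delta(P_j)P_j + P_j\delta(P_j)$ to identify the difference with $\delta(P_j)$.

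\emph{The main obstacle} is purely bookkeeping: making sure the cross-terms in $P_jT$ recombine correctly. The subtle point is that $TP_j$ is easy (it is just $\delta(P_j)P_j$) because the $P_i$ sit to the \emph{left} of the outer $P_j$ and orthogonality truncates the sum, whereas in $P_jT$ the outer $P_j$ sits on the \emph{left} and does not annihilate $P_j\delta(P_i)$; here one must genuinely use the derivation identity $P_j\delta(P_i)+\delta(P_j)P_i$ coming from $\delta(P_jP_i)=0$. I would therefore expect to spend most of the effort confirming that after substituting this relation and summing over $i$, the term $\sum_i \delta(P_j)P_i = \delta(P_j)\sum_i P_i = \delta(P_j)$ emerges and combines with the diagonal $i=j$ contribution $P_j\delta(P_j)P_j$ to give precisely $\delta(P_j) + (\text{diagonal})$, matching $TP_j + \delta(P_j)$ after the idempotent relation is applied. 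No boundedness or topological input is needed, so the argument is algebraic and works in any Banach $\mathcal{A}$-bimodule $\mathcal{X}$.
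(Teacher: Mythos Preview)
Your plan is essentially the paper's, but your choice of $T$ carries a sign slip. The paper takes $T=\sum_j P_j\,\delta(P_j)$, while you take $T=\sum_i \delta(P_i)\,P_i$; these are negatives of each other, since their sum is $\sum_j\bigl(P_j\delta(P_j)+\delta(P_j)P_j\bigr)=\sum_j\delta(P_j^2)=\delta(I)=0$. With your $T$ the computation you outline gives $TP_j=\delta(P_j)P_j$ (as you say), but for $i\ne j$ the relation $P_j\delta(P_i)=-\delta(P_j)P_i$ yields
\[
P_jT=P_j\delta(P_j)P_j-\sum_{i\ne j}\delta(P_j)P_i
=-\delta(P_j)+\delta(P_j)P_j,
\]
using $P_j\delta(P_j)P_j=0$. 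Hence $\delta_T(P_j)=P_jT-TP_j=-\delta(P_j)$, the wrong sign. The fix is cosmetic: replace $T$ by $-T$, or equivalently use the paper's $T=\sum_j P_j\,\delta(P_j)$; with that adjustment your argument goes through exactly as written.
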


\begin{proof}
Let $T=\sum_{j=1}^nP_j\delta(P_j)$.
A direct calculation shows that
\begin{equation*}
  \delta_T(P_i)=P_i\delta(P_i)-\sum_{j=1}^nP_j\delta(P_j)P_i.
\end{equation*}
Since $P_i$ is an idempotent, we have $\delta(P_i)=\delta(P_i)P_i+P_i\delta(P_i)$.
It follows that $P_i\delta(P_i)P_i=0$.
Moreover, $0=\delta(P_jP_i)=\delta(P_j)P_i+P_j\delta(P_i)$ for all $j\ne i$.
Therefore,
\begin{equation*}
  \delta_T(P_i)=P_i\delta(P_i)+\sum_{j\ne i}^nP_j\delta(P_i)=\delta(P_i).
\end{equation*}
This completes the proof.
\end{proof}

The following result is very useful in computing the first cohomology groups.

\begin{proposition}\label{prop zero-at-A}
Let $\mathcal{A}$ be a Banach algebra, $\mathcal{X}$ a Banach $\mathcal{A}$-bimodule, and $\mathscr{A}$ a subset of $\mathcal{A}$.
Define
\begin{align*}
   \mathrm{Der}_{\mathscr{A}}(\mathcal{A},\mathcal{X}):&=\{\delta\in \mathrm{Der}(\mathcal{A},\mathcal{X})\colon
   \delta(A)=0~\text{for all}~A\in\mathscr{A}\},\\
   \mathrm{Inn}_{\mathscr{A}}(\mathcal{A},\mathcal{X}):&=\{\delta\in \mathrm{Inn}(\mathcal{A},\mathcal{X})\colon
   \delta(A)=0~\text{for all}~A\in\mathscr{A}\}.
\end{align*}
Suppose that every bounded derivation from $\mathcal{A}$ into $\mathcal{X}$ is locally inner with respect to $\mathscr{A}$.
Then $H^1(\mathcal{A},\mathcal{X})
\cong\mathrm{Der}_{\mathscr{A}}(\mathcal{A},\mathcal{X})
/\mathrm{Inn}_{\mathscr{A}}(\mathcal{A},\mathcal{X})$.
\end{proposition}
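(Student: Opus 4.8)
The plan is to realize $H^1(\mathcal{A},\mathcal{X})$ as a homomorphic image of $\mathrm{Der}_{\mathscr{A}}(\mathcal{A},\mathcal{X})$ and then invoke the first isomorphism theorem for vector spaces. Concretely, I would introduce the linear map $\iota$ obtained by composing the inclusion $\mathrm{Der}_{\mathscr{A}}(\mathcal{A},\mathcal{X})\hookrightarrow\mathrm{Der}(\mathcal{A},\mathcal{X})$ with the canonical quotient map $\mathrm{Der}(\mathcal{A},\mathcal{X})\to H^1(\mathcal{A},\mathcal{X})$, so that $\iota(\delta)=[\delta]$ is simply the cohomology class of $\delta$. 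The whole statement then reduces to computing the image and kernel of $\iota$.

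The first step is to prove that $\iota$ is surjective, and this is exactly where the hypothesis is used. Given any bounded derivation $\delta\colon\mathcal{A}\to\mathcal{X}$, local innerness with respect to $\mathscr{A}$ supplies an element $T\in\mathcal{X}$ with $\delta(A)=\delta_T(A)$ for all $A\in\mathscr{A}$. Then $\delta-\delta_T$ is again a derivation and vanishes identically on $\mathscr{A}$, so $\delta-\delta_T\in\mathrm{Der}_{\mathscr{A}}(\mathcal{A},\mathcal{X})$; since $\delta$ and $\delta-\delta_T$ differ by the inner derivation $\delta_T$, they represent the same class, whence $\iota(\delta-\delta_T)=[\delta]$. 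As $[\delta]$ ranges over all of $H^1(\mathcal{A},\mathcal{X})$, this shows $\iota$ is onto.

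The second step is to identify $\ker\iota$. By definition $\delta\in\ker\iota$ means $\delta\in\mathrm{Der}_{\mathscr{A}}(\mathcal{A},\mathcal{X})$ together with $[\delta]=0$ in $H^1$, i.e.\ $\delta$ is inner; hence $\ker\iota=\mathrm{Der}_{\mathscr{A}}(\mathcal{A},\mathcal{X})\cap\mathrm{Inn}(\mathcal{A},\mathcal{X})$. Unwinding the definitions, an inner derivation lies in $\mathrm{Der}_{\mathscr{A}}$ precisely when it vanishes on $\mathscr{A}$, which is exactly the defining condition of $\mathrm{Inn}_{\mathscr{A}}(\mathcal{A},\mathcal{X})$, so this intersection equals $\mathrm{Inn}_{\mathscr{A}}(\mathcal{A},\mathcal{X})$. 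The first isomorphism theorem then yields
\[
  \mathrm{Der}_{\mathscr{A}}(\mathcal{A},\mathcal{X})/\mathrm{Inn}_{\mathscr{A}}(\mathcal{A},\mathcal{X})
  =\mathrm{Der}_{\mathscr{A}}(\mathcal{A},\mathcal{X})/\ker\iota
  \cong\mathrm{Im}\,\iota
  =H^1(\mathcal{A},\mathcal{X}).
\]

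I do not expect any serious obstacle: essentially all the content sits in the surjectivity step, where local innerness is precisely what lets one subtract an inner derivation to land inside $\mathrm{Der}_{\mathscr{A}}$. The only point deserving a moment's care is the kernel computation, namely checking that $\mathrm{Der}_{\mathscr{A}}\cap\mathrm{Inn}$ is genuinely $\mathrm{Inn}_{\mathscr{A}}$ and nothing larger; but this is immediate once the two defining conditions are written out side by side.
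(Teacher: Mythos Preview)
Your proposal is correct and follows essentially the same approach as the paper: define the natural linear map $\iota\colon\mathrm{Der}_{\mathscr{A}}(\mathcal{A},\mathcal{X})\to H^1(\mathcal{A},\mathcal{X})$ sending $\delta$ to its class, use local innerness to show surjectivity via $\delta-\delta_T$, and identify the kernel as $\mathrm{Inn}_{\mathscr{A}}(\mathcal{A},\mathcal{X})$. The paper's argument is the same, only slightly more terse in the kernel step.
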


\begin{proof}
Define a linear map
\begin{equation*}
  \pi\colon\mathrm{Der}_{\mathscr{A}}(\mathcal{A},\mathcal{X})\to H^1(\mathcal{A},\mathcal{X}),\quad
  \delta\mapsto\delta+\mathrm{Inn}(\mathcal{A},\mathcal{X}).
\end{equation*}
By assumption, for any bounded derivation $\delta$ from $\mathcal{A}$ into $\mathcal{X}$, there exists $T\in\mathcal{X}$ such that $\delta(A)=\delta_T(A)$ for all $A\in\mathscr{A}$.
Since $\delta-\delta_T\in\mathrm{Der}_{\mathscr{A}}(\mathcal{A},\mathcal{X})$ and
\begin{equation*}
  \delta+\mathrm{Inn}(\mathcal{A},\mathcal{X})
  =(\delta-\delta_T)+\mathrm{Inn}(\mathcal{A},\mathcal{X}),
\end{equation*}
the map $\pi$ is onto.
Clearly, $\ker\pi=\mathrm{Inn}_{\mathscr{A}}(\mathcal{A},\mathcal{X})$.
This completes the proof.
\end{proof}

\subsection{Derivations on reduced subalgebras}
Using the following lemma, we can first investigate derivations of free semigroupoid algebras associated with finite directed graphs and then generalize to infinite directed graphs.
Recall that $L_F=\sum_{v\in F}L_v$ if $F$ is a subset of $\mathcal{V}$.

\begin{lemma}\label{lem reduced-derivation}
Let $G$ be a directed graph and $C>0$ a positive constant.
Suppose for any finite subset $F$ of $\mathcal{V}$ and any bounded derivation $\delta_F$ from $L_F\mathcal{A}_GL_F$ into $L_F\mathfrak{L}_GL_F$, there exists $T_F\in L_F\mathfrak{L}_GL_F$ such that $\delta_F=\delta_{T_F}$ and $\|T_F\|\leqslant C\|\delta_F\|$.
Then for any bounded derivation $\delta$ from $\mathcal{A}_G$ into $\mathfrak{L}_G$, there exists $T\in\mathfrak{L}_G$ such that $\delta=\delta_T$ and $\|T\|\leqslant C\|\delta\|$.
\end{lemma}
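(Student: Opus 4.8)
The plan is to reduce the assertion about $\mathcal{A}_G$ to the finite-set hypothesis by an exhaustion-and-compactness argument over the directed set $\mathcal{F}$ of finite subsets of $\mathcal{V}$, ordered by inclusion. First, given a bounded derivation $\delta\colon\mathcal{A}_G\to\mathfrak{L}_G$, for each $F\in\mathcal{F}$ I would compress it to a finite piece: define $\delta_F\colon L_F\mathcal{A}_GL_F\to L_F\mathfrak{L}_GL_F$ by $\delta_F(A)=L_F\delta(A)L_F$. Using $L_FA=AL_F=A$ for $A\in L_F\mathcal{A}_GL_F$, a routine check turns the Leibniz rule for $\delta$ into the Leibniz rule for $\delta_F$, so $\delta_F$ is a derivation with $\|\delta_F\|\le\|\delta\|$. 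The hypothesis then supplies $T_F\in L_F\mathfrak{L}_GL_F$ with $\delta_F=\delta_{T_F}$ and $\|T_F\|\le C\|\delta_F\|\le C\|\delta\|$. Since $T_F$ and $A$ are both compressed by $L_F$, the products are unaffected by whether they are read on $L_F\mathcal{H}_G$ or on $\mathcal{H}_G$, and the key identity takes the form $L_F\delta(A)L_F=AT_F-T_FA$ as operators on $\mathcal{H}_G$, valid for every $A\in L_F\mathcal{A}_GL_F$.

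Next I would pass to the limit. All of the $T_F$ lie in the norm-closed ball of radius $C\|\delta\|$, which is weak-operator compact by \Cref{prop compact}; hence the net $(T_F)_{F\in\mathcal{F}}$ admits a subnet converging in the weak-operator topology to some $T\in\mathfrak{L}_G$ with $\|T\|\le C\|\delta\|$, which is the candidate implementing operator with the desired norm bound. To see that $\delta=\delta_T$, I would first establish it on the dense subalgebra $\mathcal{P}=\mathrm{span}\{L_w\colon w\in\mathbb{F}_G^+\}$. Every $A\in\mathcal{P}$ is a finite linear combination of monomials $L_w$, so with $F_0$ the finite set of source and range vertices occurring, $A\in L_{F_0}\mathcal{A}_GL_{F_0}\subseteq L_F\mathcal{A}_GL_F$ for all $F\supseteq F_0$, and the identity $L_F\delta(A)L_F=AT_F-T_FA$ holds throughout this tail. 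Taking the limit along the subnet, the right-hand side converges to $AT-TA$ because left and right multiplication by the fixed operator $A$ is separately weak-operator continuous, while the left-hand side converges to $\delta(A)$ because $L_F\to I$ strongly and $\delta(A)$ is bounded, giving $\langle\delta(A)L_F\xi,L_F\eta\rangle\to\langle\delta(A)\xi,\eta\rangle$. Hence $\delta(A)=\delta_T(A)$ on $\mathcal{P}$, and boundedness of both $\delta$ and $\delta_T$ extends the equality to all of $\mathcal{A}_G=\overline{\mathcal{P}}$.

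The one genuinely delicate point, which I expect to be the main obstacle, is the interchange of the two limiting processes. The subnet selected for weak-operator convergence of $(T_F)$ must remain cofinal in $\mathcal{F}$, so that the cut-off projections $L_F$ indexing it still increase to $I$ strongly; cofinality is exactly what makes both convergences happen along one and the same subnet. Since a subnet of a convergent net converges to the same limit, applying this single subnet to the identity $L_F\delta(A)L_F=AT_F-T_FA$ legitimately produces $\delta(A)=AT-TA$, with both sides evaluated in the limit simultaneously. Everything else—checking that $\delta_F$ is a derivation, that $A\in L_{F_0}\mathcal{A}_GL_{F_0}$, and that the norm estimate survives the limit—is straightforward bookkeeping with the orthogonal projections $L_v$.
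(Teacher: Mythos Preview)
Your proposal is correct and follows essentially the same approach as the paper: compress $\delta$ to $\delta_F(A)=L_F\delta(A)L_F$, apply the hypothesis to obtain $T_F$ with the norm bound, extract a weak-operator limit point $T$ of the bounded net $(T_F)_F$ via \Cref{prop compact}, and verify $\delta=\delta_T$ on the generators $L_p$ (equivalently on $\mathcal{P}$) by passing to the limit in $L_F\delta(L_p)L_F=L_pT_F-T_FL_p$. The paper's proof is simply terser, omitting the routine checks (that $\delta_F$ is a derivation, the cofinality discussion, and the density argument) that you spell out.
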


\begin{proof}
For every finite subset $F$ of $\mathcal{V}$, we define a derivation $\delta_F$ from $L_F\mathcal{A}_GL_F$ into $L_F\mathfrak{L}_GL_F$ by $\delta_F(A)=L_F\delta(A)L_F$.
By assumption, there exists an operator $T_F\in L_F\mathfrak{L}_GL_F$ such that $\delta_F=\delta_{T_F}$ and $\|T_F\|\leqslant C\|\delta_F\|\leqslant C\|\delta\|$.
By \Cref{prop compact}, the bounded net $\{T_F\}_F$ has a weak-operator limit point $T\in\mathfrak{L}_G$ with $\|T\|\leqslant C\|\delta\|$.
For any $p\in\mathbb{F}_G^+$ and any finite set $F\supseteq\{r(p),s(p)\}$, we have $L_p\in L_F\mathcal{A}_GL_F$ and hence
\begin{equation*}
  L_F\delta(L_p)L_F=\delta_F(L_p)=L_pT_F-T_FL_p.
\end{equation*}
It follows that $\delta(L_p)=\delta_T(L_p)$.
This completes the proof.
\end{proof}

\section{A weak Dixmier approximation theorem}\label{sec weak-Dixmier}
In this section, we prove a weak Dixmier approximation theorem for free semigroupoid algebras determined by strongly connected directed graphs, which will be used in \Cref{sec S-C} to investigate derivations.
At first, we need to characterize the center $Z(\mathfrak{L}_G)$ of $\mathfrak{L}_G$.

\subsection{Center}\label{subsec center}
Suppose $G$ is a directed graph.
The {\sl center} of $\mathfrak{L}_G$ is defined as
\begin{equation*}
  Z(\mathfrak{L}_G):
  =\{A\in\mathfrak{L}_G\colon AB=BA~\text{for all}~B\in\mathfrak{L}_G\}.
\end{equation*}
Recall that the $n$-circle graph $\mathscr{C}_n$ is a directed graph with $n$ distinct vertices $\{v_1,v_2,\ldots,v_n\}$ and edges $\{e_1,e_2,\ldots,e_n\}$ satisfying that
\begin{equation*}
  r(e_j)=v_j\quad\text{and}\quad s(e_j)=v_{j+1\,(\mathrm{mod}\,n)}
\end{equation*}
for all $1\leqslant j\leqslant n$.
Moreover, there is a unique minimal circle
\begin{equation}\label{equ c-j}
  c_j:=e_je_{j+1\,(\mathrm{mod}\,n)}\cdots e_{j+n-1\,(\mathrm{mod}\,n)}
\end{equation}
at the vertex $v_j$.

\begin{proposition}\label{prop center}
Suppose $G$ is a connected directed graph.
If $G$ is the $n$-circle graph $\mathscr{C}_n$, then each operator $A\in Z(\mathfrak{L}_G)$ has a Fourier expansion of the form
\begin{align*}
   \sum_{j=0}^{\infty}a_j\sum_{i=1}^{n}L_{c_i^j}.
\end{align*}
Otherwise, $Z(\mathfrak{L}_G)$ is trivial.
\end{proposition}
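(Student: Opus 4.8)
The plan is to exploit the fact that $A\in Z(\mathfrak{L}_G)$ precisely when $A$ commutes with every generator $L_x$ ($x\in\mathcal{V}$) and $L_e$ ($e\in\mathcal{E}$), and to read the resulting constraints directly off the (unique) Fourier expansion $A=\sum_{w\in\mathbb{F}_{G}^+}a_wL_w$. First I would commute $A$ with the projections: since $L_xA$ retains exactly the monomials $L_w$ with $r(w)=x$ while $AL_x$ retains those with $s(w)=x$, comparing Fourier coefficients for every $x$ forces $a_w=0$ unless $r(w)=s(w)$. Thus $A$ is supported on circles (vertices included). Because commuting with a single edge raises path-length by one and respects the grading by length, I would split $A=\sum_{k\geqslant 0}A_k$ into its homogeneous parts and treat $L_eA_k=A_kL_e$ separately for each $k$. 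For the degree-zero part $A_0=\sum_{v}a_vL_v$, commuting with each $L_e$ yields $a_{s(e)}=a_{r(e)}$, and connectedness of $G$ gives $A_0=a_0I$.

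The heart of the argument is the degree-$k$ relation for $k\geqslant 1$. For each edge $e$, comparing coefficients in $L_eA_k=A_kL_e$ reduces to the monomial identity $ew=w'e$ relating a circle $w$ at $s(e)$ to a circle $w'$ at $r(e)$. Writing $w=f_1\cdots f_k$, I would observe that $ew=w'e$ forces $f_k=e$ and $w'=ef_1\cdots f_{k-1}$, the cyclic rotation of $w$. Two rigidity consequences follow whenever $a_w\neq 0$ for a circle $w$ at a vertex $v$ with $\ell(w)\geqslant 1$: any edge $e$ with $s(e)=v$ other than $f_k$ would leave the monomial $a_wL_{ew}$ unmatched on the right, so $v$ admits a unique edge with source $v$; symmetrically, examining the right-hand monomials $a_wL_{we}$ shows $v$ admits a unique edge with range $v$. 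Moreover the coefficient propagates unchanged to $w'$, so every cyclic rotation of $w$ again has nonzero coefficient, and hence every vertex traversed by $w$ has both in-degree and out-degree exactly one.

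To finish, once every vertex on the support circle has a unique incoming and a unique outgoing edge, following the outgoing edges traces a simple cycle visiting exactly those vertices; since $G$ is connected and no further edges meet them, $G$ must coincide with this cycle, i.e. $G=\mathscr{C}_n$. Therefore, if $G$ is connected and not a circle graph, no circle of length $\geqslant 1$ can carry a nonzero coefficient, whence $Z(\mathfrak{L}_G)=\mathbb{C}I$ is trivial. In the remaining case $G=\mathscr{C}_n$, the only minimal circle at $v_j$ is $c_j$, so every circle of length $nj$ equals some $c_i^j$ while lengths not divisible by $n$ contribute nothing; the rotation identity $e_\ell c_{\ell+1}^j=c_\ell^je_\ell$ forces $a_{c_1^j},\dots,a_{c_n^j}$ to share a common value $a_j$ by connectedness, yielding the claimed expansion $A=\sum_{j\geqslant 0}a_j\sum_{i=1}^nL_{c_i^j}$.

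The main obstacle is the third step: converting the algebraic identity $ew=w'e$ into the graph-theoretic rigidity that pins every vertex of a support circle to in- and out-degree one and then recognizes the connected component as $\mathscr{C}_n$. I would also check the degenerate cases for consistency: a vertex graph has no edge and a single vertex with at least two loops has two edges sharing a source, so in both the uniqueness of the outgoing edge fails immediately and every length-$\geqslant 1$ coefficient must vanish, in agreement with $Z(\mathfrak{L}_G)=\mathbb{C}I$; the single-loop case is exactly $\mathscr{C}_1$ and recovers the full center $Z(\mathfrak{L}_G)\cong H^\infty(\mathbb{D})$.
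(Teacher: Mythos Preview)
Your argument is correct and takes a genuinely different route from the paper's proof. The paper proceeds case by case: it first uses the diagonal decomposition $A=\sum_v L_vAL_v$, then for $G=\mathscr{C}_n$ invokes \Cref{cor relative-commutant} (the relative commutant $\{L_{c_i}\}'\cap L_{v_i}\mathfrak{L}_GL_{v_i}\cong H^\infty(\mathbb{D})$) to get the expansion in powers of $L_{c_i}$; for $G$ strongly connected but not a circle, it uses the existence of two distinct minimal circles at each vertex together with \Cref{cor relative-commutant} to force $L_vAL_v\in\mathbb{C}L_v$; and for $G$ not strongly connected it exploits an acyclic edge and a maximality argument to propagate a scalar across all vertices. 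By contrast, you work directly and uniformly at the level of Fourier coefficients: the single combinatorial observation that $ew=w'e$ forces the last edge of $w$ to equal $e$ (and dually for incoming edges) pins every vertex on a support circle to in-degree and out-degree one, after which connectedness identifies $G$ with a circle graph. Your approach is more elementary and self-contained---it does not require \Cref{cor relative-commutant} or the strongly/not-strongly-connected dichotomy---while the paper's approach ties the computation more directly to the $H^\infty$ structure that is reused elsewhere (e.g.\ in \Cref{rem center} and the weak Dixmier theorem). One small remark: your homogeneous decomposition $A=\sum_k A_k$ should be read at the level of Fourier coefficients rather than as a sum of bounded operators, but since you only compare coefficients this causes no difficulty.
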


\begin{proof}
Let $A\in Z(\mathfrak{L}_G)$.
Then we can write
\begin{equation*}
  A=\sum_{v\in\mathcal{V}}L_vAL_v.
\end{equation*}
In particular, if $G=\mathscr{C}_n$, then $A=\sum_{i=1}^{n}L_{v_i}AL_{v_i}$.
Since $L_{v_i}AL_{v_i}$ commutes with $L_{c_i}$, it follows from \Cref{cor relative-commutant} that
\begin{equation*}
  L_{v_i}AL_{v_i}=\sum_{j=0}^{\infty}a_{ij}L_{c_i^j}.
\end{equation*}
Since $A$ commutes with $L_{e_i}$, we have $a_{ij}=a_{i+1\,(\mathrm{mod}\,n),j}$.
Hence $a_{ij}$ is independent of $1\leqslant i\leqslant n$, and we denote it by $a_j$.
Therefore, we have
\begin{equation*}
  A=\sum_{i=1}^{n}\sum_{j=0}^{\infty}a_jL_{c_i^j}.
\end{equation*}
Suppose $G$ is strongly connected and not an $n$-circle graph.
The conclusion is straightforward if $G$ is a vertex graph.
Otherwise, there are at least two distinct minimal circles at every vertex $v$. By \Cref{cor relative-commutant}, we have $L_vAL_v=\lambda_vL_v$, where $\lambda_v\in\mathbb{C}$.
For any $v_1,v_2\in\mathcal{V}$, there exists a path $p$ from $v_2$ to $v_1$.
Clearly, $L_pA=AL_p$ implies that $\lambda_{v_1}=\lambda_{v_2}$.
Therefore, we have $A=\lambda I$.

Next we assume that $G$ is not strongly connected.
Then there exists an edge $e\in\mathcal{E}$ such that there is no path from $y=r(e)$ to $x=s(e)$.
Since $A$ commutes with $L_e$, we have
\begin{equation*}
  L_yAL_y=\lambda L_y\quad\text{and}\quad L_xAL_x=\lambda L_x,
\end{equation*}
where $\lambda\in\mathbb{C}$.
Let $V_1$ be the set of all vertices such that $L_vAL_v=\lambda L_v$.
We claim that $V_1=V$.
Otherwise, since $G$ is connected, there exists an edge $f$ such that either $r(f)\in V_1,s(f)\in V\backslash V_1$ or $r(f)\in V\backslash V_1,s(f)\in V_1$.
Since $A$ commutes with $L_f$, in both cases we have
\begin{equation*}
  L_{r(f)}AL_{r(f)}=\lambda L_{r(f)}\quad\text{and}\quad
  L_{s(f)}AL_{s(f)}=\lambda L_{s(f)}.
\end{equation*}
That contradicts the maximality of $V_1$.
Therefore, $V_1=V$ and $A=\lambda I$.
We finish the proof.
\end{proof}

\begin{remark}\label{rem center}
Assume that $G=\mathscr{C}_n$ and $A$ is an operator in $Z(\mathfrak{L}_G)$ with Fourier expansion $\sum_{j=0}^{\infty}a_j\sum_{i=1}^{n}L_{c_i^j}$.
We define
\begin{equation*}
  \varphi_A(z)=\sum_{j=0}^{\infty}a_jz^j.
\end{equation*}
Then $\varphi\colon A\mapsto\varphi_A$ is a complete isometric isomorphism from $Z(\mathfrak{L}_G)$ onto $H^{\infty}(\mathbb{D})$, i.e., $Z(\mathfrak{L}_G)\cong H^{\infty}(\mathbb{D})$.
\end{remark}

\subsection{Weak Dixmier approximation theorem}
We denote by $\mathscr{D}=\mathscr{D}_G$ the set of all mappings $\alpha\colon B(\mathcal{H}_G)\to B(\mathcal{H}_G)$ of the form
\begin{align*}
    \alpha(A)=\frac{1}{n}\sum_{j=1}^nU_j^*AU_j,
\end{align*}
where every $U_j$ is an isometry in $\mathfrak{L}_G$.
Clearly, $\|\alpha\|=1$ and $\alpha|_{Z(\mathfrak{L}_G)}=\mathrm{id}$.
It is worth noting that $\alpha(\mathfrak{L}_G)$ is not a subset of $\mathfrak{L}_G$ in general.
For any $A\in\mathfrak{L}_G$, we define a subset $\mathscr{D}(A)$ of $B(\mathcal{H}_G)$ as
\begin{align*}
    \mathscr{D}(A):=\{\alpha(A)\colon\alpha\in\mathscr{D}\}.
\end{align*}
Let $\mathscr{D}(A)^-$ be the weak-operator closure of $\mathscr{D}(A)$.
Then we have the following weak Dixmier approximation theorem.

\begin{theorem}\label{thm weak-Dixmier}
Suppose $G$ is a strongly connected directed graph.
Then for any $A\in\mathfrak{L}_G$, we have
\begin{align*}
   \mathscr{D}(A)^-\cap Z(\mathfrak{L}_G)\neq\emptyset.
\end{align*}
In particular, $\mathscr{D}(A)\cap Z(\mathfrak{L}_G)\neq\emptyset$ if $G$ is an $n$-circle graph.
\end{theorem}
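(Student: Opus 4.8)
The plan is to treat the three possible shapes of a strongly connected graph separately, following the trichotomy recorded just after the definition of $\mathscr{C}_n$: either $G$ is a vertex graph, or $G=\mathscr{C}_n$, or $G$ has at least two distinct minimal circles at every vertex. The vertex-graph case is immediate, since then $\mathfrak{L}_G\cong\mathbb{C}=Z(\mathfrak{L}_G)$ and one may take $\alpha=\mathrm{id}$ (i.e. $U_1=I$), so that $A\in\mathscr{D}(A)\cap Z(\mathfrak{L}_G)$.

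For $G=\mathscr{C}_n$ I would prove the stronger (finite, exact) statement by producing an explicit $\alpha\in\mathscr{D}$ with $\alpha(A)\in Z(\mathfrak{L}_G)$, in two averaging steps. First, for each $n$-th root of unity $\zeta$ set $\Phi_\zeta=\sum_{i=1}^n\zeta^{i}L_{v_i}$; this is a unitary in $\mathfrak{L}_G$, and since $\Phi_\zeta^*(L_{v_i}AL_{v_{i'}})\Phi_\zeta=\zeta^{i'-i}L_{v_i}AL_{v_{i'}}$, orthogonality of characters gives $\frac1n\sum_{\zeta^n=1}\Phi_\zeta^*A\Phi_\zeta=\sum_{i=1}^nL_{v_i}AL_{v_i}=:B$, the block-diagonal part of $A$. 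By \Cref{cor relative-commutant} each diagonal block lies in the commutant of $L_{c_i}$, so $B=\sum_{i,j}b_{ij}L_{c_i^j}$. Second, the shift $V=\sum_{i=1}^nL_{e_i}$ is an isometry in $\mathfrak{L}_G$, and a direct computation gives $V^*L_{c_i^j}V=L_{c_{i+1\,(\mathrm{mod}\,n)}^{j}}$; hence $\frac1n\sum_{l=0}^{n-1}V^{*l}BV^l=\sum_j\big(\frac1n\sum_i b_{ij}\big)\sum_iL_{c_i^j}$, which lies in $Z(\mathfrak{L}_G)$ by \Cref{prop center}. Combining the two steps, $\frac{1}{n^2}\sum_{l=0}^{n-1}\sum_{\zeta^n=1}(\Phi_\zeta V^l)^*A(\Phi_\zeta V^l)\in Z(\mathfrak{L}_G)$ is the desired finite isometric average, which is exactly the ``in particular'' assertion.

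For the remaining case I would use \Cref{cor sum-U} together with \Cref{prop center} (so that $Z(\mathfrak{L}_G)=\mathbb{C}I$), splitting on the cardinality of $\mathcal{V}$. If $\mathcal{V}$ is finite, take $F=\mathcal{V}$ in \Cref{cor sum-U}; then $L_F=I$, the $U_{i,k}$ are honest isometries, and $\frac1n\sum_iU_{i,k}^*AU_{i,k}\xrightarrow{\mathrm{WOT}}\big(\frac1n\sum_{v\in\mathcal{V}}\lambda_v(A)\big)I$, a central element lying in $\mathscr{D}(A)^-$. If $\mathcal{V}$ is infinite, I run the same computation over finite subsets $F\subseteq\mathcal{V}$ with $n=|F|$, but first extend each partial isometry $U_{i,k}$ (which satisfies $U_{i,k}^*U_{i,k}=L_F$) to an honest isometry $\tilde U_{i,k}=U_{i,k}+W_{i,k}\in\mathfrak{L}_G$, where $W_{i,k}=\sum_{u\notin F}L_{\eta_u}$ is a partial isometry with $W_{i,k}^*W_{i,k}=L_{\mathcal{V}\setminus F}$ and range orthogonal to that of $U_{i,k}$; strong connectivity and the presence of two minimal circles at each vertex provide enough room to choose the prefixes $\eta_u$ orthogonal to those occurring in $U_{i,k}$. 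The crucial simplification is that for any fixed $p_1,p_2\in\mathbb{F}_G^+$ with $r(p_1),r(p_2)\in F$ one has $W_{i,k}\xi_{p_1}=W_{i,k}\xi_{p_2}=0$, whence $\langle\tilde U_{i,k}^*A\tilde U_{i,k}\xi_{p_1},\xi_{p_2}\rangle=\langle U_{i,k}^*AU_{i,k}\xi_{p_1},\xi_{p_2}\rangle$, so the extension never enters the relevant matrix coefficients.

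Finally I assemble the limit. Setting $\alpha_{F,k}(A)=\frac1{|F|}\sum_i\tilde U_{i,k}^*A\tilde U_{i,k}\in\mathscr{D}(A)$ and $c_F=\frac1{|F|}\sum_{v\in F}\lambda_v(A)$, the coefficient identity above together with \Cref{cor sum-U} gives $\langle\alpha_{F,k}(A)\xi_{p_1},\xi_{p_2}\rangle\to c_F\langle\xi_{p_1},\xi_{p_2}\rangle$ as $k\to\infty$ whenever $r(p_1),r(p_2)\in F$. Since $|c_F|\leqslant\|A\|$, the net $\{c_F\}$ has a limit point $\lambda\in\mathbb{C}$; given any basic weak-operator neighborhood of $\lambda I$ (finitely many $p_1,p_2$ and $\varepsilon>0$) I choose $F$ containing all the relevant ranges with $|c_F-\lambda|<\varepsilon/2$ and then $k$ large, placing $\alpha_{F,k}(A)$ in that neighborhood. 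Hence $\lambda I\in\mathscr{D}(A)^-\cap Z(\mathfrak{L}_G)$, with \Cref{prop compact} guaranteeing that the ambient ball is weak-operator compact. The main obstacle is precisely this infinite-vertex case: verifying that the partial isometries of \Cref{cor sum-U} admit honest isometric extensions inside $\mathfrak{L}_G$, and organizing the two nested limits ($k\to\infty$ and $F\uparrow\mathcal{V}$) so that the resulting limit point is genuinely a scalar. The observation that $W_{i,k}$ annihilates $\xi_{p_1}$ and $\xi_{p_2}$ is what keeps this under control.
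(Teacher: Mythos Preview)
Your proof is correct and follows essentially the same architecture as the paper's: split into the vertex/circle/general trichotomy, handle the circle case by an explicit finite average, and in the general case invoke \Cref{cor sum-U} over finite $F\subseteq\mathcal{V}$, extending the partial isometries there to honest isometries before taking the nested WOT limit.

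Two points of comparison. For $\mathscr{C}_n$ you use the $n^2$ isometries $\Phi_\zeta V^l$ (diagonal root-of-unity phases followed by powers of the single shift $V=\sum_iL_{e_i}$), whereas the paper uses the $2^nn$ isometries $W_{\pi,i}=\sum_j\pi(j)L_{e_je_{j+1}\cdots e_{j+i-1}}$ with $\pi\in\{-1,1\}^{\mathbb{Z}/n\mathbb{Z}}$; both produce an explicit $\alpha\in\mathscr{D}$ with $\alpha(A)\in Z(\mathfrak{L}_G)$, and your choice is arguably tidier. In the infinite-vertex case, however, you work harder than necessary: the simplest choice of your $W_{i,k}$ is just $\eta_u=u$ for each $u\notin F$, giving $W_{i,k}=\sum_{u\notin F}L_u=I-L_F$, and this is precisely what the paper does, writing $V_{i,k,F}=I-L_F+U_{i,k,F}$. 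Since $U_{i,k}$ has range inside $L_F\mathcal{H}_G$ the orthogonality you need is automatic, so there is no need to appeal to ``enough room'' coming from two minimal circles. With this simplification the cross terms $(I-L_F)AU_{i,k}$ and $U_{i,k}^*A(I-L_F)$ go to $0$ in WOT as $k\to\infty$, giving $\text{WOT-}\lim_k\alpha_{k,F}(A)=(I-L_F)A(I-L_F)+c_FL_F$ directly, and then letting $F\uparrow\mathcal{V}$ finishes as you describe.
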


\begin{proof}
(i) Suppose $G$ is the $n$-circle $\mathscr{C}_n$.
For convenience, let $v_j=v_{j\,(\mathrm{mod}\,n)}$ and $e_j=e_{j\,(\mathrm{mod}\,n)}$ for all $j\in\mathbb{Z}$.
For every $\pi\in\{-1,1\}^{\mathbb{Z}/n\mathbb{Z}}$ and $1\leqslant i\leqslant n$, we define an isometry in $\mathfrak{L}_G$ by $W_{\pi,i}=\sum_{j=1}^n\pi(j)L_{e_je_{j+1}\cdots e_{j+i-1}}$.
Let
\begin{align*}
   \alpha(A)=\frac{1}{2^nn}\sum_{\pi,i}W_{\pi,i}^*AW_{\pi,i}.
\end{align*}
For any path $e_ae_{a+1}\cdots e_b\in\mathbb{F}_G^+$, it is straightforward to verify that
\begin{equation}\label{equ W*LW}
  W_{\pi,i}^*L_{e_ae_{a+1}\cdots e_b}W_{\pi,i}
  =\pi(a)\pi(b+1)L_{e_{a+i}e_{a+1+i}\cdots e_{b+i}}.
\end{equation}
If $b+1=a+kn$ for some $k\geqslant 1$, then $\pi(a)\pi(b+1)=1$ for every $\pi$.
It follows from \Cref{prop center} and \eqref{equ W*LW} that
\begin{equation*}
  \alpha(L_{e_ae_{a+1}\cdots e_b})=\frac{1}{n}\sum_{i=1}^{n}L_{c_i^k}\in Z(\mathfrak{L}_G).
\end{equation*}
If $b+1\not\equiv a\,(\mathrm{mod}\,n)$, then we have $\alpha(L_{e_ae_{a+1}\cdots e_b})=0$ by \eqref{equ W*LW}.
Moreover, for each vertex $v_j\in\mathcal{V}$, the equation \eqref{equ W*LW} becomes $W_{\pi,i}^*L_{v_j}W_{\pi,i}=L_{v_{j+i}}$.
It follows that $\alpha(L_{v_j})=\frac{1}{n}I\in Z(\mathfrak{L}_G)$.
Therefore, for any $A\in\mathfrak{L}_G$, we have
\begin{equation*}
  \alpha(A)\in Z(\mathfrak{L}_G).
\end{equation*}
In particular, $\mathscr{D}(A)^-\cap Z(\mathfrak{L}_G)\neq\emptyset$.

(ii) If $G$ is not an $n$-circle graph, then $Z(\mathfrak{L}_G)=\mathbb{C}I$ by \Cref{prop center}.
Suppose $F$ is a finite subset of $\mathcal{V}$ and $L_F=\sum_{v\in F}L_v$.
Let $U_{i,k,F}$ be the partial isometries given by \Cref{cor sum-U} and $V_{i,k,F}=I-L_F+U_{i,k,F}$.
We define
\begin{align*}
    \alpha_{k,F}(A)=\frac{1}{|F|}\sum_{i=1}^{|F|}V_{i,k,F}^*AV_{i,k,F}
    \in\mathscr{D}(A),\quad \lambda_F(A)=\frac{1}{|F|}\sum_{v\in F}\lambda_v(A),
\end{align*}
where $\lambda_v(A)$ is the Fourier coefficient of $A$ at $L_v$.
Then $|\lambda_F(A)|\leqslant\|A\|$.
It follows from \Cref{cor sum-U} that
\begin{align*}
    \text{\scriptsize {\rm WOT-}}\lim_{k\to\infty} \alpha_{k,F}(A)=(I-L_F)A(I-L_F)+\lambda_F(A)L_F.
\end{align*}
Since $\{\lambda_F(A)\}_F$ is a bounded net, it has a limit point $\lambda$ as $F\to\mathcal{V}$.
Then
\begin{align*}
    \text{\scriptsize {\rm WOT-}}\lim_{F\to\mathcal{V}}\lim_{k\to\infty} \alpha_{k,F}(A)=\lambda I.
\end{align*}
Hence $\lambda I\in\mathscr{D}(A)^-\cap Z(\mathfrak{L}_G)$.
This completes the proof.
\end{proof}

\section{Derivations and Strongly Connected Graphs}\label{sec S-C}
In this section, we show that for any bounded derivation $\delta$ from $\mathcal{A}_G$ into $\mathfrak{L}_G$, there exists $T\in\mathfrak{L}_G$ such that $\delta=\delta_T$ and $\|T\|\leqslant\|\delta\|$ if every connected component of $G$ is strongly connected.

\subsection{Derivations from $\mathcal{A}_G$ into $\mathfrak{L}_G$}
Let $\Lambda$ be an index set.
It was proved in \cite[Theorem 4.5]{HMW24} that for each bounded derivation $\delta$ from the non-commutative disc algebra $\mathcal{A}_{\Lambda}$ into the left regular semigroup algebra $\mathfrak{L}_{\Lambda}$, there exists $T\in\mathfrak{L}_{\Lambda}$ such that $\delta=\delta_T$.
The following is a reformulation of this result.

\begin{lemma}\label{thm 4.5}
Let $G$ be a directed graph with one vertex.
Then every bounded derivation $\delta$ from $\mathcal{A}_G$ into $\mathfrak{L}_G$ is an inner derivation.
\end{lemma}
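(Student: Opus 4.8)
The claim is that when $G$ has a single vertex, every bounded derivation $\delta\colon\mathcal{A}_G\to\mathfrak{L}_G$ is inner. In this case $\mathbb{F}_G^+$ is a free semigroup on the edge set $\mathcal{E}$, $\mathcal{A}_G$ is the non-commutative disc algebra, and $\mathfrak{L}_G$ is the left regular free semigroup algebra. The statement is explicitly a reformulation of the cited result from \cite[Theorem 4.5]{HMW24}, so the honest plan is to exhibit the dictionary between the two formulations and then quote that theorem; but let me sketch how one would prove it from scratch, which is what the reformulation is hiding.

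**The self-contained strategy.** The plan is to reconstruct the candidate implementing operator $T$ directly from the Fourier expansions. Since $G$ has one vertex $v$, we have $L_v=I$, and each generator $L_e$ for $e\in\mathcal{E}$ is an isometry with mutually orthogonal ranges (the Cuntz--Toeplitz relations $L_e^*L_f=\delta_{ef}I$). First I would study $\delta$ on the generators: set $T_e:=\delta(L_e)\in\mathfrak{L}_G$ and expand each $T_e=\sum_w a_w^{(e)}L_w$. The derivation identity $\delta(L_eL_f)=\delta(L_e)L_f+L_e\delta(L_f)$ forces strong compatibility conditions among the Fourier coefficients across words of the free semigroup. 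The key idea is to produce a single $T=\sum_w t_w L_w\in\mathfrak{L}_G$ whose induced inner derivation $\delta_T(A)=AT-TA$ matches $\delta$ on every generator $L_e$; by boundedness and norm-density of polynomials in $\mathcal{A}_G$, matching on generators and extending linearly and multiplicatively then gives $\delta=\delta_T$ on all of $\mathcal{A}_G$. To pin down the coefficients $t_w$, I would impose $\delta_T(L_e)=L_eT-TL_e=\delta(L_e)$ and read this off coefficient-by-coefficient, solving recursively by word length. The boundedness of $\delta$ is used to guarantee that the formally-defined $T$ actually lies in $\mathfrak{L}_G$ rather than being a merely formal series, i.e.\ that it defines a bounded operator.

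**The main obstacle.** The hard part is exactly the convergence/boundedness step: the recursion produces a formal Fourier series $\sum_w t_w L_w$, and one must show it represents a bounded operator in $\mathfrak{L}_G$ with a norm bound. In the free-semigroup setting this is delicate because the coefficients can grow, and one typically controls them by a Cesàro/Abel summation or a compression argument using the gauge automorphisms $L_e\mapsto zL_e$ (the circle action) to extract the homogeneous components of $\delta$ and bound them separately, then reassembling. A clean route is to reduce to the case where $\delta$ vanishes on the degree-zero part and is ``graded,'' analyze each homogeneous piece, and sum. This is precisely the technical content of \cite[Theorem 4.5]{HMW24}, and reproducing it in full would duplicate that paper.

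**What I would actually write.** Given that the lemma is stated as a reformulation, the efficient and rigorous proof is the translation argument. The plan is:
\begin{enumerate}[(i)]
\item Observe that for a one-vertex graph $G$, the path space $\mathbb{F}_G^+$ is the free semigroup on $\mathcal{E}$, so with $\Lambda:=\mathcal{E}$ the algebras $\mathcal{A}_G$ and $\mathfrak{L}_G$ coincide with the non-commutative disc algebra $\mathcal{A}_\Lambda$ and the left regular free semigroup algebra $\mathfrak{L}_\Lambda$ from \cite{HMW24}, via the identification $L_e\leftrightarrow$ the corresponding left-creation operator.
\item Under this identification the notions of bounded derivation and inner derivation correspond exactly, since the bimodule structures agree.
\item Apply \cite[Theorem 4.5]{HMW24}, which asserts the existence of $T\in\mathfrak{L}_\Lambda$ with $\delta=\delta_T$, to conclude that $\delta$ is inner.
\end{enumerate}
I expect the only subtlety to verify is that the index set $\Lambda$ in \cite{HMW24} is allowed to be arbitrary (countable or uncountable), matching the generality of $\mathcal{E}$ here; since the present paper allows uncountably many edges, I would confirm that the cited theorem is stated at that level of generality, which it is.
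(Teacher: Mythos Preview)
Your proposal is correct and matches the paper exactly: the paper gives no proof at all for this lemma, merely stating it as a reformulation of \cite[Theorem 4.5]{HMW24}, which is precisely your plan (i)--(iii). Your additional sketch of a self-contained argument goes beyond what the paper does, but your actual ``what I would write'' section is the same translation-and-cite approach the paper takes.
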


Let $G$ be a directed graph.
For every vertex $v\in\mathcal{V}$, let $\mathcal{E}_v$ denote the set of all minimal circles at $v$.
It follows from \eqref{equ reduced-by-v} and \Cref{prop reduced} that $L_v\mathfrak{L}_GL_v$ and $L_v\mathcal{A}_GL_v$ are the left free semigroup algebra and the non-commutative disc algebra generated by $\mathcal{E}_v$, respectively.

\begin{proposition}\label{prop S-C-finite}
Suppose $G$ is a finite strongly connected directed graph.
Then every bounded derivation from $\mathcal{A}_G$ into $\mathfrak{L}_G$ is inner.
\end{proposition}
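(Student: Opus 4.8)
The plan is to reduce the global statement about $\mathcal{A}_G$ and $\mathfrak{L}_G$ to the single-vertex case handled by \Cref{thm 4.5}, using \Cref{lem idempotent} to first normalize the derivation on the vertex projections and then \Cref{cor relative-commutant} and \Cref{prop E} together with the weak Dixmier approximation \Cref{thm weak-Dixmier} to produce and control the implementing operator $T$. Since $G$ is finite, the set $\mathscr{A}=\{L_v\colon v\in\mathcal{V}\}$ is a finite family of mutually orthogonal idempotents summing to the identity of the unital algebra $\mathcal{A}_G$, so \Cref{lem idempotent} applies directly.

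First I would apply \Cref{lem idempotent} to obtain $T_0=\sum_{v\in\mathcal{V}}L_v\delta(L_v)\in\mathfrak{L}_G$ so that $\delta_0:=\delta-\delta_{T_0}$ is a bounded derivation vanishing on every vertex projection $L_v$. Because $\delta_0(L_v)=0$, a short computation with the derivation identity $\delta_0(L_vAL_w)=L_v\delta_0(A)L_w$ shows that $\delta_0$ respects the decomposition $A=\sum_{v,w}L_v A L_w$; in particular $\delta_0$ restricts to a bounded derivation on each corner $L_v\mathcal{A}_GL_v$ into $L_v\mathfrak{L}_GL_v$. By \eqref{equ reduced-by-v} and \Cref{prop reduced}, each such corner is the non-commutative disc algebra and free semigroup algebra on the minimal circles $\mathcal{E}_v$, so \Cref{thm 4.5} yields elements $T_v\in L_v\mathfrak{L}_GL_v$ with $\delta_0(A)=AT_v-T_vA$ for $A\in L_v\mathcal{A}_GL_v$. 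The natural candidate is then $T=T_0+\sum_v T_v$, but one must verify that this single operator simultaneously implements $\delta_0$ on the \emph{off-diagonal} corners $L_v\mathcal{A}_GL_w$ with $v\ne w$, not merely on the diagonal.

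The main obstacle is precisely this off-diagonal consistency, since the $T_v$ are produced independently corner by corner and need not patch together. To handle it I would exploit strong connectivity: for $v\ne w$ there exist paths, hence nonzero $L_v\mathfrak{L}_GL_w$, and the derivation relation applied to products $L_pA$ and $AL_p$ (with $p$ a path from $w$ to $v$) forces rigid compatibility relations among the $T_v$ and the values of $\delta_0$ on edges. I expect that combining these relations with the relative-commutant description in \Cref{cor relative-commutant} and the conditional expectations $E_w$ of \Cref{prop E} pins down the correct $T$ up to a central element; the freedom is exactly $Z(\mathfrak{L}_G)$, which is either trivial or $H^\infty(\mathbb{D})$ by \Cref{prop center}, and since an inner derivation is unchanged by adding a central operator, this ambiguity is harmless.

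Finally, once a single $T\in\mathfrak{L}_G$ implementing $\delta$ on all generators $\{L_e,L_x\}$ is obtained, I would conclude $\delta=\delta_T$ on all of $\mathcal{A}_G$ by continuity and the fact that $\mathcal{A}_G$ is the norm-closed algebra generated by these partial isometries and projections, so that $\delta$ and $\delta_T$ agreeing on generators forces agreement everywhere. The norm estimate $\|T\|\leqslant\|\delta\|$ is not asserted in this proposition (only innerness), so at this stage I would not attempt to optimize the norm; the sharper bound, needed for the passage to infinite graphs, is exactly where \Cref{thm weak-Dixmier} and the Dixmier-averaging maps $\alpha\in\mathscr{D}$ will enter in the subsequent results, letting one average $T$ against isometries to push its norm down to $\|\delta\|$ independently of $|\mathcal{V}|$.
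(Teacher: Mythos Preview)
Your first step --- using \Cref{lem idempotent} to annihilate $\delta$ on the vertex projections and then applying \Cref{thm 4.5} corner by corner to produce $T_v\in L_v\mathfrak{L}_GL_v$ --- matches the paper's Step~I verbatim, and you correctly isolate the real difficulty: the $T_v$ are constructed independently and there is no a~priori reason $\sum_v T_v$ implements $\delta_0$ on the off-diagonal corners.

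The gap is in how you resolve that difficulty. Saying the derivation relation on products $L_pA$, $AL_p$ ``forces rigid compatibility relations'' and that you ``expect'' \Cref{cor relative-commutant} and \Cref{prop E} to pin down $T$ modulo the center is not yet an argument; and in fact neither \Cref{prop E} nor \Cref{thm weak-Dixmier} appears in the paper's proof of this proposition (you are right that the Dixmier averaging is reserved for the norm bound in the subsequent corollary). The paper's Step~II proceeds by a concrete induction that your sketch omits: strong connectivity furnishes a single circle $e_1e_2\cdots e_n$ visiting every vertex, and one enlarges the set on which $\delta$ is locally inner from $\mathscr{A}=\bigcup_v L_v\mathcal{A}_GL_v$ to $\mathscr{A}_j=\mathscr{A}\cup\{L_{e_1},\ldots,L_{e_j}\}$ one edge at a time. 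At step $j$, assuming $\delta|_{\mathscr{A}_{j-1}}=0$, the relations $\delta(L_pL_{e_j})=0=\delta(L_{e_j}L_p)$ for a return path $p$ force $\delta(L_{e_j})=-T_jL_{e_j}$ with $T_j$ in the relative commutant $\{L_{e_j}L_p\}'\cap L_{v_j}\mathfrak{L}_GL_{v_j}$; the dichotomy (either two distinct minimal circles at $v_j$, whence $T_j\in\mathbb{C}L_{v_j}$, or the corner is commutative) is exactly what guarantees that $\delta_{T_j}$ vanishes on $\mathscr{A}_{j-1}$, so the adjustment does not spoil earlier steps. A short Step~III then kills $\delta$ on every remaining edge once $\delta|_{\mathscr{A}_n}=0$. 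This inductive scaffolding is the substance of the proof, and your proposal does not supply it.
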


\begin{proof}
{\bf Step I.}
We claim that every bounded derivation $\delta$ from $\mathcal{A}_G$ into $\mathfrak{L}_G$ is locally inner with respect to $\mathscr{A}:=\bigcup_{v\in\mathcal{V}}L_v\mathcal{A}_GL_v$.
By \Cref{lem idempotent}, there exists $T\in\mathfrak{L}_G$ such that $\delta(L_v)=\delta_T(L_v)$ for each $v\in\mathcal{V}$.
Since $\delta$ is locally inner with respect to $\mathscr{A}$ if and only if $\delta-\delta_T$ is locally inner with respect to $\mathscr{A}$, we may assume that $\delta(L_v)=0$ for each $v\in\mathcal{V}$.
Since $\delta(L_v)=0$, we have
\begin{equation*}
  \delta(A)=\delta(L_vAL_v)=L_v\delta(A)L_v\in L_v\mathfrak{L}_GL_v
\end{equation*}
for all $A\in L_v\mathcal{A}_GL_v$.
By \Cref{thm 4.5}, there exists $T_v\in L_v\mathfrak{L}_GL_v$ such that $\delta(A)=\delta_{T_v}(A)$ for all $A\in L_v\mathcal{A}_GL_v$.
Let $T=\sum_{v\in\mathcal{V}}T_v$.
It is routine to verify that $\delta(A)=\delta_T(A)$ for all $A\in\mathscr{A}$.

{\bf Step II.}
Since $G$ is a strongly connected, there exists a circle $e_1e_2\cdots e_n\in\mathbb{F}_G^+$ such that every vertex in $\mathcal{V}$ is of the form $r(e_j)$ for some $1\leqslant j\leqslant n$.
Let
\begin{equation*}
  v_j=r(e_j)~\text{and}~
  \mathscr{A}_j=\{L_{e_1},L_{e_2},\ldots,L_{e_j}\}\cup\mathscr{A}
\end{equation*}
for $1\leqslant j\leqslant n$, and $v_{n+1}=s(e_n)=r(e_1)=v_1$.
Next, we claim that every bounded derivation from $\mathcal{A}_G$ into $\mathfrak{L}_G$ is locally inner with respect to each $\mathscr{A}_j$.
Without loss of generality, let $\delta$ be a bounded derivation from $\mathcal{A}_G$ into $\mathfrak{L}_G$ such that $\delta|_{\mathscr{A}}=0$.
If $v_2=v_1$, then $L_{e_1}\in\mathscr{A}$ and hence $\delta(L_{e_1})=0$.
Assume that $v_2\ne v_1$.
Let $p$ be a path from $v_1$ to $v_2$.
Then $\delta(L_{e_1})=L_{v_1}\delta(L_{e_1})L_{v_2}$ and $\delta(L_p)=L_{v_2}\delta(L_p)L_{v_1}$.
Since $L_pL_{e_1}\in\mathscr{A}$, we have
\begin{equation*}
  0=\delta(L_pL_{e_1})=\delta(L_p)L_{e_1}+L_p\delta(L_{e_1}).
\end{equation*}
By comparing the Fourier coefficients, there exists $T_1\in L_{v_1}\mathfrak{L}_GL_{v_1}$ independent of the choice of $p$ such that $\delta(L_p)=L_pT_1$ and $\delta(L_{e_1})=-T_1L_{e_1}$.
By the fact $L_{e_1}L_p\in\mathscr{A}$, we also have $\delta(L_{e_1}L_p)=0$.
Therefore, $T_1\in\{L_{e_1}L_p\}'\cap L_{v_1}\mathfrak{L}_GL_{v_1}$ for every path $p$ from $v_1$ to $v_2$.
If there are two distinct minimal circles at $v_1$, then we must have $T_1=cL_{v_1}$ for some $c\in\mathbb{C}$.
Otherwise, $L_{v_1}\mathfrak{L}_GL_{v_1}$ is commutative.
In both case, $\delta$ and $\delta_{T_1}$ coincide on $\mathscr{A}_1$.

Suppose every bounded derivation from $\mathcal{A}_G$ into $\mathfrak{L}_G$ is locally inner with respect to $\mathscr{A}_{j-1}$ for some $2\leqslant j\leqslant n$.
Without loss of generality, let $\delta$ be a bounded derivation from $\mathcal{A}_G$ into $\mathfrak{L}_G$ such that $\delta|_{\mathscr{A}_{j-1}}=0$.
If $v_{j+1}=v_i$ for some $1\leqslant i\leqslant j$, then $L_{e_i}\cdots L_{e_{j-1}}L_{e_j}\in\mathscr{A}$.
It follows that $L_{e_i}\cdots L_{e_{j-1}}\delta(L_{e_j})=0$.
Combining with $\delta(L_{e_j})=L_{v_j}\delta(L_{e_j})L_{v_{j+1}}$, we have $\delta(L_{e_j})=0$.
Assume that $v_{j+1}\ne v_i$ for every $1\leqslant i\leqslant j$.
Let $p$ be a path from $v_j$ to $v_{j+1}$.
Then $L_pL_{e_j},L_{e_j}L_p\in\mathscr{A}$.
Similarly, there exists $T_j\in L_{v_j}\mathfrak{L}_GL_{v_j}$ such that $\delta$ and $\delta_{T_j}$ coincide on $\mathscr{A}_j$.

{\bf Step III.}
We claim that every bounded derivation from $\mathcal{A}_G$ into $\mathfrak{L}_G$ is inner.
Without loss of generality, let $\delta$ be a bounded derivation from $\mathcal{A}_G$ into $\mathfrak{L}_G$ such that $\delta|_{\mathscr{A}_n}=0$.
Recall that $e_1e_2\cdots e_n\in\mathbb{F}_G^+$ and every vertex in $\mathcal{V}$ is of the form $r(e_j)$ for some $1\leqslant j\leqslant n$.
For any edge $e\in\mathcal{E}$, there exists a path $p$ of the form $e_{j_1}e_{j_2}\cdots e_{j_k}$ such that $r(p)=s(e)$ and $s(p)=r(e)$.
Since $L_pL_e\in\mathscr{A}$, we have $L_p\delta(L_e)=0$.
It follows that $\delta(L_e)=0$ for all $e\in\mathcal{E}$.
Therefore, we have $\delta=0$.
This completes the proof.
\end{proof}

The following corollary is an application of \Cref{thm weak-Dixmier} and \Cref{prop S-C-finite}.

\begin{corollary}\label{cor S-C-finite}
Suppose $G$ is a finite strongly connected directed graph.
Then for every bounded derivation $\delta$ from $\mathcal{A}_G$ into $\mathfrak{L}_G$, there exists $T\in\mathfrak{L}_G$ such that $\delta=\delta_T$ and $\|T\|\leqslant\|\delta\|$.
\end{corollary}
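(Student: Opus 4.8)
The plan is to use \Cref{prop S-C-finite} to obtain one implementing operator and then use the averaging supplied by \Cref{thm weak-Dixmier} to correct it by a central element without increasing its norm beyond $\|\delta\|$. By \Cref{prop S-C-finite} there exists $T_0\in\mathfrak{L}_G$ with $\delta=\delta_{T_0}$. Any two operators implementing $\delta$ differ by an element commuting with every generator of $\mathcal{A}_G$, hence by an element of $Z(\mathfrak{L}_G)$; conversely $\delta_C=0$ for every $C\in Z(\mathfrak{L}_G)$. So the task reduces to finding a central $C$ with $\|T_0-C\|\leqslant\|\delta\|$ and then setting $T=T_0-C$.

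The key computation relates the averaging maps to the derivation. If $U\in\mathcal{A}_G$ is an isometry, then from $\delta(U)=UT_0-T_0U$ and $U^*U=I$ one gets
\begin{equation*}
  U^*T_0U=U^*\bigl(UT_0-\delta(U)\bigr)=T_0-U^*\delta(U).
\end{equation*}
Hence for any $\alpha\in\mathscr{D}$ of the form $\alpha(A)=\frac{1}{n}\sum_{j=1}^nU_j^*AU_j$ with all $U_j\in\mathcal{A}_G$,
\begin{equation*}
  T_0-\alpha(T_0)=\frac{1}{n}\sum_{j=1}^nU_j^*\delta(U_j),
\end{equation*}
and since each $U_j$ is an isometry this operator has norm at most $\frac{1}{n}\sum_{j=1}^n\|\delta(U_j)\|\leqslant\|\delta\|$. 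Because $G$ is finite, $I\in\mathcal{A}_G$ and every isometry constructed in the proof of \Cref{thm weak-Dixmier} (taking $F=\mathcal{V}$ in the non-circle case) is a finite combination of the generators $L_w$ together with $I$, hence lies in $\mathcal{A}_G$; so the hypothesis $U_j\in\mathcal{A}_G$ is automatically met.

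Now I would apply \Cref{thm weak-Dixmier} to $T_0$: there is $C\in\mathscr{D}(T_0)^-\cap Z(\mathfrak{L}_G)$, realized as a weak-operator limit of a net $\alpha_\beta(T_0)$ with each $\alpha_\beta\in\mathscr{D}$ built from isometries in $\mathcal{A}_G$. Put $T=T_0-C$. Since $C$ is central, $\delta_T=\delta_{T_0}-\delta_C=\delta_{T_0}=\delta$. Moreover
\begin{equation*}
  T=T_0-C=\text{WOT-}\lim_\beta\bigl(T_0-\alpha_\beta(T_0)\bigr),
\end{equation*}
and each term $T_0-\alpha_\beta(T_0)$ has norm at most $\|\delta\|$ by the previous paragraph. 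Since norm-closed balls of $\mathfrak{L}_G$ are weak-operator closed by \Cref{prop compact}, the limit satisfies $\|T\|\leqslant\|\delta\|$, as required.

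The main point requiring care is the compatibility between the domain of $\delta$ and the averaging: the identity $U^*T_0U=T_0-U^*\delta(U)$ needs $U\in\mathcal{A}_G$, and this is precisely where finiteness of $G$ enters, ensuring the isometries produced by the weak Dixmier construction lie in $\mathcal{A}_G$. The remaining ingredient, lower semicontinuity of the norm along weak-operator limits, is exactly \Cref{prop compact}; in the $n$-circle case no net is even needed, since \Cref{thm weak-Dixmier} already gives a single $\alpha$ with $\alpha(T_0)\in Z(\mathfrak{L}_G)$ and $T=T_0-\alpha(T_0)=\frac{1}{n}\sum_jU_j^*\delta(U_j)$ has norm at most $\|\delta\|$ outright.
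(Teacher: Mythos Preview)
Your proof is correct and follows essentially the same approach as the paper: obtain an implementing operator from \Cref{prop S-C-finite}, use the identity $T_0-\alpha(T_0)=\frac{1}{n}\sum_jU_j^*\delta(U_j)$ to bound $\|T_0-\alpha(T_0)\|\leqslant\|\delta\|$, invoke \Cref{thm weak-Dixmier} to find a central element $C$ in the weak-operator closure of the averages, and set $T=T_0-C$. You are slightly more explicit than the paper on two points---that the isometries appearing in the weak Dixmier construction lie in $\mathcal{A}_G$ when $G$ is finite (so $\delta(U_j)$ is defined), and that the norm bound passes to the weak-operator limit via \Cref{prop compact}---but these are refinements of the same argument rather than a different route.
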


\begin{proof}
By \Cref{prop S-C-finite}, there exists $S\in\mathfrak{L}_G$ such that $\delta=\delta_S$.
Recall that every $\alpha\in\mathscr{D}$ is of the form
\begin{align*}
    \alpha(A)=\frac{1}{n}\sum_{j=1}^nU_j^*AU_j,
\end{align*}
where every $U_j$ is an isometry in $\mathfrak{L}_G$.
It follows that
\begin{equation*}
  \|S-\alpha(S)\|\leqslant\frac{1}{n}\sum_{j=1}^n\|S-U_j^*SU_j\|
  =\frac{1}{n}\sum_{j=1}^n\|U_j^*\delta(U_j)\|\leqslant\|\delta\|.
\end{equation*}
Let $C\in\mathscr{D}(S)^-\cap Z(\mathfrak{L}_G)$ by \Cref{thm weak-Dixmier}.
Then $\|S-C\|\leqslant\|\delta\|$.
We complete the proof by taking $T=S-C$.
\end{proof}

Now we are able to extend \Cref{cor S-C-finite} to infinite case.
The following is the main result of this section.

\begin{theorem}\label{thm S-C}
Suppose $G$ is a strongly connected directed graph.
Then for every bounded derivation $\delta$ from $\mathcal{A}_G$ into $\mathfrak{L}_G$, there exists $T\in\mathfrak{L}_G$ such that $\delta=\delta_T$ and $\|T\|\leqslant\|\delta\|$.
\end{theorem}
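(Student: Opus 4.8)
The plan is to reduce the infinite strongly connected case to the finite case already handled in \Cref{cor S-C-finite}, using the reduction machinery of \Cref{lem reduced-derivation}. The key observation is that \Cref{lem reduced-derivation} has exactly the shape we need: if for every \emph{finite} subset $F\subseteq\mathcal{V}$ every bounded derivation from $L_F\mathcal{A}_GL_F$ into $L_F\mathfrak{L}_GL_F$ is implemented by some $T_F$ with $\|T_F\|\leqslant C\|\delta_F\|$, then the same norm bound $C$ transfers to derivations from $\mathcal{A}_G$ into $\mathfrak{L}_G$. Here we aim for $C=1$, so it is essential that the finite-case bound in \Cref{cor S-C-finite} is exactly $\|T\|\leqslant\|\delta\|$ with no loss; this is why the earlier work to pin down the optimal constant (via the weak Dixmier approximation theorem) was necessary.

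First I would observe that by \Cref{prop reduced}, for any finite subset $F\subseteq\mathcal{V}$ the compression $L_F\mathfrak{L}_GL_F$ is completely isometrically isomorphic to $\mathfrak{L}_{G_F}$, and likewise $L_F\mathcal{A}_GL_F\cong\mathcal{A}_{G_F}$, where $G_F$ is the reduced subgraph on $F$. The crucial point is that $G_F$ is again a \emph{finite} directed graph, and I would verify that it is strongly connected: given any two vertices $x,y\in F$, strong connectedness of $G$ provides a directed path from $x$ to $y$ in $\mathbb{F}_G^+$, which decomposes as a concatenation of minimal paths at $F$ (the edges of $G_F$), hence yields a path from $x$ to $y$ in $G_F$; the same holds from $y$ to $x$. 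Thus $G_F$ is a finite strongly connected directed graph.

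With this in hand, \Cref{cor S-C-finite} applies directly to $G_F$: every bounded derivation $\delta_F$ from $\mathcal{A}_{G_F}\cong L_F\mathcal{A}_GL_F$ into $\mathfrak{L}_{G_F}\cong L_F\mathfrak{L}_GL_F$ satisfies $\delta_F=\delta_{T_F}$ for some $T_F$ in the compressed algebra with $\|T_F\|\leqslant\|\delta_F\|$. This is precisely the hypothesis of \Cref{lem reduced-derivation} with constant $C=1$. Invoking that lemma, I conclude that for any bounded derivation $\delta$ from $\mathcal{A}_G$ into $\mathfrak{L}_G$ there exists $T\in\mathfrak{L}_G$ with $\delta=\delta_T$ and $\|T\|\leqslant\|\delta\|$, as claimed.

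The only genuine content beyond citing the two prior results is the verification that the reduced subgraph $G_F$ inherits strong connectedness, together with the careful matching of the completely isometric identifications so that no norm inflation is introduced when passing between $L_F\mathfrak{L}_GL_F$ and $\mathfrak{L}_{G_F}$. The main obstacle I anticipate is purely bookkeeping: one must confirm that the derivation $\delta_F(A)=L_F\delta(A)L_F$ defined in \Cref{lem reduced-derivation} corresponds under the isomorphism of \Cref{prop reduced} to an honest bounded derivation on $\mathcal{A}_{G_F}$ with norm at most $\|\delta\|$, so that the finite-case estimate yields the clean bound $\|T_F\|\leqslant\|\delta\|$ uniformly in $F$. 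Since the isomorphisms are completely isometric and the weak-operator compactness of norm-bounded balls (\Cref{prop compact}) supplies the limit point $T$, no further analytic difficulty arises, and the proof is essentially immediate.
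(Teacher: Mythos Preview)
Your proposal is correct and follows essentially the same approach as the paper: identify $L_F\mathcal{A}_GL_F$ and $L_F\mathfrak{L}_GL_F$ with $\mathcal{A}_{G_F}$ and $\mathfrak{L}_{G_F}$ via \Cref{prop reduced}, observe that $G_F$ is finite and strongly connected, apply \Cref{cor S-C-finite} with constant $C=1$, and conclude via \Cref{lem reduced-derivation}. Your explicit verification that $G_F$ inherits strong connectedness is a detail the paper leaves implicit, but otherwise the arguments coincide.
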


\begin{proof}
Let $F$ be a finite subset of $\mathcal{V}$, $G_F$ the reduced graph of $G$ by $F$, and $L_F=\sum_{v\in F}L_v$.
It follows from \Cref{prop reduced} that
\begin{equation*}
  L_F\mathcal{A}_GL_F\cong\mathcal{A}_{G_F},\quad
  L_F\mathfrak{L}_GL_F\cong\mathfrak{L}_{G_F}.
\end{equation*}
Note that $G_F$ is a finite strongly connected directed graph.
By \Cref{cor S-C-finite}, for any bounded derivation $\delta_F$ from $L_F\mathcal{A}_GL_F$ into $L_F\mathfrak{L}_GL_F$, there exists an operator $T_F\in L_F\mathfrak{L}_GL_F$ such that $\delta_F=\delta_{T_F}$ and $\|T_F\|\leqslant C\|\delta_F\|$.
We complete the proof by applying \Cref{lem reduced-derivation}.
\end{proof}

\begin{corollary}\label{cor S-C}
Suppose $G$ is a directed graph such that every connected component of $G$ is strongly connected.
Then for every bounded derivation $\delta$ from $\mathcal{A}_G$ into $\mathfrak{L}_G$, there exists $T\in\mathfrak{L}_G$ such that $\delta=\delta_T$ and $\|T\|\leqslant\|\delta\|$.
\end{corollary}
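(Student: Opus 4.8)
The plan is to reduce the general case to the strongly connected case already settled in \Cref{thm S-C}, using the decomposition of derivations along connected components provided by \Cref{prop derivation-decomposition} and \Cref{rem derivation-decomposition}. Write $G$ as the disjoint union of its connected components $\{G_\lambda\}_{\lambda\in\Lambda}$, each of which is strongly connected by hypothesis. By \Cref{rem derivation-decomposition} we have the identifications $\mathcal{A}_G\cong c_0(\{\mathcal{A}_{G_\lambda}\}_{\lambda\in\Lambda})$ and $\mathfrak{L}_G\cong\ell^\infty(\{\mathfrak{L}_{G_\lambda}\}_{\lambda\in\Lambda})$, and any bounded derivation $\delta$ from $\mathcal{A}_G$ into $\mathfrak{L}_G$ splits as $\delta=\prod_\lambda\delta_\lambda$, where each $\delta_\lambda\colon\mathcal{A}_{G_\lambda}\to\mathfrak{L}_{G_\lambda}$ is a bounded derivation and $\|\delta\|=\sup_{\lambda\in\Lambda}\|\delta_\lambda\|$.

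Next I would apply \Cref{thm S-C} to each component. Since every $G_\lambda$ is strongly connected, \Cref{thm S-C} yields an operator $T_\lambda\in\mathfrak{L}_{G_\lambda}$ with $\delta_\lambda=\delta_{T_\lambda}$ and, crucially, $\|T_\lambda\|\leqslant\|\delta_\lambda\|$. I would then assemble these into the single operator $T=(T_\lambda)_{\lambda\in\Lambda}$. Because the inner derivation $\delta_T$ acts coordinatewise, namely $\delta_T=\prod_\lambda\delta_{T_\lambda}$, and $\delta_{T_\lambda}=\delta_\lambda$ for every $\lambda$, we recover $\delta_T=\prod_\lambda\delta_\lambda=\delta$.

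It remains to check that $T$ genuinely defines an element of $\mathfrak{L}_G$ with the desired norm bound, and this is the one point that requires the quantitative strength of \Cref{thm S-C} rather than mere innerness. Under the identification $\mathfrak{L}_G\cong\ell^\infty(\{\mathfrak{L}_{G_\lambda}\}_{\lambda\in\Lambda})$, membership of $(T_\lambda)_{\lambda\in\Lambda}$ requires $\sup_\lambda\|T_\lambda\|<\infty$. The uniform estimate $\|T_\lambda\|\leqslant\|\delta_\lambda\|\leqslant\|\delta\|$ supplies exactly this, giving $\|T\|=\sup_{\lambda\in\Lambda}\|T_\lambda\|\leqslant\|\delta\|$. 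Had \Cref{thm S-C} only produced implementing operators of uncontrolled norm, the supremum could diverge and $T$ might fail to lie in $\mathfrak{L}_G$; thus the norm bound carried through from the weak Dixmier approximation theorem is precisely what makes the infinitely-many-component case go through. With $T$ so constructed, the proof concludes.
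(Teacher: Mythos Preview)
Your proposal is correct and follows essentially the same approach as the paper, which simply cites \Cref{rem derivation-decomposition} and \Cref{thm S-C}. You have spelled out in detail exactly the argument those two references encode, including the point that the uniform norm bound from \Cref{thm S-C} is what guarantees $(T_\lambda)_{\lambda\in\Lambda}$ lies in $\ell^\infty(\{\mathfrak{L}_{G_\lambda}\}_{\lambda\in\Lambda})\cong\mathfrak{L}_G$.
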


\begin{proof}
It follows from \Cref{rem derivation-decomposition} and \Cref{thm S-C}.
\end{proof}

\subsection{Derivations on $\mathcal{A}_G$}
By the proof of \cite[Lemma 5.2]{KP04}, $\mathcal{A}_G$ is semisimple if every connected component of $G$ is strongly connected.
In \cite{JS68}, Johnson and Sinclair proved that every derivation on a semisimple Banach algebra is bounded.
Since $\mathcal{A}_G$ is a Banach subalgebra of $\mathfrak{L}_G$, we can immediately obtain the following corollary by \Cref{cor S-C}.

\begin{corollary}\label{cor S-C-2}
Suppose $G$ is a directed graph such that every connected component of $G$ is strongly connected.
Then for every  derivation $\delta$ on $\mathcal{A}_G$, there exists $T\in\mathfrak{L}_G$ such that $\delta=\delta_T$ and $\|T\|\leqslant\|\delta\|$.
\end{corollary}

Note that it seems not easy to determine whether $T\in\mathcal{A}_G$ in \Cref{cor S-C-2}.
By the following lemma, we only need to focus on finite directed graphs.

\begin{lemma}\label{lem infinite-not-inner}
Let $G$ be a connected directed graph with infinite vertices.
Then there exists a bounded derivation on $\mathcal{A}_G$ which is not inner.
\end{lemma}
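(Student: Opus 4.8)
The plan is to exhibit one explicit outer derivation, implemented by a diagonal projection coming from a partition of the (infinite) vertex set. Since $\mathcal{V}=\mathcal{V}(G)$ is infinite, I would fix a partition $\mathcal{V}=W\sqcup(\mathcal{V}\backslash W)$ with both $W$ and $\mathcal{V}\backslash W$ infinite, and set $T=L_W:=\sum_{v\in W}L_v\in\mathfrak{L}_G$, a projection in $\mathfrak{L}_G$. The claim to be established is that $\delta_T(A)=AT-TA$ is a bounded derivation of $\mathcal{A}_G$ into itself which is not inner.

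First I would check that $\delta_T$ maps $\mathcal{A}_G$ into $\mathcal{A}_G$. Using \eqref{equ left} one computes for every path $w\in\mathbb{F}_G^+$ that $L_wL_W=L_w$ precisely when $s(w)\in W$ (and $0$ otherwise), while $L_WL_w=L_w$ precisely when $r(w)\in W$; hence $\delta_T(L_w)=\big(\chi_W(s(w))-\chi_W(r(w))\big)L_w$. Thus $\delta_T$ acts diagonally on the basis $\{L_w\}_{w\in\mathbb{F}_G^+}$ with coefficients in $\{-1,0,1\}$, so $\delta_T(L_w)\in\mathcal{A}_G$; since $\|\delta_T\|\leqslant 2$ and the span of $\{L_w\}$ is norm-dense in $\mathcal{A}_G$, it follows that $\delta_T(\mathcal{A}_G)\subseteq\mathcal{A}_G$, so $\delta_T$ is a genuine bounded derivation on $\mathcal{A}_G$.

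For non-innerness, I would argue by contradiction: suppose $\delta_T=\delta_S$ for some $S\in\mathcal{A}_G$. Then $T-S$ commutes with every $L_w$, hence — since $\{L_e,L_x\}$ generate $\mathfrak{L}_G$ in the weak-operator topology and multiplication is separately WOT-continuous — with all of $\mathfrak{L}_G$, so $T-S\in Z(\mathfrak{L}_G)$. Because $G$ is connected with infinitely many vertices it is not an $n$-circle graph, so \Cref{prop center} gives $Z(\mathfrak{L}_G)=\mathbb{C}I$; thus $L_W=S+cI$ for some $c\in\mathbb{C}$, and comparing Fourier coefficients at the vertices yields $\lambda_v(S)=\chi_W(v)-c$ for every $v\in\mathcal{V}$.

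The main obstacle, and the one genuinely algebra-specific step, is to rule this out by showing that the vertex coefficients of any element of $\mathcal{A}_G$ form a $c_0$ family. I would prove this via the contractive diagonal expectation $\Phi_0=\frac{1}{2\pi}\int_0^{2\pi}\gamma_\theta\,d\theta$, where $\gamma_\theta$ is the gauge automorphism implemented by the diagonal unitary $\xi_w\mapsto e^{i\ell(w)\theta}\xi_w$, so that $\gamma_\theta(L_w)=e^{i\ell(w)\theta}L_w$. Since each $\gamma_\theta$ preserves $\mathcal{A}_G$, the map $\Phi_0$ sends $\mathcal{A}_G$ contractively into the norm-closure of $\mathrm{span}\{L_v\colon v\in\mathcal{V}\}$, and the identity $\|\sum c_vL_v\|=\sup_v|c_v|$ for finite vertex sums forces this closure to be isometrically $c_0(\mathcal{V})$. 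Consequently $(\lambda_v(S))_{v\in\mathcal{V}}\in c_0(\mathcal{V})$, so $\chi_W(v)-c\to 0$ along $\mathcal{V}$; but this family equals $1-c$ on the infinite set $W$ and $-c$ on the infinite set $\mathcal{V}\backslash W$, which is impossible. Therefore $\delta_T$ cannot be inner, which is exactly the assertion of the lemma.
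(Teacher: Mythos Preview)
Your proof is correct and follows essentially the same approach as the paper: both choose an infinite--co-infinite vertex set $W$ (the paper calls it $F$), form the derivation $\delta_{L_W}$, check it maps $\mathcal{A}_G$ into itself, and then use \Cref{prop center} to force any implementing $S\in\mathcal{A}_G$ to satisfy $S=L_W-cI$. The only difference is that the paper states ``that is a contradiction'' without elaboration, while you spell out the reason via the gauge expectation $\Phi_0$ and the $c_0$ structure of the diagonal of $\mathcal{A}_G$; this extra detail is correct and clarifies the step the paper leaves implicit.
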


\begin{proof}
Let $F$ be a subset of $\mathcal{V}$ such that both $F$ and $\mathcal{V}\backslash F$ are infinite sets, $L_F=\sum_{v\in F}L_v$, and $\delta$ a bounded derivation given by $\delta(A)=AL_F-L_FA$.
For any path $p\in\mathbb{F}_G^+$, we have
\begin{equation*}
  \delta(L_p)
  =\begin{cases}
     -L_p, & \mbox{if } r(p)\in F,s(p)\in\mathcal{V}\backslash F,\\
     L_p, & \mbox{if } s(p)\in F,r(p)\in\mathcal{V}\backslash F,\\
     0, & \mbox{otherwise}.
   \end{cases}
\end{equation*}
It follows that $\delta(L_p)\in\mathcal{A}_G$.
Therefore, $\delta$ is a derivation on $\mathcal{A}_G$.
If there exists $T\in\mathcal{A}_G$ such that $\delta=\delta_T$, then $T-L_F\in Z(\mathfrak{L}_G)$.
By \Cref{prop center}, there exists $\lambda\in\mathbb{C}$ such that $T=L_F+\lambda I\in\mathcal{A}_G$.
That is a contradiction.
\end{proof}

The following technique lemma is useful when studying derivations on $\mathcal{A}_G$.

\begin{lemma}\label{lem Lp*}
Let $G$ be a directed graph.
Then for every $p\in\mathbb{F}_G^+$, we have
\begin{equation*}
  L_p^*\mathcal{A}_G\cap\mathfrak{L}_G\subseteq\mathcal{A}_G.
\end{equation*}
\end{lemma}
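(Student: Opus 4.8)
The plan is to analyze $L_p^*A$ entrywise in the orthonormal basis $\{\xi_w\}$, split it into an \emph{analytic} part that lands in $\mathfrak{L}_G$ and a \emph{coanalytic} part, then force the coanalytic part to vanish and show the analytic part actually lies in $\mathcal{A}_G$. Write $A=\sum_{w\in\mathbb{F}_G^+}a_wL_w$ for the Fourier expansion of $A\in\mathcal{A}_G$. The key local computation is that for a single monomial $L_p^*L_w=L_{w'}$ when $w=pw'$ (so $p$ is a prefix of $w$), while $L_p^*L_w=L_{p'}^{*}$ when $p=wp'$ with $\ell(p')\ge1$, and $L_p^*L_w=0$ otherwise. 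Accordingly I would record, at the level of matrix coefficients, the splitting
\[
L_p^*A=C+D,\qquad C=\sum_{w'\,:\,r(w')=s(p)}a_{pw'}L_{w'},\qquad D=\sum_{p=wp',\ \ell(p')\ge1}a_wL_{p'}^{*},
\]
the sum defining $D$ being finite, indexed by the proper prefixes $w$ of $p$ (including the vertex $r(p)$). This identity is checked by comparing $\langle L_p^*A\,\xi_u,\xi_v\rangle=\langle A\xi_u,\xi_{pv}\rangle$ with the coefficients of $C$ and $D$, distinguishing the cases $\ell(u)\le\ell(v)$ and $\ell(u)>\ell(v)$; unique factorization of paths makes each comparison unambiguous.

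Next I would show $D=0$. Every operator in $\mathfrak{L}_G$ has matrix coefficients $\langle\,\cdot\,\xi_u,\xi_v\rangle$ supported on pairs with $u$ a suffix of $v$; in particular they vanish whenever $\ell(v)<\ell(u)$. The analytic operator $C$ has the same support property (each $L_{w'}$ does), whereas every summand $L_{p'}^{*}$ of $D$ has coefficients supported on $\{\ell(v)<\ell(u)\}$ since $\ell(p')\ge1$. As $B:=L_p^*A\in\mathfrak{L}_G$ by hypothesis, comparing coefficients on the region $\ell(v)<\ell(u)$ gives $\langle D\xi_u,\xi_v\rangle=\langle B\xi_u,\xi_v\rangle=0$ there, and since $D$ is supported entirely on this region, $D=0$. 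Hence $B=C=\sum_{w'}a_{pw'}L_{w'}$, and in particular the series for $C$ defines the bounded operator $B$.

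The remaining and, I expect, main point is to upgrade $B\in\mathfrak{L}_G$ to $B\in\mathcal{A}_G$: the obstacle is that the analytic truncation producing $C$ is not a bounded map on all of $\mathfrak{L}_G$, so one cannot pass to the norm closure termwise. To control norms I would invoke the gauge action $\gamma_t$ $(t\in\mathbb{T})$ determined by $\gamma_t\xi_w=t^{\ell(w)}\xi_w$, under which $\gamma_tL_w\gamma_t^*=t^{\ell(w)}L_w$, together with the completely contractive Fourier projections $\Phi_m(X)=\int_{\mathbb{T}}\bar t^{\,m}\gamma_tX\gamma_t^*\,dt$, so that $\Phi_m(A)=\sum_{\ell(w)=m}a_wL_w$. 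Setting $K=\ell(p)$, $A_{<K}=\sum_{m=0}^{K-1}\Phi_m(A)$ and $A_{\ge K}=A-A_{<K}$, the part $A_{<K}$ is a polynomial, so $A_{\ge K}\in\mathcal{A}_G$; moreover $L_p^*A_{<K}=D=0$, whence $B=L_p^*A_{\ge K}$. Now approximate $A_{\ge K}$ in norm by polynomials $Q_j$ and delete their degree-$<K$ terms to get polynomials $\widetilde Q_j$; since $\sum_{m<K}\Phi_m(A_{\ge K})=0$ and each $\Phi_m$ is contractive, $\widetilde Q_j\to A_{\ge K}$ in norm. For a polynomial $\widetilde Q_j$ with all terms of degree $\ge K$, the local computation forces $L_p^*L_w$ to be analytic, so $L_p^*\widetilde Q_j$ is again a polynomial, hence in $\mathcal{A}_G$; and $L_p^*\widetilde Q_j\to L_p^*A_{\ge K}=B$ in norm because $L_p^*$ is a contraction. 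Therefore $B\in\mathcal{A}_G$. Converting the algebraic splitting of the first step into this norm-convergent polynomial approximation through the degree grading is the crux I would expect to spend the most care on.
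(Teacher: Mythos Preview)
Your proof is correct and follows the same two-step strategy as the paper: first show that the coanalytic part (your $D$, the paper's $T_0=\sum_{j}\lambda_jL_{q_j}^*$) vanishes, then produce polynomial approximants to $L_p^*T$ by correcting for the coanalytic contribution of a polynomial approximant to $T$. The paper's execution of the second step is more bare-hands: given $\varepsilon>0$ it picks any polynomial $A$ with $\|T-A\|<\varepsilon$, notes that the prefix coefficients $\mu_j$ of $A$ satisfy $|\mu_j|<\varepsilon$ (since those of $T$ vanish by step one), and observes that $B:=L_p^*A-\sum_j\mu_jL_{q_j}^*$ is a polynomial with $\|L_p^*T-B\|<(n+1)\varepsilon$. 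Your route through the gauge action and the projections $\Phi_m$ achieves the same correction by truncating below degree $K=\ell(p)$ \emph{before} applying $L_p^*$, so that no $L_{q}^*$ terms ever appear in the approximants; this is slightly heavier machinery but makes the mechanism more transparent. One small slip: $A_{<K}=\sum_{m<K}\Phi_m(A)$ need not be a polynomial when $G$ has infinitely many paths of a given length, but it does lie in $\mathcal{A}_G$ (each $\Phi_m$ preserves $\mathcal{A}_G$ by norm-continuity of $t\mapsto\gamma_tA\gamma_t^*$ on $\mathcal{A}_G$), and that is all your argument actually uses.
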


\begin{proof}
If $p$ is a vertex, then $L_p^*=L_p$ and the conclusion is clear.
We assume that $p=e_1e_2\cdots e_n$ has length $n\geqslant 1$.
For $1\leqslant j\leqslant n$, let
\begin{equation*}
  p_j=e_1e_2\cdots e_{j-1},\quad q_j=e_je_{j+1}\cdots e_n.
\end{equation*}
Then $p=p_jq_j$ and $p_1=r(p)$.
Let $T\in\mathcal{A}_G$ such that $L_p^*T\in\mathfrak{L}_G$.
For any $\varepsilon>0$, there exists an operator $A\in\mathrm{span}\{L_w\colon w\in\mathbb{F}_G^+\}$ such that $\|T-A\|<\varepsilon$.
Let $\lambda_j$ and $\mu_j$ be the Fourier coefficients of $T$ and $A$ at $L_{p_j}$, respectively.
It follows from $L_p^*T\in\mathfrak{L}_G$ that $T_0:=\sum_{j=1}^{n}\lambda_jL_{q_j}^*\in\mathfrak{L}_G$.
Since $T_0\xi_v=0$ for every vertex $v\in\mathcal{V}$, we have $T_0=0$ and hence $\lambda_j=0$ for every $1\leqslant j\leqslant n$.
In particular, we have $|\mu_j|\leqslant\|T-A\|<\varepsilon$.
It is clear that
\begin{equation*}
  B:=L_p^*A-\sum_{j=1}^{n}\mu_jL_{q_j}^*
  \in\mathrm{span}\{L_w\colon w\in\mathbb{F}_G^+\}.
\end{equation*}
Thus, we have
\begin{equation*}
  \|L_p^*T-B\|=\left\|L_p^*(T-A)+\sum_{j=1}^{n}\mu_jL_{q_j}^*\right\|
  <(n+1)\varepsilon.
\end{equation*}
This completes the proof.
\end{proof}

\begin{proposition}\label{prop S-C-uncountable}
Let $G$ be a finite strongly connected directed graph with uncountably many edges.
Then every derivation on $\mathcal{A}_G$ is inner.
\end{proposition}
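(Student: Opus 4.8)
The plan is to begin with \Cref{cor S-C-2}: as $G$ is strongly connected it yields $T\in\mathfrak{L}_G$ with $\delta=\delta_T$ and $\|T\|\le\|\delta\|$, so the whole task is to improve $T$ from $\mathfrak{L}_G$ to $\mathcal{A}_G$. Since $G$ is finite, $I=\sum_{v\in\mathcal{V}}L_v\in\mathcal{A}_G$ and $T=\sum_{v\in\mathcal{V}}L_vT$, so it suffices to prove $L_vT\in\mathcal{A}_G$ for each of the finitely many vertices $v$. A preliminary observation drives everything: writing $T=\sum_wa_wL_w$, each $T\xi_v=\sum_{s(w)=v}a_w\xi_w$ lies in $\ell^2(\mathbb{F}_G^+)$, so (using that $\mathcal{V}$ is finite) the support $S=\{w\colon a_w\ne0\}$ is countable, and hence the set $\mathcal{E}_0$ of edges occurring in paths of $S$ is countable; as $\mathcal{E}$ is uncountable, $\mathcal{E}\setminus\mathcal{E}_0\ne\emptyset$. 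The role of \Cref{lem Lp*} will be to convert this slack into membership in $\mathcal{A}_G$.

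The base case handles a first vertex. For $e\in\mathcal{E}\setminus\mathcal{E}_0$, no path of $S$ begins with $e$, so a direct computation collapses the conjugation to $L_e^*TL_e=a_{r(e)}L_{s(e)}\in\mathcal{A}_G$. Therefore $L_e^*\delta(L_e)=L_{s(e)}T-L_e^*TL_e$ lies in $\mathfrak{L}_G$, and since $\delta(L_e)\in\mathcal{A}_G$, applying \Cref{lem Lp*} with $p=e$ gives $L_e^*\delta(L_e)\in\mathcal{A}_G$, whence $L_{s(e)}T\in\mathcal{A}_G$. This proves $L_vT\in\mathcal{A}_G$ for every $v$ in the nonempty set $W=\{s(e)\colon e\in\mathcal{E}\setminus\mathcal{E}_0\}$.

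Then I would set up a transport step that moves a good vertex backwards along any edge. Suppose $L_uT\in\mathcal{A}_G$ and $f$ is an edge with $r(f)=u$. From $L_uL_f=L_f$ we get $L_f^*T=L_f^*(L_uT)$; since $L_f^*T=\sum_{fw'\in S}a_{fw'}L_{w'}\in\mathfrak{L}_G$, \Cref{lem Lp*} yields $L_f^*T\in\mathcal{A}_G$, and hence $L_f^*TL_f\in\mathcal{A}_G$. Consequently $L_f^*\delta(L_f)=L_{s(f)}T-L_f^*TL_f\in\mathfrak{L}_G$, and \Cref{lem Lp*} once more gives $L_f^*\delta(L_f)\in\mathcal{A}_G$; adding the two pieces produces $L_{s(f)}T=L_f^*\delta(L_f)+L_f^*TL_f\in\mathcal{A}_G$. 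Because $G$ is strongly connected, every vertex $v$ admits a directed path to some $u\in W$, and iterating the transport step along the edges of that path (from $u$ back to $v$) propagates $L_vT\in\mathcal{A}_G$ to all $v\in\mathcal{V}$. Summing over the finitely many vertices yields $T\in\mathcal{A}_G$, so $\delta=\delta_T$ is inner.

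I expect the genuine obstacle to be the vertices outside $W$, namely those all of whose outgoing edges are already consumed by the countable support of $T$, for which the clean identity $L_e^*TL_e=a_{r(e)}L_{s(e)}$ is unavailable. The transport step is designed precisely to circumvent this, its crux being that $L_f^*T$ still lies in $\mathfrak{L}_G$ so that \Cref{lem Lp*} can be invoked, together with the strong connectivity of $G$, which guarantees that the finitely many vertices in $W$ reach every other vertex; note that the argument nowhere needs to invoke the center $Z(\mathfrak{L}_G)$, the uncountability of $\mathcal{E}$ entering only through the nonemptiness of $\mathcal{E}\setminus\mathcal{E}_0$.
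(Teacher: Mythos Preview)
Your transport step contains a genuine gap. You assert that
\[
L_f^*T=\sum_{fw'\in S}a_{fw'}L_{w'}\in\mathfrak{L}_G,
\]
but this formula omits the contribution of the vertex term: when $a_u\ne 0$ (with $u=r(f)$), the Fourier term $a_uL_u$ of $T$ produces $a_uL_f^*$ under left multiplication by $L_f^*$, so in fact
\[
L_f^*T=a_uL_f^*+\sum_{fw'\in S}a_{fw'}L_{w'}.
\]
Since $L_f^*\notin\mathfrak{L}_G$ (it annihilates every $\xi_v$, hence would have zero Fourier expansion, yet is nonzero), the operator $L_f^*T$ is \emph{not} in $\mathfrak{L}_G$ in general, and you cannot invoke \Cref{lem Lp*} at this point. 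Nothing in your setup forces $a_u=0$: you did not normalize $T$ to commute with the vertex projections.

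The gap is easily repaired: apply \Cref{lem Lp*} to $L_f^*(L_uT-a_uL_u)$ instead. Since $L_uT\in\mathcal{A}_G$ by hypothesis, $L_uT-a_uL_u\in\mathcal{A}_G$, and the displayed computation shows that $L_f^*(L_uT-a_uL_u)=L_f^*T-a_uL_f^*$ does lie in $\mathfrak{L}_G$; hence it lies in $\mathcal{A}_G$, and then $L_f^*TL_f=(L_f^*T-a_uL_f^*)L_f+a_uL_{s(f)}\in\mathcal{A}_G$, after which your argument proceeds as written.

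For comparison, the paper avoids the transport mechanism entirely. It first subtracts an inner derivation coming from $\mathcal{A}_G$ so that $\delta(L_v)=0$ for every $v$, forcing $T=\sum_vL_vTL_v$. Then, using that a finite strongly connected graph with uncountably many edges has uncountably many \emph{minimal circles} at every vertex, it finds at each $v$ a minimal circle $p$ with $L_p^*T=\lambda_vL_p^*$ and applies \Cref{lem Lp*} once to $L_p^*\delta(L_p)=L_vT-\lambda_vL_v$. This handles every vertex directly, with no propagation.
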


\begin{proof}
Let $\delta$ be a derivation on $\mathcal{A}_G$.
By the proof of \Cref{lem idempotent}, there exists $S\in\mathcal{A}_G$ such that $\delta(L_v)=\delta_S(L_v)$ for all $v\in\mathcal{V}$.
We may assume that $\delta(L_v)=0$ for all $v\in\mathcal{V}$.
By \Cref{cor S-C-2}, there exists $T\in\mathfrak{L}_G$ such that $\delta=\delta_T$.
Then $TL_v=L_vT$.
We only need to show that $T\in\mathcal{A}_G$.

For any $v\in\mathcal{V}$, there are at most countably many nonzero terms in the Fourier expansion of $L_vTL_v$.
By assumption, there are uncountably many minimal circles at $v$.
Since $L_{p_1}^*L_{p_2}=0$ for any two distinct minimal circles $p_1$ and $p_2$ at $v$, there exists a minimal circle $p$ at $v$ such that $L_p^*T=\lambda_vL_p^*$, where $\lambda_v$ is the Fourier coefficient at $L_v$.
It follows that
\begin{equation*}
  L_p^*\delta(L_p)=L_vT-\lambda_vL_v\in L_p^*\mathcal{A}_G\cap\mathfrak{L}_G.
\end{equation*}
Hence $L_vT\in\mathcal{A}_G$ by \Cref{lem Lp*}.
Therefore, $T=\sum_{v\in\mathcal{V}}L_vT\in\mathcal{A}_G$.
\end{proof}

\begin{corollary}
Let $G$ be a finite strongly connected directed graph with uncountably many edges.
Then for every derivation $\delta$ on $\mathcal{A}_G$, there exists $T\in\mathcal{A}_G$ such that $\delta=\delta_T$ and $\|T\|\leqslant\|\delta\|$.
\end{corollary}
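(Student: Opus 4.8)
The plan is to combine the two preceding results, \Cref{cor S-C-finite} and \Cref{prop S-C-uncountable}, reconciling them through the triviality of the center. First I would apply \Cref{cor S-C-finite} to obtain an operator $T'\in\mathfrak{L}_G$ with $\delta=\delta_{T'}$ and $\|T'\|\leqslant\|\delta\|$; this furnishes the desired norm bound, but only places $T'$ in the larger algebra $\mathfrak{L}_G$. Separately, \Cref{prop S-C-uncountable} gives an operator $S\in\mathcal{A}_G$ implementing the same derivation, $\delta=\delta_S$, but without any control on $\|S\|$. The whole point is therefore to show that the norm-optimal implementer $T'$ already lies in $\mathcal{A}_G$, so that it can play both roles at once.

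To do this I would compare the two implementers. Since $\delta_{T'}=\delta=\delta_S$, the difference $T'-S$ commutes with every element of $\mathcal{A}_G$. As $\mathfrak{L}_G$ is the weak-operator closure of $\mathcal{A}_G$, and since left and right multiplication by the fixed operator $T'-S$ are each weak-operator continuous, the condition of commuting with $T'-S$ is weak-operator closed; hence $T'-S$ commutes with all of $\mathfrak{L}_G$, i.e. $T'-S\in Z(\mathfrak{L}_G)$. Because $G$ has uncountably many edges it is neither a vertex graph nor an $n$-circle graph (the latter having only finitely many edges), so \Cref{prop center} yields $Z(\mathfrak{L}_G)=\mathbb{C}I$. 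Therefore $T'-S=\lambda I$ for some $\lambda\in\mathbb{C}$.

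It then remains only to observe that $T'=S+\lambda I$ lies in $\mathcal{A}_G$: the graph $G$ is finite, so $\mathcal{A}_G$ is unital and contains $I$, while $S\in\mathcal{A}_G$ by construction. Taking $T:=T'$ thus gives an operator in $\mathcal{A}_G$ with $\delta=\delta_T$ and $\|T\|=\|T'\|\leqslant\|\delta\|$, completing the proof. The only delicate point, which would be the genuine obstacle were the center nontrivial, is the identification $Z(\mathfrak{L}_G)=\mathbb{C}I$; here it is dispatched cleanly by \Cref{prop center}, precisely because having uncountably many edges rules out the $n$-circle case in which the center is large.
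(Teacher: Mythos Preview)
Your argument is essentially the paper's own proof: obtain a norm-optimal implementer in $\mathfrak{L}_G$, obtain a separate implementer in $\mathcal{A}_G$ via \Cref{prop S-C-uncountable}, and use \Cref{prop center} to see that their difference is a scalar multiple of the identity, whence the first implementer already lies in $\mathcal{A}_G$. The only cosmetic discrepancy is that the paper cites \Cref{cor S-C-2} rather than \Cref{cor S-C-finite} for the first step; the former already packages the automatic continuity of $\delta$ (via Johnson--Sinclair and the semisimplicity of $\mathcal{A}_G$ for strongly connected $G$), which you implicitly need in order to invoke \Cref{cor S-C-finite} on an a priori unbounded derivation.
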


\begin{proof}
By \Cref{cor S-C-2}, there exists $T\in\mathfrak{L}_G$ such that $\delta=\delta_T$ and $\|T\|\leqslant\|\delta\|$.
By \Cref{prop S-C-uncountable}, there exists $S\in\mathcal{A}_G$ such that $\delta=\delta_S$.
It follows that $T-S\in Z(\mathfrak{L}_G)$.
By applying \Cref{prop center}, we have $T=S+\lambda I$ for some scalar $\lambda\in\mathbb{C}$.
Therefore, $T\in\mathcal{A}_G$.
\end{proof}

For non-commutative disc algebras, we provide a simple criterion for \Cref{ques inner} in the following lemma.

\begin{lemma}\label{lemma condition-inner}
Suppose $G$ is a directed graph with one vertex and $N$ loops $\{e_j\}_{j=1}^{N}$, where $2\leqslant N\leqslant\infty$.
Let $\delta$ be a derivation on $\mathcal{A}_G$.
If $\delta$ is locally inner at $L_{e_1}$, then $\delta$ is an inner derivation.
\end{lemma}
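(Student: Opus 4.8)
The plan is to reduce to the case $\delta(L_{e_1})=0$ and then to show that the operator implementing $\delta$, which \Cref{thm 4.5} produces only inside $\mathfrak{L}_G$, must in fact already lie in $\mathcal{A}_G$. Since $G$ is a single vertex with $N\ge 2$ loops, its (unique) connected component is strongly connected, so $\mathcal{A}_G$ is semisimple and, by the Johnson--Sinclair theorem, $\delta$ is automatically bounded. By hypothesis $\delta$ is locally inner at $L_{e_1}$, so there is $T\in\mathcal{A}_G$ with $\delta(L_{e_1})=L_{e_1}T-TL_{e_1}$; replacing $\delta$ by $\delta-\delta_T$, which alters $\delta$ only by an inner derivation, I may assume $\delta(L_{e_1})=0$. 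Applying \Cref{thm 4.5} to the bounded derivation $\delta\colon\mathcal{A}_G\to\mathfrak{L}_G$ then yields $S\in\mathfrak{L}_G$ with $\delta=\delta_S$. Note that $Z(\mathfrak{L}_G)=\mathbb{C}I$ by \Cref{prop center}, so $S$ is unique up to an additive scalar; there is no freedom to replace $S$ by an unrelated implementer, and the entire task becomes to prove $S\in\mathcal{A}_G$.

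The next step pins down the shape of $S$. The reduction $\delta(L_{e_1})=0$ gives $L_{e_1}S=SL_{e_1}$, so $S$ lies in $\{L_{e_1}\}'\cap\mathfrak{L}_G$. As $e_1$ is a length-one circle (hence already its own primitive root), \Cref{cor relative-commutant} identifies this relative commutant with the weak-operator closed algebra generated by $I$ and $L_{e_1}$. Consequently $S$ has a Fourier expansion $S=\sum_{k\ge 0}a_kL_{e_1^k}$, i.e. $S$ is a function of $L_{e_1}$ alone.

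The crux uses the second loop $e_2$, which exists precisely because $N\ge 2$. Compressing $\delta(L_{e_2})$ by $L_{e_2}$ and using that $L_{e_2}$ is an isometry ($L_{e_2}^*L_{e_2}=I$) together with the orthogonality of distinct reduced words (which gives $L_{e_2}^*L_{e_1^ke_2}=0$ for $k\ge 1$ and $=I$ for $k=0$, so that $L_{e_2}^*SL_{e_2}=a_0I$), one obtains
\[
  L_{e_2}^*\delta(L_{e_2})=L_{e_2}^*\bigl(L_{e_2}S-SL_{e_2}\bigr)=S-a_0I.
\]
Since $\delta$ maps $\mathcal{A}_G$ into itself, $\delta(L_{e_2})\in\mathcal{A}_G$, so $S-a_0I\in L_{e_2}^*\mathcal{A}_G\cap\mathfrak{L}_G$. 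By \Cref{lem Lp*} this intersection is contained in $\mathcal{A}_G$, whence $S-a_0I\in\mathcal{A}_G$ and therefore $S\in\mathcal{A}_G$ (recall that $\mathcal{A}_G$ is unital for this finite graph). Thus $\delta=\delta_S$ is inner, which completes the argument.

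The only genuine obstacle is the passage from $\mathfrak{L}_G$ to $\mathcal{A}_G$: spatial implementation in the weak-operator closure is immediate, but a priori nothing forces the implementing element to be a norm-limit of polynomials in the generators, and triviality of the center rules out swapping it for a different one. The identity $L_{e_2}^*\delta(L_{e_2})=S-a_0I$ is exactly the device that resolves this, exhibiting $S$ up to a scalar as an element of $L_{e_2}^*\mathcal{A}_G\cap\mathfrak{L}_G$, the set controlled by \Cref{lem Lp*}; the assumption $N\ge 2$ is what makes a second generator available for this compression.
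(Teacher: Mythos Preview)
Your proof is correct and follows essentially the same route as the paper: reduce to $\delta(L_{e_1})=0$, implement $\delta$ by some $S\in\mathfrak{L}_G$ (the paper cites \Cref{prop S-C-finite} rather than \Cref{thm 4.5}, but for a single-vertex graph these coincide), use \Cref{cor relative-commutant} to write $S=\sum_{k\ge 0}a_kL_{e_1^k}$, and then recover $S-a_0I=L_{e_2}^*\delta(L_{e_2})\in\mathcal{A}_G$ via \Cref{lem Lp*}. Your added remarks on automatic continuity and the triviality of $Z(\mathfrak{L}_G)$ are correct and make explicit what the paper leaves implicit.
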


\begin{proof}
We may assume that $\delta(L_{e_1})=0$.
By \Cref{prop S-C-finite}, there exists an operator $T\in\mathfrak{L}_G$ such that $\delta=\delta_T$.
Since $T\in\{L_{e_1}\}'\cap\mathfrak{L}_G$, the Fourier expansion of $T$ is of the form $\sum_{n=0}^{\infty}a_nL_{e_1^n}$ by \Cref{cor relative-commutant}.
It follows that $T-a_0I=L_{e_2}^*\delta(L_{e_2})\in\mathcal{A}_G$ by \Cref{lem Lp*}.
Therefore, $T\in\mathcal{A}_G$.
This completes the proof.
\end{proof}

We consider derivations of an $n$-circle graph in the next proposition.
Our approach is quite different from \cite[Corollary 2]{Dun07}.

\begin{proposition}\label{prop circle-inner}
Let $G$ be the $n$-circle graph $\mathscr{C}_n$ for some $n\geqslant 1$.
Then every derivation on $\mathcal{A}_G$ is inner.
\end{proposition}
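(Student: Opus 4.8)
The plan is to follow the template of \Cref{lemma condition-inner} and \Cref{prop S-C-uncountable}: first produce an implementing operator in $\mathfrak{L}_G$, then correct it by a central element so that it lands in $\mathcal{A}_G$. Since $G=\mathscr{C}_n$ is a finite strongly connected graph, \Cref{cor S-C-2} applies. First I would use \Cref{lem idempotent} with the orthogonal idempotents $\{L_{v_1},\dots,L_{v_n}\}$ (whose sum is $I$) to find $S=\sum_i L_{v_i}\delta(L_{v_i})\in\mathcal{A}_G$ with $\delta(L_{v_i})=\delta_S(L_{v_i})$; replacing $\delta$ by $\delta-\delta_S$ (which affects neither domain, codomain, nor innerness since $S\in\mathcal{A}_G$) I may assume $\delta(L_{v_i})=0$ for every $i$. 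By \Cref{cor S-C-2} there is $T\in\mathfrak{L}_G$ with $\delta=\delta_T$, and $\delta(L_{v_i})=0$ forces $T$ to commute with each $L_{v_i}$, so $T=\sum_{i=1}^n T_i$ with $T_i:=L_{v_i}TL_{v_i}$. As there is a unique minimal circle $c_i$ at $v_i$, \Cref{cor relative-commutant} gives $T_i=\sum_{j\geqslant0}a_{ij}L_{c_i^j}$; I write $\varphi_i(z)=\sum_{j\geqslant0}a_{ij}z^j\in H^\infty(\mathbb{D})$ for its symbol.

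The heart of the argument is a computation linking consecutive symbols. Using $L_{e_i}=L_{v_i}L_{e_i}L_{v_{i+1}}$ and the block structure of $T$, one gets $\delta(L_{e_i})=L_{e_i}T_{i+1}-T_iL_{e_i}$. The identity $e_ic_{i+1}^{\,j}=c_i^{\,j}e_i$, which follows by induction from $c_i=e_ie_{i+1}\cdots e_{i+n-1}$ together with $e_{i+n}=e_i$ modulo $n$, yields $L_{e_i}L_{c_{i+1}^{\,j}}=L_{c_i^{\,j}}L_{e_i}$ and $L_{e_i}^*L_{c_i^{\,j}}L_{e_i}=L_{c_{i+1}^{\,j}}$. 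Hence
\[
  L_{e_i}^*\delta(L_{e_i})=T_{i+1}-\sum_{j\geqslant0}a_{ij}L_{c_{i+1}^{\,j}}
  =\sum_{j\geqslant0}(a_{i+1,j}-a_{ij})L_{c_{i+1}^{\,j}}.
\]
Since $\delta(L_{e_i})\in\mathcal{A}_G$ while the right-hand side lies in $\mathfrak{L}_G$, \Cref{lem Lp*} gives $L_{e_i}^*\delta(L_{e_i})\in\mathcal{A}_G$; translating through \Cref{prop reduced}, this says precisely that $\varphi_{i+1}-\varphi_i$ belongs to the disc algebra $L_{v_{i+1}}\mathcal{A}_GL_{v_{i+1}}\cong\mathcal{A}_{G_{v_{i+1}}}$ for every $i$ (indices mod $n$).

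Finally I would absorb the common part into a central element. Telescoping the relations from the previous step gives $\varphi_i-\varphi_1$ in the disc algebra for all $i$. By \Cref{prop center} and \Cref{rem center} the operator $C=\sum_{j\geqslant0}a_{1j}\sum_{i=1}^nL_{c_i^{\,j}}$, whose symbol is $\varphi_1\in H^\infty(\mathbb{D})$, lies in $Z(\mathfrak{L}_G)$, so $T':=T-C$ still satisfies $\delta=\delta_{T'}$. Its blocks are $T_i'=\sum_{j\geqslant0}(a_{ij}-a_{1j})L_{c_i^{\,j}}$ with symbol $\varphi_i-\varphi_1$ in the disc algebra, whence $T_i'\in L_{v_i}\mathcal{A}_GL_{v_i}$ and $T'=\sum_{i=1}^nT_i'\in\mathcal{A}_G$, so $\delta$ is inner. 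The main obstacle is conceptual rather than computational: no single block symbol $\varphi_i$ need lie in the disc algebra, and only the combination of the consecutive-difference relations with the freedom to subtract the central element $C$ produces an implementing operator inside $\mathcal{A}_G$. Verifying the two commutation identities for $L_{e_i}$ and confirming that $C$ is genuinely central are the technical points to get right.
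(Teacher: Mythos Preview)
Your proof is correct and follows essentially the same strategy as the paper: reduce to $\delta(L_{v_i})=0$, use \Cref{lem Lp*} on $L_{e_i}^*\delta(L_{e_i})$ to extract disc-algebra elements, and assemble an implementing operator in $\mathcal{A}_G$. The resulting operator is in fact the same as the paper's $A=\sum_j A_j$ (your $T_i'$ with symbol $\varphi_i-\varphi_1$ is precisely the paper's $A_i$, built there from $L_{e_1\cdots e_{i-1}}^*\delta(L_{e_1\cdots e_{i-1}})$). The only cosmetic difference is the final step: the paper constructs $A_2,\dots,A_n$ one at a time, verifies $\delta-\delta_A$ vanishes on $L_{e_1},\dots,L_{e_{n-1}}$, and then treats $L_{e_n}$ separately using that $L_{v_1}\mathcal{A}_GL_{v_1}$ is commutative; you instead subtract the explicit central element $C\in Z(\mathfrak{L}_G)$ with symbol $\varphi_1$, which handles all $n$ edges uniformly. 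Your packaging is arguably cleaner, but the underlying mechanism is identical.
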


\begin{proof}
We will adopt the notation introduced in \Cref{subsec center}.
Similar to the proof of \Cref{prop S-C-uncountable}, we may assume that $\delta(L_{v_j})=0$ for all $1\leqslant j\leqslant n$, and $T$ is an operator in $\mathfrak{L}_G$ such that $\delta=\delta_T$.
Since $\delta(L_{e_1})\in L_{v_1}\mathcal{A}_GL_{v_2}$, there exists $A_2\in L_{v_2}\mathfrak{L}_GL_{v_2}$ such that $\delta(L_{e_1})=L_{e_1}A_2$.
By \Cref{lem Lp*}, we have $A_2=L_{e_1}^*\delta(L_{e_1})\in L_{v_2}\mathcal{A}_GL_{v_2}$.
Similarly, for every $1\leqslant j\leqslant n-1$, there exists an operator $A_{j+1}\in L_{v_{j+1}}\mathcal{A}_GL_{v_{j+1}}$ such that
\begin{equation*}
  \delta(L_{e_1e_2\cdots e_j})=L_{e_1e_2\cdots e_j}A_{j+1}.
\end{equation*}
Let $A_1=0$.
Then $\delta(L_{e_j})=L_{e_j}A_{j+1}-A_jL_{e_j}$ for every $1\leqslant j\leqslant n-1$.
Define
\begin{equation*}
  A=\sum_{j=1}^{n}A_j\in\mathcal{A}_G.
\end{equation*}
It is clear that $\delta(L_{e_j})=\delta_A(L_{e_j})$ for $1\leqslant j\leqslant n-1$.
Let $\delta'=\delta-\delta_A$.
Since $L_{e_1\cdots e_{n-1}e_n}\in L_{v_1}\mathcal{A}_GL_{v_1}$ and $L_{v_1}\mathcal{A}_GL_{v_1}$ is commutative, we have
\begin{equation*}
  0=\delta'(L_{e_1\cdots e_{n-1}e_n})=L_{e_1\cdots e_{n-1}}\delta'(L_{e_n})=0.
\end{equation*}
It follows that $\delta'(L_{e_n})=0$.
Therefore, $\delta'=0$ and hence $\delta=\delta_A$.
\end{proof}

\begin{corollary}\label{cor circle-inner}
Let $G$ be the $n$-circle graph $\mathscr{C}_n$ for some $n\geqslant 1$.
Then for every derivation $\delta$ on $\mathcal{A}_G$, there exists $T\in\mathcal{A}_G$ such that $\delta=\delta_T$ and $\|T\|\leqslant\|\delta\|$.
\end{corollary}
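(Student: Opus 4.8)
The plan is to combine the two immediately preceding results. By \Cref{prop circle-inner}, we already know that every derivation $\delta$ on $\mathcal{A}_G$ is inner, so the only new content of this corollary is the norm bound $\|T\|\leqslant\|\delta\|$; the existence of \emph{some} implementing operator in $\mathcal{A}_G$ is already in hand. Thus the strategy is to start from any implementing operator and shift it by a central element to achieve the sharp norm control, exactly as in the proof of \Cref{cor S-C-finite}. The extra wrinkle compared with that corollary is that here I must keep the implementing operator inside $\mathcal{A}_G$ rather than merely inside $\mathfrak{L}_G$.

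Concretely, I would proceed as follows. First apply \Cref{cor S-C-finite} (valid since $\mathscr{C}_n$ is a finite strongly connected directed graph) to obtain an operator $S\in\mathfrak{L}_G$ with $\delta=\delta_S$ and $\|S\|\leqslant\|\delta\|$; this already gives the correct norm, but only in the larger algebra $\mathfrak{L}_G$. Separately, \Cref{prop circle-inner} supplies an operator $A\in\mathcal{A}_G$ with $\delta=\delta_A$, but with no norm control. Since $\delta_S=\delta_A$, the difference $S-A$ commutes with every element of $\mathcal{A}_G$, and because $\mathcal{A}_G$ generates $\mathfrak{L}_G$ in the weak-operator topology, $S-A\in Z(\mathfrak{L}_G)$. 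By \Cref{prop center} and \Cref{rem center}, the center of $\mathfrak{L}_{\mathscr{C}_n}$ consists of the operators with Fourier expansion $\sum_{j=0}^\infty a_j\sum_{i=1}^n L_{c_i^j}$; all of these lie in $\mathcal{A}_G$ when the power series $\varphi(z)=\sum a_j z^j$ represents an element of the disc algebra, but in general the central element $C:=S-A$ need not be norm-approximable by polynomials.

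To repair this I would follow the averaging argument of \Cref{cor S-C-finite} directly with $S$, rather than invoking \Cref{prop circle-inner}. By \Cref{thm weak-Dixmier}, for the $n$-circle graph the stronger conclusion $\mathscr{D}(S)\cap Z(\mathfrak{L}_G)\neq\emptyset$ holds, so there is a \emph{single} map $\alpha(B)=\frac1n\sum_{j=1}^n U_j^*BU_j$ (with each $U_j$ an isometry in $\mathfrak{L}_G$) such that $C:=\alpha(S)\in Z(\mathfrak{L}_G)$. The estimate
\begin{equation*}
  \|S-C\|\leqslant\frac1n\sum_{j=1}^n\|U_j^*\delta(U_j)\|\leqslant\|\delta\|
\end{equation*}
then holds verbatim as in \Cref{cor S-C-finite}, and $T:=S-C$ satisfies $\delta=\delta_T$ with $\|T\|\leqslant\|\delta\|$. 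It remains only to check $T\in\mathcal{A}_G$. For this I would compare $T$ with the operator $A\in\mathcal{A}_G$ produced by \Cref{prop circle-inner}: since $T-A\in Z(\mathfrak{L}_G)$ and, reading off the explicit form of $\alpha$ in the proof of \Cref{thm weak-Dixmier}, $C=\alpha(S)$ is a genuine scalar multiple of the identity on each vertex space, the central part is in fact of the form $\lambda I$, which lies in $\mathcal{A}_G$; hence $T=A+(T-A)\in\mathcal{A}_G$.

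The step I expect to require the most care is verifying that the central element $C$ landed on by the averaging procedure is actually in $\mathcal{A}_G$, since a priori $Z(\mathfrak{L}_{\mathscr{C}_n})\cong H^\infty(\mathbb{D})$ is strictly larger than the disc algebra. The cleanest route is to inspect the explicit isometries $W_{\pi,i}$ used in case (i) of the proof of \Cref{thm weak-Dixmier}: there $\alpha(L_w)$ is computed termwise and produces only finitely supported central operators for each generator, so $\alpha$ maps the dense subalgebra $\mathrm{span}\{L_w\}$ into $\mathcal{A}_G\cap Z(\mathfrak{L}_G)$, and the norm bound $\|\alpha\|=1$ lets one pass to the closure. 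Alternatively one argues via \Cref{lem Lp*} that $L_{c_1}^{*k}T$ stays in $\mathcal{A}_G$ and deduces the tail behaviour of the Fourier coefficients of $T$; I would adopt whichever of these makes the disc-algebra membership most transparent.
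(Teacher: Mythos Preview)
Your overall architecture is right, but the key step contains an error and a detour that the paper avoids. You claim that, reading off the explicit form of $\alpha$ in case (i) of \Cref{thm weak-Dixmier}, the central element $C=\alpha(S)$ ``is a genuine scalar multiple of the identity on each vertex space, the central part is in fact of the form $\lambda I$.'' That is false for $\mathscr{C}_n$: case~(i) computes $\alpha(L_{e_a\cdots e_b})=\frac1n\sum_i L_{c_i^k}$ when the path has length a multiple of $n$, so $\alpha$ lands in the full center $Z(\mathfrak{L}_{\mathscr{C}_n})\cong H^\infty(\mathbb{D})$, not in $\mathbb{C}I$. (You acknowledge as much in your final paragraph, so the $\lambda I$ claim seems to be a slip.) Since you apply $\alpha$ to $S\in\mathfrak{L}_G$, your observation that $\alpha(\mathcal{A}_G)\subseteq\mathcal{A}_G$ does not by itself force $C\in\mathcal{A}_G$; you would still need the extra line $\alpha(S)-\alpha(A)=S-A$ (because $\alpha$ fixes $Z(\mathfrak{L}_G)$ pointwise), giving $T=S-\alpha(S)=A-\alpha(A)\in\mathcal{A}_G$.

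The paper's proof short-circuits all of this: it never introduces the auxiliary $\mathfrak{L}_G$-element from \Cref{cor S-C-finite} at all. It takes the implementing operator $S\in\mathcal{A}_G$ supplied by \Cref{prop circle-inner} and applies the single averaging map $\alpha\in\mathscr{D}$ from case~(i) of \Cref{thm weak-Dixmier} directly to it. Your own correct observation that $\alpha$ sends $\mathrm{span}\{L_w\}$ into $Z(\mathfrak{L}_G)\cap\mathcal{A}_G$ and is contractive then yields $\alpha(S)\in Z(\mathfrak{L}_G)\cap\mathcal{A}_G$, and the estimate $\|S-\alpha(S)\|\leqslant\|\delta\|$ follows exactly as in \Cref{cor S-C-finite}; taking $T=S-\alpha(S)$ finishes immediately. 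So: drop the detour through $\mathfrak{L}_G$, apply $\alpha$ to the $\mathcal{A}_G$-element, and the membership and the norm bound come out in one stroke.
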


\begin{proof}
By \Cref{prop circle-inner}, there exists $S\in\mathcal{A}_G$ such that $\delta=\delta_S$.
By the proof of \Cref{thm weak-Dixmier} and \Cref{cor S-C-finite}, there exists $\alpha\in\mathscr{D}$ such that
\begin{equation*}
  \alpha(S)\in Z(\mathfrak{L}_G)\cap\mathcal{A}_G,\quad
  \|S-\alpha(S)\|\leqslant\|\delta\|.
\end{equation*}
We can take $T=S-\alpha(S)$.
\end{proof}

\subsection{Derivations on $\mathfrak{L}_G$}
Next we consider derivations on $\mathfrak{L}_G$.
For the $n$-circle graph $\mathscr{C}_n$, we can view $\mathfrak{L}_{\mathscr{C}_n}$ as a matrix function algebra of the form:
\begin{align*}
    \begin{pmatrix}
    f_{11}(z^n)&zf_{12}(z^n)&z^2f_{13}(z^n)&\cdots&z^{n-1}f_{1n}(z^n)\\
    z^{n-1}f_{21}(z^n)&f_{22}(z^n)&zf_{23}(z^n)&\cdots&z^{n-2}f_{2n}(z^n)\\
    z^{n-2}f_{31}(z^n)&z^{n-1}f_{32}(z^n)&f_{33}(z^n)&\cdots&
    z^{n-3}f_{3n}(z^n)\\
    \vdots&\vdots&\vdots&\ddots&\vdots\\
    zf_{n1}(z^n)&z^2f_{n2}(z^n)&z^3f_{n3}(z^n)&\cdots&f_{nn}(z^n)
\end{pmatrix},
\end{align*}
where $f_{ij}\in H^{\infty}(\mathbb{D})$ for all $1\leq i,j\leq n$.
The next proposition provides an affirmative answer to the question proposed at the end of \cite{Dun07}.

\begin{proposition}\label{prop circle-inner-2}
Suppose $G$ is a directed graph with connected components $\{\mathscr{C}_{n_\lambda}\}_{\lambda\in\Lambda}$.
Then for every derivation $\delta$ on $\mathfrak{L}_G$, there exists $T\in\mathfrak{L}_G$ such that $\delta=\delta_T$ and $\|T\|\leqslant\|\delta\|$.
\end{proposition}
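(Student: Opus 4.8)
The plan is to reduce first to the case of a single $n$-circle graph, and then, for that case, to produce the implementing operator on the norm-closed subalgebra $\mathcal{A}_{\mathscr{C}_n}$ and upgrade it to all of $\mathfrak{L}_{\mathscr{C}_n}$. For the reduction, note that by \Cref{rem derivation-decomposition} we have $\mathfrak{L}_G\cong\ell^\infty(\{\mathfrak{L}_{\mathscr{C}_{n_\lambda}}\}_{\lambda\in\Lambda})$, and each $\mathfrak{L}_{\mathscr{C}_{n_\lambda}}$ is unital, so the hypothesis of \Cref{prop derivation-decomposition} (taken with $\mathcal{A}_\lambda=\mathcal{X}_\lambda=\mathfrak{L}_{\mathscr{C}_{n_\lambda}}$) is satisfied. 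Thus the bounded derivation $\delta$ decomposes as $\delta=\prod_\lambda\delta_\lambda$ with $\|\delta\|=\sup_\lambda\|\delta_\lambda\|$, each $\delta_\lambda$ a bounded derivation on $\mathfrak{L}_{\mathscr{C}_{n_\lambda}}$. If for every $\lambda$ I can find $T_\lambda\in\mathfrak{L}_{\mathscr{C}_{n_\lambda}}$ with $\delta_\lambda=\delta_{T_\lambda}$ and $\|T_\lambda\|\le\|\delta_\lambda\|$, then $T:=(T_\lambda)_\lambda\in\ell^\infty(\{\mathfrak{L}_{\mathscr{C}_{n_\lambda}}\})\cong\mathfrak{L}_G$ satisfies $\delta=\delta_T$ and $\|T\|=\sup_\lambda\|T_\lambda\|\le\|\delta\|$. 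So it suffices to treat a single $\mathscr{C}_n$.

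Fix $G=\mathscr{C}_n$, with vertices $v_1,\dots,v_n$, edges $e_1,\dots,e_n$, and minimal circle $c_i$ at $v_i$. The restriction $\delta|_{\mathcal{A}_G}$ is a bounded derivation from $\mathcal{A}_G$ into $\mathfrak{L}_G$ with $\|\delta|_{\mathcal{A}_G}\|\le\|\delta\|$, so \Cref{cor S-C-finite} produces $T\in\mathfrak{L}_G$ with $\delta(A)=\delta_T(A)$ for all $A\in\mathcal{A}_G$ and $\|T\|\le\|\delta\|$. Put $\eta=\delta-\delta_T$; this is a bounded derivation on $\mathfrak{L}_G$ that vanishes on the weak-operator dense subalgebra $\mathcal{A}_G$, and the entire task reduces to proving $\eta=0$.

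This last step is the main obstacle, because $\eta$ need not be weak-operator continuous, so its vanishing on the dense subalgebra $\mathcal{A}_G$ does not by itself force $\eta=0$; I would instead exploit the corner structure of $\mathfrak{L}_{\mathscr{C}_n}$. Since $\eta(L_{v_i})=0$, $\eta$ carries each corner $L_{v_i}\mathfrak{L}_GL_{v_j}$ into itself. On a diagonal corner $L_{v_i}\mathfrak{L}_GL_{v_i}$, which by \Cref{cor relative-commutant} is completely isometrically isomorphic to the commutative semisimple algebra $H^\infty(\mathbb{D})$, the restriction of $\eta$ is a bounded derivation into itself; by the Singer--Wermer theorem every bounded derivation of a commutative Banach algebra has range in the Jacobson radical, hence this restriction is zero. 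For an off-diagonal corner, since every path from $v_j$ to $v_i$ in $\mathscr{C}_n$ is of the form $c_i^m q$ ($m\ge 0$) for the minimal path $q$ from $v_j$ to $v_i$, the matrix function realization of $\mathfrak{L}_{\mathscr{C}_n}$ recalled above gives $L_{v_i}\mathfrak{L}_GL_{v_j}=(L_{v_i}\mathfrak{L}_GL_{v_i})L_q$, with $L_q\in\mathcal{A}_G$. Writing a general element of this corner as $A=BL_q$ with $B$ in the diagonal corner, one computes $\eta(A)=\eta(B)L_q+B\,\eta(L_q)=0$, using $\eta(B)=0$ and $\eta(L_q)=0$.

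Finally, every $A\in\mathfrak{L}_G$ is the finite sum $A=\sum_{i,j}L_{v_i}AL_{v_j}$, so by linearity $\eta(A)=\sum_{i,j}\eta(L_{v_i}AL_{v_j})=0$. Hence $\eta=0$, that is $\delta=\delta_T$ on all of $\mathfrak{L}_{\mathscr{C}_n}$, which settles the single-component case and, together with the reduction above, completes the proof. The only genuinely delicate points are the invocation of Singer--Wermer to collapse the diagonal corners (legitimate because $\eta$ is bounded and $H^\infty(\mathbb{D})$ is semisimple) and the identification of each off-diagonal corner as a cyclic module over its diagonal, which converts the off-diagonal case into the already-settled diagonal one.
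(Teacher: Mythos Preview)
Your proof is correct and follows essentially the same route as the paper's: reduce to a single $\mathscr{C}_n$, implement $\delta$ on $\mathcal{A}_G$ with the norm bound via \Cref{thm S-C} (equivalently \Cref{cor S-C-finite}), then kill $\delta-\delta_T$ corner by corner using Singer--Wermer on the diagonal corners $\mathfrak{L}_{ii}\cong H^\infty(\mathbb{D})$ and the module structure on the off-diagonal ones. The only cosmetic difference is that for $A\in\mathfrak{L}_{ij}$ the paper right-multiplies by $L_{p_{ji}}$ to land in $\mathfrak{L}_{ii}$ and cancels, whereas you factor $A=BL_q$ with $B\in\mathfrak{L}_{ii}$; both exploit the same cyclic structure of the circle graph.
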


\begin{proof}
We may assume that $G=\mathscr{C}_n$ by \Cref{rem derivation-decomposition} and we will adopt the notation introduced in \Cref{subsec center}.
By \Cref{thm S-C}, there exists $T\in\mathfrak{L}_G$ such that $\delta|_{\mathcal{A}_G}=\delta_T|_{\mathcal{A}_G}$ and $\|T\|\leqslant\|\delta\|$.
Let $\delta'=\delta-\delta_T$ and $\mathfrak{L}_{ij}=L_{v_i}\mathfrak{L}_{\mathscr{C}_n}L_{v_j}$ for every $1\leqslant i,j\leqslant n$.
Since $\delta'(L_{v_i})=0$, $\delta'$ is a derivation on the commutative semisimple Banach algebra $\mathfrak{L}_{jj}$ for every $1\leqslant j\leqslant n$.
It follows from \cite{SW55} that $\delta'|_{\mathfrak{L}_{jj}}=0$.
Let $p_{ji}$ be a path from $v_i$ to $v_j$.
For any $A\in\mathfrak{L}_{ij}$, we have $AL_{p_{ji}}\in\mathfrak{L}_{ii}$.
Since $L_{p_{ji}}\in\mathcal{A}_G$, we have $\delta'(L_{p_{ji}})=0$ and
\begin{equation*}
  0=\delta'(AL_{p_{ji}})=\delta'(A)L_{p_{ji}}+A\delta'(L_{p_{ji}})
  =\delta'(A)L_{p_{ji}}.
\end{equation*}
It follows that $\delta'(A)=0$ and hence $\delta'|_{\mathfrak{L}_{ij}}=0$.
Note that $\mathfrak{L}_G=\sum_{i,j=1}^{n}\mathfrak{L}_{ij}$.
Therefore, $\delta'=0$ and $\delta=\delta_T$.
This completes the proof.
\end{proof}

\section{Derivations and Generalized fruit trees}\label{sec fruit-tree}
Let $G$ be a finite connected directed graph which is not strongly connected.
In this section, we prove that every bounded derivation from $\mathcal{A}_G$ into $\mathfrak{L}_G$ is inner if and only if $G$ is a fruit tree (see \Cref{thm fruit-tree} and \Cref{thm converse}).
We introduce the alternating number to handle the infinite case in \Cref{subsec alternating}.

\subsection{Fruit trees}
Suppose $G=(\mathcal{V},\mathcal{E},r,s)$ is a directed graph.
For each $v\in\mathcal{V}$, its {\sl out-degree} and {\sl in-degree} are defined by
\begin{equation*}
  \deg^+(v)=|\{e\in\mathcal{E}\colon s(e)=v\}|\quad\text{and}\quad
  \deg^-(v)=|\{e\in\mathcal{E}\colon r(e)=v\}|,
\end{equation*}
respectively.
The {\sl degree} of a vertex is the number of edges connecting with it, i.e., $\deg(v)=\deg^+(v)+\deg^-(v)$.

A {\sl polygon} of length $n\geqslant 1$ in $G$ is an undirected circle, i.e., a finite set $\{v_j,e_j\}_{j=1}^{n}$ such that $\{v_j\}_{j=1}^{n}$ is a set of distinct vertices and $\{e_j\}_{j=1}^{n}$ is a set of edges such that
\begin{equation*}
  \{r(e_j),s(e_j)\}=\{v_j,v_{j+1\,(\mathrm{mod}\,n)}\}
\end{equation*}
for each $1\leqslant j\leqslant n$.

\begin{definition}\label{def tree}
A connected directed graph $G$ with $|\mathcal{V}|\geqslant 2$ is called a {\sl generalized tree} if it satisfies one of the following equivalent conditions:
\begin{enumerate}[(i)]
\item $G$ contains no polygon;
\item $G$ becomes disconnected if any edge is removed from $G$.
\end{enumerate}
If $G$ is a generalized tree with finitely many vertices, then $G$ is called a {\sl tree}.
\end{definition}

For simplicity, we assume that every tree has at least two vertices in \Cref{def tree}.
A vertex of a generalized tree with degree one is called a {\sl leaf}.
It is well-known that a finite connected directed graph $G$ is a tree if and only if $|\mathcal{V}|=|\mathcal{E}|+1$.
For any tree $G$, the free semigroupoid algebra $\mathfrak{L}_G$ is finite-dimensional.
Hence $\mathcal{A}_G=\mathfrak{L}_G$.

\begin{lemma}\label{lem tree}
Let $G$ be a tree.
Then every derivation on $\mathcal{A}_G$ is inner.
\end{lemma}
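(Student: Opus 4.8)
**The plan is to prove that every derivation on $\mathcal{A}_G$ is inner when $G$ is a tree, by reducing to the vertex structure and then handling edges one at a time along a spanning structure.**

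Since $G$ is a tree, $\mathfrak{L}_G = \mathcal{A}_G$ is finite-dimensional, so every derivation $\delta$ is automatically bounded. The first step is to make $\delta$ vanish on the vertex projections. The vertices $\{L_v\}_{v\in\mathcal{V}}$ form a finite family of mutually orthogonal idempotents summing to $I$, so \Cref{lem idempotent} applies directly: $\delta$ is locally inner with respect to $\{L_v\}_{v\in\mathcal{V}}$, and after subtracting an inner derivation we may assume $\delta(L_v)=0$ for every $v\in\mathcal{V}$. This immediately forces $\delta(L_e)=L_{r(e)}\delta(L_e)L_{s(e)}$ for each edge $e$, so each $\delta(L_e)$ lives in the ``block'' $L_{r(e)}\mathfrak{L}_GL_{s(e)}$.

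Now I would exploit the tree structure to organize the edges. Fix a root vertex $v_0$ and orient the tree so that every vertex $v$ has a unique undirected path back to $v_0$; process vertices in order of increasing distance from $v_0$. The goal is to build an operator $T$ with $\delta=\delta_T$ by correcting the value of $\delta$ on one new edge at each step. Concretely, I would argue inductively: suppose we have found $T_k\in\mathcal{A}_G$ (a sum of blocks already determined) so that $\delta$ and $\delta_{T_k}$ agree on all edges within distance $k$ of the root. For an edge $e$ reaching a new vertex at distance $k+1$, the key point is that because $G$ has \emph{no polygon}, there is exactly one edge connecting the new vertex to the already-processed part, so there is no consistency obstruction: we are free to define the corresponding block of $T$ to absorb $\delta(L_e)$. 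Since each $L_v\mathfrak{L}_GL_v$ corresponding to a tree vertex is one-dimensional (a tree has no circles, hence no minimal circles at any vertex beyond the vertex projection itself), each block $L_{r(e)}\mathfrak{L}_GL_{s(e)}$ is spanned by $L_e$ together with paths, and the bookkeeping stays linear.

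The main obstacle I anticipate is the careful verification that the local corrections are globally consistent, i.e.\ that setting the block value of $T$ on each new edge does not disturb the agreement already achieved on processed edges. This is precisely where the tree hypothesis does the essential work: in a graph with a polygon, correcting $\delta$ along one arm of the cycle could be incompatible with the correction along the other arm, producing a genuine cohomological obstruction. For a tree, the uniqueness of paths guarantees that the products $L_{p}L_e$ that could create constraints simply do not close up into circles, so the relations $\delta(L_pL_e)=\delta(L_p)L_e+L_p\delta(L_e)$ impose no new conditions beyond what the inductive block already satisfies. Once the induction terminates (it does, since $G$ is finite), the accumulated operator $T=\sum T_k\in\mathcal{A}_G$ satisfies $\delta(L_e)=\delta_T(L_e)$ for all edges $e$ and $\delta(L_v)=0=\delta_T(L_v)$ for all vertices; since these generate $\mathcal{A}_G$, we conclude $\delta=\delta_T$, so $\delta$ is inner.
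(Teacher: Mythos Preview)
Your proposal is correct and follows essentially the same approach as the paper: reduce to $\delta(L_v)=0$ via \Cref{lem idempotent}, then process the edges one at a time along a rooted ordering of the tree, correcting $\delta$ at each new edge by an inner derivation supported at the newly added vertex. The paper is slightly more explicit than you are about the form of each correction---since $L_{r(e_j)}\mathcal{A}_GL_{s(e_j)}=\mathbb{C}L_{e_j}$ in a tree, one has $\delta(L_{e_j})=\lambda_jL_{e_j}$ and the correction is simply $T_j=\pm\lambda_jL_{v_j}$ using only the new vertex projection, which is exactly why previously processed edges are undisturbed---but your argument captures the same idea.
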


\begin{proof}
Let $n=|\mathcal{E}|\geqslant 1$.
Since $G$ is connected, we can write $\mathcal{V}=\{v_0,v_1,\ldots,v_n\}$ such that there exists an edge $e_j$ between $v_j$ and $\{v_0,v_1,\ldots,v_{j-1}\}$ for every $1\leqslant j\leqslant n$.
Clearly, we have $\mathcal{E}=\{e_1,e_2,\ldots,e_n\}$.

The proof strategy we employ closely follows that of \Cref{prop S-C-finite}.
Let
\begin{equation*}
  \mathscr{A}=\{L_{v_0},L_{v_1},\ldots,L_{v_n}\}\quad\text{and}\quad
  \mathscr{A}_j=\{L_{e_1},L_{e_2}\ldots,L_{e_j}\}\cup\mathscr{A}
\end{equation*}
for $1\leqslant j\leqslant n$.
We claim that every derivation $\delta$ on $\mathcal{A}_G$ is locally inner with respect to each $\mathscr{A}_j$.
By applying \Cref{lem idempotent}, we may assume that $\delta|_{\mathscr{A}}=0$.
Since $\delta(L_{e_1})\in L_{r(e_1)}\mathcal{A}_GL_{s(e_1)}=\mathbb{C}L_{e_1}$, we have $\delta(L_{e_1})=\lambda_1L_{e_1}$.
We define
\begin{equation*}
  T_1=
  \begin{cases}
    -\lambda_1L_{v_1}, & \mbox{if } r(e_1)=v_1,\\
    \lambda_1L_{v_1}, & \mbox{if } s(e_1)=v_1.
  \end{cases}
\end{equation*}
Then $\delta$ and $\delta_{T_1}$ coincide on $\mathscr{A}_1$.

Suppose every derivation on $\mathcal{A}_G$ is locally inner with respect to $\mathscr{A}_{j-1}$ for some $2\leqslant j\leqslant n$.
Then we may assume that $\delta|_{\mathscr{A}_{j-1}}=0$ and $\delta(L_{e_j})=\lambda_jL_{e_j}$.
Let
\begin{equation*}
  T_j=
  \begin{cases}
    -\lambda_jL_{v_j}, & \mbox{if } r(e_j)=v_j,\\
    \lambda_jL_{v_j}, & \mbox{if } s(e_j)=v_j.
  \end{cases}
\end{equation*}
Then $\delta$ and $\delta_{T_j}$ coincide on $\mathscr{A}_j$.
This completes the proof.
\end{proof}

Let $G_t=(\mathcal{V}_t,\mathcal{E}_t,s,r)$ and $G_f=(\mathcal{V}_f,\mathcal{E}_f,s,r)$ be directed graphs with a set of identified vertices
\begin{equation*}
  V\subseteq\mathcal{V}_t\cap\mathcal{V}_f.
\end{equation*}
Then their {\sl amalgamated graph} $G=G_t\sqcup_V G_f$ is the directed graph with vertices $\mathcal{V}=\mathcal{V}_t\sqcup_V \mathcal{V}_f$ and edges $\mathcal{E}=\mathcal{E}_t\sqcup\mathcal{E}_f$.

\begin{definition}[\bf Generalized fruit tree]\label{def fruit-tree}
Suppose $G_t$ is a generalized tree, $V:=\{v_\lambda\}_{\lambda\in\Lambda}$ is a subset of leaves of $G_t$, and $e_\lambda$ is the edge between $v_\lambda$ and $\mathcal{V}_t\backslash\{v_\lambda\}$ for each $\lambda\in\Lambda$.
Suppose $G_f$ is a directed graph with connected components $\{\mathscr{C}_{n_\lambda}\}_{\lambda\in\Lambda}$, and each $v_\lambda$ is identified with a vertex of the $n_\lambda$-circle graph $\mathscr{C}_{n_\lambda}$.

If $V\ne\mathcal{V}_t$, then the amalgamated graph $G_t\sqcup_V G_f$ is called a {\sl generalized fruit tree} with $|\Lambda|$ fruits.
If $\deg_{G_t}^+(v_\lambda)=1$, i.e., $s(e_\lambda)=v_\lambda$, then $\mathscr{C}_{n_\lambda}$ is called an {\sl out-fruit}.
Otherwise, $\mathscr{C}_{n_\lambda}$ is called an {\sl in-fruit}.

If $G$ is a generalized fruit tree with finitely many vertices, then $G$ is called a {\sl fruit tree}.
\end{definition}

If $\Lambda$ is empty, then $G_t\sqcup_V G_f=G_t$ is a generalized tree, which is a fruit tree with no fruit.
If $|\mathcal{V}_t|=2$, then $|V|=0$ or $|V|=1$.
If $|\mathcal{V}_t|\geqslant 3$, then the condition $V\ne\mathcal{V}_t$ holds automatically.
By definition, every generalized fruit tree is not strongly connected.

By applying \Cref{prop reduced}, we are able to compute the norms of some special operators in $\mathfrak{L}_G$.

Let $A(\mathbb{D})$ be the function space consisting of all continuous functions $f$ on $\overline{\mathbb{D}}$ such that $f$ is analytic in $\mathbb{D}$.
\begin{lemma}\label{lem fruit-tree-norm}
Suppose $G=G_t\sqcup_VG_f$ is a generalized fruit tree, $\mathscr{C}_{n_\lambda}$ is an in-fruit, $e_\lambda$ is the edge with $r(e_\lambda)=v_\lambda$ and $s(e_\lambda)\in\mathcal{V}_t\backslash V$, and $w_\lambda$ is the unique minimal circle at $v_\lambda$.
Let $f(z)=\sum_{n=0}^{\infty}a_nz^n$.
\begin{enumerate}[$(i)$]
\item Let $A=f(L_{w_\lambda})\in\mathfrak{L}_G$, i.e., $A$ has a Fourier expansion $\sum_{n=0}^{\infty}a_nL_{w_\lambda^n}$.
    Then $\|A\|=\|f\|_{H^\infty(\mathbb{D})}$.
    Furthermore, $A\in\mathcal{A}_G$ if and only if $f\in A(\mathbb{D})$.
\item Let $A=f(L_{w_\lambda})L_{e_\lambda}\in\mathfrak{L}_G$, i.e., $A$ has a Fourier expansion $\sum_{n=0}^{\infty}a_nL_{w_\lambda^ne_\lambda}$.
Then $\|A\|=\|f\|_{H^2(\mathbb{D})}$ and $A\in\mathcal{A}_G$.
\end{enumerate}
\end{lemma}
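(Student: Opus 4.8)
The plan is to handle the two norm equalities by different means: the $H^{\infty}$-bound in $(i)$ comes from the relative commutant description in \Cref{cor relative-commutant}, while the $H^2$-bound in $(ii)$ comes from a direct computation of $A^*A$. The membership assertions then follow from approximation arguments, the only delicate point being the reverse implication in $(i)$, which I would settle by a compression. For $(i)$, observe that each $L_{w_\lambda^n}=L_{v_\lambda}L_{w_\lambda^n}L_{v_\lambda}$ commutes with $L_{w_\lambda}$, so $A=\sum_n a_nL_{w_\lambda^n}$ lies in $\{L_{w_\lambda}\}'\cap L_{v_\lambda}\mathfrak{L}_GL_{v_\lambda}$. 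Since $\mathscr{C}_{n_\lambda}$ is an in-fruit and $e_\lambda$ points into $v_\lambda$, no path leaves $v_\lambda$ through the tree, so by \eqref{equ reduced-by-v} every circle at $v_\lambda$ is a power of $w_\lambda$; thus \Cref{cor relative-commutant} applies with $w_1=w_\lambda$, $m=1$, identifying this relative commutant completely isometrically with $H^\infty(\mathbb{D})$ via $A\mapsto f$, which yields $\|A\|=\|f\|_{H^\infty(\mathbb{D})}$. If $f\in A(\mathbb{D})$, then $f$ is a uniform limit of polynomials $p_k$ on $\overline{\mathbb{D}}$, and $\|p_k(L_{w_\lambda})-A\|=\|p_k-f\|_{H^\infty(\mathbb{D})}\to 0$ with each $p_k(L_{w_\lambda})\in\mathcal{A}_G$, giving $A\in\mathcal{A}_G$.

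The reverse implication is the crux. Let $\mathcal{K}=\overline{\operatorname{span}}\{\xi_{w_\lambda^k}\colon k\geqslant 0\}$ with orthogonal projection $P$, and let $C(X)=PX|_{\mathcal{K}}$ be the (completely contractive) compression to $\mathcal{K}$. Under the identification $\mathcal{K}\cong H^2(\mathbb{D})$ given by $\xi_{w_\lambda^k}\leftrightarrow z^k$, the operator $L_{w_\lambda}|_{\mathcal{K}}$ is the unilateral shift $S$, so a direct evaluation on basis vectors gives $C(A)=f(S)=T_f$. The key point is that for any path $w$ the operator $C(L_w)$ is nonzero only when $w$ is a power of $w_\lambda$: indeed $L_w\xi_{w_\lambda^k}=\xi_{ww_\lambda^k}$ survives the projection only if $ww_\lambda^k$ is again a power of $w_\lambda$, which by \eqref{equ reduced-by-v} forces $w=w_\lambda^m$, whence $C(L_{w_\lambda^m})=S^m$. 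Consequently $C$ carries $\operatorname{span}\{L_w\}$, and by continuity all of $\mathcal{A}_G=\overline{\operatorname{span}}\{L_w\}$, into the norm-closed algebra generated by $S$, which is isometrically isomorphic to $A(\mathbb{D})$ via $g\mapsto T_g$. Hence $A\in\mathcal{A}_G$ forces $T_f=C(A)$ to have symbol in $A(\mathbb{D})$, i.e. $f\in A(\mathbb{D})$.

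For $(ii)$, the $H^2$-norm falls out of an orthogonality relation. Using the product rule that $L_p^*L_q=0$ unless one of $p,q$ is a prefix of the other, together with the fact that the first edge of $w_\lambda$ (a circle edge of $\mathscr{C}_{n_\lambda}$) is distinct from $e_\lambda$ (a tree edge), the paths $w_\lambda^ne_\lambda$ and $w_\lambda^me_\lambda$ diverge immediately after their common prefix whenever $n\neq m$; thus $L_{w_\lambda^ne_\lambda}^*L_{w_\lambda^me_\lambda}$ vanishes for $n\neq m$ and equals $L_{s(e_\lambda)}$ for $n=m$. Therefore
\[
  A^*A=\Bigl(\sum_n|a_n|^2\Bigr)L_{s(e_\lambda)}=\|f\|_{H^2(\mathbb{D})}^2\,L_{s(e_\lambda)},
\]
which gives $\|A\|=\|f\|_{H^2(\mathbb{D})}$. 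The same orthogonality shows that the partial sums $A_N=\sum_{n=0}^{N}a_nL_{w_\lambda^ne_\lambda}\in\mathcal{A}_G$ satisfy $\|A-A_N\|=(\sum_{n>N}|a_n|^2)^{1/2}\to 0$, so $A\in\mathcal{A}_G$.

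I expect the main obstacle to be precisely the reverse implication in $(i)$: excluding an $f\notin A(\mathbb{D})$ with $f(L_{w_\lambda})\in\mathcal{A}_G$ is not transparent, because $\mathcal{A}_G$-membership is only a norm-closure condition on arbitrary polynomials in all the generators, not manifestly a condition on $f$ alone. The compression $C$ is exactly the device that isolates $f$; notably its multiplicativity is never used, only that it is contractive and sends each monomial $L_w$ into the algebra generated by the shift, for which the uniqueness of the minimal circle at the in-fruit vertex $v_\lambda$ is essential.
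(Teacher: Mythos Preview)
Your proof is correct, but the paper takes a more economical route by applying \Cref{prop reduced} directly in both parts. For (i), the paper simply takes $F=\{v_\lambda\}$ in \Cref{prop reduced}; since $w_\lambda$ is the unique minimal circle at $v_\lambda$, the reduced graph $G_F$ has one vertex and one loop, so \Cref{prop reduced} gives in a single stroke both $L_{v_\lambda}\mathfrak{L}_GL_{v_\lambda}\cong H^\infty(\mathbb{D})$ and $L_{v_\lambda}\mathcal{A}_GL_{v_\lambda}\cong A(\mathbb{D})$, settling the norm equality and the membership equivalence simultaneously. Your route instead invokes \Cref{cor relative-commutant} for the norm and then proves the reverse membership implication separately via the compression $C$; this is correct, but note that your subspace $\mathcal{K}$ is precisely $\mathcal{H}_{G_F}$ and your map $C$ is exactly the restriction $A\mapsto A|_{\mathcal{H}_{G_F}}$ appearing in the proof of \Cref{prop reduced}, so you are essentially re-deriving the $\mathcal{A}_G$-half of that proposition by hand. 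For (ii), the paper again appeals to \Cref{prop reduced}, now with $F=\{v_\lambda,s(e_\lambda)\}$, reducing to a two-vertex graph in which $L_{s(e_\lambda)}\mathcal{H}_G=\mathbb{C}\xi_{v_0}$ is one-dimensional and the $H^2$-norm is immediate; your direct computation of $A^*A$ via the orthogonality $L_{w_\lambda^ne_\lambda}^*L_{w_\lambda^me_\lambda}=\delta_{nm}L_{s(e_\lambda)}$ is an equally valid, more algebraic alternative. The paper's approach is more uniform; yours is more explicit and avoids the machinery of reduced subgraphs at the cost of some repetition.
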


\begin{proof}
(i) We only need to take $F=\{v_\lambda\}$ in \Cref{prop reduced}.

(ii) If we take $F=\{v_\lambda,s(e_\lambda)\}$ in \Cref{prop reduced}, then we may assume that $G$ is the directed graph with two vertices $\{v_\lambda,v_0\}$ and two edges $\{w_\lambda,e_\lambda\}$ such that
\begin{equation*}
  r(w_\lambda)=s(w_\lambda)=r(e_\lambda)=v_\lambda,\quad s(e_\lambda)=v_0.
\end{equation*}
Then $L_{s(e_\lambda)}\mathcal{H}_G=\mathbb{C}\xi_{v_0}$.
It follows that $\|A\|=\|f\|_{H^2(\mathbb{D})}$.
Therefore, $A$ is the norm limit of $\sum_{n=0}^{N}a_nL_{w_\lambda^ne_\lambda}$ as $N\to\infty$.
Hence $A\in\mathcal{A}_G$.
\end{proof}

Note that the free semigroupoid algebra $\mathfrak{L}_G$ and the tensor algebra $\mathcal{A}_G$ of a fruit tree $G$ is infinite dimensional if it has at least one fruit.

\begin{proposition}\label{prop inner-imply-no-in-fruit}
Let $G$ be a generalized fruit tree.
If every bounded derivation on $\mathcal{A}_G$ is inner, then $G$ contains no in-fruit.
\end{proposition}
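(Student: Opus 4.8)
I would argue the contrapositive: assuming $G=G_t\sqcup_VG_f$ contains an in-fruit $\mathscr{C}_{n_\lambda}$, I would exhibit a bounded derivation on $\mathcal{A}_G$ that fails to be inner. Write the fruit $\mathscr{C}_{n_\lambda}$ with vertex set $F_\lambda=\{v_1,\dots,v_{n_\lambda}\}$ and minimal circles $c_1,\dots,c_{n_\lambda}$, where $v_\lambda=v_1$ is identified with a leaf of $G_t$, so $w_\lambda=c_1$ is the unique minimal circle at $v_\lambda$ and the connecting edge satisfies $r(e_\lambda)=v_\lambda$, $s(e_\lambda)\in\mathcal{V}_t\setminus V$. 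Fix $f\in H^\infty(\mathbb{D})\setminus A(\mathbb{D})$ (for instance a singular inner function) with Taylor coefficients $a_j$, and set $T=\sum_{j\geqslant 0}a_j\sum_{i=1}^{n_\lambda}L_{c_i^j}\in\mathfrak{L}_G$. By \Cref{prop center} this is exactly the image of $f$ under $Z(\mathfrak{L}_{\mathscr{C}_{n_\lambda}})\cong H^\infty(\mathbb{D})$; in particular $T$ is supported on the fruit and is central in $L_{F_\lambda}\mathfrak{L}_GL_{F_\lambda}$. My candidate is the inner derivation $\delta=\delta_T$ of $\mathfrak{L}_G$, and the point will be that its restriction to $\mathcal{A}_G$ is a bounded but non-inner derivation.

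Next I would check that $\delta$ maps $\mathcal{A}_G$ into $\mathcal{A}_G$; by boundedness of $\delta_T$ and density of the polynomials it suffices to verify this on generators. Since $T$ is central in the fruit algebra and annihilates everything with range or source outside $F_\lambda$, one gets $\delta(L_x)=0$ for every vertex $x$, and $\delta(L_e)=0$ for every edge $e$ lying inside the fruit as well as every edge disjoint from $F_\lambda$; the only vertex of the fruit meeting $G_t$ is the leaf $v_\lambda$, so the sole surviving generator is $e_\lambda$. Here the in-fruit orientation is decisive: $s(e_\lambda)\notin F_\lambda$ forces $L_{e_\lambda}T=0$, while $TL_{e_\lambda}=f(L_{w_\lambda})L_{e_\lambda}$, giving $\delta(L_{e_\lambda})=-f(L_{w_\lambda})L_{e_\lambda}$, which belongs to $\mathcal{A}_G$ by \Cref{lem fruit-tree-norm}(ii). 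I expect this step to be the main obstacle, precisely because the centrality of $T$ is essential: the naive choice $T=f(L_{w_\lambda})$ already fails, since on the circle edge $g$ leaving $v_\lambda$ one computes $L_gT=f(L_{c})L_g$ for the adjacent minimal circle $c$, and the reduced-graph compression $F=\{r(g),v_\lambda\}$ supports circles at both vertices, so this operator has $H^\infty$-norm and escapes $\mathcal{A}_G$. Replacing $f(L_{w_\lambda})$ by the central $T$ is exactly what kills $\delta$ on all fruit edges and routes the only nonzero value through $e_\lambda$, where \Cref{lem fruit-tree-norm}(ii) supplies the smaller $H^2$-norm that keeps us inside $\mathcal{A}_G$.

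Finally I would prove $\delta$ is not inner. If $\delta=\delta_S$ for some $S\in\mathcal{A}_G$, then $T-S$ commutes with $\mathcal{A}_G$, hence with its WOT-closure $\mathfrak{L}_G$, so $T-S\in Z(\mathfrak{L}_G)$. As the fruit tree is connected and not an $n$-circle graph, \Cref{prop center} gives $Z(\mathfrak{L}_G)=\mathbb{C}I$, so $S=T-\lambda I$ for some $\lambda\in\mathbb{C}$. Compressing to $v_\lambda$ yields $L_{v_\lambda}SL_{v_\lambda}=(f-\lambda)(L_{w_\lambda})$, which lies in $L_{v_\lambda}\mathcal{A}_GL_{v_\lambda}\subseteq\mathcal{A}_G$ because $L_{v_\lambda}\in\mathcal{A}_G$; by \Cref{lem fruit-tree-norm}(i) this forces $f-\lambda\in A(\mathbb{D})$, whence $f\in A(\mathbb{D})$, contradicting the choice of $f$. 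This contradiction exhibits a bounded derivation on $\mathcal{A}_G$ that is not inner, establishing the contrapositive and hence the proposition. This localization argument sidesteps any distinction between finite and infinite $G$, since the contradiction is extracted entirely at the single vertex $v_\lambda$.
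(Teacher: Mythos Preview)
Your argument is correct and in fact a bit cleaner than the paper's. Both proofs construct the same derivation, namely the one vanishing on every generator except $L_{e_\lambda}$, where it takes the value $\pm f(L_{w_\lambda})L_{e_\lambda}$ for some $f\in H^\infty(\mathbb{D})\setminus A(\mathbb{D})$; and both derive the contradiction by compressing to $v_\lambda$ and invoking \Cref{lem fruit-tree-norm}(i). The difference lies in how boundedness and the inclusion $\delta(\mathcal{A}_G)\subseteq\mathcal{A}_G$ are obtained. The paper defines $\delta$ on generators, extends to the polynomial algebra, and proves $\|\delta(A)\|\leqslant\|f\|_{H^\infty}\|A\|$ by an explicit orthogonality estimate involving the decomposition $\mathbb{F}_G^+=\Lambda_1\sqcup\Lambda_2$ according to whether a path contains $e_\lambda$. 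You instead exhibit $\delta$ as $\delta_T$ for the central element $T\in Z(\mathfrak{L}_{\mathscr{C}_{n_\lambda}})\subset\mathfrak{L}_G$ corresponding to $f$; this gives boundedness for free, and the centrality makes the verification that $\delta_T$ preserves $\mathcal{A}_G$ trivial on generators (everything vanishes except at $e_\lambda$, where \Cref{lem fruit-tree-norm}(ii) applies). Your non-innerness step is also slightly more direct: rather than first arguing that the implementing operator must be diagonal, you pass immediately through $Z(\mathfrak{L}_G)=\mathbb{C}I$ via \Cref{prop center}. Your remark that the naive choice $f(L_{w_\lambda})$ fails (for $n_\lambda\geqslant2$) is correct: if $g$ is the fruit edge with $s(g)=v_\lambda$ then $\delta_{f(L_{w_\lambda})}(L_g)=L_gf(L_{w_\lambda})$, and applying \Cref{lem Lp*} with $p=g$ would force $f(L_{w_\lambda})\in\mathcal{A}_G$, contradicting \Cref{lem fruit-tree-norm}(i).
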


\begin{proof}
Suppose $G=G_t\sqcup_VG_f$ is a generalized fruit tree, $\mathscr{C}_{n_\lambda}$ is an in-fruit, $e_\lambda$ is the edge with $r(e_\lambda)=v_\lambda$ and $s(e_\lambda)\in\mathcal{V}_t\backslash V$, and $w_\lambda$ is the unique minimal circle at $v_\lambda$.
Let $f\in H^\infty(\mathbb{D})\backslash A(\mathbb{D})$.
For any vertex $v\in\mathcal{V}$ and edge $e\in\mathcal{E}$, we define
\begin{equation*}
  \delta(L_v)=0,\quad
  \delta(L_e)=
  \begin{cases}
    f(L_{w_\lambda})L_{e_\lambda}, & \mbox{if } e=e_\lambda, \\
    0, & \mbox{otherwise}.
  \end{cases}
\end{equation*}
Let $\mathscr{A}=\mathrm{span}\{L_w\colon w\in\mathbb{F}_G^+\}$.
Then $\mathscr{A}$ is a dense subalgebra of $\mathcal{A}_G$ and $\delta$ can be naturally extended to a derivation on $\mathscr{A}$.

Let $\Lambda_1=\{p_1e_\lambda p_2\colon p_1,p_2\in\mathbb{F}_G^+\}$, $\Lambda_2=\mathbb{F}_G^+\backslash\Lambda_1$, and $P_j$ the projection from $\mathcal{H}_G$ onto $\{\xi_p\colon p\in\Lambda_j\}$ for $j=1,2$.
Note that every $A\in\mathscr{A}$ can be written as a finite sum
\begin{equation*}
  \sum_{p\in\Lambda_1}a_pL_p+\sum_{p\in\Lambda_2}a_pL_p.
\end{equation*}
Let $A_j=\sum_{p\in\Lambda_j}a_pL_p$ for $j=1,2$.
Then $A_1P_1=0$.
For any $x\in\mathcal{H}_G$, we have
\begin{equation*}
  \|A_1x\|^2=\|A_1P_2x\|^2\leqslant\|A_1P_2x\|^2+\|A_2P_2x\|^2
  =\|AP_2x\|^2\leqslant\|A\|^2\|x\|^2.
\end{equation*}
It follows that $\|A_1\|\leqslant\|A\|$.
For each $v\in\mathcal{V}(\mathscr{C}_{n_\lambda})$, let $p_v$ be the path from $v_\lambda$ to $v$ with minimal lengths.
Then $A_1$ can be written as
\begin{equation*}
  A_1=\sum_{v\in\mathcal{V}(\mathscr{C}_{n_\lambda}),r(p_2)=s(e_\lambda)}
  L_{p_v}g_{v,p_2}(L_{w_\lambda})L_{e_\lambda}L_{p_2}.
\end{equation*}
Let $A_v=\sum_{r(p_2)=s(e_\lambda)}g_{v,p_2}(L_{w_\lambda})L_{e_\lambda}L_{p_2}$.
Then $A_1=\sum_{v\in\mathcal{V}(\mathscr{C}_{n_\lambda})}L_{p_v}A_v$.
By the definition of $\delta$, we have $\delta(A_v)=f(L_{w_\lambda})A_v$.
It follows that
\begin{equation*}
  \delta(A)=\delta(A_1)
  =\sum_{v\in\mathcal{V}(\mathscr{C}_{n_\lambda})}L_{p_v}f(L_{w_\lambda})A_v.
\end{equation*}
Hence for any vector $x\in\mathcal{H}_G$, it follows from \Cref{lem fruit-tree-norm} (i) that
\begin{align*}
  \|\delta(A)x\|^2 &=\sum_{v\in\mathcal{V}(\mathscr{C}_{n_\lambda})}
  \|L_{p_v}f(L_{w_\lambda})A_vx\|^2\leqslant\|f\|_{H^\infty(\mathbb{D})}^2
  \sum_{v\in\mathcal{V}(\mathscr{C}_{n_\lambda})}\|A_vx\|^2\\
  &=\|f\|_{H^\infty(\mathbb{D})}^2\|A_1x\|^2
  \leqslant\|f\|_{H^\infty(\mathbb{D})}^2\|A\|^2\|x\|^2.
\end{align*}
Therefore, $\|\delta(A)\|\leqslant\|f\|_{H^\infty(\mathbb{D})}\|A\|$ for every $A\in\mathscr{A}$.
Thus, $\delta$ extends to a bounded derivation on $\mathcal{A}_G$ with $\|\delta\|\leqslant 1$.
We claim that $\delta$ is not inner.

Suppose $\delta=\delta_T$ for some $T\in\mathcal{A}_G$.
Then $\delta_T(L_v)=0$ for all $v\in\mathcal{V}$ and we can write
\begin{equation*}
  T=\sum_{v\in\mathcal{V}}L_vTL_v.
\end{equation*}
Let $L_{v_\lambda}TL_{v_\lambda}=g(L_{w_\lambda})$ and $L_{s(e_\lambda)}TL_{s(e_\lambda)}=\lambda L_{s(e_\lambda)}$.
Then
\begin{equation*}
  f(L_{w_\lambda})L_{e_\lambda}=\delta(L_{e_\lambda})
  =\lambda L_{e_\lambda}-g(L_{w_\lambda})L_{e_\lambda}.
\end{equation*}
It follows that $g=\lambda-f\in H^\infty(\mathbb{D})\backslash A(\mathbb{D})$.
Therefore, $L_{v_\lambda}TL_{v_\lambda}=g(L_{w_\lambda})\notin\mathcal{A}_G$ by \Cref{lem fruit-tree-norm} (i).
That is a contradiction.
\end{proof}

\begin{theorem}\label{thm fruit-tree}
Let $G$ be a fruit tree.
Then every bounded derivation from $\mathcal{A}_G$ into $\mathfrak{L}_G$ is inner.
\end{theorem}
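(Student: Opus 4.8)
The plan is to peel a general bounded derivation $\delta\colon\mathcal{A}_G\to\mathfrak{L}_G$ apart along the decomposition $G=G_t\sqcup_VG_f$ into the tree skeleton and the circle fruits, subtracting inner derivations until $\delta$ vanishes on every generator $L_v,L_e$; continuity of $\delta$ on the dense subalgebra $\mathrm{span}\{L_w\}$ then forces $\delta=0$, so the original derivation equals the sum of the subtracted inner derivations.

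First, since $G$ is finite the vertex projections $\{L_v\}_{v\in\mathcal{V}}$ form a finite family of orthogonal idempotents summing to $I$, so by \Cref{lem idempotent} I may subtract an inner derivation and assume $\delta(L_v)=0$ for every $v$. Next I normalize the fruits. For each fruit with vertex set $C_\lambda=\mathcal{V}(\mathscr{C}_{n_\lambda})$, the compression $A\mapsto L_{C_\lambda}\delta(A)L_{C_\lambda}$ is a bounded derivation of $L_{C_\lambda}\mathcal{A}_GL_{C_\lambda}\cong\mathcal{A}_{\mathscr{C}_{n_\lambda}}$ into $L_{C_\lambda}\mathfrak{L}_GL_{C_\lambda}\cong\mathfrak{L}_{\mathscr{C}_{n_\lambda}}$ by \Cref{prop reduced}; since $\mathscr{C}_{n_\lambda}$ is strongly connected, \Cref{cor S-C-finite} yields an implementing $T_\lambda$, which (using $\delta(L_v)=0$) is block-diagonal and so commutes with every $L_v$. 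Because $\delta(L_{C_\lambda})=0$ gives $\delta(A)=L_{C_\lambda}\delta(A)L_{C_\lambda}$ for $A$ in the fruit algebra, subtracting $\sum_\lambda\delta_{T_\lambda}$ keeps $\delta(L_v)=0$ and now also kills every fruit edge; in particular $\delta(L_{w_\lambda})=0$ and $\delta(g(L_{w_\lambda}))=0$ for all $g\in A(\mathbb{D})$.

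It remains to annihilate $\delta$ on the tree edges $\mathcal{E}_t$, which I carry out by the vertex-by-vertex induction of \Cref{lem tree}, processing the edges of $G_t$ so that each new edge attaches a single new vertex. For a pure tree edge $e$ (one that is not an attaching edge $e_\lambda$, hence with both endpoints away from the fruit leaves), acyclicity of $G_t$ together with the one-directional attaching edges shows $e$ is the only directed path realizing the corner $L_{r(e)}\mathfrak{L}_GL_{s(e)}$; thus $\delta(L_e)=\lambda_eL_e$ is scalar and is removed by subtracting $\delta_T$ with $T=\pm\lambda_eL_{v_{\mathrm{new}}}$, exactly as in \Cref{lem tree}. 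The delicate case is an attaching edge $e_\lambda$ at a fruit leaf $v_\lambda$, where the corner is no longer scalar: $\delta(L_{e_\lambda})=f(L_{w_\lambda})L_{e_\lambda}$ for an in-fruit (respectively $L_{e_\lambda}f(L_{w_\lambda})$ for an out-fruit), and \Cref{lem fruit-tree-norm} only gives $f\in H^2$ in the in-fruit case.

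The main obstacle, and the heart of the argument, is to upgrade $f\in H^2$ to $f\in H^\infty$ for in-fruits, since any implementing operator at $v_\lambda$ lies in $L_{v_\lambda}\mathfrak{L}_GL_{v_\lambda}\cong H^\infty$ while $H^2\not\subseteq H^\infty$. I would extract this from boundedness of $\delta$: for every $g\in A(\mathbb{D})$ one has $g(L_{w_\lambda})L_{e_\lambda}\in\mathcal{A}_G$ with $\|g(L_{w_\lambda})L_{e_\lambda}\|=\|g\|_{H^2}$ by \Cref{lem fruit-tree-norm}, and since $\delta(g(L_{w_\lambda}))=0$,
\begin{equation*}
  \delta\bigl(g(L_{w_\lambda})L_{e_\lambda}\bigr)=g(L_{w_\lambda})\,\delta(L_{e_\lambda})=(gf)(L_{w_\lambda})L_{e_\lambda},
\end{equation*}
whence $\|gf\|_{H^2}\le\|\delta\|\,\|g\|_{H^2}$. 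Thus multiplication by $f$ is bounded on a dense subspace of $H^2$, so $f$ is a multiplier of $H^2$ and therefore $f\in H^\infty$ with $\|f\|_{H^\infty}\le\|\delta\|$. (For out-fruits $\|\delta(L_{e_\lambda})\|=\|f\|_{H^\infty}$ directly, so this step is automatic.) With $f\in H^\infty$ I subtract $\delta_Z$, where $Z\in Z(\mathfrak{L}_{\mathscr{C}_{n_\lambda}})\subseteq\mathfrak{L}_G$ is the central extension of $\mp f(L_{w_\lambda})$ to the whole fruit; being central it leaves the already-normalized fruit edges and vertices untouched while cancelling $\delta(L_{e_\lambda})$. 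Once every tree edge is processed, $\delta$ vanishes on all generators, hence on $\mathrm{span}\{L_w\}$, and by continuity $\delta=0$; therefore the original $\delta$ is inner.
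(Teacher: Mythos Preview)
Your proof is correct and follows essentially the same route as the paper's: normalize on the vertex/block projections via \Cref{lem idempotent}, kill the derivation on each fruit using the strongly connected case, reduce on the tree skeleton via the induction of \Cref{lem tree}, and for the attaching edges upgrade $f\in H^2$ to $f\in H^\infty$ by the $H^2$-multiplier bound $\|fg\|_{H^2}\le\|\delta\|\,\|g\|_{H^2}$ before implementing with a central element of $\mathfrak{L}_{\mathscr{C}_{n_\lambda}}$. The only cosmetic difference is that the paper normalizes the coarser block projections $P_0,P_1,\ldots,P_m$ (tree block versus individual fruit blocks) and handles the entire tree corner $P_0\mathcal{A}_GP_0$ at once via \Cref{lem tree}, whereas you normalize all individual $L_v$'s and interleave pure tree edges with attaching edges in a single vertex-by-vertex sweep; the key multiplier step and the use of $Z(\mathfrak{L}_{\mathscr{C}_{n_\lambda}})$ are identical.
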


\begin{proof}
Suppose $G=G_t\sqcup_VG_f$, where $V=\{v_j\}_{j=1}^m$, $G_f$ is a directed graph with connected components $\{\mathscr{C}_{n_j}\}_{j=1}^m$, and $w_j$ is the minimal circle at $v_j$.

Let $P_0=\sum_{v\in\mathcal{V}_t\backslash V}L_v$, $P_j=\sum_{v\in\mathcal{V}(\mathscr{C}_{n_j})}L_v$ for every $1\leqslant j\leqslant m$, and $\delta$ a bounded derivation from $\mathcal{A}_G$ into $\mathfrak{L}_G$.
By \Cref{lem idempotent}, we may assume that $\delta(P_j)=0$ for every $0\leqslant j\leqslant m$.
Then $\delta$ can be viewed as a derivation from $P_j\mathcal{A}_GP_j$ into $P_j\mathfrak{L}_GP_j$.
By \Cref{thm S-C} and \Cref{lem tree}, there exists $T_j\in P_j\mathfrak{L}_GP_j$ such that $\delta(A)=\delta_{T_j}(A)$ for every $A\in P_j\mathcal{A}_GP_j$.
Therefore, $\delta$ is locally inner with respect to $\mathscr{A}=\bigcup_{j=0}^{n}P_j\mathcal{A}_GP_j$.

Assume that $\delta|_{\mathscr{A}}=0$.
For any $1\leqslant j\leqslant m$, there exists an edge $e_j$ between $v_j$ and $x_j\in\mathcal{V}_t\backslash V$, where $x_j$ may be repeated.
Since $\delta(L_{e_j})\in L_{r(e_j)}\mathfrak{L}_GL_{s(e_j)}$, we can write
\begin{equation*}
  \delta(L_{e_j})=
  \begin{cases}
    L_{e_j}A_j, & \mbox{if } s(e_j)=v_j, \\
    -A_jL_{e_j}, & \mbox{if } r(e_j)=v_j,
  \end{cases}
\end{equation*}
where $A_j=f_j(L_{w_j})$ and $f_j\in H^2(\mathbb{D})$.
If $s(e_j)=v_j$, then $A_j=L_{e_j}^*\delta(L_{e_j})$ is bounded, and hence $A_j\in\mathfrak{L}_{\mathscr{C}_{n_j}}$.
If $r(e_j)=v_j$, then $\delta(L_{e_j})=-f_j(L_{w_j})L_{e_j}$.
By \Cref{lem fruit-tree-norm} (ii), for any polynomial $g\in H^2(\mathbb{D})$, we have
\begin{equation*}
  \|f_jg\|_{H^2(\mathbb{D})}=\|f_j(L_{w_j})g(L_{w_j})L_{e_j}\|
  =\|\delta(g(L_{w_j})L_{e_j})\|
  \leqslant\|\delta\|\|g\|_{H^2(\mathbb{D})}.
\end{equation*}
It follows that $f_j\in H^\infty(\mathbb{D})$.
Hence $A_j\in\mathfrak{L}_{\mathscr{C}_{n_j}}$ by \Cref{lem fruit-tree-norm} (i).

Let $B_j=n_j\alpha_j(A_j)\in Z(\mathfrak{L}_{\mathscr{C}_{n_j}})$, where $\alpha_j\in\mathscr{D}_{\mathscr{C}_{n_j}}$ is the mapping given by the proof of \Cref{thm weak-Dixmier}.
Let $B=\sum_{j=1}^mB_j$.
Then $\delta=\delta_B$.
This completes the proof.
\end{proof}

\begin{remark}\label{rem fruit-tree}
Let $G$ be a fruit tree that contains no in-fruit.
By \Cref{lem idempotent}, \Cref{prop circle-inner}, \Cref{lem tree}, \Cref{lem Lp*}, \Cref{thm weak-Dixmier}, and the proof of \Cref{thm fruit-tree}, every bounded derivation on $\mathcal{A}_G$ is inner.
This can be viewed as a converse of \Cref{prop inner-imply-no-in-fruit} for fruit trees.
\end{remark}

Next, we provide an abstract characterization of generalized fruit trees.
A polygon in $G$ is called a {\sl fake circle} if it is not an $n$-circle.

\begin{proposition}\label{prop abstract-fruit-tree}
A connected directed graph $G$ with $|\mathcal{V}(G)|\geqslant 2$ is a generalized fruit tree if and only if $G$ satisfies the following properties:
\begin{enumerate}[(i)]
\item every maximal strongly connected component of $G$ is either a vertex graph or an $n$-circle graph;

\item there is no edge between any two nontrivial maximal strongly connected components;

\item for any nontrivial maximal strongly connected component $G_0$ of $G$, there is exactly one edge between $\mathcal{V}(G_0)$ and $\mathcal{V}(G)\backslash\mathcal{V}(G_0)$;

\item $G$ contains no fake circle.
\end{enumerate}
\end{proposition}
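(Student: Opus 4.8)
The plan is to read off both implications from the decomposition of $G$ into maximal strongly connected components (MSCCs), reconstructing the amalgamation data $(G_t,G_f,V)$ from that decomposition. Throughout I use that the trivial MSCCs are exactly the vertices lying on no circle, while the nontrivial ones are the genuine directed cycles appearing in $G$.

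For the forward direction, suppose $G=G_t\sqcup_VG_f$ is a generalized fruit tree. Since $G_t$ is a generalized tree it contains no circle, so each tree vertex is its own MSCC; and because $v_\lambda$ is a leaf of $G_t$, the single edge $e_\lambda$ is the only link between $\mathscr{C}_{n_\lambda}$ and the rest of $G$, so no cycle can leave a fruit and $\mathscr{C}_{n_\lambda}$ is a maximal strongly connected component. This gives (i), and (iii) is immediate from the leaf condition. For (ii), an edge between two fruits would lie in $\mathcal{E}_t$ and would have to join two distinct attaching leaves $v_\lambda,v_\mu$ of $G_t$; but two leaves of a generalized tree are adjacent only when the tree is a single edge, forcing $V=\mathcal{V}_t$ and contradicting the definition. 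Finally each $e_\lambda$ is a bridge, so no polygon can traverse it, whence every polygon of $G$ lies inside one fruit and is therefore an $n$-circle; this is exactly (iv).

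For the converse, assume (i)--(iv). I would declare the fruits $\{\mathscr{C}_{n_\lambda}\}$ to be the nontrivial MSCCs, which are $n$-circles by (i); set $G_f$ to be their disjoint union; and for each $\lambda$ let $v_\lambda$ be the fruit-side endpoint of the unique edge $e_\lambda$ furnished by (iii). I then define $G_t$ to have vertex set the trivial MSCCs together with $\{v_\lambda\}$ and edge set the edges of $G$ that are not internal to any fruit. Using (ii), every non-fruit edge joins a fruit only to a trivial vertex, so one verifies directly that $\mathcal{V}=\mathcal{V}_t\sqcup_V\mathcal{V}_f$ and $\mathcal{E}=\mathcal{E}_t\sqcup\mathcal{E}_f$, i.e.\ $G=G_t\sqcup_VG_f$. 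Property (iii) also shows that the only edge of $G_t$ at $v_\lambda$ is $e_\lambda$, so $v_\lambda$ has degree one and $V$ is a set of leaves.

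The main obstacle is verifying that $G_t$ is a generalized tree, that is, that it contains no polygon. I would argue by contradiction: a polygon in $G_t$ is a polygon in $G$, hence an $n$-circle by (iv); no attaching vertex $v_\lambda$ can appear on it since $v_\lambda$ has degree one in $G_t$, so the polygon runs entirely through trivial MSCCs, yet a directed $n$-circle is strongly connected and would constitute a nontrivial MSCC, a contradiction. Connectivity of $G_t$ follows from that of $G$ by contracting each fruit to its attaching vertex $v_\lambda$, using that a fruit meets the rest of $G$ only through $v_\lambda$. It remains to settle the size condition and the constraint $V\ne\mathcal{V}_t$: if $G$ has no nontrivial MSCC then (i) and (iv) leave $G$ with no polygon, so $G=G_t$ is already a generalized tree, hence a generalized fruit tree with empty $\Lambda$; and if at least one fruit is present then (ii)--(iii) force a trivial vertex adjacent to some $v_\lambda$, giving $|\mathcal{V}_t|\geq 2$ and $V\subsetneq\mathcal{V}_t$, exactly as \Cref{def fruit-tree} demands.
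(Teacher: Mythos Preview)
Your proof is correct and follows essentially the same approach as the paper: both directions proceed via the MSCC decomposition, identifying the nontrivial components as the fruits and the complement (together with the attaching vertices) as the tree $G_t$. The paper dismisses the forward implication as ``clear'' and is terse in the converse, whereas you spell out the bridge argument for (iv), the leaf argument for (ii)--(iii), and the edge cases $V\ne\mathcal{V}_t$ and $|\mathcal{V}_t|\geqslant 2$ explicitly; but the underlying strategy is identical.
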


\begin{proof}
It is clear that every generalized fruit tree satisfies the properties (i-iv).
Suppose that $G$ satisfies the properties (i-iv).
Let $G_f$ be the subgraph of all nontrivial maximal strongly connected components of $G$.
By (i) and (ii), $G_f$ has connected components $\{\mathscr{C}_{n_\lambda}\}_{\lambda\in\Lambda}$.
By (ii) and (iii), there exists exactly one edge $e_\lambda$ between $v_\lambda\in\mathcal{V}(\mathscr{C}_{n_\lambda})$ and $\mathcal{V}_t'=\mathcal{V}\backslash\mathcal{V}_f$.
Let $G_t'$ be the induced subgraph of $G$ by $\mathcal{V}_t'$.
Then $G_t'$ contains no circle.
Since $G$ is connected and contains no fake circle by (iv), $G_t'$ is connected and contains no polygon.
It follows that $G_t'$ is a generalized tree.
Let $G_t$ be the graph with $\mathcal{V}_t=\{v_\lambda\}_{\lambda\in\Lambda}\cup\mathcal{V}_t'$ and $\mathcal{E}_t=\{e_\lambda\}_{\lambda\in\Lambda}\cup\mathcal{E}_t'$.
Let $V=\{v_\lambda\}_{\lambda\in\Lambda}$.
It is clear that $G=G_t\sqcup_VG_f$.
This completes the proof.
\end{proof}

\subsection{A certain converse}\label{subsec converse}
In this subsection, we will prove a converse to \Cref{thm fruit-tree}.
An edge in a directed graph is called an {\sl acyclic edge} if there exists no circle containing it.
In other words, $e$ is an acyclic edge if and only if there is no path from $r(e)$ to $s(e)$.
Note that a connected directed graph contains an acyclic edge if and only it is not strongly connected.
This fact has been used in the proof of \Cref{prop center}.

\begin{lemma}\label{lem derivation-extension}
Suppose $G$ is a directed graph, $e_0$ is an acyclic edge, $v_1=r(e_0)$, and $v_2=s(e_0)$.
Let $w_j\in\mathbb{F}_G^+$ with $r(w_j)=s(w_j)=v_j$ for $j=1,2$.
For any vertex $v\in\mathcal{V}$ and edge $e\in\mathcal{E}$, we define
\begin{equation*}
  \delta(L_v)=0,\quad
  \delta(L_e)=
  \begin{cases}
    L_{w_1e_0w_2}, & \mbox{if } e=e_0, \\
    0, & \mbox{otherwise}.
  \end{cases}
\end{equation*}
Then $\delta$ can be extended to a bounded derivation on $\mathcal{A}_G$ with $\|\delta\|\leqslant 1$.
\end{lemma}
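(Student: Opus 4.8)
The plan is to define $\delta$ first on the dense subalgebra $\mathscr{A}=\mathrm{span}\{L_w\colon w\in\mathbb{F}_G^+\}$ by extending the prescribed values on the generators via the Leibniz rule, and then to show that $\delta$ is a contraction on $\mathscr{A}$, so that it extends continuously to a bounded derivation on $\mathcal{A}_G=\overline{\mathscr{A}}^{\|\cdot\|}$ with $\|\delta\|\le 1$.

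The first and decisive step is a combinatorial observation: because $e_0$ is acyclic, i.e.\ there is no path from $r(e_0)=v_1$ to $s(e_0)=v_2$, every $p\in\mathbb{F}_G^+$ contains $e_0$ at most once. Indeed, if $e_0$ occurred at two positions of $p$, then the subpath strictly between them would be a path from $v_1$ to $v_2$ (and two adjacent occurrences would force $e_0$ to be a loop), contradicting acyclicity. Consequently, applying the Leibniz rule to $p=f_1\cdots f_n$ selects the single position at which $e_0$ appears: writing $p=\alpha e_0\beta$ with $\alpha,\beta\in\mathbb{F}_G^+$ (each possibly a vertex), one gets the single-term formula $\delta(L_p)=L_{p'}$ with $p'=\alpha w_1e_0w_2\beta$ when $e_0$ occurs in $p$, and $\delta(L_p)=0$ otherwise. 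A short composability check---using that $w_1,w_2$ are circles at $v_1,v_2$ so inserting them alters neither $r(p)$ nor $s(p)$, and that $w_1,w_2$ themselves cannot contain $e_0$ by acyclicity---confirms $\delta(L_pL_q)=\delta(L_p)L_q+L_p\delta(L_q)$ for all $p,q$, so $\delta$ is a well-defined derivation on $\mathscr{A}$.

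For the norm bound I would put $\Lambda_1=\{p\in\mathbb{F}_G^+\colon p\text{ contains }e_0\}$, $\Lambda_2=\mathbb{F}_G^+\setminus\Lambda_1$, and let $P_1,P_2$ be the orthogonal projections of $\mathcal{H}_G$ onto $\overline{\mathrm{span}}\{\xi_p\colon p\in\Lambda_j\}$ for $j=1,2$. Since $q'\in\Lambda_1$ for each $q\in\Lambda_1$ and, by the cancellation property of path concatenation, the map $q\mapsto q'$ is injective, the assignment $V\xi_q=\xi_{q'}$ for $q\in\Lambda_1$ together with $V|_{P_2\mathcal{H}_G}=0$ defines a partial isometry $V$ on $\mathcal{H}_G$ with $\|V\|\le 1$. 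The crux is then the identity $\delta(A)=VAP_2$ for every $A\in\mathscr{A}$, which I would verify on basis vectors $\xi_u$: when $u\in\Lambda_1$ both sides vanish (the single-occurrence property again), while for $u\in\Lambda_2$ one computes $VA\xi_u=\sum_{p\in\Lambda_1}a_p\xi_{p'u}$ using $(pu)'=p'u$, matching $\delta(A)\xi_u$. Granting this, $\|\delta(A)\|=\|VAP_2\|\le\|V\|\,\|A\|\,\|P_2\|\le\|A\|$, and the lemma follows by density.

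The main obstacle is really the first step: recognizing that acyclicity forces the single-occurrence property, which simultaneously makes $\delta$ well defined as a single-term derivation and supplies the factorization $\delta(\cdot)=V(\cdot)P_2$ through a genuine partial isometry. Once $V$ is in hand the contractivity estimate is immediate; the only routine care needed is in the composability bookkeeping for the Leibniz identity and the injectivity of $q\mapsto q'$, both of which reduce to the same acyclicity input.
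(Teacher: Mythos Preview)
Your proof is correct and follows essentially the same approach as the paper: both define $\delta$ on $\mathscr{A}=\mathrm{span}\{L_w\colon w\in\mathbb{F}_G^+\}$, use the acyclicity of $e_0$ to obtain the single-occurrence property, split paths according to $\Lambda_1$ and $\Lambda_2$, and bound $\|\delta(A)\|$ by $\|A\|$ via the orthogonality that this splitting provides. The only cosmetic difference is that the paper decomposes $A_1=\sum_pL_pL_{e_0}A_p$ and verifies $\|\delta(A)x\|=\|A_1x\|\le\|Ax\|$ directly, whereas you package the same computation as a factorization $\delta(A)=VAP_2$ through a partial isometry; the underlying orthogonality argument is identical.
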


\begin{proof}
Let $\mathscr{A}=\mathrm{span}\{L_w\colon w\in\mathbb{F}_G^+\}$.
Similar to the proof of \Cref{prop inner-imply-no-in-fruit}, it suffices to shows that $\|\delta|_{\mathscr{A}}\|\leqslant 1$.

Let $\Lambda_1=\{p_1e_0p_2\colon p_1,p_2\in\mathbb{F}_G^+\}$ and $\Lambda_2=\mathbb{F}_G^+\backslash\Lambda_1$.
Then every $A\in\mathscr{A}$ can be written as a finite sum $\sum_{p\in\Lambda_1}a_pL_p+\sum_{p\in\Lambda_2}a_pL_p$.
Let $A_j=\sum_{p\in\Lambda_j}a_pL_p$ for $j=1,2$.
Since $e_0$ is an acyclic edge, we have $\|A_1\|\leqslant\|A\|$.
Note that $A_1$ can be written as $A_1=\sum_{p}L_pL_{e_0}A_p$.
By the definition of $\delta$, we have
\begin{equation*}
  \delta(A)=\delta(A_1)=\sum_{p}L_pL_{w_1e_0w_2}A_p.
\end{equation*}
For any path $p_1\ne p_2$, the ranges of $L_{p_1e_0}$ and $L_{p_2e_0}$ are orthogonal.
Hence for any vector $x\in\mathcal{H}_G$, we have
\begin{align*}
  \|\delta(A)x\|^2 &=\sum_{p}\|L_pL_{w_1e_0w_2}A_px\|^2
  =\sum_{p}\|A_px\|^2=\|A_1x\|^2\leqslant\|Ax\|^2.
\end{align*}
Therefore, $\|\delta(A)\|\leqslant\|A\|$ for every $A\in\mathscr{A}$.
The proof is completed.
\end{proof}

Let $G$ be a connected directed graph that is not strongly connected.
We will show that if every bounded derivation on $\mathcal{A}_G$ is of the form $\delta_T$ for some $T\in\mathfrak{L}_G$, then $G$ satisfies the properties in \Cref{prop abstract-fruit-tree}.

\begin{lemma}\label{lem inner-imply-circle}
Let $G$ be a connected directed graph that is not strongly connected.
If every bounded derivation on $\mathcal{A}_G$ is of the form $\delta_T$ for some $T\in\mathfrak{L}_G$, then every maximal strongly connected component of $G$ is either a vertex graph or an $n$-circle graph.
\end{lemma}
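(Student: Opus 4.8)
The plan is to argue by contraposition: I will assume that some maximal strongly connected component $G_0$ of $G$ is neither a vertex graph nor an $n$-circle graph, and produce a bounded derivation on $\mathcal{A}_G$ that is \emph{not} of the form $\delta_T$ with $T\in\mathfrak{L}_G$. By the dichotomy recorded after the definition of the $n$-circle graph, such a $G_0$ is strongly connected, nontrivial, and not an $n$-circle, so it carries at least two distinct minimal circles at each vertex; in particular \Cref{prop center} applies to $G_0$ and gives $Z(\mathfrak{L}_{G_0})=\mathbb{C}I$. Since $G$ is connected but not strongly connected, we have $\mathcal{V}(G_0)\neq\mathcal{V}(G)$, so connectivity yields an edge $e_0$ with exactly one endpoint in $G_0$; this $e_0$ is acyclic, because a return path would force the outside endpoint into the same strongly connected component. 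Let $v_\ast$ be the endpoint of $e_0$ lying in $G_0$ and let $c$ be a nontrivial circle at $v_\ast$, which exists because $G_0$ is nontrivial and strongly connected.

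Next I construct the test derivation. Treating first the case $r(e_0)=v_\ast$ (the edge enters $G_0$), I apply \Cref{lem derivation-extension} with $w_1=c$ and $w_2=s(e_0)$ to obtain a bounded derivation $\delta$ on $\mathcal{A}_G$ with $\|\delta\|\leqslant 1$, $\delta(L_v)=0$ for every vertex $v$, $\delta(L_e)=0$ for every edge $e\neq e_0$, and $\delta(L_{e_0})=L_{ce_0}$. Because each $\delta(L_p)$ is a scalar multiple of some $L_{p'}$, this $\delta$ indeed maps $\mathcal{A}_G$ into $\mathcal{A}_G$. The case $s(e_0)=v_\ast$ is handled symmetrically, taking $w_2=c$ and $w_1=r(e_0)$, so that $\delta(L_{e_0})=L_{e_0c}$.

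Now suppose, for contradiction, that $\delta=\delta_T$ for some $T\in\mathfrak{L}_G$. From $\delta(L_v)=0$ I get $L_vT=TL_v$ for all $v$, hence $T=\sum_{v}T_v$ with $T_v:=L_vTL_v\in L_v\mathfrak{L}_GL_v$. From $\delta(L_e)=0$ for every edge $e$ of $G_0$ (all of which differ from $e_0$) I obtain the intertwining relations $T_{r(e)}L_e=L_eT_{s(e)}$, so that $\widetilde T:=\sum_{v\in\mathcal{V}(G_0)}T_v\in L_{\mathcal{V}(G_0)}\mathfrak{L}_GL_{\mathcal{V}(G_0)}\cong\mathfrak{L}_{G_0}$ commutes with all generators $L_e,L_v$ of $\mathfrak{L}_{G_0}$; here I use \Cref{prop reduced} together with the fact that for a maximal strongly connected component the reduced subgraph on $\mathcal{V}(G_0)$ coincides with the induced one. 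Thus $\widetilde T\in Z(\mathfrak{L}_{G_0})=\mathbb{C}I$, forcing $T_v=\lambda L_v$ for every $v\in\mathcal{V}(G_0)$; in particular $T_{v_\ast}=\lambda L_{v_\ast}$. Substituting into the relation for $e_0$ (enter case), $\delta(L_{e_0})=L_{e_0}T_{s(e_0)}-T_{v_\ast}L_{e_0}=L_{e_0}\bigl(T_{s(e_0)}-\lambda L_{s(e_0)}\bigr)$, so every path occurring in the Fourier expansion of $\delta(L_{e_0})$ has $e_0$ as its leftmost edge. But $\delta(L_{e_0})=L_{ce_0}$, and the leftmost edge of $ce_0$ belongs to $G_0$, hence is not $e_0$. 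Comparing Fourier coefficients gives the desired contradiction; the leaving case is identical with left and right interchanged.

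The conceptual heart of the argument, and the step I expect to require the most care, is the passage $\widetilde T\in Z(\mathfrak{L}_{G_0})=\mathbb{C}I$: it is exactly the triviality of the center for strongly connected graphs that are not $n$-circles (\Cref{prop center}) that collapses the diagonal blocks over $G_0$ to scalars and thereby rigidifies $\delta(L_{e_0})$ into a one-sided form. For an $n$-circle component the center is $H^\infty(\mathbb{D})$ rather than $\mathbb{C}I$, which is precisely the extra freedom that lets such components be implemented and explains why they are not excluded. I would also verify carefully that the reduced subgraph $G_{\mathcal{V}(G_0)}$ equals $G_0$ (no minimal path at $\mathcal{V}(G_0)$ leaves and re-enters $G_0$, by maximality), since this underlies the isomorphism $L_{\mathcal{V}(G_0)}\mathfrak{L}_GL_{\mathcal{V}(G_0)}\cong\mathfrak{L}_{G_0}$ used above.
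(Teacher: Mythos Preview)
Your proof is correct and follows essentially the same strategy as the paper: construct the test derivation via \Cref{lem derivation-extension} with a circle on the $G_0$ side of the acyclic edge $e_0$, then force the diagonal block at $v_\ast$ to be scalar and read off a Fourier-coefficient contradiction. The only difference is cosmetic: you pass through $Z(\mathfrak{L}_{G_0})=\mathbb{C}I$ for the whole component (using the intertwining relations over all edges of $G_0$ and the identification $L_{\mathcal{V}(G_0)}\mathfrak{L}_GL_{\mathcal{V}(G_0)}\cong\mathfrak{L}_{G_0}$), whereas the paper works at the single vertex $v_\ast$ and uses that $L_{v_\ast}\mathfrak{L}_GL_{v_\ast}$ already has trivial center because there are at least two distinct minimal circles at $v_\ast$; both routes give $T_{v_\ast}=\lambda L_{v_\ast}$ and the same contradiction.
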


\begin{proof}
Suppose there exists a nontrivial maximal strongly connected component $G_0$ of $G$ which is not an $n$-circle.
Since $G$ is connected, there exists an edge $e_0\in\mathcal{E}$ between $\mathcal{V}_0:=\mathcal{V}(G_0)$ and $\mathcal{V}\backslash\mathcal{V}_0$.
Since $G_0$ is maximal, $e_0$ is an acyclic edge.
Let $v_1=r(e_0)$ and $v_2=s(e_0)$.

First, we assume that $v_1\in\mathcal{V}_0$ and $v_2\in\mathcal{V}\backslash\mathcal{V}_0$.
Let $w_1$ be a circle at $v_1$, $w_2=v_2$, and $\delta$ the derivation given by \Cref{lem derivation-extension}.
Suppose $\delta=\delta_T$ for some $T\in\mathfrak{L}_G$.
Then $\delta_T(L_v)=0$ for all $v\in\mathcal{V}$ and we can write
\begin{equation*}
  T=\sum_{v\in\mathcal{V}}L_vTL_v.
\end{equation*}
Since $\delta_T|_{L_{v_1}\mathcal{A}_{G_0}L_{v_1}}=0$ and $Z(L_{v_1}\mathcal{A}_{G_0}L_{v_1})=\mathbb{C}L_{v_1}$ by \Cref{prop reduced} and \Cref{prop center}, we have $L_{v_1}TL_{v_1}=\lambda L_{v_1}$ for some $\lambda\in\mathbb{C}$.
It follow that
\begin{equation*}
  L_{w_1e_0}=\delta(L_{e_0})=\delta_T(L_{e_0})
  =L_{e_0}(L_{v_2}TL_{v_2}-\lambda I).
\end{equation*}
This is a contradiction.

Next, we assume that $v_1\in\mathcal{V}\backslash\mathcal{V}_0$ and $v_2\in\mathcal{V}_0$.
Let $w_2$ be a circle at $v_2$, $w_1=v_1$, and $\delta$ the derivation given by \Cref{lem derivation-extension}.
Similarly, we can get
\begin{equation*}
  L_{e_0w_2}=(\lambda I-L_{v_1}TL_{v_1})L_{e_0}.
\end{equation*}
That is a contradiction.
We complete the proof.
\end{proof}

\begin{lemma}\label{lem inner-imply-no-edge}
Let $G$ be a connected directed graph that is not strongly connected.
If every bounded derivation on $\mathcal{A}_G$ is of the form $\delta_T$ for some $T\in\mathfrak{L}_G$, then there is no edge between any two nontrivial maximal strongly connected components.
\end{lemma}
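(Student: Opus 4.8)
The plan is to argue by contradiction and to reuse the derivation-building device of \Cref{lem derivation-extension}, but now feeding it two genuinely nontrivial circles. Suppose there were an edge $e_0\in\mathcal{E}$ joining two distinct nontrivial maximal strongly connected components $G_1$ and $G_2$, and write $v_1=r(e_0)$, $v_2=s(e_0)$, say with $v_1\in\mathcal{V}(G_1)$ and $v_2\in\mathcal{V}(G_2)$. A circle through $e_0$ would supply a directed path from $v_1$ back to $v_2$ and thereby merge $G_1$ and $G_2$ into a single strongly connected component, contradicting maximality; hence $e_0$ is an acyclic edge. Since $G_1$ and $G_2$ are nontrivial and strongly connected, I can pick a nontrivial circle $w_1$ at $v_1$ (lying in $G_1$) and a nontrivial circle $w_2$ at $v_2$ (lying in $G_2$).

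First I would apply \Cref{lem derivation-extension} to this acyclic edge and these two circles, obtaining a bounded derivation $\delta$ on $\mathcal{A}_G$ with $\delta(L_v)=0$ for every $v\in\mathcal{V}$, with $\delta(L_{e_0})=L_{w_1e_0w_2}$, and with $\delta(L_e)=0$ for every other edge. The hypothesis then furnishes $T\in\mathfrak{L}_G$ with $\delta=\delta_T$. From $\delta_T(L_v)=L_vT-TL_v=0$ for all $v$ I get that $T$ commutes with every vertex projection, so $T=\sum_{v\in\mathcal{V}}L_vTL_v$.

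The heart of the argument is a Fourier-support comparison. By \eqref{equ reduced-by-v}, each corner $L_{v_i}\mathfrak{L}_GL_{v_i}$ is supported on circles at $v_i$; thus $L_{v_2}TL_{v_2}$ is built from operators $L_q$ with $q$ a circle at $v_2$, and $L_{v_1}TL_{v_1}$ from operators $L_p$ with $p$ a circle at $v_1$. Writing $L_{e_0}=L_{v_1}L_{e_0}L_{v_2}$ and using $T=\sum_vL_vTL_v$, I would compute
\begin{equation*}
  L_{w_1e_0w_2}=\delta_T(L_{e_0})
  =L_{e_0}\bigl(L_{v_2}TL_{v_2}\bigr)-\bigl(L_{v_1}TL_{v_1}\bigr)L_{e_0}.
\end{equation*}
The right-hand side is therefore supported on paths of the form $e_0q$ (with $q$ a circle at $v_2$) and $pe_0$ (with $p$ a circle at $v_1$). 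However, $w_1e_0w_2$ begins with the first edge of $w_1$, an edge of $G_1$ distinct from $e_0$, and it ends with the last edge of $w_2$, an edge of $G_2$ distinct from $e_0$; hence it has neither of these two forms. Comparing the Fourier coefficient at $L_{w_1e_0w_2}$ gives $1=0$, the desired contradiction.

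The decisive difference from \Cref{lem inner-imply-circle}---and the step I expect to require the most care---is that here both endpoints of $e_0$ lie in nontrivial components, so one cannot force a scalar corner of $T$ by appealing to a trivial center, and both $L_{v_1}TL_{v_1}$ and $L_{v_2}TL_{v_2}$ may be nontrivial. The resolution is the observation that $\delta_T(L_{e_0})$ can only create terms carrying a circle on a single side of $e_0$, while the target $L_{w_1e_0w_2}$ carries nontrivial circles on both sides; choosing both $w_1$ and $w_2$ nontrivial is precisely what makes this two-sided obstruction visible.
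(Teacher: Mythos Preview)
Your proposal is correct and follows essentially the same approach as the paper's proof: both argue by contradiction, invoke \Cref{lem derivation-extension} with nontrivial circles $w_1,w_2$ on each side of the acyclic edge $e_0$, and derive the impossible identity $L_{w_1e_0w_2}=L_{e_0}(L_{v_2}TL_{v_2})-(L_{v_1}TL_{v_1})L_{e_0}$. Your write-up is in fact more explicit than the paper's, which simply states the identity and declares a contradiction; your Fourier-support explanation (that the right-hand side is supported on paths of the form $e_0q$ or $pe_0$, while $w_1e_0w_2$ has neither shape) is exactly the missing justification.
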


\begin{proof}
Suppose $G_1$ and $G_2$ are two nontrivial maximal strongly connected components and $e_0$ is an edge between them.
Then $e_0$ is an acyclic edge.
Let $v_1=r(e_0)$ and $v_2=s(e_0)$.

Without loss of generality, we assume that $v_j\in\mathcal{V}_j=\mathcal{V}(G_j)$ for $j=1,2$.
Let $w_j$ be a circle at $v_j$ for $j=1,2$, and $\delta$ the derivation given by \Cref{lem derivation-extension}.
Suppose $\delta=\delta_T$ for some $T\in\mathfrak{L}_G$.
Then $L_{w_1e_0w_2}=L_{e_0}T-TL_{e_0}$.
That is a contradiction.
\end{proof}

\begin{lemma}\label{lem inner-imply-one-edge}
Let $G$ be a connected directed graph that is not strongly connected.
If every bounded derivation on $\mathcal{A}_G$ is of the form $\delta_T$ for some $T\in\mathfrak{L}_G$, then for any nontrivial maximal strongly connected component $G_0$ of $G$, there is exactly one edge between $\mathcal{V}(G_0)$ and $\mathcal{V}(G)\backslash\mathcal{V}(G_0)$.
\end{lemma}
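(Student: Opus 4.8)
The plan is to argue by contradiction. Since $G$ is connected and not strongly connected, a nontrivial maximal strongly connected component $G_0$ cannot exhaust $\mathcal{V}(G)$, so by connectivity there is always at least one edge between $\mathcal{V}_0:=\mathcal{V}(G_0)$ and its complement; the whole content is to show there cannot be two. By \Cref{lem inner-imply-circle}, $G_0$ is an $n$-circle graph $\mathscr{C}_n$, so every vertex $v\in\mathcal{V}_0$ carries a minimal circle $c_v$, and by maximality of $G_0$ every edge incident to $\mathcal{V}_0$ from outside is acyclic. So suppose, for contradiction, that $e_0$ and $e_1$ are two distinct edges between $\mathcal{V}_0$ and $\mathcal{V}(G)\backslash\mathcal{V}_0$, and write $u_0,u_1\in\mathcal{V}_0$ for their endpoints lying in $G_0$.

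I would apply \Cref{lem derivation-extension} to the single edge $e_0$, inserting a nontrivial circle on its $G_0$-side and the trivial circle on the other side: take $\delta(L_{e_0})=L_{e_0c_{u_0}}$ if $s(e_0)=u_0$, and $\delta(L_{e_0})=L_{c_{u_0}e_0}$ if $r(e_0)=u_0$, with $\delta$ vanishing on every other generator. By hypothesis $\delta=\delta_T$ for some $T\in\mathfrak{L}_G$. Since $\delta(L_v)=0$ for all $v$, the operator $T$ commutes with each $L_v$, so $T=\sum_vL_vTL_v$ is block diagonal; and since $e_0,e_1$ are acyclic while $G_0$ is maximal, every circle at a vertex of $G_0$ stays inside $G_0$, whence $L_vTL_v\in L_v\mathfrak{L}_{G_0}L_v$ for $v\in\mathcal{V}_0$.

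The crucial structural input is that $\delta$ vanishes on every generator of $\mathcal{A}_{G_0}$, so $\delta_T$ vanishes on $\mathcal{A}_{G_0}$ and therefore $T_0:=\sum_{v\in\mathcal{V}_0}L_vTL_v$ lies in $Z(\mathfrak{L}_{G_0})$. By \Cref{prop center} and \Cref{rem center} there is a single power series $\varphi(z)=\sum_{j\geqslant0}a_jz^j$ with $L_vTL_v=\varphi(L_{c_v})$ for every $v\in\mathcal{V}_0$; in particular the coefficient $a_1$ is common to all vertices of $G_0$. I would then read off $a_1$ from the two relations satisfied by this one $T$. Expanding $\delta_T(L_{e_0})$ and comparing Fourier coefficients, the ``$G_0$-side'' terms $L_{e_0c_{u_0}^j}$ (resp.\ $L_{c_{u_0}^je_0}$) for $j\geqslant1$ can never coincide with any term arising from the complementary diagonal block of $T$ multiplied by $L_{e_0}$, precisely because $e_0$ is acyclic and so the rightmost edge of such a path decides whether the path lies in $G_0$. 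Matching the term $L_{e_0c_{u_0}}$ (resp.\ $L_{c_{u_0}e_0}$) therefore forces $a_1=\pm1\neq0$. On the other hand $\delta(L_{e_1})=0$, hence $\delta_T(L_{e_1})=0$, and the identical coefficient comparison at $e_1$, whose $G_0$-endpoint $u_1$ satisfies $L_{u_1}TL_{u_1}=\varphi(L_{c_{u_1}})$ by the common-$\varphi$ property, forces $a_1=0$. This contradiction shows $G_0$ has at most one incident edge, and with the connectivity remark, exactly one.

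The main obstacle is the coefficient-matching argument: one must verify carefully that the acyclicity of the incident edges blocks any cancellation between the $\varphi$-generated terms on the $G_0$-side and the terms produced by the outside block, uniformly across the four orientation possibilities for the pair $(e_0,e_1)$, and that the centrality result genuinely transports the single coefficient $a_1$ from $u_0$ to $u_1$ even when these are distinct vertices of $G_0$. Everything else is routine bookkeeping with Fourier expansions together with the already-established \Cref{lem derivation-extension} and \Cref{rem center}.
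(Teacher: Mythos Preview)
Your argument is correct and follows the same overall plan as the paper: build a bounded derivation via \Cref{lem derivation-extension} by inserting a circle on the $G_0$-side of one incident edge, then show that any implementing $T\in\mathfrak{L}_G$ must satisfy incompatible constraints coming from the two edges. The difference is in how you transport information between the two $G_0$-endpoints. You invoke \Cref{lem inner-imply-circle} to know $G_0\cong\mathscr{C}_n$, then use \Cref{prop center} to conclude that $T_0=\sum_{v\in\mathcal{V}_0}L_vTL_v$ lies in $Z(\mathfrak{L}_{G_0})$ and is therefore governed by a single power series $\varphi$; the contradiction is then read off from the coefficient $a_1$. The paper instead works directly: from the edge carrying the circle it reads off that $L_{v_1}TL_{v_1}=\lambda_1L_{v_1}-L_{w_1}$, from the other edge that $L_{v_3}TL_{v_3}=\lambda_2L_{v_3}$, and then uses a single path $p$ in $G_0$ from $v_3$ to $v_1$ together with $\delta(L_p)=0$ to force $L_{w_1}=(\lambda_1-\lambda_2)L_{v_1}$. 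The paper's route is slightly more elementary in that it does not need the prior lemma identifying $G_0$ as a circle graph nor the description of the center; your route is a bit more structural and handles the four orientation cases uniformly. Both are short and essentially equivalent.
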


\begin{proof}
Suppose there are two distinct edges $e_1$ and $e_2$ between $\mathcal{V}_0=\mathcal{V}(G_0)$ and $\mathcal{V}\backslash\mathcal{V}_0$.
Let $v_1=r(e_1)$, $v_2=s(e_1)$, $v_3=r(e_2)$, and $v_4=s(e_2)$.

First, we assume that $v_1,v_3\in\mathcal{V}_0$ and $v_2,v_4\in\mathcal{V}\backslash\mathcal{V}_0$.
Note that $e_1$ is an acyclic edge.
Let $w_1$ be a circle at $v_1$, $w_2=v_2$, and $\delta$ the derivation given by \Cref{lem derivation-extension}.
Suppose $\delta=\delta_T$ for some $T\in\mathfrak{L}_G$.
Then
\begin{equation*}
  T=\sum_{v\in\mathcal{V}}L_vTL_v.
\end{equation*}
Since $\delta(L_{e_1})=L_{w_1e_1}$, we have
\begin{equation*}
  L_{e_1}\cdot L_{v_2}TL_{v_2}=(L_{v_1}TL_{v_1}+L_{w_1})L_{e_1}.
\end{equation*}
It follows that $L_{v_1}TL_{v_1}=\lambda_1L_{v_1}-L_{w_1}$ and $L_{v_2}TL_{v_2}=\lambda_1L_{v_2}$.
Since $\delta(L_{e_2})=0$ and $e_2$ is an acyclic edge, we have $L_{v_3}TL_{v_3}=\lambda_2L_{v_3}$ and $L_{v_4}TL_{v_4}=\lambda_2L_{v_4}$.
Let $p$ be a path of $G_0$ from $v_3$ to $v_1$.
Then $\delta(L_p)=0$ implies that $L_{w_1}=(\lambda_1-\lambda_2)L_{v_1}$.
That is a contradiction.
Similar to the proof of \Cref{lem inner-imply-circle}, we can get a contradiction for other cases.
This completes the proof.
\end{proof}

\begin{lemma}\label{lem inner-imply-no-fake}
Let $G$ be a connected directed graph that is not strongly connected.
If every bounded derivation on $\mathcal{A}_G$ is of the form $\delta_T$ for some $T\in\mathfrak{L}_G$, then $G$ contains no fake circle.
\end{lemma}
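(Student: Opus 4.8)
The plan is to argue by contradiction: assume $G$ contains a fake circle and then use \Cref{lem derivation-extension} to manufacture a bounded derivation on $\mathcal{A}_G$ that cannot have the form $\delta_T$. The whole difficulty is to guarantee that the derivation I build is bounded, which via \Cref{lem derivation-extension} requires an acyclic edge sitting on the fake circle. So the first task is to locate the fake circle inside the graph.

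To do this I would invoke the structural conclusions already proved in \Cref{lem inner-imply-circle}, \Cref{lem inner-imply-no-edge}, and \Cref{lem inner-imply-one-edge}: under the hypothesis, every nontrivial maximal strongly connected component of $G$ is an $n$-circle, there is no edge between two distinct such components, and each is joined to the rest of $G$ by exactly one edge. That single edge is then a cut edge, hence lies in no polygon; and since the only polygon contained in an $n$-circle is the $n$-circle itself (a genuine $n$-circle, not a fake one), any fake circle must avoid all the nontrivial components and therefore lie entirely in the acyclic part $\mathcal{V}\setminus\mathcal{V}_f$, where $\mathcal{V}_f$ denotes the union of the vertices of the nontrivial components. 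Finally, any edge joining two such trivial-component vertices is acyclic, for a circle through it would place its endpoints in a common nontrivial component. Thus every edge of the fake circle is acyclic, which is exactly what is needed below.

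Write the fake circle as a polygon $\{v_j,e_j\}_{j=1}^{n}$ with $e_j$ joining $v_j$ and $v_{j+1\,(\mathrm{mod}\,n)}$, where $n\geqslant 2$. Fix one edge $e_k$ and apply \Cref{lem derivation-extension} with the trivial circles $w_1=r(e_k)$ and $w_2=s(e_k)$ to obtain the bounded derivation $\delta$ with $\delta(L_{e_k})=L_{e_k}$ and $\delta=0$ on every other vertex and edge. Suppose toward a contradiction that $\delta=\delta_T$ for some $T\in\mathfrak{L}_G$. Since $\delta(L_v)=0$ for all $v$, the operator $T$ commutes with each $L_v$, so $T=\sum_{v}L_vTL_v$; write $T_v=L_vTL_v$ and let $\lambda_v$ be the Fourier coefficient of $T$ at $L_v$. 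For every edge $e$ one computes $\delta_T(L_e)=L_eT_{s(e)}-T_{r(e)}L_e$, and since the only length-one contribution comes from the constant terms $\lambda_{s(e)}L_{s(e)}$ and $\lambda_{r(e)}L_{r(e)}$, the Fourier coefficient of $\delta_T(L_e)$ at $L_e$ equals $\lambda_{s(e)}-\lambda_{r(e)}$.

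Now I would telescope around the polygon. For each $j\neq k$ the relation $\delta_T(L_{e_j})=0$ forces $\lambda_{s(e_j)}=\lambda_{r(e_j)}$, i.e. $\lambda$ agrees on the two endpoints of $e_j$; removing $e_k$ leaves the edges $\{e_j\}_{j\neq k}$ connecting all polygon vertices along a single path, so all $\lambda_{v_j}$ are equal, and in particular $\lambda_{v_k}=\lambda_{v_{k+1}}$. On the other hand $\delta_T(L_{e_k})=L_{e_k}$ forces $\lambda_{s(e_k)}-\lambda_{r(e_k)}=1$, and since $\{s(e_k),r(e_k)\}=\{v_k,v_{k+1}\}$ this yields $0=1$, the desired contradiction. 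The main obstacle is precisely the localization step in the second paragraph: showing that a fake circle must consist entirely of acyclic edges, which is where the bridge/cut-edge structure supplied by the earlier lemmas is indispensable, after which the derivation construction and the telescoping are routine.
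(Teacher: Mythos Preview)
Your proof is correct and follows the same overall strategy as the paper: find an acyclic edge on the fake circle, apply \Cref{lem derivation-extension} with trivial circles $w_1,w_2$ to get a bounded derivation sending that edge to itself, and then telescope the resulting scalar constraints around the polygon to obtain $0=1$.

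The main difference lies in the localization step. You invoke all three of \Cref{lem inner-imply-circle}, \Cref{lem inner-imply-no-edge}, \Cref{lem inner-imply-one-edge} to prove the strong statement that \emph{every} edge of the fake circle is acyclic (by showing the fake circle must avoid all nontrivial components). The paper instead argues more economically, using only \Cref{lem inner-imply-circle}: if every consecutive pair $v_j,v_{j+1}$ on the polygon has paths in both directions, then the whole polygon sits in one maximal strongly connected component, which must be an $n$-circle, forcing the polygon to be a genuine circle rather than fake; otherwise some consecutive pair has a missing direction, and the edge between them is the desired single acyclic edge. Your route proves more than is needed; the paper's is shorter but yields only one acyclic edge, which is all that the construction requires. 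Conversely, your telescoping step is slightly slicker than the paper's: you read off only the Fourier coefficient of $\delta_T(L_{e_j})$ at $L_{e_j}$, which immediately gives $\lambda_{s(e_j)}-\lambda_{r(e_j)}$, whereas the paper goes further and shows each $L_{v_j}TL_{v_j}$ is a scalar multiple of $L_{v_j}$ before propagating. Both variants are valid; they simply allocate the work differently.
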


\begin{proof}
Suppose there exists a fake circle, i.e., a finite set $\mathscr{F}=\{v_j,e_j\}_{j=1}^{n}$ such that $\{v_j\}_{j=1}^{n}$ is a set of distinct vertices and $\{e_j\}_{j=1}^{n}$ is a set of edges such that $\{r(e_j),s(e_j)\}=\{v_j,v_{j+1\,(\mathrm{mod}\,n)}\}$ for each $1\leqslant j\leqslant n$.
For convenience, let $v_{n+1}=v_1$.
If $L_{v_j}\mathcal{A}_GL_{v_{j+1}}\ne\{0\}$ and $L_{v_{j+1}}\mathcal{A}_GL_{v_j}\ne\{0\}$ for every $1\leqslant j\leqslant n$, then $\mathscr{F}$ is contained in a maximal strongly connected component of $G$.
By \Cref{lem inner-imply-circle}, $\mathscr{F}$ must be a circle.
That is a contradiction.

Without loss of generality, we assume that $L_{v_2}\mathcal{A}_GL_{v_1}=0$.
Then $e_1$ is an acyclic edge such that $r(e_1)=v_1$ and $s(e_1)=v_2$.
Let $w_1=v_1$, $w_2=v_2$, and $\delta$ the derivation given by \Cref{lem derivation-extension}.
Suppose $\delta=\delta_T$ for some $T\in\mathfrak{L}_G$.
Similar to the proof of \Cref{lem inner-imply-circle}, we have
\begin{equation*}
  T=\sum_{v\in\mathcal{V}}L_vTL_v.
\end{equation*}
Since $\delta(L_{e_1})=L_{e_1}$, we have
\begin{equation*}
  L_{e_1}(L_{v_2}TL_{v_2}-I)=L_{v_1}TL_{v_1}\cdot L_{e_1}.
\end{equation*}
It follows that $L_{v_1}TL_{v_1}=\lambda_1L_{v_1}$, $L_{v_2}TL_{v_2}=\lambda_2L_{v_2}$, and $\lambda_2=1+\lambda_1$.
Note that
\begin{equation*}
  0=\delta(L_{e_2})=
  \begin{cases}
   L_{e_2}\cdot L_{v_3}TL_{v_3}- \lambda_2L_{e_2}, & \mbox{if } r(e_2)=v_2, \\
   \lambda_2L_{e_2}- L_{v_3}TL_{v_3}\cdot L_{e_2}, & \mbox{otherwise}.
  \end{cases}
\end{equation*}
In both cases, we have $L_{v_3}TL_{v_3}=\lambda_2L_{v_3}$.
Inductively, since $\delta(L_{e_j})=0$, we have $L_{v_{j+1}}TL_{v_{j+1}}=\lambda_2L_{v_{j+1}}$ for every $2\leqslant j\leqslant n$.
It follows that $L_{v_1}TL_{v_1}=\lambda_2L_{v_1}$.
That is a contradiction.
\end{proof}

Now we are ready to prove a certain converse to \Cref{thm fruit-tree}.

\begin{theorem}\label{thm converse}
Let $G$ be a connected directed graph that is not strongly connected.
If every bounded derivation on $\mathcal{A}_G$ is of the form $\delta_T$ for some $T\in\mathfrak{L}_G$, then $G$ is a generalized fruit tree.
\end{theorem}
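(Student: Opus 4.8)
The plan is to reduce the statement entirely to the abstract characterization of generalized fruit trees in \Cref{prop abstract-fruit-tree}. That proposition applies to any connected directed graph with at least two vertices, so I would begin by observing that $|\mathcal{V}(G)|\geqslant 2$: a one-vertex graph is vacuously strongly connected, whence the hypothesis that $G$ is connected but not strongly connected already forces two or more vertices. With this in hand, it suffices to verify the four defining properties (i)--(iv) of \Cref{prop abstract-fruit-tree}.

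Each of these four properties has been isolated as a separate lemma whose hypothesis is exactly the present one, that every bounded derivation on $\mathcal{A}_G$ has the form $\delta_T$ for some $T\in\mathfrak{L}_G$. Concretely, I would invoke \Cref{lem inner-imply-circle} to obtain property (i), that every maximal strongly connected component is a vertex graph or an $n$-circle graph; \Cref{lem inner-imply-no-edge} for property (ii), that no edge joins two distinct nontrivial maximal strongly connected components; \Cref{lem inner-imply-one-edge} for property (iii), that exactly one edge connects each nontrivial component to its complement; and \Cref{lem inner-imply-no-fake} for property (iv), that $G$ contains no fake circle. Since (i)--(iv) all hold, the forward direction of \Cref{prop abstract-fruit-tree} concludes that $G$ is a generalized fruit tree.

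The organizational step above is routine, so the genuine work sits inside the four lemmas, and that is where I would expect the real obstacle to be. Their common engine is \Cref{lem derivation-extension}, which attaches to any acyclic edge $e_0$ a norm-one bounded derivation sending $L_{e_0}\mapsto L_{w_1e_0w_2}$ and annihilating every other generator. In each lemma one assumes a forbidden feature (a non-circle strongly connected component, a redundant connecting edge, or a fake circle), produces an acyclic edge $e_0$ witnessing it, and applies this derivation; writing it as $\delta_T$ and restricting $T$ to the diagonal blocks $L_vTL_v$ forces those blocks to be scalars on the nontrivial strongly connected pieces via \Cref{prop center}. Comparing Fourier coefficients across $e_0$ then yields a relation of the shape $L_{w_1e_0}=L_{e_0}(L_{v_2}TL_{v_2}-\lambda I)$, which is impossible because the acyclic edge $e_0$ can occur only once in any path and hence cannot sit initially on the left side yet terminally on the right. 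The crux, therefore, is conceptual rather than computational: recognizing that innerness of \emph{every} derivation is precisely sharp enough to exclude each deviation from the generalized-fruit-tree shape, with \Cref{lem derivation-extension} supplying the non-inner test derivation that detects any such deviation.
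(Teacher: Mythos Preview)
Your proposal is correct and follows exactly the paper's own proof: observe $|\mathcal{V}(G)|\geqslant 2$, then invoke \Cref{lem inner-imply-circle}, \Cref{lem inner-imply-no-edge}, \Cref{lem inner-imply-one-edge}, and \Cref{lem inner-imply-no-fake} to verify conditions (i)--(iv) of \Cref{prop abstract-fruit-tree}. Your additional commentary on how \Cref{lem derivation-extension} drives those lemmas is accurate and matches the paper's mechanism.
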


\begin{proof}
Since $G$ is not strongly connected, we have $|\mathcal{V}|\geqslant 2$.
By \Cref{lem inner-imply-circle}, \Cref{lem inner-imply-no-edge}, \Cref{lem inner-imply-one-edge}, and \Cref{lem inner-imply-no-fake}, $G$ satisfies the properties (i-iv) in \Cref{prop abstract-fruit-tree}.
Therefore, $G$ is a generalized fruit tree.
\end{proof}

By \Cref{lem infinite-not-inner}, \Cref{prop inner-imply-no-in-fruit}, \Cref{thm converse}, and \Cref{rem fruit-tree}, we have the following result.

\begin{corollary}\label{cor converse}
Let $G$ be a connected directed graph that is not strongly connected.
Then every bounded derivation on $\mathcal{A}_G$ is inner if and only if $G$ is a fruit tree that contains no in-fruit.
\end{corollary}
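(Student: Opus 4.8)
The plan is to prove the two implications separately, assembling the lemmas of this subsection together with \Cref{rem fruit-tree} and \Cref{lem infinite-not-inner}; the corollary is essentially a bookkeeping statement once those results are in hand.

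For the sufficiency direction, I would simply invoke \Cref{rem fruit-tree}: if $G$ is a fruit tree containing no in-fruit, then every bounded derivation on $\mathcal{A}_G$ is inner. Nothing further is required.

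For the necessity direction I would argue in three steps. First, I would rule out infinitely many vertices: if $G$ had infinitely many vertices, then since $G$ is connected, \Cref{lem infinite-not-inner} would produce a bounded derivation on $\mathcal{A}_G$ that is not inner, contradicting the hypothesis; hence $G$ is a finite directed graph. Second, I would observe that every inner derivation $\delta_T$ has its implementing operator $T$ lying in $\mathcal{A}_G\subseteq\mathfrak{L}_G$, so the hypothesis in particular asserts that every bounded derivation on $\mathcal{A}_G$ is of the form $\delta_T$ for some $T\in\mathfrak{L}_G$; since $G$ is connected and not strongly connected, \Cref{thm converse} then shows that $G$ is a generalized fruit tree, and the finiteness obtained in the first step upgrades this to a fruit tree. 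Third, with $G$ now a generalized fruit tree on which all bounded derivations are inner, \Cref{prop inner-imply-no-in-fruit} forces $G$ to contain no in-fruit. Combining the three steps yields that $G$ is a fruit tree that contains no in-fruit.

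The argument requires no genuine analytic work, as all the substantive content has already been carried out in the preceding results. The only points demanding care are the logical ones: passing from the hypothesis of innerness (implementing operator in $\mathcal{A}_G$) to the weaker condition used in \Cref{thm converse} (implementing operator in $\mathfrak{L}_G$), and promoting \enquote{generalized fruit tree} to \enquote{fruit tree} via the finiteness extracted from \Cref{lem infinite-not-inner}. I do not anticipate any real obstacle, since each cited result has its hypotheses met in the order indicated.
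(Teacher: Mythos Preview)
Your proposal is correct and follows essentially the same approach as the paper, which simply cites \Cref{lem infinite-not-inner}, \Cref{prop inner-imply-no-in-fruit}, \Cref{thm converse}, and \Cref{rem fruit-tree} without further comment. You have merely spelled out explicitly the logical order in which these results are applied, including the observation that innerness in $\mathcal{A}_G$ implies implementation by an operator in $\mathfrak{L}_G$ and that finiteness of vertices promotes a generalized fruit tree to a fruit tree.
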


\subsection{Alternating number}\label{subsec alternating}
Let $G$ be a connected directed graph that is not strongly connected.
If every bounded derivation from $\mathcal{A}_G$ into $\mathfrak{L}_G$ is inner, then $G$ must be a generalized fruit tree by \Cref{thm converse}.
Conversely, if $G$ is a fruit tree, then every bounded derivation from $\mathcal{A}_G$ into $\mathfrak{L}_G$ is inner by \Cref{thm fruit-tree}.
For a generalized fruit tree $G$ with infinitely many vertices, a natural question is whether every bounded derivation from $\mathcal{A}_G$ into $\mathfrak{L}_G$ is inner.
For this propose, we introduce the concept of alternating numbers in this subsection.

Suppose $G$ is a directed graph.
A {\sl line} $L$ of length $n\geqslant 1$ in $G$ is a finite set $\{v_j\}_{j=1}^{n+1}\cup\{e_j\}_{j=1}^{n}$ such that $\{v_j\}_{j=1}^{n+1}$ is a set of distinct vertices and $\{e_j\}_{j=1}^{n}$ is a set of edges such that
\begin{equation*}
  \{r(e_j),s(e_j)\}=\{v_j,v_{j+1}\}
\end{equation*}
for each $1\leqslant j\leqslant n$.
Note that if we require $v_{n+1}=v_1$, then the line $L$ becomes a polygon.

\begin{definition}\label{def alternating}
Suppose $G$ is a directed graph and $L$ is a line in $G$.
A vertex $v$ in $L$ is called an {\sl alternating vertex} of $L$ if either $\deg_L^+(v)=2$ or $\deg_L^-(v)=2$.
We denote by $A(L)$ the number of alternating vertices in $L$.
The {\sl alternating number} of $G$ is defined as $A(G)=\sup\{A(L)\colon L~\text{is a line in}~G\}$.
\end{definition}

If $G=G_t\sqcup_VG_f$ is a generalized fruit tree, then it is easy to see that
\begin{equation*}
  A(G_t)\leqslant A(G)\leqslant A(G_t)+2.
\end{equation*}

\begin{lemma}\label{lem alternating}
Suppose $G=G_t\sqcup_VG_f$ is a generalized fruit tree such that $A(G_t)\geqslant 2m$.
Then there exists a nonzero bounded derivation $\delta$ on $\mathcal{A}_G$ such that if $\delta=\delta_T$ for some $T\in\mathfrak{L}_G$, then $\|T\|\geqslant\frac{m}{2}\|\delta\|$.
\end{lemma}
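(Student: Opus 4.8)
The plan is to realize the alternating number by a single line and to manufacture from it a derivation every implementer of which is forced to be large. First I would fix a line $L=\{u_0,\dots,u_N\}$ in $G_t$ with $A(L)\geqslant 2m$ and list its alternating vertices $u_{p_1},\dots,u_{p_{2m}}$; these are interior, hence of degree $\geqslant 2$ in $G_t$, hence not leaves and not identified with any circle component. They cut $L$ into maximal directed runs. On each run I would mark exactly one edge, the edge incident to the alternating vertex that closes the run, and set $\delta(L_u)=0$ for every vertex $u$, $\delta(L_e)=c_eL_e$ with $c_e=\pm1$ on each marked edge $e$, and $\delta(L_e)=0$ on every other edge. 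The signs $c_e$ are chosen so that the scalar jump produced across each alternating vertex equals $+1$. As in \Cref{lem derivation-extension}, extending by the Leibniz rule yields a derivation on the dense subalgebra $\mathscr{A}=\mathrm{span}\{L_w\colon w\in\mathbb{F}_G^+\}$ with $\delta(L_p)=\kappa_pL_p$, where $\kappa_p$ is the signed number of marked edges occurring in $p$.

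The boundedness estimate rests on one combinatorial fact: no directed path in $G$ can pass through an alternating vertex along $L$. At a sink both line-edges point in and at a source both point out, so a directed path entering along one line-edge cannot continue along the other; since $G_t$ is a generalized tree the only connection between vertices of different runs is through $L$ itself, and the single edges attaching the circle components cannot reconnect distinct runs. Consequently every directed path meets line-edges of at most one run, so it contains at most one marked edge and $|\kappa_p|\leqslant 1$. Writing $A=A_0+\sum_j A_j$, where $A_j$ collects the terms of $A$ whose paths run through the $j$-th marked edge, the same fact shows that the $A_j$ have pairwise orthogonal ranges and that $A_0$ is orthogonal to all of them. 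Hence for every $x$ one gets $\|\delta(A)x\|^2=\sum_j\|A_jx\|^2\leqslant\|Ax\|^2$, so $\|\delta\|\leqslant 1$ on $\mathscr{A}$, and this extends to a bounded derivation on $\mathcal{A}_G$ exactly as in \Cref{lem derivation-extension}.

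For the lower bound, suppose $\delta=\delta_T$ with $T\in\mathfrak{L}_G$. Since $\delta(L_u)=0$ for all vertices $u$, the operator $T$ commutes with every $L_u$ and is block-diagonal; at each interior line vertex $v$, which carries no circle, \Cref{prop reduced} forces $L_vTL_v=\lambda_v L_v$ for a scalar $\lambda_v$. Reading the identity $\delta_T(L_e)=(\lambda_{s(e)}-\lambda_{r(e)})L_e$ (up to the orientation sign) on the line-edges shows that $\lambda$ is constant on the interior of each run and increases by exactly $1$ across each alternating vertex. Therefore $\lambda_{u_{p_{2m}}}-\lambda_{u_{p_1}}=2m-1$, so some alternating vertex satisfies $|\lambda_{u_{p_i}}|\geqslant m-\tfrac12\geqslant\tfrac m2$, whence $\|T\|\geqslant\|L_{u_{p_i}}TL_{u_{p_i}}\|=|\lambda_{u_{p_i}}|\geqslant\tfrac m2\geqslant\tfrac m2\|\delta\|$, using $\|\delta\|\leqslant 1$.

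The hard part will be the combinatorial claim underlying the norm bound, namely that the marked edges are genuinely independent: no word in $\mathbb{F}_G^+$ meets two of them. It is precisely this that keeps $\|\delta\|$ bounded by an absolute constant while the accumulation of the scalars $\lambda_v$ grows linearly in the number of alternating vertices, and so it is the step that actually couples $A(G_t)$ to the norm ratio. Establishing it requires combining the source/sink obstruction at alternating vertices with the tree structure of $G_t$ and a careful check that detours through the attached circle components, joined to $G_t$ by a single edge at a leaf, cannot bridge two different runs; once this is settled, both the orthogonal decomposition and the telescoping of the $\lambda_v$ are routine.
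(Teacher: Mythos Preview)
Your approach is essentially the paper's. Both arguments pick a line in $G_t$ with $2m$ alternating vertices, define $\delta$ by sending selected line-edges to themselves (times a scalar), show $\|\delta\|\leqslant 1$ via the combinatorial fact that no directed path can contain two of the selected edges, and then read off a telescoping of the scalars $\lambda_v$ to bound $\|T\|$ below. The paper is slightly more economical: it marks only the $m$ edges $e_{k_{2j-1}}$ (one leaving each source-type alternating vertex) with coefficient $+1$, so no signs are needed and the telescoping gives $\lambda_1-\lambda_{2m}=m$ directly. Your variant marks one edge per run with coefficients $\pm1$ and obtains $\lambda_{u_{p_{2m}}}-\lambda_{u_{p_1}}=2m-1$; both reach $\|T\|\geqslant m/2$.

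There is, however, a small gap in your boundedness step. The assertion that ``$A_0$ is orthogonal to all of them'' is not correct as stated: if $q\in\Lambda_0$ but $w'$ contains the $j$-th marked edge, then $\xi_{qw'}$ lies both in the range of $A_0$ and in $P_j\mathcal{H}$, so $A_0x$ and $A_jx$ need not be orthogonal, and $\sum_j\|A_jx\|^2\leqslant\|Ax\|^2$ does not follow from the decomposition $A=A_0+\sum_jA_j$ alone. The fix is exactly the paper's $P_0$-trick (cf.\ \Cref{lem derivation-extension} and \Cref{prop inner-imply-no-in-fruit}): for $j\geqslant 1$ one has $A_j(I-P_0)=0$ because a path already containing a marked edge cannot be extended on the left by another path containing a marked edge, while $A_0P_0x\in P_0\mathcal{H}$. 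Hence
\[
\|\delta(A)x\|^2=\sum_{j\geqslant1}\|A_jP_0x\|^2\leqslant\|A_0P_0x\|^2+\sum_{j\geqslant1}\|A_jP_0x\|^2=\|AP_0x\|^2\leqslant\|A\|^2\|x\|^2,
\]
which repairs the estimate. Everything else in your outline, including the crucial claim that a directed path meets line-edges of at most one run (using that alternating vertices are sources/sinks along $L$, that $G_t$ is a tree, and that each fruit is attached at a leaf by a single edge), is correct and matches the paper's reasoning.
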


\begin{proof}
Suppose $L=\{v_j\}_{j=1}^{n+1}\cup\{e_j\}_{j=1}^{n}$ is a line in $G_t$ such that $A(L)=2m$.
Let $\{v_{k_j}\}_{j=1}^{2m}$ be the $2m$ alternating vertices of $L$ such that $\{k_j\}_{j=1}^{2m}$ is increasing.
Without loss of generality, we assume that
\begin{equation*}
  \deg^+(v_{k_{2j-1}})=\deg^-(v_{k_{2j}})=2
\end{equation*}
for every $1\leqslant j\leqslant m$.
For any vertex $v\in\mathcal{V}$ and edge $e\in\mathcal{E}$, we define
\begin{equation*}
  \delta(L_v)=0,\quad
  \delta(L_e)=
  \begin{cases}
    L_{e_{k_{2j-1}}}, & \mbox{if } e=e_{k_{2j-1}}, 1\leqslant j\leqslant m, \\
    0, & \mbox{otherwise}.
  \end{cases}
\end{equation*}
Let $\mathscr{A}=\mathrm{span}\{L_w\colon w\in\mathbb{F}_G^+\}$.
Then $\mathscr{A}$ is a subalgebra of $\mathcal{A}_G$ and $\delta$ can be naturally extended to a derivation on $\mathscr{A}$.
Similar to the proof of \Cref{lem derivation-extension}, we will show that $\|\delta|_{\mathscr{A}}\|\leqslant 1$.

Let $\Lambda_1=\{p_1e_{k_{2j-1}}p_2\colon p_1,p_2\in\mathbb{F}_G^+,1\leqslant j\leqslant m\}$, $\Lambda_2=\mathbb{F}_G^+\backslash\Lambda_1$, and $P_j$ the projection from $\mathcal{H}_G$ onto $\{\xi_p\colon p\in\Lambda_j\}$ for $j=1,2$.
Note that every $A\in\mathscr{A}$ can be written as a finite sum
\begin{equation*}
  \sum_{p\in\Lambda_1}a_pL_p+\sum_{p\in\Lambda_2}a_pL_p.
\end{equation*}
Let $A_j=\sum_{p\in\Lambda_j}a_pL_p$ for $j=1,2$.
Since every path $p\in\mathbb{F}_G^+$ contains at most one $e_{k_{2j-1}}$, we have $\delta(A)=A_1$.
Note that $A_1P_1=0$.
For any $x\in\mathcal{H}_G$, we have
\begin{equation*}
  \|A_1x\|^2=\|A_1P_2x\|^2\leqslant\|A_1P_2x\|^2+\|A_2P_2x\|^2
  =\|AP_2x\|^2\leqslant\|A\|^2\|x\|^2.
\end{equation*}
It follows that $\|\delta(A)\|=\|A_1\|\leqslant\|A\|$ for every $A\in\mathscr{A}$.
Therefore, $\delta$ extends to a derivation on $\mathcal{A}_G$ with $\|\delta\|\leqslant 1$.

Suppose $\delta=\delta_T$ for some $T\in\mathfrak{L}_G$.
Then we can write
\begin{equation*}
  T=\sum_{v\in\mathcal{V}}L_vTL_v.
\end{equation*}
Since $L_{v_{k_j}}\mathfrak{L}_GL_{v_{k_j}}=\mathbb{C}L_{v_{k_j}}$, there exists $\lambda_j\in\mathbb{C}$ such that $L_{v_{k_j}}TL_{v_{k_j}}=\lambda_jL_{v_{k_j}}$ for every $1\leqslant j\leqslant 2m$.
Let
\begin{equation*}
  p_i=
  \begin{cases}
    e_{k_{2j}-1}e_{k_{2j}-2}\cdots e_{k_{2j-1}}, & \mbox{if } i=2j-1~\text{and}~1\leqslant j\leqslant m,\\
    e_{k_{2j}}e_{k_{2j}+1}\cdots e_{k_{2j+1}-1}, & \mbox{if } i=2j~\text{and}~1\leqslant j\leqslant m-1.
  \end{cases}
\end{equation*}
Then $\delta(L_{p_{2j-1}})=L_{p_{2j-1}}$ for every $1\leqslant j\leqslant m$ and $\delta(L_{p_{2j}})=0$ for every $1\leqslant j\leqslant m-1$.
It follows that $\lambda_{2j-1}=1+\lambda_{2j}$ for every $1\leqslant j\leqslant m$ and $\lambda_{2j}=\lambda_{2j+1}$ for every $1\leqslant j\leqslant m-1$.
Therefore, $\lambda_{1}-\lambda_{2m}=m$.
In particular, we have
\begin{equation*}
  \|T\|\geqslant\max\{|\lambda_1|,|\lambda_{2m}|\}\geqslant\frac{m}{2}.
\end{equation*}
This completes the proof.
\end{proof}

\begin{proposition}\label{prop alternating}
Let $G$ be a generalized fruit tree.
If every bounded derivation on $\mathcal{A}_G$ is of the form $\delta_T$ for some $T\in\mathfrak{L}_G$, then $A(G)<\infty$.
\end{proposition}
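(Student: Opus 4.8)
The plan is to argue by contraposition: assuming the hypothesis, I would first extract a single \emph{uniform} constant $C$ governing every implementation $\delta=\delta_T$, and then play this constant against the divergent lower bounds supplied by \Cref{lem alternating}. The whole argument rests on an automatic continuity principle (the open mapping theorem), which is available here precisely because the center of $\mathfrak{L}_G$ is as small as possible.

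First I would observe that, since a generalized fruit tree is connected, has at least two vertices, and is not strongly connected (hence is not an $n$-circle graph), \Cref{prop center} gives $Z(\mathfrak{L}_G)=\mathbb{C}I$. Next, set $\mathcal{D}:=\mathrm{Der}(\mathcal{A}_G,\mathcal{A}_G)$, a norm-closed subspace of the bounded operators on $\mathcal{A}_G$ and hence a Banach space, and
\[
  \mathcal{S}:=\{T\in\mathfrak{L}_G\colon AT-TA\in\mathcal{A}_G \ \text{for all}\ A\in\mathcal{A}_G\}.
\]
Since for each fixed $A$ the map $T\mapsto AT-TA$ is norm-continuous and $\mathcal{A}_G$ is norm-closed in $\mathfrak{L}_G$, the set $\mathcal{S}$ is a norm-closed subspace of $\mathfrak{L}_G$, hence a Banach space. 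The map $\Psi\colon\mathcal{S}\to\mathcal{D}$, $\Psi(T)=\delta_T|_{\mathcal{A}_G}$, is bounded with $\|\Psi(T)\|\le 2\|T\|$; its kernel consists of those $T$ commuting with $\mathcal{A}_G$, and since $\mathcal{A}_G$ generates $\mathfrak{L}_G$ in the weak-operator topology, $\ker\Psi=Z(\mathfrak{L}_G)=\mathbb{C}I$.

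The key step is to invoke the hypothesis, which says exactly that $\Psi$ is surjective. Thus the induced map $\overline{\Psi}\colon\mathcal{S}/\mathbb{C}I\to\mathcal{D}$ is a continuous linear bijection between Banach spaces, so by the bounded inverse theorem there is a constant $C>0$ with $\mathrm{dist}(T,\mathbb{C}I)\le C\|\delta_T|_{\mathcal{A}_G}\|$ for every $T\in\mathcal{S}$. Consequently, for every bounded derivation $\delta$ on $\mathcal{A}_G$ one may choose an implementing operator $T\in\mathfrak{L}_G$ with $\delta=\delta_T$ and $\|T\|\le 2C\|\delta\|$. Now suppose, for contradiction, that $A(G)=\infty$. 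Using $A(G_t)\ge A(G)-2$, we get $A(G_t)=\infty$, so for every $m\ge 1$ we have $A(G_t)\ge 2m$, and \Cref{lem alternating} produces a nonzero bounded derivation $\delta_m$ on $\mathcal{A}_G$ such that any $T$ with $\delta_m=\delta_T$ satisfies $\|T\|\ge\frac{m}{2}\|\delta_m\|$. Applying the uniform bound to the particular implementation yields $\frac{m}{2}\|\delta_m\|\le\|T\|\le 2C\|\delta_m\|$, and dividing by $\|\delta_m\|>0$ gives $m\le 4C$ for all $m$, a contradiction. Hence $A(G)<\infty$.

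The main obstacle is the justification of the uniform constant, i.e., verifying that the abstract open mapping machinery genuinely applies: one must confirm that both $\mathcal{S}/\mathbb{C}I$ and $\mathcal{D}$ are complete, that $\Psi$ is well defined into $\mathcal{D}$ (rather than merely into $\mathrm{Der}(\mathcal{A}_G,\mathfrak{L}_G)$), and above all that its kernel is exactly the scalars, for which the triviality of $Z(\mathfrak{L}_G)$ from \Cref{prop center} is indispensable. Everything after that is a clean uniform-boundedness contradiction, with \Cref{lem alternating} supplying the diverging lower bounds $\tfrac{m}{2}$ that no single constant can dominate.
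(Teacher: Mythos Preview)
Your proof is correct and follows essentially the same strategy as the paper's: both set up the implementing map from $\{T\in\mathfrak{L}_G:\delta_T(\mathcal{A}_G)\subseteq\mathcal{A}_G\}/Z(\mathfrak{L}_G)$ onto $\mathrm{Der}(\mathcal{A}_G)$, invoke the open mapping theorem to extract a uniform constant, and then derive a contradiction from the lower bounds of \Cref{lem alternating}. Your version is slightly more explicit in justifying $\ker\Psi=Z(\mathfrak{L}_G)=\mathbb{C}I$ via \Cref{prop center} and in passing from $A(G)=\infty$ to $A(G_t)=\infty$, but the argument is the same.
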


\begin{proof}
Let $\mathcal{D}
=\{T\in\mathfrak{L}_G\colon\delta_T(\mathcal{A}_G)\subseteq\mathcal{A}_G\}$.
Then $\mathcal{D}$ is a Banach subspace of $\mathfrak{L}_G$.
We define a bounded linear map
\begin{equation*}
  \varphi\colon\mathcal{D}/Z(\mathfrak{L}_G)\to\mathrm{Der}(\mathcal{A}_G),\quad
  T+Z(\mathfrak{L}_G)\mapsto\delta_T.
\end{equation*}
Since $\varphi$ is bijective by assumption, $\varphi^{-1}$ is also bounded by the open mapping theorem.
Suppose on the contrary that $A(G)=\infty$.
By \Cref{lem alternating}, for every $m\geqslant 1$, there exists a nonzero bounded derivation $\delta_m$ on $\mathcal{A}_G$ such that if $\delta_m=\delta_{T_m}$ for some $T_m\in\mathfrak{L}_G$, then $\|T_m\|\geqslant\frac{m}{2}\|\delta_m\|$.
It follows that
\begin{equation*}
  \frac{m}{2}\|\delta_m\|\leqslant\|\varphi^{-1}(\delta_m)\|
  \leqslant\|\varphi^{-1}\|\|\delta_m\|.
\end{equation*}
That is a contradiction.
\end{proof}

For example, let $G$ be the directed graph with vertices $\{v_j\}_{j\in\mathbb{Z}}$ and edges $\{e_j\}_{j\in\mathbb{Z}}$ such that for all $j\in\mathbb{Z}$, we have $r(e_{2j-1})=v_{2j-1}$, $s(e_{2j-1})=v_{2j}$, $r(e_{2j})=v_{2j+1}$, and $s(e_{2j})=v_{2j}$.
See the following graph
\begin{equation*}
\xymatrix{
\cdots&v_0&v_1&v_2&v_3&v_4&\cdots.
\ar_{e_{-1}}"1,2";"1,1"
\ar^{e_0}"1,2";"1,3"
\ar_{e_1}"1,4";"1,3"
\ar^{e_2}"1,4";"1,5"
\ar_{e_3}"1,6";"1,5"
\ar^{e_4}"1,6";"1,7"
}
\end{equation*}
Then $G$ is a generalized tree with $A(G)=\infty$.
In particular, not every derivation on $\mathcal{A}_G$ is inner.

\begin{corollary}\label{cor inner-imply}
Suppose $G$ is a directed graph with connected components $\{G_{\lambda_1}\}_{\lambda_1\in\Lambda_1}
\cup\{G_{\lambda_2}\}_{\lambda_2\in\Lambda_2}$ such that each $G_{\lambda_1}$ is strongly connected and each $G_{\lambda_2}$ is not strongly connected.
If every bounded derivation on $\mathcal{A}_G$ is of the form $\delta_T$ for some $T\in\mathfrak{L}_G$, then each $G_{\lambda_2}$ is a generalized fruit tree and $\sup_{\lambda_2\in\Lambda_2}A(G_{\lambda_2})<\infty$.
\end{corollary}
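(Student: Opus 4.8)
The plan is to reduce the problem to a single connected component, apply the structural results already in hand, and then upgrade the per-component finiteness of the alternating number to a uniform bound via a global open-mapping argument; that last upgrade is the one genuinely new point.

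First I would isolate each non-strongly-connected component. Fix $\mu\in\Lambda_2$ and let $\delta_0$ be any bounded derivation on $\mathcal{A}_{G_\mu}$. Using the identifications $\mathcal{A}_G\cong c_0(\{\mathcal{A}_{G_\lambda}\})$ and $\mathfrak{L}_G\cong\ell^\infty(\{\mathfrak{L}_{G_\lambda}\})$ of \Cref{rem derivation-decomposition}, I would extend $\delta_0$ to the derivation $\tilde\delta_0=\prod\delta_\lambda$ on $\mathcal{A}_G$ with $\delta_\mu=\delta_0$ and $\delta_\lambda=0$ for $\lambda\ne\mu$; by \Cref{prop derivation-decomposition} this is bounded with $\|\tilde\delta_0\|=\|\delta_0\|$. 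The hypothesis yields $T\in\mathfrak{L}_G$ with $\tilde\delta_0=\delta_T$, and restricting to the $\mu$-block---where multiplication respects the component decomposition, so the $\mu$-component of $AT-TA$ is $AT_\mu-T_\mu A$ for $A\in\mathcal{A}_{G_\mu}$---shows $\delta_0=\delta_{T_\mu}$ for the $\mu$-component $T_\mu\in\mathfrak{L}_{G_\mu}$. Thus every bounded derivation on each $\mathcal{A}_{G_\mu}$ is implemented by an element of $\mathfrak{L}_{G_\mu}$.

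Since each $G_\mu$ with $\mu\in\Lambda_2$ is connected and not strongly connected, \Cref{thm converse} immediately gives that each $G_\mu$ is a generalized fruit tree, and \Cref{prop alternating} gives $A(G_\mu)<\infty$ for each fixed $\mu$.

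The main obstacle is the uniform bound $\sup_\mu A(G_\mu)<\infty$, which the per-component finiteness does not control. Here I would run the open-mapping argument of \Cref{prop alternating} globally on $\mathcal{A}_G$. Setting $\mathcal{D}=\{T\in\mathfrak{L}_G\colon\delta_T(\mathcal{A}_G)\subseteq\mathcal{A}_G\}$, the map $\varphi\colon\mathcal{D}/Z(\mathfrak{L}_G)\to\mathrm{Der}(\mathcal{A}_G)$, $T+Z(\mathfrak{L}_G)\mapsto\delta_T$, is a bounded bijection between Banach spaces by hypothesis, so $\varphi^{-1}$ is bounded with some constant $C$; equivalently, every bounded derivation $\delta$ on $\mathcal{A}_G$ admits an implementing $T$ with $\|T\|\leqslant(C+1)\|\delta\|$ (the quotient norm being an infimum). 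Suppose toward a contradiction that $\sup_\mu A(G_\mu)=\infty$. Using the inequality $A((G_\mu)_t)\geqslant A(G_\mu)-2$ recorded after \Cref{def alternating}, for a fixed $m>2(C+1)$ I would pick $\mu$ with $A((G_\mu)_t)\geqslant 2m$ and invoke \Cref{lem alternating} to obtain a nonzero bounded derivation $\delta_m$ on $\mathcal{A}_{G_\mu}$ with $\|\delta_m\|\leqslant 1$ such that any operator implementing it has norm at least $\tfrac{m}{2}\|\delta_m\|$. Extending $\delta_m$ by zero to $\mathcal{A}_G$ as in the first step (norm unchanged) and applying the global bound produces $T\in\mathfrak{L}_G$ with $\delta_T$ equal to this extension and $\|T\|\leqslant(C+1)\|\delta_m\|$; its $\mu$-component implements $\delta_m$, so $\tfrac{m}{2}\|\delta_m\|\leqslant\|T_\mu\|\leqslant\|T\|\leqslant(C+1)\|\delta_m\|$, and since $\|\delta_m\|>0$ this forces $\tfrac{m}{2}\leqslant C+1$, a contradiction. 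Hence $\sup_\mu A(G_\mu)<\infty$, which completes the proof.
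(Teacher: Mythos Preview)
Your proof is correct and follows essentially the same approach as the paper, which simply cites \Cref{prop derivation-decomposition}, \Cref{thm converse}, \Cref{lem alternating}, and the proof of \Cref{prop alternating}. You have spelled out precisely how these pieces fit together: the component decomposition lets you pass from the global hypothesis to each $G_\mu$, \Cref{thm converse} forces each $G_\mu$ to be a generalized fruit tree, and the open-mapping argument from the proof of \Cref{prop alternating}, run globally on $\mathcal{A}_G$ and combined with \Cref{lem alternating}, yields the uniform bound on the alternating numbers.
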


\begin{proof}
It follows from \Cref{prop derivation-decomposition}, \Cref{thm converse}, \Cref{lem alternating}, and the proof of \Cref{prop alternating}.
\end{proof}

\subsection{A reduction}
Let $G=G_t\sqcup_VG_f$ be a generalized fruit tree.
By \Cref{def fruit-tree}, the set $\mathcal{V}_t'=\mathcal{V}\backslash\mathcal{V}_f=\mathcal{V}_t\backslash V$ is nonempty.
Let $G_t'$ be the reduced subgraph of $G$ by $\mathcal{V}_t'$, which is the same as the induced subgraph of $G$ by $\mathcal{V}_t'$.
It is clear that $G_t'$ is a generalized subtree of $G_t$.
The following proposition is an enhanced version of \Cref{thm fruit-tree}.
Combining with \Cref{lem reduced-derivation}, we only need to focus on generalized trees.
Recall that $\mathcal{A}_G=\mathfrak{L}_G$ if $G$ is a tree.

\begin{proposition}\label{prop reduction}
Suppose $G=G_t\sqcup_VG_f$ is a fruit tree, $G_t'$ is the reduced subgraph of $G$ by $\mathcal{V}_t'=\mathcal{V}\backslash\mathcal{V}_f$, and $C\geqslant 1$ is a constant.
Assume that for every derivation $\delta$ on $\mathcal{A}_{G_t'}$, there exists $T\in\mathcal{A}_{G_t'}$ such that $\delta=\delta_T$ and $\|T\|\leqslant C\|\delta\|$.
Then for every bounded derivation $\delta$ from $\mathcal{A}_G$ into $\mathfrak{L}_G$, there exists $T\in\mathfrak{L}_G$ such that $\delta=\delta_T$ and $\|T\|\leqslant 22C\|\delta\|$.
\end{proposition}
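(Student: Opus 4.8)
The plan is to rerun the three-step reduction from the proof of \Cref{thm fruit-tree}, but to keep track of every operator norm so that the only non-universal factor is the constant $C$ supplied by the hypothesis on $\mathcal{A}_{G_t'}$. Write $G=G_t\sqcup_VG_f$ with $V=\{v_j\}_{j=1}^m$ and $G_f=\bigsqcup_{j=1}^m\mathscr{C}_{n_j}$, let $w_j$ be the minimal circle at $v_j$ and $e_j$ the unique edge joining $v_j$ to $\mathcal{V}_t'$. Put $P_0=\sum_{v\in\mathcal{V}_t'}L_v$ and $P_j=\sum_{v\in\mathcal{V}(\mathscr{C}_{n_j})}L_v$, so that $\{P_j\}_{j=0}^m$ are orthogonal projections summing to $I$, with $P_0\mathcal{A}_GP_0\cong\mathcal{A}_{G_t'}$ and $P_j\mathcal{A}_GP_j\cong\mathcal{A}_{\mathscr{C}_{n_j}}$ by \Cref{prop reduced}. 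Fix a bounded $\delta\colon\mathcal{A}_G\to\mathfrak{L}_G$ with $\|\delta\|=d$.

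First I would kill the block projections. By \Cref{lem idempotent} the operator $S=\sum_{j=0}^mP_j\delta(P_j)$ satisfies $\delta_S(P_j)=\delta(P_j)$ for every $j$; it is block-off-diagonal ($P_jSP_j=0$) and its $(j,k)$-block $P_j\delta(P_j)P_k$ is nonzero only when there is a directed path from block $k$ to block $j$. In a fruit tree every such path must enter an in-fruit through its unique edge $e_j$, so, using that the ranges $P_j\mathcal{H}_G$ are pairwise orthogonal and that the entry operators $L_{e_j}$ have pairwise orthogonal ranges, I would estimate $\|S\|\le 2d$, a bound independent of $m$ and of the $n_j$. Replacing $\delta$ by $\delta-\delta_S$ I may assume $\delta(P_j)=0$, so that $\delta$ sends each $P_j\mathcal{A}_GP_j$ into $P_j\mathfrak{L}_GP_j$; the norm grows to at most $\|\delta\|+2\|S\|\le 5d$.

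Next I would reduce on the diagonal blocks. On the tree block $P_0\mathcal{A}_GP_0\cong\mathcal{A}_{G_t'}$ (where $\mathfrak{L}_{G_t'}=\mathcal{A}_{G_t'}$ since $G_t'$ is a finite tree) the hypothesis yields $T_0\in\mathcal{A}_{G_t'}\subseteq\mathfrak{L}_G$ with $\|T_0\|\le 5Cd$, while on each fruit block $P_j\mathcal{A}_GP_j\cong\mathcal{A}_{\mathscr{C}_{n_j}}$ the strongly connected case (\Cref{cor S-C-finite}, or \Cref{cor circle-inner}) yields $C_j\in P_j\mathfrak{L}_GP_j$ with $\|C_j\|\le 5d$. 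Since the tree block and the fruit blocks act on mutually orthogonal subspaces, the block-diagonal operator $T_{\mathrm{diag}}=T_0+\sum_jC_j$ has $\|T_{\mathrm{diag}}\|\le 5Cd$ (here $C\ge1$ is used), commutes with every $P_j$, and hence subtracting $\delta_{T_{\mathrm{diag}}}$ preserves $\delta(P_j)=0$ while making $\delta$ vanish on $\mathscr{A}=\bigcup_jP_j\mathcal{A}_GP_j$; the resulting derivation has norm at most $5d+10Cd$. Every generator of $\mathcal{A}_G$ except the connecting edges $L_{e_j}$ lies in $\mathscr{A}$, so only the $e_j$ remain, and as in \Cref{thm fruit-tree} one writes $\delta(L_{e_j})=L_{e_j}A_j$ or $-A_jL_{e_j}$ with $A_j=f_j(L_{w_j})$. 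By \Cref{lem fruit-tree-norm} one has $\|A_j\|=\|f_j\|_{H^\infty(\mathbb{D})}\le 5d+10Cd$, and the central element $B_j=n_j\alpha_j(A_j)$ produced by the weak Dixmier map of \Cref{thm weak-Dixmier} satisfies $\|B_j\|=\|A_j\|$ by the isometric identification $Z(\mathfrak{L}_{\mathscr{C}_{n_j}})\cong H^\infty(\mathbb{D})$ of \Cref{rem center}; this is exactly what removes the apparent factor $n_j$. As the $B_j$ lie in orthogonal blocks, $B=\sum_jB_j$ has $\|B\|=\sup_j\|B_j\|\le 5d+10Cd$ and $\delta=\delta_B$ on the $e_j$.

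Finally I would assemble $T=S+T_{\mathrm{diag}}+B$, which implements $\delta$ on all generators, hence $\delta=\delta_T$, and the triangle inequality gives $\|T\|\le 2d+5Cd+(5d+10Cd)=7d+15Cd\le 22Cd$ after using $C\ge1$. The main obstacle is the norm estimate $\|S\|\le 2d$ (and, with it, the uniform control of the fruit operators): a priori $S=\sum_jP_j\delta(P_j)$ could grow like $\sqrt{m}$, and what rescues the bound is precisely the defining feature of a fruit tree, namely that each fruit is a leaf glued along a single edge, so every basis path meets at most one in-fruit and at most one out-fruit. This makes the relevant range projections orthogonal and lets the $\ell^2$-type estimates proceed with a universal constant; the isometric center identification $\|B_j\|=\|A_j\|$ plays the analogous role on the fruit side, and together they ensure that no dependence on the number or the sizes of the fruits enters the final constant.
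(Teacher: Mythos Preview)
Your proof is essentially the paper's proof, with the same three-stage reduction and the same arithmetic $2d+5Cd+(5d+10Cd)=7d+15Cd\le 22Cd$. The one place you organize things differently is the first step: you apply \Cref{lem idempotent} to the full family $\{P_0,P_1,\dots,P_m\}$ and then must argue that $\|S\|=\bigl\|\sum_{j=0}^m P_j\delta(P_j)\bigr\|\le 2d$, which you correctly flag as the crux but justify only heuristically. The paper avoids this by grouping the projections into three: $Q_0=P_0$, $Q_1=\sum_{\text{out-fruits}}P_j$, $Q_2=\sum_{\text{in-fruits}}P_j$. Then
\[
T_0=\sum_{i=0}^{2}Q_i\delta(Q_i)=(Q_1-Q_0)\delta(Q_1)+(Q_2-Q_0)\delta(Q_2),
\]
which is visibly a sum of two terms each of norm $\le\|\delta\|$, giving $\|T_0\|\le 2d$ for free. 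In fact $T_0$ coincides with your $S$ (since $P_j\delta(P_k)=0$ for distinct out-fruits $j,k$, and likewise for in-fruits), so the bound you want is true---but the three-projection rewriting is what makes it transparent. The price the paper pays is a short extra argument that $\delta_1(P_j)=0$ for each individual fruit $P_j$, using that $Q_1\mathfrak{L}_GQ_1=\bigoplus_jP_j\mathfrak{L}_GP_j$ (no paths between distinct out-fruits) and similarly for $Q_2$; you get this for free from your choice. Either way the remainder of the argument is identical.
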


\begin{proof}
Suppose $V=\{v_j\}_{j=1}^m$ and $G_f$ has connected components $\{\mathscr{C}_{n_j}\}_{j=1}^m$.
For any $1\leqslant j\leqslant m$, there exists an edge $e_j$ between $v_j$ and $x_j\in\mathcal{V}_t'$, where $x_j$ may be repeated.
Without loss of generality, we assume that $\mathscr{C}_{n_j}$ is an out-fruit for each $1\leqslant j\leqslant m'$ and it is an in-fruit for each $m'+1\leqslant j\leqslant m$.
Let
\begin{align*}
   P_j&=\sum_{v\in\mathcal{V}(\mathscr{C}_{n_j})}L_v\quad\text{for every}~
   1\leqslant j\leqslant m,\\
   Q_0&=\sum_{v\in \mathcal{V}_t'}L_v,\quad Q_1=\sum_{j=1}^{m'}P_j,\quad Q_2=\sum_{j=m'+1}^mP_j.
\end{align*}
Clearly, we have $Q_0+Q_1+Q_2=I$.
Let $\delta$ be a bounded derivation from $\mathcal{A}_G$ into $\mathfrak{L}_G$.
We define
\begin{equation*}
  T_0=\sum_{j=0}^2Q_j\delta(Q_j)=(Q_1-Q_0)\delta(Q_1)+(Q_2-Q_0)\delta(Q_2).
\end{equation*}
Then $\|T_0\|\leqslant 2\|\delta\|$ and $\delta(Q_j)=\delta_{T_0}(Q_j)$ for $0\leqslant j\leqslant 2$ by \Cref{lem idempotent}.
Let $\delta_1=\delta-\delta_{T_0}$.
Then $\|\delta_1\|\leqslant 5\|\delta\|$ and $\delta_1(Q_j)=0$.
Moreover, for any $1\leqslant j\leqslant m'$, we have $\delta_1(P_j)=Q_1\delta_1(P_j)Q_1$.
Combining with $\delta_1(P_j)=\delta_1(P_j)P_j+P_j\delta_1(P_j)$, we can get $\delta_1(P_j)=0$ for every $1\leqslant j\leqslant m'$.
Similarly, $\delta_1(P_j)=0$ for every $m'+1\leqslant j\leqslant m$.

Since $\delta_1|_{Q_0\mathcal{A}_GQ_0}$ is a derivation on $Q_0\mathcal{A}_GQ_0\cong\mathcal{A}_{G_t'}$ by \Cref{prop reduced}, there exists $S_0\in Q_0\mathcal{A}_GQ_0$ such that $\delta_1|_{Q_0\mathcal{A}_GQ_0}=\delta_{S_0}$ and $\|S_0\|\leqslant C\|\delta_1\|$ by assumption.
Since $\delta_1|_{P_j\mathcal{A}_GP_j}$ is a derivation from $\mathcal{A}_{\mathscr{C}_j}$ into $\mathfrak{L}_{\mathscr{C}_j}$, by applying \Cref{thm S-C}, there exists $S_j\in\mathfrak{L}_{\mathscr{C}_j}$ such that $\delta_1|_{P_j\mathcal{A}_GP_j}=\delta_{S_j}$ and $\|S_j\|\leqslant\|\delta_1\|$.
Let $S=\sum_{j=0}^{m}S_j$ and $\delta_2=\delta_1-\delta_S$.
Then $\|S\|\leqslant C\|\delta_1\|$, $\|\delta_2\|\leqslant(1+2C)\|\delta_1\|$, and $\delta_2$ vanishes on $Q_0\mathcal{A}_GQ_0$ and $P_j\mathcal{A}_GP_j$ for every $1\leqslant j\leqslant m$.

Similar to the proof of \Cref{thm fruit-tree}, there exists $A_j\in\mathfrak{L}_{\mathscr{C}_{n_j}}$ with $\|A_j\|\leqslant\|\delta_2\|$ such that
\begin{equation*}
  \delta_2(L_{e_j})=
  \begin{cases}
    L_{e_j}A_j, & \mbox{if } 1\leqslant j\leqslant m', \\
    -A_jL_{e_j}, & \mbox{if } m'+1\leqslant j\leqslant m.
  \end{cases}
\end{equation*}
Let $B_j=n_j\alpha_j(A_j)\in Z(\mathfrak{L}_{\mathscr{C}_{n_j}})$, where $\alpha_j\in\mathscr{D}_{\mathscr{C}_{n_j}}$ is the mapping given by the proof of \Cref{thm weak-Dixmier}.
More precisely, let $\mathcal{V}(\mathscr{C}_{n_j})=\{v_{j1},v_{j2},\ldots,v_{jn_j}\}$ and $c_{ji}$ the minimal circle at $v_{ji}$ for every $1\leqslant i\leqslant n_j$, see \eqref{equ c-j}.
We may assume that $v_j=v_{j1}$ and $A_j=f_j(L_{c_{j1}})$, where $f_j\in H^{\infty}(\mathbb{D})$ by the proof of \Cref{thm fruit-tree}.
Then $B_j=\sum_{i=1}^{n_j}f_j(c_{ji})$ and $\|B_j\|=\|A_j\|$ since $f(c_{ji})f(c_{jk})=0$ for any $1\leq i\neq k\leq n_j$.
Let $B=\sum_{j=1}^mB_j$.
It is clear that $\delta_2=\delta_B$ and
\begin{equation*}
  \|B\|=\max_{1\leqslant j\leqslant m}\|B_j\|=\max_{1\leqslant j\leqslant m}\|A_j\|\leqslant\|\delta_2\|.
\end{equation*}
Let $T=T_0+S+B$.
Then $\delta=\delta_T$ and $\|T\|\leqslant(7+15C)\|\delta\|$.
This completes the proof.
\end{proof}

\begin{corollary}\label{cor reduction}
Suppose $G=G_t\sqcup_VG_f$ is a generalized fruit tree, and $C\geqslant 1$ is a constant.
Assume that for every subtree $G_0$ of $G_t$ and every derivation $\delta$ on $\mathcal{A}_{G_0}$, there exists $T\in\mathcal{A}_{G_0}$ such that $\delta=\delta_T$ and $\|T\|\leqslant C\|\delta\|$.
Then for every derivation $\delta$ from $\mathcal{A}_G$ into $\mathfrak{L}_G$, there exists $T\in\mathfrak{L}_G$ such that $\delta=\delta_T$ and $\|T\|\leqslant 22C\|\delta\|$.
\end{corollary}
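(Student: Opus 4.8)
The plan is to reduce the (possibly infinite) generalized fruit tree $G$ to its finite subgraphs and apply \Cref{prop reduction} on each of them. Concretely, I would invoke the reduction mechanism of \Cref{lem reduced-derivation}: for a finite vertex set $F$, \Cref{prop reduced} identifies $L_F\mathcal{A}_GL_F$ and $L_F\mathfrak{L}_GL_F$ with $\mathcal{A}_{G_F}$ and $\mathfrak{L}_{G_F}$, where $G_F$ is the reduced subgraph of $G$ by $F$; so it suffices to implement every bounded derivation $\delta_F$ from $\mathcal{A}_{G_F}$ into $\mathfrak{L}_{G_F}$ by an operator of norm at most $22C\|\delta_F\|$, uniformly in $F$, and then pass to a weak-operator limit point of the resulting net $\{T_F\}$ (bounded by \Cref{prop compact}), exactly as in the proof of \Cref{lem reduced-derivation}.

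First I would analyze the structure of $G_F$. Reduction sends each fruit $\mathscr{C}_{n_\lambda}$ meeting $F$ to a smaller circle (a $k$-circle on $|F\cap\mathcal{V}(\mathscr{C}_{n_\lambda})|$ vertices), sends the pure tree vertices to a finite reduced tree, and leaves the single connecting edge $e_\lambda$ as the unique edge joining the reduced fruit to the tree part, provided $F$ contains both $v_\lambda$ and its tree attachment vertex $x_\lambda$. For such $F$, I would check the four conditions of \Cref{prop abstract-fruit-tree} to conclude that $G_F$ is a finite fruit tree whose reduced tree part $(G_F)_t'$ is the reduced subgraph of $G_t$ by $F\cap\mathcal{V}_t'$ (here one uses that the fruit edges are acyclic, so minimal tree-to-tree paths never detour through a fruit). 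That is, $(G_F)_t'$ is a finite subtree of $G_t$, and the hypothesis of the corollary provides the constant $C$ for every derivation on $\mathcal{A}_{(G_F)_t'}$; hence \Cref{prop reduction} applies to the fruit tree $G_F$ and yields $T_F$ with $\|T_F\|\le 22C\|\delta_F\|$ (using $C\ge 1$ to absorb the $(7+15C)$ bound into $22C$).

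The main obstacle is that $G_F$ is \emph{not} a fruit tree for an arbitrary finite $F$: if $F$ picks one vertex from an out-fruit and one from an in-fruit whose tree-attachment points are joined by a directed tree path, but omits every tree vertex along that path, then reduction produces a direct edge between two nontrivial strongly connected components, violating condition (ii) of \Cref{prop abstract-fruit-tree} (indeed, by \Cref{thm converse} such a $G_F$ then admits non-inner derivations, so the hypothesis of \Cref{lem reduced-derivation} genuinely fails there). I would circumvent this by restricting to the cofinal family of finite sets $F$ that contain $v_\lambda$ and $x_\lambda$ whenever $F$ meets $\mathscr{C}_{n_\lambda}$; this family is cofinal in the directed set of all finite subsets of $\mathcal{V}$, and the limit-point argument of \Cref{lem reduced-derivation} only needs a cofinal family, so the uniform bound $\|T_F\|\le 22C\|\delta_F\|\le 22C\|\delta\|$ over this family still produces $T\in\mathfrak{L}_G$ with $\delta=\delta_T$ and $\|T\|\le 22C\|\delta\|$. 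The degenerate cases (where $F$ meets no fruit, giving a reduced tree handled directly by the hypothesis, or where $G_F$ is a single reduced circle handled by \Cref{thm S-C}) all yield constants $\le 22C$ and cause no trouble.
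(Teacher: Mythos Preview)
Your overall strategy coincides with the paper's: restrict to a cofinal family of finite $F\subseteq\mathcal{V}$ for which the reduced subgraph $G_F$ is a fruit tree, apply \Cref{prop reduction} to each such $G_F$, and then run the weak-operator limit argument of \Cref{lem reduced-derivation}. The paper chooses its cofinal family slightly differently---each fruit is taken whole or not at all, and $G_F$ is required to be connected---but the mechanism is the same.

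There is, however, a gap in your verification that $G_F$ satisfies condition (iv) of \Cref{prop abstract-fruit-tree}. Your constraint on $F$ only governs how $F$ meets the fruits; it puts no restriction on the tree part, and the reduced subgraph of a generalized tree by an arbitrary finite vertex set need not be a tree. For instance, take $G=G_t$ to be the star with centre $c$ and edges $a\to c$, $b\to c$, $c\to d$, $c\to e$ (no fruits, so your fruit condition is vacuous) and set $F=\{a,b,d,e\}$. Then $G_F$ has the four edges $a\to d$, $a\to e$, $b\to d$, $b\to e$ and contains the fake circle $a$--$d$--$b$--$e$--$a$; by \Cref{thm converse} this $G_F$ even admits non-inner bounded derivations, so \Cref{prop reduction} is genuinely unavailable there. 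The fix is to also require that $F\cap\mathcal{V}_t$ induce a connected (equivalently, convex) subtree of $G_t$. This still yields a cofinal family---replace any finite set by its tree-convex hull together with the required $v_\lambda,x_\lambda$---and now the reduced and induced subgraphs of the tree part agree, so $(G_F)_t'$ is honestly a finite subtree of $G_t$ and the hypothesis of the corollary applies.
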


\begin{proof}
Suppose $G_f$ has connected components $\{\mathscr{C}_{n_\lambda}\}_{\lambda\in\Lambda}$.
Let $F$ be a finite subset of $\mathcal{V}$ such that either $\mathcal{V}(\mathscr{C}_{n_\lambda})\cap F=\emptyset$ or $\mathcal{V}(\mathscr{C}_{n_\lambda})\subseteq F$ for each $\lambda\in\Lambda$, and the reduced subgraph of $G$ by $F$ is connected.
Combining with \Cref{prop reduction}, the rest part is the same as the proof of \Cref{lem reduced-derivation}.
\end{proof}

\subsection{Rooted trees}
Let $\{E_{ij}\}_{i,j=1}^{n}$ be a system of matrix units in the matrix algebra $M_n(\mathbb{C})$.
The {\sl upper triangle algebra} of order $n$ is defined as
\begin{equation*}
  UT_n=\mathrm{span}\{E_{ij}\colon 1\leqslant i\leqslant j\leqslant n\}.
\end{equation*}
Suppose $G$ is a directed graph.
We say that $G$ is the {\sl upper triangle tree} of order $n$ if it has vertices $\{v_j\}_{j=1}^n$ and edges $\{e_j\}_{j=1}^{n-1}$ such that $r(e_j)=v_j$ and $s(e_j)=v_{j+1}$.
Actually, let $e_{ij}$ be the unique path in $\mathbb{F}_G^+$ from $v_j$ to $v_i$, i.e., $e_{jj}=v_j$ for $1\leqslant j\leqslant n$ and $e_{ij}=e_ie_{i+1}\cdots e_{j-1}$ for $1\leqslant i<j\leqslant n$.
Then the linear map
\begin{equation*}
  \varphi\colon\mathcal{A}_G\to UT_n,\quad \sum_{i\leqslant j}a_{ij}e_{ij}\mapsto\sum_{i\leqslant j}a_{ij}E_{ij}
\end{equation*}
is an isomorphism.
Note that a tree $G$ is an upper triangle tree if and if only its alternating
number $A(G)$ is zero.

\begin{definition}
Let $G$ be a tree.
Then $G$ is called an {\sl out-tree} if there exists a vertex $v_s$ such that for each $v\in\mathcal{V}$, there exists a path from $v_s$ to $v$.
Note that $v_s$ is unique and is called the {\sl source root} of $G$.

Similarly, $G$ is called an {\sl in-tree} if there exists a vertex $v_r$ such that for each $v\in\mathcal{V}$, there exists a path from $v$ to $v_r$.
Note that $v_r$ is unique and is called the {\sl range root} of $G$.

We say that $G$ is a {\sl rooted tree} if it is either an out-tree or an in-tree.
\end{definition}

By definition, every upper triangle tree is an out-tree and an in-tree at the same time.
Let $G$ be an out-tree with $n$ vertices.
Let $v_n$ be the source root.
Then we can write $\mathcal{V}=\{v_1,v_1,\ldots,v_n\}$ such that there exists an edge between $v_j$ and $\{v_{j+1},v_{j+2},\ldots,v_n\}$ for every $1\leqslant j\leqslant n-1$.
It is clear that there exists no path from $v_i$ to $v_j$ for every $1\leqslant i<j\leqslant n$.

\begin{lemma}\label{lem out-tree}
Suppose $G$ is an out-tree with $n$ vertices.
Then $\mathcal{A}_G$ on $\mathcal{H}_G$ is completely isometrically isomorphic to a subalgebra of the upper triangle algebra $UT_n$ on $\mathbb{C}^n$.
\end{lemma}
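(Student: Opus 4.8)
The plan is to realize $\mathcal{A}_G=\mathfrak{L}_G$ as block-diagonal operators indexed by the source vertex of a path, and then to single out the block attached to the source root as a faithful copy that already sits inside $UT_n$, with every other block a compression of it.

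First I would fix the ordering $\mathcal{V}=\{v_1,\ldots,v_n\}$ of the paragraph preceding the statement, so that $v_n$ is the source root and there is no path from $v_i$ to $v_j$ whenever $i<j$. For $i\le j$ let $e_{ij}$ be the unique path with $r(e_{ij})=v_i$, $s(e_{ij})=v_j$ when such a path exists; uniqueness holds since $G$ is a tree, and $e_{ij}$ exists exactly when $v_i$ is reachable from $v_j$. Every $A\in\mathcal{A}_G$ then has a finite Fourier expansion $A=\sum_{i\le j}a_{ij}L_{e_{ij}}$ over those $(i,j)$ for which $e_{ij}$ exists, and I define $\psi\colon\mathcal{A}_G\to UT_n$ by $\psi(A)=\sum_{i\le j}a_{ij}E_{ij}$; triangularity of the image is precisely the absence of paths from $v_i$ to $v_j$ with $i<j$. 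Using $L_{e_{ij}}L_{e_{kl}}=\delta_{jk}L_{e_{il}}$ (paths compose iff $s(e_{ij})=r(e_{kl})$, and uniqueness of tree paths forces $e_{ij}e_{jl}=e_{il}$), one checks routinely that $\psi$ is an injective algebra homomorphism onto the subalgebra $\mathrm{span}\{E_{ij}\colon e_{ij}\text{ exists}\}$ of $UT_n$, closure under multiplication reflecting transitivity of reachability.

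The heart of the argument is that $\psi$ is completely isometric. I would decompose $\mathcal{H}_G=\bigoplus_{j=1}^n R_{v_j}\mathcal{H}_G$ by the source vertex, where $R_{v_j}\mathcal{H}_G=\mathrm{span}\{\xi_w\colon s(w)=v_j\}$ has orthonormal basis $\{\xi_{e_{ij}}\colon e_{ij}\text{ exists}\}$. Left multiplication preserves sources, so each summand is invariant, and since $R_{v_j}\in\mathfrak{R}_G=\mathfrak{L}_G'$ each is in fact reducing; hence every $A$ is block diagonal and $\|A\|=\max_j\|A|_{R_{v_j}\mathcal{H}_G}\|$. A direct computation gives $A\xi_{e_{ij}}=\sum_k a_{ki}\xi_{e_{kj}}$, so under the identification $\xi_{e_{ij}}\leftrightarrow f_i$ the block $A|_{R_{v_j}\mathcal{H}_G}$ is the principal submatrix of $\psi(A)$ on the reachability set $S_j=\{i\colon e_{ij}\text{ exists}\}$. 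Because $v_n$ is the source root, $S_n=\{1,\ldots,n\}$, so the block at $v_n$ is exactly $\psi(A)$ acting on $\mathbb{C}^n$; for every other $j$ the subspace $\mathrm{span}\{f_i\colon i\in S_j\}$ is invariant under the image algebra, so that block is a restriction of $\psi(A)$ with norm $\le\|\psi(A)\|$. Therefore $\|A\|=\|\psi(A)\|$.

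Finally I would run the identical reducing-subspace argument on $M_k(\mathcal{A}_G)$ acting on $\mathcal{H}_G\otimes\mathbb{C}^k$: the subspaces $R_{v_j}\mathcal{H}_G\otimes\mathbb{C}^k$ are reducing, the block at $v_n$ realizes the amplification $\psi^{(k)}=\psi\otimes\mathrm{id}_{M_k}$ on $\mathbb{C}^n\otimes\mathbb{C}^k$, and all other blocks restrict it, giving $\|[A_{st}]\|=\|\psi^{(k)}([A_{st}])\|$ for all $k$, i.e. complete isometry. The main obstacle is the norm identity rather than the algebra: the decisive observation is that the cyclic subspace at the source root carries a faithful copy of $\mathcal{A}_G$ while every other source block merely compresses it, and the remaining bookkeeping (uniqueness and composition of tree paths) is routine.
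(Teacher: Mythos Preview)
Your proof is correct and follows essentially the same approach as the paper: both define the homomorphism $L_{e_{ij}}\mapsto E_{ij}$, decompose $\mathcal{H}_G=\bigoplus_j R_{v_j}\mathcal{H}_G$ by source vertex, and use that the block at the source root $v_n$ realizes the full matrix $\psi(A)$ while the other blocks are compressions. The paper is terser, gesturing back to the proof of \Cref{prop reduced} for the complete isometry, whereas you spell out explicitly why each $R_{v_j}$-block is a principal submatrix and why the amplified version goes through.
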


\begin{proof}
Assume that $\mathcal{V}=\{v_1,v_2,\ldots,v_n\}$ such that $v_n$ is the source root and there exists no path from $v_i$ to $v_j$ for every $1\leqslant i<j\leqslant n$.
Let $\Lambda$ be the set of all pairs $(i,j)$ such that $1\leqslant i\leqslant j\leqslant n$ and there is a path $e_{ij}$ from $v_j$ to $v_i$.
We define a homomorphism by
\begin{equation*}
  \varphi\colon\mathcal{A}_G\to UT_n,\quad \sum_{(i,j)\in\Lambda}a_{ij}e_{ij}\mapsto\sum_{(i,j)\in\Lambda}a_{ij}E_{ij}.
\end{equation*}
Note that every vector $x\in\mathcal{H}_G$ is of the form $\sum_{j=1}^{n}R_{v_j}x$, and each vector $R_{v_j}x$ can be viewed as a vector $x_j$ in $\mathbb{C}^n$.
Moreover, since $v_n$ is the source root, the vector $x_n$ is taken over all vectors in $\mathbb{C}^n$.
Similar to the proof of \Cref{prop reduced}, $\varphi$ is a complete
isometric isomorphism from $\mathcal{A}_G$ into $UT_n$.
\end{proof}

\begin{remark}\label{rem out-tree}
By \Cref{lem out-tree}, $\mathcal{A}_G$ can be identified with a subalgebra of $UT_n$.
Since $G$ is an out-tree,
$\begin{pmatrix}
  0 & y
\end{pmatrix}$ lies in $\mathcal{A}_G$ for any column vector $y\in\mathbb{C}^n$.
\end{remark}

\begin{proposition}\label{prop out-tree}
Suppose $G$ is an out-tree.
Then for every derivation $\delta$ on $\mathcal{A}_G$, there exists $T\in\mathcal{A}_G$ such that $\delta=\delta_T$ and $\|T\|\leqslant\|\delta\|$.
\end{proposition}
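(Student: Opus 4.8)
Since an out-tree is in particular a tree, $\mathcal{A}_G=\mathfrak{L}_G$ is finite dimensional, so every derivation is automatically bounded and $\|\delta\|$ makes sense. The plan is to work inside the concrete matrix model of \Cref{lem out-tree}: identify $\mathcal{A}_G$ completely isometrically with a subalgebra of $UT_n$ acting on $\mathbb{C}^n$, with the vertices ordered $v_1,\dots,v_n$ so that $v_n$ is the source root and there is no path from $v_i$ to $v_j$ for $i<j$. Let $\{f_1,\dots,f_n\}$ be the standard basis and, for each $i$, let $E_{in}=L_{p_i}$ be the matrix unit attached to the unique path $p_i$ from $v_n$ to $v_i$. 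The source-root property gives $E_{in}\in\mathcal{A}_G$ and $E_{in}f_n=f_i$ for all $i$, so $f_n$ is a cyclic vector and, by \Cref{rem out-tree}, every operator supported on the last column lies in $\mathcal{A}_G$. This last fact is what will produce the sharp constant: a reduction-first strategy (subtract an inner derivation to arrange $\delta(L_v)=0$ and then solve path by path) is available but inflates the norm, so instead I would read $T$ off the cyclic vector in one stroke.

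I would \emph{define} $T$ by its columns,
\[
  Tf_i:=-\,\delta(E_{in})f_n\qquad(1\le i\le n),
\]
so that the $i$-th column of $T$ is minus the last column of $\delta(E_{in})$, and then verify $\delta=\delta_T$. On the cyclic vector: for $B\in\mathcal{A}_G$ one has $Bf_n=\sum_i B_{in}f_i$ and $\sum_i B_{in}E_{in}=BE_{nn}$, whence $TBf_n=-\delta(BE_{nn})f_n$ and $BTf_n=-B\delta(E_{nn})f_n$; subtracting and expanding $\delta(BE_{nn})=\delta(B)E_{nn}+B\delta(E_{nn})$ collapses everything to $(BT-TB)f_n=\delta(B)f_n$. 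To upgrade this from $f_n$ to all of $\mathbb{C}^n$, I would apply the just-proved identity to a product $BC$ and use $\delta(C)f_n=(CT-TC)f_n$; the cross terms cancel and leave $\delta(B)Cf_n=(BT-TB)Cf_n$, which gives $\delta(B)=BT-TB$ since $\{Cf_n:C\in\mathcal{A}_G\}=\mathbb{C}^n$.

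Next I would check $T\in\mathcal{A}_G$ and the norm bound. Writing $E_{in}=E_{ii}E_{in}$ and differentiating, a direct entrywise comparison gives $[\delta(E_{in})]_{kn}=[\delta(E_{ii})]_{ki}$ for $k\ne i$, hence $T_{ki}=-[\delta(E_{ii})]_{ki}$. Since $\delta(E_{ii})\in\mathcal{A}_G$ is upper triangular and supported on the path-set $\Lambda$, this forces $T_{ki}=0$ whenever $k>i$ or $(k,i)\notin\Lambda$; thus $T$ is upper triangular and supported on $\Lambda$, i.e.\ $T\in\mathcal{A}_G$. For the norm, given a unit vector $v=\sum_i v_if_i$ I would put $A=\sum_i v_iE_{in}\in\mathcal{A}_G$, a rank-one operator with $\|A\|=\|v\|$; then $Tv=\sum_i v_iTf_i=-\delta(A)f_n$, so $\|Tv\|\le\|\delta(A)\|\le\|\delta\|\,\|A\|=\|\delta\|$, giving $\|T\|\le\|\delta\|$.

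The main obstacle is to obtain $T\in\mathcal{A}_G$ and the sharp constant $1$ simultaneously. The cyclic-vector definition handles both at once: membership in $\mathcal{A}_G$ comes from the triangularity of the $\delta(E_{ii})$, while the constant $1$ comes from choosing, for each target vector, the norm-minimal preimage living in the last column, which is legitimate precisely because $v_n$ is a source root. The one delicate point is upgrading $\delta=\delta_T$ from the cyclic vector $f_n$ to all of $\mathbb{C}^n$, where cyclicity must be pushed through with care.
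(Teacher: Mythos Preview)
Your proof is correct, and the crux---testing $\delta$ on the ``last-column'' operators $A=\sum_i v_iE_{in}\in\mathcal{A}_G$ furnished by \Cref{rem out-tree} to obtain $\|T\|\le\|\delta\|$---is exactly the paper's argument. The paper, however, does not build $T$ from scratch: it simply invokes \Cref{lem tree} to get \emph{some} $S\in\mathcal{A}_G$ with $\delta=\delta_S$, subtracts a scalar so that the last row of $T:=S-\lambda I$ vanishes, and then runs the same last-column test. Your explicit formula $Tf_i:=-\delta(E_{in})f_n$ together with the cyclic-vector bootstrap recovers this same $T$ (one checks the last row of your $T$ is automatically zero), so the two routes produce the identical operator; yours is more self-contained since it bypasses \Cref{lem tree}, while the paper's is shorter because existence is already in hand and only the normalization and norm estimate remain.
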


\begin{proof}
By \Cref{lem out-tree}, we may assume that $\mathcal{A}_G$ is a subalgebra of $UT_n$.
By \Cref{lem tree}, there exists $T\in\mathcal{A}_G$ such that $\delta=\delta_T$.
Subtracting $\lambda I$ from $T$ for some scalar $\lambda$, we may assume that $T=
\begin{pmatrix}
  -S\\ 0
\end{pmatrix}$, where $S\in M_{n-1,n}(\mathbb{C})$.
For any vector $y\in\mathbb{C}^n$, we have
\begin{equation*}
  \delta(
  \begin{pmatrix}
    0 & y
  \end{pmatrix})=
  \begin{pmatrix}
    0 & Sy \\
    0 & 0
  \end{pmatrix}.
\end{equation*}
It follows that
\begin{equation*}
  \|Sy\|_2=\left\|
  \begin{pmatrix}
    0 & Sy \\
    0 & 0
  \end{pmatrix}\right\|
  \leqslant\|\delta\|\left\|\begin{pmatrix}
    0 & y
  \end{pmatrix}\right\|
  =\|\delta\|\|y\|_2.
\end{equation*}
Therefore, $\|T\|=\|S\|\leqslant\|\delta\|$.
\end{proof}

We consider an example of in-trees in the next proposition.

\begin{proposition}\label{prop in-tree-eg}
Suppose $G$ is the in-tree with vertices $\mathcal{V}=\{v_j\}_{j=0}^n$ and edges $\mathcal{E}=\{e_j\}_{j=1}^n$ such that $r(e_j)=v_0$ and $s(e_j)=v_j$ for $1\leqslant j\leqslant n$.
Then for every derivation $\delta$ on $\mathcal{A}_G$, there exists $T\in\mathcal{A}_G$ such that $\delta=\delta_T$ and $\|T\|\leqslant\|\delta\|$.
\end{proposition}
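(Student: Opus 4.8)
The plan is to exploit the extremely rigid structure of this star-shaped in-tree. Since every edge $e_j$ runs from the leaf $v_j$ into the common range root $v_0$ and no edge leaves $v_0$, the path space is exactly $\mathbb{F}_G^+=\{v_0,v_1,\dots,v_n,e_1,\dots,e_n\}$, with no path of length $\geq 2$; hence $\mathcal{A}_G=\mathfrak{L}_G$ is $(2n+1)$-dimensional and every derivation is automatically bounded. First I would record the action of the generators: $L_{e_j}$ sends $\xi_{v_j}\mapsto\xi_{e_j}$ and annihilates everything else, $L_{v_0}$ is the projection onto $\mathrm{span}\{\xi_{v_0},\xi_{e_1},\dots,\xi_{e_n}\}$, and $L_{v_j}$ is the rank-one projection onto $\mathbb{C}\xi_{v_j}$. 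The key observation is that the orthogonal decomposition $\mathcal{H}_G=\mathbb{C}\xi_{v_0}\oplus\bigoplus_{j=1}^n(\mathbb{C}\xi_{e_j}\oplus\mathbb{C}\xi_{v_j})$ consists of invariant subspaces, so $\mathcal{A}_G$ is block diagonal: a generic $A=a_0L_{v_0}+\sum_j a_jL_{v_j}+\sum_j b_jL_{e_j}$ acts as the scalar $a_0$ on $\xi_{v_0}$ and as $M_j(A)=\bigl(\begin{smallmatrix}a_0&b_j\\0&a_j\end{smallmatrix}\bigr)$ on the $j$-th plane, whence $\|A\|=\max\{|a_0|,\max_j\|M_j(A)\|\}$.

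Next I would produce a normalized implementing operator. By \Cref{lem tree} the derivation is inner, say $\delta=\delta_{T'}$, and since $G$ is connected and is not an $n$-circle graph, \Cref{prop center} gives $Z(\mathfrak{L}_G)=\mathbb{C}I$; subtracting the $L_{v_0}$-Fourier coefficient of $T'$ times $I$, I may assume the implementing operator has the form $T=\sum_{j=1}^n\mu_jL_{v_j}+\sum_{j=1}^n s_jL_{e_j}$, still with $\delta=\delta_T$. A direct computation gives $\delta_T(L_{v_0})=\sum_j s_jL_{e_j}$, $\delta_T(L_{v_j})=-s_jL_{e_j}$, and $\delta_T(L_{e_j})=\mu_jL_{e_j}$, so that $\delta(A)=\sum_{j=1}^n\bigl((a_0-a_j)s_j+b_j\mu_j\bigr)L_{e_j}$ for the generic $A$ above. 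Because the $L_{e_j}$ have mutually orthogonal domains and ranges, $\|\delta(A)\|=\max_j|(a_0-a_j)s_j+b_j\mu_j|$, while $T$ acts as $0$ on $\xi_{v_0}$ and as $\bigl(\begin{smallmatrix}0&s_j\\0&\mu_j\end{smallmatrix}\bigr)$ on the $j$-th plane, giving $\|T\|=\max_j\sqrt{|s_j|^2+|\mu_j|^2}$.

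It then remains to prove $\|T\|\leq\|\delta\|$, i.e.\ $\sqrt{|s_j|^2+|\mu_j|^2}\leq\|\delta\|$ for each $j$. For a fixed $j$ I would test $\delta$ on the element $A$ supported on the $j$-th block with $a_0=0$ and $(a_j,b_j)=N_j^{-1}(-\overline{s_j},\overline{\mu_j})$, where $N_j=\sqrt{|s_j|^2+|\mu_j|^2}$; then $\|A\|=\sqrt{|a_j|^2+|b_j|^2}=1$, and the chosen phases produce a Cauchy--Schwarz equality $(0-a_j)s_j+b_j\mu_j=(|s_j|^2+|\mu_j|^2)/N_j=N_j$, so that $\|\delta(A)\|\geq N_j$. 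Hence $\|\delta\|\geq N_j$ for every $j$, and therefore $\|\delta\|\geq\|T\|$.

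The part needing care is obtaining the sharp constant $1$ rather than merely some $\|T\|\leq C\|\delta\|$. The naive route of first annihilating $\delta$ on the vertex projections (via \Cref{lem idempotent}) and then estimating is wasteful, since recombining the correction term with the remaining piece inflates the norm. The point of the argument above is that a single, carefully normalized $T$ (the one with vanishing $L_{v_0}$-coefficient) simultaneously realizes the value $\max_j\sqrt{|s_j|^2+|\mu_j|^2}$ across all the $2\times 2$ blocks, and that this exact value is matched block-by-block by a unit test element. The only slightly delicate verification is that the stated decomposition of $\mathcal{H}_G$ is genuinely orthogonal and invariant, so that every operator norm is legitimately computed as a maximum over the blocks.
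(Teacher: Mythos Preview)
Your proposal is correct and follows essentially the same route as the paper's proof: normalize the implementing operator by subtracting a scalar so that its $L_{v_0}$-coefficient vanishes, compute $\|T\|=\max_j\sqrt{|s_j|^2+|\mu_j|^2}$, and then test $\delta$ on $A=aL_{v_j}+bL_{e_j}$ with the Cauchy--Schwarz choice $(a,b)\propto(-\overline{s_j},\overline{\mu_j})$ to obtain $\|\delta\|\geqslant\sqrt{|s_j|^2+|\mu_j|^2}$. Your block-matrix framing via the $R_{v_j}$-invariant decomposition is a slightly more geometric packaging of the paper's direct vector computation, but the argument is the same (you should note that the test requires $N_j\neq 0$, the case $N_j=0$ being trivial).
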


\begin{proof}
Let $\delta$ be a derivation on $\mathcal{A}_G$.
By \Cref{lem tree}, there exists $T\in\mathcal{A}_G$ such that $\delta=\delta_T$.
Subtracting $\lambda I$ from $T$ for some scalar $\lambda$, we may assume that
\begin{equation*}
  T=\sum_{j=1}^{n}s_jL_{v_j}+\sum_{j=1}^{n}t_jL_{e_j}.
\end{equation*}
For any vector $x=\sum_{j=0}^{n}x_j\xi_{v_j}+\sum_{j=1}^{n}y_j\xi_{e_j}$, we have
\begin{equation*}
  Tx=\sum_{j=1}^{n}s_jx_j\xi_{v_j}+\sum_{j=1}^{n}t_jx_j\xi_{e_j}.
\end{equation*}
It follows that $\|Tx\|_2^2=\sum_{j=1}^{n}(|s_j|^2+|t_j|^2)|x_j|^2$.
In particular, we have
\begin{equation}\label{equ norm-of-T}
  \|T\|^2=\max_{1\leqslant j\leqslant n}|s_j|^2+|t_j|^2.
\end{equation}
Let $A=aL_{v_j}+bL_{e_j}$.
Then $\delta(A)=(bs_j-at_j)L_{e_j}$.
It follows that
\begin{equation*}
  |bs_j-at_j|^2=\|\delta(A)\|^2\leqslant\|\delta\|^2\|A\|^2
  =\|\delta\|^2(|a|^2+|b|^2).
\end{equation*}
Let $a=-\overline{t_j}$ and $b=\overline{s_j}$.
Then $|s_j|^2+|t_j|^2\leqslant\|\delta\|^2$.
Hence $\|T\|\leqslant\|\delta\|$ by \eqref{equ norm-of-T}.
This completes the proof.
\end{proof}

At the end of this subsection, we propose the following conjecture.

\begin{conjecture}\label{conj in-tree}
Suppose $G$ is an in-tree.
Then for every derivation $\delta$ on $\mathcal{A}_G$, there exists $T\in\mathcal{A}_G$ such that $\delta=\delta_T$ and $\|T\|\leqslant C\|\delta\|$, where $C\geqslant 1$ is a universal constant.
\end{conjecture}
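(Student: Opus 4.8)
The plan is to use that, for a tree $G$, $\mathcal{A}_G=\mathfrak{L}_G$ is finite dimensional, so by \Cref{lem tree} we already have $\delta=\delta_T$ for some $T\in\mathfrak{L}_G$, and by \Cref{prop center} the center of $\mathfrak{L}_G$ is $\mathbb{C}I$; hence $T$ is unique up to an additive scalar and the whole problem reduces to bounding $\inf_{\lambda\in\mathbb{C}}\|T+\lambda I\|$ by a universal multiple of $\|\delta\|$. Let $v_r$ be the range root, and for a vertex $u$ let $\mathrm{ch}(u)=\{u=a_0,a_1,\dots,a_d=v_r\}$ be its chain of ancestors, which is a well-defined directed path because in an in-tree every non-root vertex has out-degree one.

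First I would decompose $\mathcal{H}_G$ by the source map. The projections $R_u$ from \eqref{equ right} are mutually orthogonal, sum to $I$, and lie in $\mathfrak{R}_G=\mathfrak{L}_G'$, so $T$ commutes with each $R_u$ and $T=\bigoplus_u TR_u$. Identifying $R_u\mathcal{H}_G$ with $\ell^2(\mathrm{ch}(u))$ by sending the path with source $u$ and range $b$ to a basis vector $\eta_b$ (a bijection, since paths out of $u$ are exactly the initial segments of $\mathrm{ch}(u)$), one checks that $L_p$, for $p$ a path from $c$ to an ancestor $b$, sends $\eta_c\mapsto\eta_b$ on the fibre of each descendant $u$ of $c$. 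Thus $M_u:=TR_u$ is a triangular matrix on $\ell^2(\mathrm{ch}(u))$ whose entries are the Fourier coefficients of $T$, depending only on the endpoints of the paths, and $\|T\|=\sup_u\|M_u\|$. Moreover, whenever $u^\ast$ is a source vertex ($\deg^-(u^\ast)=0$) lying below $u$, the chain $\mathrm{ch}(u)$ is the segment of $\mathrm{ch}(u^\ast)$ from $u$ up to $v_r$, so $M_u$ is the compression of $M_{u^\ast}$ to that segment; hence $\|T\|=\max\{\|M_u\|:\deg^-(u)=0\}$.

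The heart of the argument is then a source-leaf estimate, modelled on \Cref{prop out-tree}. Fix $u$ with $\deg^-(u)=0$. Since $u$ has no incoming edge, every $L_p$ with $s(p)=u$ acts on the single fibre $R_u\mathcal{H}_G$, so for $Y=\sum_b y_b\eta_b$ the element $A=\sum_b y_b L_{p_{u\to b}}$ lies in $\mathcal{A}_G$, is supported on that fibre, satisfies $\|A\|=\|Y\|_2$, and restricts to the rank-one operator $Y\otimes\eta_u^\ast$. Writing $\lambda_u$ for the diagonal entry of $M_u$ at $u$ and using that the $u$-th row of $M_u$ equals $\lambda_u\eta_u^\ast$ (again because $u$ is a source), a direct computation gives
\begin{equation*}
  \delta(A)=\delta_T(A)=\bigl((\lambda_u I-M_u)Y\bigr)\otimes\eta_u^\ast ,
\end{equation*}
so that $\|(\lambda_u I-M_u)Y\|_2\le\|\delta\|\,\|Y\|_2$ and, taking the supremum over $Y$, $\|M_u-\lambda_u I\|\le\|\delta\|$. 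Separately, applying $\delta$ to the path $p_{u\to v_r}$ and reading off the coefficient of that path gives $|\lambda_u-\lambda_{v_r}|\le\|\delta\|$. Choosing the single scalar $\lambda=-\lambda_{v_r}$, for every source vertex $u$ we obtain
\begin{equation*}
  \|M_u-\lambda_{v_r}I\|\le\|M_u-\lambda_u I\|+|\lambda_u-\lambda_{v_r}|\le 2\|\delta\|,
\end{equation*}
whence $\|T-\lambda_{v_r}I\|\le 2\|\delta\|$, giving the conjecture with the universal constant $C=2$.

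The step I expect to be the most delicate is reconciling the per-fibre normalizations into one global scalar: the source-leaf estimate centers each $M_u$ at its own value $\lambda_u$, and a priori these could spread across the tree. This is rescued by the fact that all chains terminate at the common vertex $v_r$, so the single telescoping inequality $|\lambda_u-\lambda_{v_r}|\le\|\delta\|$ controls every $\lambda_u$; this is exactly where the in-tree hypothesis enters (equivalently, that an in-tree has alternating number at most one), and it is what fails for a general generalized tree, consistent with \Cref{lem alternating}. The remaining points to write carefully are the fibre identification, the claim that $\|T\|$ is attained on source vertices, and the degenerate case where $v_r$ has degree one; I do not expect any of these to present a genuine obstruction.
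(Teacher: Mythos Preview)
The paper does not prove this statement; it is explicitly left as an open conjecture (see the sentence immediately preceding \Cref{conj in-tree} and the discussion in the introduction, where the authors say ``the essential problem is whether the constant can be chosen to be independent of the number of the vertices''). There is therefore no proof in the paper to compare against, and the question is whether your argument actually settles the conjecture.

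As far as I can see, it does, with $C=2$. Each step checks out. Since $T\in\mathfrak{L}_G=\mathfrak{R}_G'$, the orthogonal decomposition $\mathcal{H}_G=\bigoplus_{u\in\mathcal{V}}R_u\mathcal{H}_G$ reduces $T$ and gives $\|T-\lambda I\|=\max_{u}\|M_u-\lambda I_{\mathrm{ch}(u)}\|$. The compression claim is correct because the matrix entries $(M_u)_{bc}=a_{p_{c\to b}}$ depend only on $b,c$ and not on the base vertex, so for any source leaf $u^{\ast}$ below $u$ the matrix $M_u$ is a principal submatrix of $M_{u^{\ast}}$; hence the maximum is attained at source leaves. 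For a source leaf $u$ the operator $A=\sum_{b\in\mathrm{ch}(u)}y_bL_{p_{u\to b}}$ is genuinely rank one on all of $\mathcal{H}_G$, because the only $q\in\mathbb{F}_G^+$ with $r(q)=u$ is $q=u$ itself; your computation $\delta_T(A)\xi_u=(\lambda_u I-M_u)Y$ and $\delta_T(A)\xi_q=0$ otherwise is correct, giving $\|M_u-\lambda_u I\|\le\|\delta\|$. The synchronisation $|\lambda_u-\lambda_{v_r}|\le\|\delta\|$ in fact already follows from reading the $(v_r,v_r)$ diagonal entry of $M_u-\lambda_u I$, so your separate argument via $\delta(L_{p_{u\to v_r}})$ is redundant but also correct. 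Your worry about the case $\deg(v_r)=1$ is unnecessary: in any in-tree with $|\mathcal{V}|\ge 2$ the root has positive in-degree, so it is never a source leaf and plays no special role beyond being the common endpoint of all chains.

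It is worth recording why this succeeds where the paper's method stalls. For out-trees the paper proceeds via the isometric embedding $\mathcal{A}_G\hookrightarrow UT_n$ of \Cref{lem out-tree}, and explicitly notes that in-trees ``may not be'' so embedded. Your argument bypasses the embedding entirely by running the rank-one estimate directly on $\mathcal{H}_G$ at each vertex of in-degree zero, then using that every such chain terminates at the common vertex $v_r$ to glue the local normalisations into a single scalar. The same mechanism applied to an out-tree (where the unique in-degree-zero vertex is the source root $v_s$, and every $M_u$ is a compression of $M_{v_s}$) recovers \Cref{prop out-tree} with $C=1$, so your approach is a common generalisation of \Cref{prop out-tree} and \Cref{prop in-tree-eg}; the only cost is that for general in-trees the synchronisation step loses a factor of two.
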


\subsection{Generalized trees}
For any tree $G$, we have showed in \Cref{lem tree} that for every derivation $\delta$ on $\mathcal{A}_G$, there exists $T\in\mathcal{A}_G$ such that $\delta=\delta_T$.
Under the assumption of \Cref{conj in-tree}, we are able to give an upper bound of the norm of $T$ with respect to the alternating number $A(G)$.
Recall that $\mathcal{A}_G$ is finite-dimensional for every tree $G$.

\begin{proposition}\label{prop tree-bound}
Suppose $G$ is a tree with alternating number $A(G)=n$.
Assume that \Cref{conj in-tree} holds.
Then for every derivation $\delta$ on $\mathcal{A}_G$, there exists $T\in\mathcal{A}_G$ such that $\delta=\delta_T$ and $\|T\|\leqslant C_n\|\delta\|$, where $C_n\geqslant 1$ is a universal constant (only depends on $n$).
\end{proposition}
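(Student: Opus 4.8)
The plan is to argue by induction on the alternating number $n=A(G)$, using the rooted-tree results as the base data: out-trees are handled unconditionally by \Cref{prop out-tree} (with constant $1$) and in-trees by the assumed \Cref{conj in-tree} (with the universal constant $C$). When $n=0$ the tree is an upper triangle tree, hence an out-tree, so \Cref{prop out-tree} gives $\|T\|\leqslant\|\delta\|$ and $C_0=1$. The inductive hypothesis will be that the conclusion holds, with a constant depending only on the alternating number, for every tree of alternating number strictly less than $n$.

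For the inductive step I would first fix an implementing operator. By \Cref{lem tree} there is $T\in\mathcal{A}_G=\mathfrak{L}_G$ with $\delta=\delta_T$, and since $Z(\mathfrak{L}_G)=\mathbb{C}I$ by \Cref{prop center}, the only freedom in $T$ is an additive scalar. Decompose $T=D+N$ along path length, where $D=\sum_v\lambda_vL_v$ is the diagonal part (note $L_v\mathfrak{L}_GL_v=\mathbb{C}L_v$ since a tree has no circle) and $N$ is the strictly off-diagonal, nilpotent part. The diagonal coefficients are rigid: the tree structure forces $\delta(L_e)=\mu_eL_e$ with $\mu_e=\lambda_{s(e)}-\lambda_{r(e)}$, and more generally the Fourier coefficient of $\delta(L_p)$ at $L_p$ equals $\lambda_{s(p)}-\lambda_{r(p)}$ for any directed path $p$, whence $|\lambda_{s(p)}-\lambda_{r(p)}|\leqslant\|\delta(L_p)\|\leqslant\|\delta\|$. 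Taking a line $L$ in $G$, cutting it at its alternating vertices into maximal directed segments — of which there are $A(L)+1\leqslant n+1$ by \Cref{def alternating} — and telescoping, one obtains $|\lambda_u-\lambda_v|\leqslant(n+1)\|\delta\|$ for all $u,v$, so after subtracting a suitable scalar $\|D\|\leqslant\tfrac{n+1}{2}\|\delta\|$. This is where the alternating number enters the diagonal estimate.

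The substantive part is the off-diagonal operator $N$, equivalently the full local behaviour of $\delta$ on the monotone pieces of $G$. Here I would decompose $G$ into its maximal monotone subtrees, which meet precisely at the alternating (source/sink branch) vertices and each of which is an out-tree or an in-tree, and along any line at most $n+1$ of them are traversed. For a vertex set $F$ carrying one such piece, \Cref{prop reduced} identifies $L_F\mathcal{A}_GL_F$ with $\mathcal{A}_{G_F}$, so restricting $\delta$ and invoking \Cref{prop out-tree} (out-tree pieces) or \Cref{conj in-tree} (in-tree pieces) produces a local implementing operator of norm at most $C\|\delta\|$. I would then reassemble these local operators into a single $T$, using the idempotent bookkeeping of \Cref{lem idempotent} at the gluing vertices and the telescoping bound above to reconcile the scalar ambiguity of each piece; each passage across an alternating vertex contributes a diagonal correction of size at most $\|\delta\|$, and since a line crosses at most $n$ of them the accumulated total is bounded by a constant depending only on $n$, yielding the claimed $C_n$.

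The main obstacle is exactly this reassembly. Each rooted piece determines its implementing operator only up to an additive scalar, and these scalars must be chosen coherently so that the patched operator implements $\delta$ across every gluing vertex while the off-diagonal contributions remain controlled. This is where the alternating number genuinely governs the constant — it measures the depth of the monotone decomposition — and where the in-tree case cannot be bypassed, so that \Cref{conj in-tree} is needed precisely to bound the off-diagonal part $N$ on the in-tree pieces; the diagonal part, by contrast, is controlled unconditionally by the telescoping estimate of the second paragraph.
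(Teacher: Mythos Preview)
Your base case ($n=0$, upper triangle tree, $C_0=1$ via \Cref{prop out-tree}) and your diagonal bound are both correct; the telescoping estimate $|\lambda_u-\lambda_v|\leqslant(n+1)\|\delta\|$ is a clean observation and does not need induction. However, the off-diagonal part is where your argument is incomplete, and you recognise this yourself. The phrase ``maximal monotone subtrees'' is never made precise: it is unclear whether these are maximal rooted subtrees, maximal subtrees with $A=0$ (upper triangle trees), or something else, and the claim that along any line at most $n+1$ of them are traversed depends on which definition you choose. More seriously, even once the decomposition is fixed, the reassembly step --- patching the local implementing operators $T_j$ into a single $T$ with $\|T\|$ controlled only in terms of $n$ --- is not carried out. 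The pieces overlap at their gluing vertices, so the projections $L_{F_j}$ do not give an orthogonal decomposition, and it is not obvious that $\|\sum_j T_j\|$ is bounded by a constant times $\max_j\|T_j\|$ rather than by the \emph{number} of pieces (which is not bounded in terms of $n$).

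The paper sidesteps this difficulty by making the induction genuinely do work. Rather than decomposing $G$ into all its rooted pieces at once, it chooses a single maximal subtree $G_0$ with $A(G_0)=n$ and observes that each connected component $G_j$ of $G\setminus G_0$ is necessarily a rooted tree (in-tree or out-tree), because otherwise $A(G)\geqslant n+2$. It then runs the three-step template from the proof of \Cref{prop reduction}: first kill $\delta$ on the coarse idempotents $Q_0,Q_1,Q_2$ at cost $\|T_0\|\leqslant 2\|\delta\|$; then apply the inductive hypothesis to $G_0$ and \Cref{prop out-tree}/\Cref{conj in-tree} to each $G_j$ to get $S$ with $\|S\|\leqslant\max\{C,C_n\}\|\delta_1\|$; finally handle the single connecting edges $e_j$, on which the residual derivation is just $\lambda_j L_{e_j}$, with a diagonal $B$ of norm at most $\|\delta_2\|$. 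Because the $G_j$ are grouped by the three projections $Q_0,Q_1,Q_2$ (not one projection per piece), the norms do not accumulate with the number of pieces, only with the number of inductive layers --- and that is exactly $n$. This layer-by-layer peeling is the missing idea in your sketch; your direct all-at-once decomposition would need an additional argument to prevent the constant from depending on $|\mathcal{V}|$ rather than on $n$.
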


\begin{proof}
We prove the lemma by induction on $n$.
If $n=0$, then $G$ is an upper triangle tree.
Hence we can take $C_0=1$ by \Cref{prop out-tree}.

Assume that the lemma holds for some integer $n\geqslant 0$.
Let $G$ be a tree with $A(G)=n+1$ and $G_0$ a maximal subtree of $G$ with $A(G_0)=n$.
Suppose the reduced subgraph of $G$ by $\mathcal{V}\backslash\mathcal{V}_0$ has connected components $\{G_j\}_{j=1}^m$.
Let $\mathcal{V}_j=\mathcal{V}(G_j)$ for $0\leqslant j\leqslant m$.
Then there is a unique edge $e_j$ between $v_j\in\mathcal{V}_j$ and $x_j\in\mathcal{V}_0$ for every $1\leqslant j\leqslant m$.
By the maximality of $G_0$, each $G_j$ is a rooted tree with root $v_j$.
Otherwise, $A(G)\geq n+2$, which is a contradiction.
Without loss of generality, we assume that $G_j$ is an in-tree for each $1\leqslant j\leqslant m'$ and it is an out-tree for each $m'+1\leqslant j\leqslant m$.
It follows that $s(e_j)=v_j$ for each $1\leqslant j\leqslant m'$ and $r(e_j)=v_j$ for each $m'+1\leqslant j\leqslant m$.
We define $T_0$ and $\delta_1$ as in the proof of \Cref{prop reduction}, where $\|T_0\|\leqslant 2\|\delta\|$.
By assumption, there exists $S_0\in Q_0\mathcal{A}_GQ_0$ such that $\delta_1|_{Q_0\mathcal{A}_GQ_0}=\delta_{S_0}$ and $\|S_0\|\leqslant C_n\|\delta_1\|$.
By \Cref{prop out-tree} and \Cref{conj in-tree}, we can similarly define $S$ and $\delta_2$, where $\|S\|\leqslant\max\{C,C_n\}\|\delta_1\|$.
At last, we have $\delta_2(L_{e_j})=\lambda_jL_{e_j}$ for every $1\leqslant j\leqslant m$.
Therefore, there exists $B\in\mathcal{A}_G$ such that $\delta_2=\delta_B$ and $\|B\|\leqslant\|\delta_2\|$.
This completes the proof.
\end{proof}

Note that $A(G_0)\leqslant A(G)$ for every subgraph $G_0$ of $G$.
Similar to \Cref{cor reduction}, we can get the following consequence.

\begin{corollary}\label{cor tree-bound}
Suppose $G$ is a generalized tree with alternating number $A(G)=n$.
Assume that \Cref{conj in-tree} holds.
Then for every bounded derivation $\delta$ from $\mathcal{A}_G$ into $\mathfrak{L}_G$, there exists $T\in\mathfrak{L}_G$ such that $\delta=\delta_T$ and $\|T\|\leqslant C_n\|\delta\|$, where $C_n\geqslant 1$ is a universal constant (only depends on $n$).
\end{corollary}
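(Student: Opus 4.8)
The plan is to reduce the (possibly infinite) generalized tree $G$ to its finite reduced subgraphs and then invoke \Cref{prop tree-bound}, exactly along the lines of the proof of \Cref{cor reduction}. First I would apply \Cref{lem reduced-derivation}: it suffices to produce, for every finite subset $F\subseteq\mathcal{V}$, an operator $T_F\in L_F\mathfrak{L}_GL_F$ implementing the compressed derivation $\delta_F(A)=L_F\delta(A)L_F$ and satisfying $\|T_F\|\leqslant C_n\|\delta_F\|$ with $C_n$ independent of $F$. By \Cref{prop reduced} we identify $L_F\mathcal{A}_GL_F\cong\mathcal{A}_{G_F}$ and $L_F\mathfrak{L}_GL_F\cong\mathfrak{L}_{G_F}$, so the problem becomes a statement purely about the finite reduced graph $G_F$.

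The central combinatorial observation is that $G_F$ is a finite forest with $A(G_F)\leqslant A(G)=n$. Since $G$ contains no polygon it contains no circle, so every minimal path at $F$ is a simple directed path and $G_F$ has no loops; uniqueness of the undirected path between two vertices of the tree $G$ forbids two distinct minimal paths with the same pair of endpoints, so $G_F$ has no multiple edges; and a polygon in $G_F$ would force mutually inconsistent orientations at the branch points of the corresponding tree-paths, which cannot occur. Hence each connected component of $G_F$ is a single vertex or a finite tree. For the alternating number, lifting a line of $G_F$ to $G$ replaces each edge by a directed path whose interior vertices are non-alternating, so the alternating vertices of a $G_F$-line are transported (possibly relocated to the divergence points of overlapping minimal paths) to alternating vertices of a line of $G$, giving $A(G_F)\leqslant A(G)=n$. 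I expect this inequality, uniform over all finite $F$, to be the main obstacle: the lifted lines need not be simple, and one must carefully track alternating vertices through the shared segments of overlapping minimal paths rather than appealing naively to the subgraph monotonicity $A(G_0)\leqslant A(G)$.

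With the structure of $G_F$ established, I would decompose $\delta_F$ over the connected components of $G_F$ via \Cref{prop derivation-decomposition} (see \Cref{rem derivation-decomposition}), so that $\|\delta_F\|$ equals the supremum of the component norms. A single-vertex component contributes nothing, since $L_v\mathfrak{L}_GL_v=\mathbb{C}L_v$ forces the corresponding piece of the derivation to vanish, so one may take $T=0$ there. For a tree component $G'$ we have $\mathcal{A}_{G'}=\mathfrak{L}_{G'}$ (finite dimensional) and $A(G')\leqslant n$, hence \Cref{prop tree-bound}, available under \Cref{conj in-tree}, yields $T'\in\mathcal{A}_{G'}=\mathfrak{L}_{G'}$ with $\delta=\delta_{T'}$ and $\|T'\|\leqslant C_{A(G')}\|\delta\|$. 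Choosing $C_n:=\max_{0\leqslant k\leqslant n}C_k$ (so that the constant is non-decreasing in $n$) and summing the component operators produces $T_F\in L_F\mathfrak{L}_GL_F$ with $\|T_F\|\leqslant C_n\|\delta_F\|\leqslant C_n\|\delta\|$, uniformly in $F$. Feeding this into \Cref{lem reduced-derivation} then delivers the required $T\in\mathfrak{L}_G$ with $\delta=\delta_T$ and $\|T\|\leqslant C_n\|\delta\|$.
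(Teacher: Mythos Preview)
Your outline is correct and follows the paper's strategy, but you are making the argument harder than it needs to be by insisting on \emph{arbitrary} finite $F$. The paper's proof (the two sentences preceding the corollary, together with the model of \Cref{cor reduction}) restricts to a cofinal family of finite sets $F$ for which $G_F$ is already a genuine subtree of $G$; the obstacle you isolate then evaporates.

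Concretely, take $F$ to be the vertex set of a finite \emph{connected} subtree of $G$. These $F$ form a cofinal net (any finite $F_0\subseteq\mathcal V$ is contained in the finite subtree spanned by the tree paths joining its elements), so the limit-point argument from the proof of \Cref{lem reduced-derivation} still goes through, exactly as in \Cref{cor reduction}. For such $F$ the reduced graph $G_F$ coincides with the induced subgraph: any minimal path at $F$ of length $\geqslant 2$ would be the unique tree path between two vertices of $F$, which already lies in the connected induced subtree and hence has interior vertices in $F$, a contradiction. Thus $G_F$ is literally a subtree of $G$, and the monotonicity $A(G_F)\leqslant A(G)=n$ is immediate from the sentence ``$A(G_0)\leqslant A(G)$ for every subgraph $G_0$ of $G$.'' You can then apply \Cref{prop tree-bound} directly to $G_F$ to obtain $T_F$ with $\|T_F\|\leqslant C_n\|\delta_F\|$, with no need to decompose a forest or worry about single-vertex components.

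In short: your detour through general $F$, the forest decomposition via \Cref{prop derivation-decomposition}, and the delicate lifting of alternating vertices are all avoidable. The ``main obstacle'' you anticipate is real for arbitrary $F$ but disappears once you choose $F$ to be (the vertex set of) a finite connected subtree.
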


Assume that \Cref{conj in-tree} holds.
By \Cref{prop derivation-decomposition}, \Cref{thm S-C}, \Cref{cor reduction}, and \Cref{cor tree-bound}, we obtain a converse of \Cref{cor inner-imply}.

\begin{corollary}\label{cor fruit-tree-bound}
Suppose $G$ is a directed graph with connected components $\{G_{\lambda_1}\}_{\lambda_1\in\Lambda_1}
\cup\{G_{\lambda_2}\}_{\lambda_2\in\Lambda_2}$ such that each $G_{\lambda_1}$ is strongly connected and each $G_{\lambda_2}$ is not strongly connected.
Assume that \Cref{conj in-tree} holds.
If each $G_{\lambda_2}$ is a generalized fruit tree and $n:=\sup_{\lambda_2\in\Lambda_2}A(G_{\lambda_2})<\infty$, then every bounded derivation from $\mathcal{A}_G$ into $\mathfrak{L}_G$ is inner.

More precisely, for every bounded derivation $\delta$ from $\mathcal{A}_G$ into $\mathfrak{L}_G$, there exists $T\in\mathfrak{L}_G$ such that $\delta=\delta_T$ and $\|T\|\leqslant C_n'\|\delta\|$, where $C_n'\geqslant 1$ is a universal constant (only depends on $n$).
\end{corollary}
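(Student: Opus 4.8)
The plan is to reduce the statement to a component-wise analysis via the decomposition of derivations, and then to dispatch the two kinds of components with the machinery already in place. First I would invoke \Cref{rem derivation-decomposition}: writing $\mathcal{A}_G\cong c_0(\{\mathcal{A}_{G_\lambda}\})$ and $\mathfrak{L}_G\cong\ell^\infty(\{\mathfrak{L}_{G_\lambda}\})$ over all connected components $\lambda\in\Lambda_1\cup\Lambda_2$, every bounded derivation $\delta$ from $\mathcal{A}_G$ into $\mathfrak{L}_G$ is of the form $\delta=\prod\delta_\lambda$, where each $\delta_\lambda$ is a bounded derivation from $\mathcal{A}_{G_\lambda}$ into $\mathfrak{L}_{G_\lambda}$ and $\|\delta\|=\sup_\lambda\|\delta_\lambda\|$. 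It then suffices to produce, for each $\lambda$, an operator $T_\lambda\in\mathfrak{L}_{G_\lambda}$ with $\delta_\lambda=\delta_{T_\lambda}$ and $\|T_\lambda\|\le C_n'\|\delta_\lambda\|$ for a single constant $C_n'$ depending only on $n$. Assembling $T=(T_\lambda)_\lambda$ then gives an element of $\ell^\infty(\{\mathfrak{L}_{G_\lambda}\})\cong\mathfrak{L}_G$, because $\sup_\lambda\|T_\lambda\|\le C_n'\sup_\lambda\|\delta_\lambda\|=C_n'\|\delta\|<\infty$, with $\delta=\prod\delta_{T_\lambda}=\delta_T$ and $\|T\|\le C_n'\|\delta\|$.

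Next I would treat the two families separately. For each strongly connected component $G_{\lambda_1}$, \Cref{thm S-C} directly supplies $T_{\lambda_1}\in\mathfrak{L}_{G_{\lambda_1}}$ with $\delta_{\lambda_1}=\delta_{T_{\lambda_1}}$ and $\|T_{\lambda_1}\|\le\|\delta_{\lambda_1}\|$. For each non-strongly-connected component $G_{\lambda_2}$, which by hypothesis is a generalized fruit tree $G_{\lambda_2}=G_t^{(\lambda_2)}\sqcup_V G_f^{(\lambda_2)}$, I would apply \Cref{cor reduction}. Its hypothesis asks for a uniform constant $C\ge 1$ so that every derivation on $\mathcal{A}_{G_0}$ is implemented by some $T\in\mathcal{A}_{G_0}$ with $\|T\|\le C\|\delta\|$, for every subtree $G_0$ of the tree part $G_t^{(\lambda_2)}$. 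This is exactly where the alternating number enters: since $A(G_0)\le A(G_t^{(\lambda_2)})\le A(G_{\lambda_2})\le n$ for any such (finite) subtree, \Cref{prop tree-bound}, whose proof uses \Cref{conj in-tree}, yields the implementing operator in $\mathcal{A}_{G_0}=\mathfrak{L}_{G_0}$ with norm bound $C_{A(G_0)}\le\max_{0\le k\le n}C_k$. Hence $C:=\max_{0\le k\le n}C_k$ is a single constant depending only on $n$, and \Cref{cor reduction} produces $T_{\lambda_2}\in\mathfrak{L}_{G_{\lambda_2}}$ with $\delta_{\lambda_2}=\delta_{T_{\lambda_2}}$ and $\|T_{\lambda_2}\|\le 22C\|\delta_{\lambda_2}\|$. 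Setting $C_n'=22C\ge 1$, both families satisfy $\|T_\lambda\|\le C_n'\|\delta_\lambda\|$, and the assembly step of the first paragraph finishes the proof.

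The main obstacle is not any individual estimate but securing uniformity of the constant across the (possibly uncountable) family of fruit-tree components simultaneously. The crucial observation is that $A(G_0)\le A(G)$ for every subgraph, so the hypothesis $\sup_{\lambda_2}A(G_{\lambda_2})=n<\infty$ bounds the alternating number of every subtree $G_0$ arising in every component $G_{\lambda_2}$ by the same $n$; this collapses the a priori component-dependent constants $C_{A(G_0)}$ into one $n$-dependent constant, which is precisely what allows $T=(T_\lambda)_\lambda$ to lie in $\ell^\infty$, and hence in $\mathfrak{L}_G$. The conjecture \Cref{conj in-tree} is used only indirectly, through \Cref{prop tree-bound}, to guarantee that the in-tree pieces of each $G_0$ contribute a constant depending on $A(G_0)$ alone rather than a constant growing with the number of vertices; without it one would only obtain the vertex-count-dependent bounds $a_n$ mentioned in the introduction, and the $\ell^\infty$ assembly would break down.
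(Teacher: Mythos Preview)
Your proposal is correct and follows essentially the same route as the paper: decompose via \Cref{rem derivation-decomposition}, handle strongly connected components by \Cref{thm S-C}, and handle each generalized fruit tree by feeding the tree bound \Cref{prop tree-bound} (equivalently \Cref{cor tree-bound}) into \Cref{cor reduction}, with the key uniformity coming from $A(G_0)\le n$ for every subtree $G_0$. Your choice $C_n'=22\max_{0\le k\le n}C_k$ is exactly what the paper's cited ingredients produce.
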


\section{The first cohomology groups}\label{sec cohomology}
We have shown that for a finite connected directed graph $G$, the first cohomology group $H^1(\mathcal{A}_G,\mathfrak{L}_G)$ vanishes if and only if $G$ is either strongly connected or a fruit tree by \Cref{prop S-C-finite}, \Cref{thm fruit-tree}, and \Cref{thm converse}.
In this section, we present some examples such that the first cohomology groups are nontrivial.

By \Cref{def fruit-tree}, the directed graph in the following example is not a fruit tree.
It follows that the first cohomology group $H^1(\mathcal{A}_G,\mathfrak{L}_G)$ is not zero by \Cref{thm converse}.

\begin{example}\label{eg 2v-2l-1e}
Suppose $G$ is the directed graph with two vertices $\{v_1,v_2\}$, one edge $e$ such that $r(e)=v_1$ and $s(e)=v_2$, and two loops $e_1$ and $e_2$ such that $r(e_1)=s(e_1)=v_1$ and $r(e_2)=s(e_2)=v_2$.
See the following graph
\begin{center}
\begin{tikzpicture}
    \node (x) at (0,0) {$v_1$};
    \node (y) at (2,0) {$v_2$};
    \draw[->] (y) -- node[above] {$e$} (x);
    \draw[->] (x) to [out=150,in=210,loop] node[left] {$e_1$} (x);
    \draw[->] (y) to [out=-30,in=30,loop] node[right] {$e_2$.} (y);
\end{tikzpicture}
\end{center}
Then
\begin{equation*}
  H^1(\mathcal{A}_G,\mathfrak{L}_G)
  =\frac{H^\infty(\mathbb{D}^2)}{H^\infty(\mathbb{D})\otimes 1+1\otimes H^\infty(\mathbb{D})}.
\end{equation*}
\end{example}

\begin{proof}
Let $\mathscr{A}=\{L_{v_1},L_{v_2}\}$.
Then every derivation from $\mathcal{A}_G$ into $\mathfrak{L}_G$ is locally inner with respect to $\mathscr{A}$ by \Cref{lem idempotent}.
Let $\delta$ be a bounded derivation from $\mathcal{A}_G$ into $\mathfrak{L}_G$ such that $\delta|_{\mathscr{A}}=0$.
Then $\delta$ is a derivation from $L_{v_j}\mathcal{A}_GL_{v_j}$ into $L_{v_j}\mathfrak{L}_GL_{v_j}$ for $j=1,2$.
Since $L_{v_j}\mathcal{A}_GL_{v_j}\cong A(\mathbb{D})$ and $L_{v_j}\mathfrak{L}_GL_{v_j}\cong H^\infty(\mathbb{D})$ by \Cref{prop reduced}, we have $\delta|_{L_{v_j}\mathcal{A}_GL_{v_j}}=0$ for $j=1,2$.
In particular, $\delta(L_{e_j})=0$ for $j=1,2$.
Therefore, $\delta$ is uniquely determined by $\delta(L_e)$.

For any operator $A\in L_{v_1}\mathfrak{L}_GL_{v_2}$, we can write
\begin{equation}\label{equ A-fn}
  A=\sum_{n=0}^{\infty}L_{e_1^ne}f_n(L_{e_2}),\quad
  f(z,w):=\sum_{n=0}^{\infty}f_n(z)w^n,
\end{equation}
where $f\in H^2(\mathbb{D}^2)$.
For any function $\varphi\in H^2(\mathbb{D})$, we have
\begin{equation*}
  \|A\varphi(L_{e_2})\xi_{v_2}\|^2
  =\sum_{n=0}^{\infty}\|f_n\varphi\|^2_{H^2(\mathbb{D})}
  =\int_{\mathbb{T}}\sum_{n=0}^{\infty}|f_n(z)|^2|\varphi(z)|^2d\mu(z),
\end{equation*}
where $\mu$ is the Haar measure on $\mathbb{T}$.
It follows that
\begin{equation*}
  \|A\|=\sup_{z\in\mathbb{D}}\sum_{n=0}^{\infty}|f_n(z)|^2
  =\sup_{z\in\mathbb{D}}\|f(z,w)\|_{H^2(\mathbb{D}),w}^2.
\end{equation*}
Since $\delta(L_{v_j})=0$ for $j=1,2$, we can write
\begin{equation*}
  \delta(L_e)=\sum_{n=0}^{\infty}L_{e_1^ne}g_n(L_{e_2}),\quad
  g(z,w):=\sum_{n=0}^{\infty}g_n(z)w^n.
\end{equation*}
If $A$ is of the form \eqref{equ A-fn}, then
\begin{equation*}
  \delta(A)=\sum_{n=0}^{\infty}L_{e_1^ne}
  \sum_{m=0}^{n}f_m(L_{e_2})g_{n-m}(L_{e_2}).
\end{equation*}
It follows that $\|\delta(A)\|=\sup_{z\in\mathbb{D}}\|g(z,w)f(z,w)\|_{H^2(\mathbb{D}),w}^2$.
Therefore, we have
\begin{equation*}
  \|\delta\|=\|g(z,w)\|_{H^\infty(\mathbb{D}^2)}.
\end{equation*}
Let $T\in\mathfrak{L}_G$.
If $\delta_T(L_{v_j})=0$ for $j=1,2$, then $T=f(L_{e_1})+g(L_{e_2})$ and
\begin{equation*}
  \delta_T(L_e)=L_eg(L_{e_2})-f(L_{e_1})L_e.
\end{equation*}
It follows that
\begin{equation*}
  \|\delta_T\|=\|g(z)-f(w)\|_{H^\infty(\mathbb{D}^2)}
  =\|f\|_{H^\infty(\mathbb{D})}+\|g\|_{H^\infty(\mathbb{D})}.
\end{equation*}
By \Cref{prop zero-at-A}, we obtain that
\begin{align*}
    H^1(\mathcal{A}_G,\mathfrak{L}_G)
    &\cong \mathrm{Der}_{\mathscr{A}}(\mathcal{A}_G,\mathfrak{L}_G)
    /\mathrm{Inn}_{\mathscr{A}}(\mathcal{A}_G,\mathfrak{L}_G)\\
    &\cong H^{\infty}(\mathbb{D}^2)/\{f(z)+g(w)\colon f,g\in H^{\infty}(\mathbb{D})\}.
\end{align*}
The proof is completed.
\end{proof}

\begin{example}\label{eg 2v-ne}
Suppose $G$ is a directed graph with two vertices $\{v_1,v_2\}$, and $n$ edges $\{f_j\}_{j=1}^n$ from $v_2$ to $v_1$.
Then
\begin{equation*}
  H^1(\mathcal{A}_G,\mathfrak{L}_G)\cong M_n(\mathbb{C})/\mathbb{C}I_n.
\end{equation*}
\end{example}

\begin{proof}
Let $\mathscr{A}=\{L_{v_1},L_{v_2}\}$.
Similar to \Cref{eg 2v-2l-1e}, we assume that $\delta|_{\mathscr{A}}=0$.
Then $\delta$ is uniquely determined by $\{\delta(L_{f_j})\}_{j=1}^n$.
Assume that
\begin{equation*}
  \delta(L_{f_i})=\sum_{j=1}^{n}a_{ij}L_{f_j}.
\end{equation*}
Then $\delta$ is determined by $(a_{ij})\in M_n(\mathbb{C})$.
If $T\in\mathfrak{L}_G$ and $\delta_T(L_{v_j})=0$ for $j=1,2$, then $T=\lambda_1L_{v_1}+\lambda_2L_{v_2}$.
It follows that
\begin{equation*}
  \delta_T(L_{f_i})=(\lambda_2-\lambda_1)L_{f_i}.
\end{equation*}
Hence $\delta_T$ is corresponding to the matrix $(\lambda_2-\lambda_1)I_n\in\mathbb{C}I_n$.
The conclusion follows from \Cref{prop zero-at-A}.
\end{proof}

\begin{example}\label{eg 3v-1l-2e}
Suppose $G$ is the directed graph with three vertices $\{v_1,v_2,v_3\}$, two edges $e_1$ and $e_2$ such that $r(e_1)=r(e_2)=v_3$, $s(e_1)=v_1$, and $s(e_2)=v_2$, and one loop $w$ such that $r(w)=s(w)=v_3$.
See the following graph
\begin{center}
\begin{tikzpicture}
    \node (x) at (0,0) {$v_3$};
    \node (y) at (2,0.5) {$v_1$};
     \node (z) at (2,-0.5) {$v_2.$};
    \draw[->] (y) -- node[above] {$e_1$} (x);
     \draw[->] (z) -- node[below] {$e_2$} (x);
    \draw[->] (x) to [out=150,in=210,loop] node[left] {$w$} (x);
\end{tikzpicture}
\end{center}
Then $ H^1(\mathcal{A}_G,\mathfrak{L}_G)=H^{\infty}(\mathbb{D})/\mathbb{C}$.
\end{example}

\begin{proof}
Let $\mathscr{A}_1=\{L_{v_1},L_{v_2},L_{v_3}\}$.
Similar to \Cref{eg 2v-2l-1e}, we may assume that $\delta|_{\mathscr{A}_1}=0$.
In this case, $\delta$ is a derivation from $L_{v_3}\mathcal{A}_GL_{v_3}$ into $L_{v_3}\mathfrak{L}_GL_{v_3}$.
Since $L_{v_3}\mathfrak{L}_GL_{v_3}\cong H^\infty(\mathbb{D})$ is commutative, we have $\delta|_{L_{v_3}\mathfrak{L}_GL_{v_3}}=0$ by \Cref{thm 4.5}.
In particular, $\delta(L_w)=0$.
Moreover, $\delta(L_{e_1})=-f(L_w)L_{e_1}$ for some $f\in H^{\infty}(\mathbb{D})$ by the proof of \Cref{thm fruit-tree}.
Then $\delta$ and $\delta_{f(L_w)}$ coincide on $\mathscr{A}=\mathscr{A}_1\cup\{L_w,L_{e_1}\}$.
Hence $\delta$ is locally inner with respect to $\mathscr{A}$.

We may assume that $\delta|_{\mathscr{A}}=0$.
Then $\delta$ is uniquely determined by
\begin{align*}
    \delta(L_{e_2})=-g(L_w)L_{e_2},\quad g\in H^{\infty}(\mathbb{D}).
\end{align*}
Then $\delta$ is determined by the function $g\in H^{\infty}(\mathbb{D})$.
If $T\in\mathfrak{L}_G$ and $\delta_T|_{\mathscr{A}}=0$,
then $T=\alpha(L_{v_1}+L_{v_3})+\beta L_{v_2}$, where $\alpha,\beta\in\mathbb{C}$.
It follows that
\begin{equation*}
  \delta_T(L_{e_2})=(\beta-\alpha)L_{e_2}.
\end{equation*}
Hence $\delta_T$ is corresponding to the scalar $\beta-\alpha\in\mathbb{C}$ .
The conclusion follows from \Cref{prop zero-at-A}.
\end{proof}

\end{document}